\definecolor{blue}{HTML}{5E81AC} 
\definecolor{slateblue}{HTML}{5E7A8B} 
\definecolor{darkbluegrey}{HTML}{3A4957}
\definecolor{orange}{HTML}{DE8638} 
\definecolor{red}{HTML}{A4243B} 
\definecolor{green}{HTML}{5D8B50} 
\title{Interpreting the Ooguri-Vafa symplectic form à la Atiyah-Bott}
\author{Danny Nackan}
\address{Department of Mathematics, Yale University, New Haven, CT 06511, USA}
\email{danny.nackan@yale.edu}
\numberwithin{equation}{section}
\newtheorem{theorem}{Theorem}[section]
\newtheorem{lemma}[theorem]{Lemma}
\newtheorem{cor}[theorem]{Corollary}
\newtheorem{prop}[theorem]{Proposition}
\newtheorem{theoremalph}{Theorem}
\theoremstyle{definition}
\newtheorem{definition}[theorem]{Definition}
\newtheorem{notation}[theorem]{Notation}
\newtheorem{example}[theorem]{Example}
\newtheorem{remark}[theorem]{Remark}
\newtheorem{construction}[theorem]{Construction}
\newtheorem{definitionalph}{Definition}
\crefname{construction}{Construction}{Constructions}
\crefname{prop}{Proposition}{Propositions}
\newenvironment{subproof}
{\begin{proof}}{\end{proof}}
\newcommand{\R}{\mathbb{R}}
\newcommand{\Z}{\mathbb{Z}}
\newcommand{\C}{\mathbb{C}}
\newcommand{\CP}{\mathbb{CP}}
\renewcommand{\O}{\mathcal{O}}
\DeclareMathOperator{\GL}{GL}
\DeclareMathOperator{\SL}{SL}
\DeclareMathOperator{\SU}{SU}
\DeclareMathOperator{\id}{id}
\newcommand{\bbid}{\mathbb{1}}
\DeclareMathOperator{\End}{End}
\renewcommand{\Re}{\operatorname{Re}}
\renewcommand{\Im}{\operatorname{Im}}
\DeclareMathOperator{\tr}{tr}
\DeclareMathOperator{\rank}{rank}
\DeclareMathOperator{\diag}{diag}
\renewcommand{\epsilon}{\varepsilon}
\newcommand{\defeq}{\stackrel{\mathrm{def}}{=}}
\newcommand{\ds}{\displaystyle}
\newcommand{\ab}{\mathrm{ab}}
\renewcommand{\L}{\mathcal{L}}
\newcommand{\W}{\mathcal{W}} 
\DeclareMathOperator{\CG}{CG} 
\newcommand{\fr}{\mathrm{fr}}
\renewcommand{\H}{\mathcal{H}} 
\newcommand{\Hfr}{\H^\fr} 
\newcommand{\sfHfr}{\H^{\fr, \sf}} 
\newcommand{\uHfr}{\widehat{\Hfr}} 
\newcommand{\Xfr}{\mathfrak{X}^\fr} 
\newcommand{\sfXfr}{\mathfrak{X}^{\fr, \sf}} 
\newcommand{\A}{\mathcal{A}}
\newcommand{\Afr}{\A^\fr} 
\newcommand{\abAfr}{\A^{\fr, \ab}} 
\newcommand{\sfAfr}{\A^{\fr, \sf}} 
\newcommand{\sfabAfr}{\A^{\fr, \sf, \ab}} 
\newcommand{\M}{\mathcal{M}} 
\newcommand{\Mfr}{\M^\fr} 
\newcommand{\abMfr}{\M^{\fr, \ab}} 
\newcommand{\B}{\mathcal{B}}
\newcommand{\Bfr}{\B^\fr} 
\newcommand{\ov}{\mathrm{ov}}
\newcommand{\Mov}{\M^\ov}
\renewcommand{\sf}{\mathrm{sf}}
\newcommand{\inst}{\mathrm{inst}}
\newcommand{\Xe}{\mathcal{X}_e}
\newcommand{\Xm}{\mathcal{X}_m}
\newcommand{\shift}{\mathrm{shift}}
\newcommand{\Xms}{\mathcal{X}_m^\shift}
\newcommand{\reg}{\mathrm{reg}}
\newcommand{\glue}{\mathrm{glue}}
\DeclareMathOperator{\NAH}{NAH}
\newcommand{\dR}{\mathrm{dR}}
\newcommand{\Stokes}{\mathrm{Stokes}}
\newcommand{\Higgs}{\mathrm{Higgs}}
\newcommand{\AB}{\mathrm{AB}} 
\newcommand{\WKB}{\mathrm{WKB}}
\newcommand{\Hit}{\mathrm{Hit}}
\DeclareMathOperator{\Sect}{Sect}
\newcommand{\eSect}{\widehat{\Sect}}
\renewcommand{\top}{\mathrm{top}}
\renewcommand{\bot}{\mathrm{bot}}
\newcommand{\ins}{\mathrm{in}}
\newcommand{\out}{\mathrm{out}}
\DeclareMathOperator{\Hol}{Hol}
\DeclareMathOperator{\wind}{wind}
\DeclareMathOperator{\pdeg}{pdeg}
\newcommand{\D}{\mathbb{D}} 
\DeclareMathOperator{\shom}{\mathcal{H}\kern -.5pt \mathit{om}} 
\renewcommand{\tocsection}[3]{%
  \indentlabel{\@ifnotempty{#2}{\bfseries\ignorespaces#1 #2\quad}}\bfseries#3}
\renewcommand{\tocsubsection}[3]{%
  \indentlabel{\@ifnotempty{#2}{\ignorespaces#1 #2\quad}}#3}
\newcommand\@dotsep{4.5}
\def\@tocline#1#2#3#4#5#6#7{\relax
  \ifnum #1>\c@tocdepth 
  \else
    \par \addpenalty\@secpenalty\addvspace{#2}%
    \begingroup \hyphenpenalty\@M
    \@ifempty{#4}{%
      \@tempdima\csname r@tocindent\number#1\endcsname\relax
    }{%
      \@tempdima#4\relax
    }%
    \parindent\z@ \leftskip#3\relax \advance\leftskip\@tempdima\relax
    \rightskip\@pnumwidth plus1em \parfillskip-\@pnumwidth
    #5\leavevmode\hskip-\@tempdima{#6}\nobreak
    \leaders\hbox{$\m@th\mkern \@dotsep mu\hbox{.}\mkern \@dotsep mu$}\hfill
    \nobreak
    \hbox to\@pnumwidth{\@tocpagenum{\ifnum#1=1\bfseries\fi#7}}\par 
    \nobreak
    \endgroup
  \fi}
\def\l@subsection{\@tocline{2}{0pt}{2.5pc}{5pc}{}}
\def\l@subsubsection{\@tocline{3}{0pt}{5pc}{5pc}{}}
\begin{document}

\begin{abstract}
Gaiotto, Moore, and Neitzke predicted that the hyperkähler Ooguri-Vafa space $\Mov$ should provide a local model for Hitchin moduli spaces near the discriminant locus.
To this end, Tulli identified $\Mov$ with a certain space of framed Higgs bundles with an irregular singularity.
We extend this result by identifying the Ooguri-Vafa holomorphic symplectic form with a regularized version of the Atiyah-Bott form on the associated space of framed connections.
We also prove the analogous statement for the corresponding semiflat forms. 
Finally, restricting to the Hitchin section, we identify a regularized version of Hitchin's $L^2$-metric with the Ooguri-Vafa metric.
\end{abstract}

\maketitle
\thispagestyle{empty}

\tableofcontents

\section{Introduction} 
\label{sec:introduction}

Moduli spaces of Higgs bundles carry an incredibly rich structure, due in large part to the presence of Hitchin's hyperkähler metric $g_{L^2}$ \cite{Hitchin:1987}.
Although $g_{L^2}$ is naturally defined, it is highly transcendental, involving solutions to Hitchin's equation (a nonlinear PDE). 

A precise conjectural picture of $g_{L^2}$ was described in the work of Gaiotto, Moore, and Neitzke \cite{Gaiotto:2010,Gaiotto:2013a}.
In particular---and of main interest to us in this paper---the local picture near the discriminant locus of the Hitchin base was conjectured to be described by the hyperkähler \emph{Ooguri-Vafa metric} (originally defined in \cite{Ooguri:1996}).

The data of a hyperkähler metric can equivalently be formulated in terms of a \emph{twistor family} of holomorphic symplectic forms $\Omega_\zeta$ (see \cite{Hitchin:1992b,Hitchin:1987b}, or \cite[Section 3]{Gaiotto:2010} for a summary).

\begin{itemize}

    \item In the case of a moduli space $\M^\Hit$ of Higgs bundles, say for now on a compact Riemann surface $C$ (see \cref{sec:hyperkahler-metric} for more background), the form $\Omega_\zeta^\Hit$ corresponding to $g_{L^2}$ can be studied in terms of flat connections on $C$.\footnote{We omit discussion of the relevant stability conditions here; all of the Higgs bundles we consider later will be stable.}

    Given a Higgs bundle $(E, \theta)$ and a harmonic metric $h$ solving Hitchin's equation, there is an associated family of flat connections
    \begin{equation} \label{eq:INTRO-NAH}
        \nabla_\zeta  =  \zeta^{-1} \theta + D_h + \zeta \theta^{\dagger_h}, \quad \zeta \in \C^*
    \end{equation}
    (where $D_h$ denotes the Chern connection).
    For each $\zeta \in \C^*$, the \emph{nonabelian Hodge correspondence} $\NAH_\zeta: (E, \theta) \mapsto (E, \nabla_\zeta)$ identifies $\M^\Hit$ (in complex structure $I_\zeta$) with the de~Rham moduli space $\M^\dR$ of flat $\SL(2, \C)$-connections on $C$ (in its natural complex structure).
    Furthermore, the form $\Omega_\zeta^\Hit$ on $\M^\Hit$ is identified with the holomorphic symplectic \emph{Atiyah-Bott form} \cite{Atiyah:1983}
    \begin{equation} \label{eq:atiyah-bott}
        \Omega^\AB (\dot{\nabla}_1, \dot{\nabla}_2) = \int_C \tr (\dot{\nabla}_1 \wedge \dot{\nabla}_2)
    \end{equation}
    on $\M^\dR$.

\item In the case of the Ooguri-Vafa space $\Mov$, the holomorphic symplectic form can be written
    \begin{equation} \label{eq:INTRO-ov-twistor-coords}
        \Omega^\ov_\zeta = -\frac{1}{4 \pi^2} d \log \Xe(\zeta) \wedge d \log \Xm(\zeta), \quad \zeta \in \C^*,
    \end{equation}
    in terms of certain ``electric and magnetic twistor coordinates'' $\Xe$ and $\Xm$ \cite{Gaiotto:2010}.
\end{itemize}

Following the predictions of Gaiotto-Moore-Neitzke, Tulli \cite{Tulli:2019} identified $\Mov$ with a moduli space $\Xfr$ of (framed) rank $2$ harmonic bundles over $C = \CP^1$ with an irregular singularity at $\infty$.
Under this correspondence, the twistor coordinates $\Xe$ and $\Xm$ of the holomorphic symplectic form $\Omega^\ov_\zeta$ are described in terms of the Stokes data of the irregular connections $\nabla_\zeta$.

The question of an $L^2$-interpretation of the Ooguri-Vafa metric (or form) was left open in \cite{Tulli:2019}.
Unlike the usual moduli spaces of wild Higgs bundles (see e.g.\ \cite{Biquard:2004}), the parabolic weights and residues of the bundles in $\Xfr$ are allowed to vary, and as a result the naive formulas for $g_{L^2}$ and the corresponding Atiyah-Bott form are divergent.

In this paper, we introduce a regularization and gluing procedure to study the hyperkähler structure of $\Xfr$.
We use this procedure to identify the Ooguri-Vafa form $\Omega_\zeta^\ov$ with a regularized version of the Atiyah-Bott form on $\Xfr$.
By further restricting to the Hitchin section we deduce the corresponding statement about metrics, i.e.\ we identify a regularized version of Hitchin's metric with the Ooguri-Vafa metric.
Additionally, we prove the analogous results about the ``semiflat'' metrics and forms, which also play a role in the conjectural picture of Gaiotto-Moore-Neitzke.

Our main technique is to study the corresponding framed \emph{abelianized} connections on the spectral cover $\Sigma \to C$.\footnote{More precisely, the spectral cover $\Sigma$ itself varies along with the nonabelian connections on $C$. However, all of our calculations in the moduli space are local, and nearby $\Sigma$ are diffeomorphic, so we can identify them with a fixed surface when computing variations.}
Below we will give a high-level overview of the argument, followed by a more detailed summary of our results and strategy.

\subsection{A schematic guide} 
\label{sub:schematic-guide}

We will be interested in three related sets of framed objects (see \cref{fig:summary-of-spaces}): 

\begin{enumerate}
    \item {\color{green} $\Hfr$} -- the set of {\color{green} framed harmonic bundles} considered in \cite{Tulli:2019}, whose moduli space of isomorphism classes is $\Xfr \cong \Mov$.

    \item {\color{orange} $\Afr_\zeta$} -- a set of {\color{orange} framed flat $\SL(2)$-connections} on $C = \CP^1$, which can be obtained from harmonic bundles in $\Hfr$ by the \emph{nonabelian Hodge correspondence}.

    \item {\color{red} $\abAfr_\zeta$} --  a set of {\color{red} framed ``almost-flat'' $\GL(1)$-connections} on the spectral cover $\Sigma$ of $C$, which can be obtained by \emph{abelianizing} connections in $\Afr_\zeta$.
\end{enumerate}

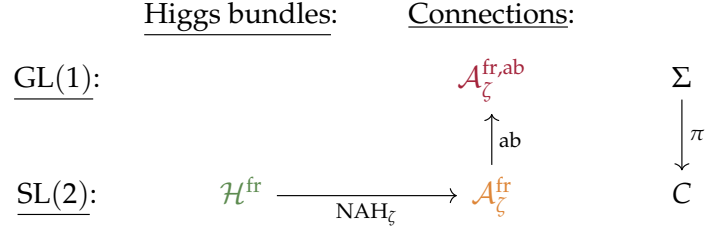
\begin{figure}[ht]
    \begin{tikzcd}
        &[-20pt] \text{\underline{Higgs bundles}:} &[-10pt] \text{\underline{Connections}:} \\[-20pt]
         \text{\underline{$\GL(1)$}:}  & & \color{red} \abAfr_\zeta & \Sigma \arrow[d, "\pi"] \\
        \text{\underline{$\SL(2)$}:} & \color{green} \Hfr \arrow[r, "\NAH_\zeta"'] & \color{orange} \Afr_\zeta \arrow[u, "\ab"'] & C
    \end{tikzcd}
    \caption{The main sets of framed objects and the maps between them.}
    \label{fig:summary-of-spaces}
\end{figure}

Our goal is to compare the following closed $2$-forms on {\color{green} $\Hfr$}, which descend to holomorphic symplectic forms on its space of isomorphism classes $\Xfr \cong \Mov$:
\begin{enumerate}
    \item {\color{green} $\Omega^\ov_\zeta$} -- the Ooguri-Vafa form, interpreted Stokes-theoretically via a form {\color{orange} $\Omega^\ov_\Stokes$ } on {\color{orange} $\Afr_\zeta$}.
    \item {\color{green} $\Omega^\reg_\zeta$} -- the pullback of a regularized version {\color{orange} $\Omega^\reg$} of the Atiyah-Bott form on {\color{orange} $\Afr_\zeta$}.
\end{enumerate}

We will argue that the above forms are in fact equal\footnote{up to a factor of $-4\pi^2$, which we suppress here} via the following commutative diagram, involving two intermediary abelian forms on {\color{red} $\abAfr_\zeta$}:
\begin{enumerate}
    \item[(3)] {\color{red} $\Omega^\glue$} -- a ``glued symplectic form'', pulled back from the Atiyah-Bott form on the torus.
    \item[(4)] {\color{red} $\Omega^{\reg, \ab}$} -- a regularized abelian Atiyah-Bott form on the spectral cover.
\end{enumerate}

\begin{equation}\label{eq:forms-cd}
\begin{tikzcd} 
 & &  & \color{red} \Omega^\glue \arrow[dd, maps to, "\eqref{eq:INTRO-glue-equals-ov}"', "\ab^*"]  \arrow[r, equals, "\eqref{eq:INTRO-glue-equals-reg}"] &\color{red} \Omega^{\reg, \ab} \arrow[dd, maps to, "\eqref{eq:INTRO-ab-symplectomorphism}"', "\ab^*"] \\
 & &  & & \\
\color{green} \Omega^\ov_\zeta \arrow[r, equals, dashed ] & \color{green} \Omega^\reg_\zeta &  & \color{orange} \Omega^\ov_\Stokes \arrow[r, equals, dashed ] \arrow[lll, "\NAH_\zeta^*", "\eqref{eq:INTRO-ov-stokes}"', maps to, bend left, dotted] & \color{orange} \Omega^{\reg} \arrow[lll, "\NAH_\zeta^*", "\coloneqq"', maps to, bend left, dotted]
\end{tikzcd}
\end{equation}

There are also \emph{semiflat} versions of the above spaces and forms, which are simpler and more explicit. 
We will introduce them in \cref{sec:sf-analysis} and carry out an analogous argument.

\pagebreak

\subsection{Detailed summary and strategy} 
\label{sub:summary}

\subsubsection{Framed bundles} 
\label{ssub:summary-framed-bundles}

The set $\Hfr$ of \emph{compatibly framed wild harmonic bundles} introduced in \cite{Tulli:2019} consists of tuples $(E, \theta, h, g)$ where, roughly: 
\begin{itemize}
    \item $E|_{\CP^1 \setminus \{\infty\}}$ is a holomorphic rank $2$ vector bundle equipped with
    \begin{itemize}
        \item a traceless Higgs field $\theta$ such that $\det \theta = -(z^2 + 2m) dz^2$ for some $m \in \C$,\footnote{The underlying unframed bundles $(E, \theta, h)$ thereby provide a local model of the Higgs moduli space near the generic part of the discriminant locus, which for $\SL(2)$-Higgs bundles consists of quadratic differentials with one double zero.} and
        \item a harmonic metric $h$.
    \end{itemize}

    \item $g$ is a frame of $E$ near $z=\infty$ with respect to which the Higgs field $\theta$ and holomorphic structure $\bar{\partial}_E$ are of a certain singular form.
\end{itemize}
(See \cref{sub:framed-harmonic-bundles} for the full definitions.)

Let $\Xfr$ denote the set of isomorphism classes of $\Hfr$.
The elements of $\Xfr$ are parametrized by the value $m \in \C$ describing the simple pole term of the singularity, $m^{(3)} \in (-\frac{1}{2}, \frac{1}{2}]$ describing the parabolic weights, and another $U(1)$-valued parameter describing the framing $g$.
(cf.\ the picture of the Ooguri-Vafa space $\Mov$ in \cref{fig:ov-space}  below.)

Given $(E, \theta, h, g) \in \Hfr$, the corresponding flat connection $\nabla_\zeta$ defined by \eqref{eq:INTRO-NAH} also has an irregular singularity at $z = \infty$, and consequently it undergoes Stokes phenomena.
With respect to the frame $g$, it is of the form
\begin{equation} \label{eq:INTRO-framed-connection-form}
    \nabla_\zeta = d + \left[- \zeta^{-1} \frac{dw}{w^3}  - \zeta \frac{d \overline{w}}{\overline{w}^3} - (\zeta^{-1} m - \frac{1}{2} m^{(3)}) \frac{dw}{w} -  (\zeta \overline{m} + \frac{1}{2} m^{(3)})\frac{d \overline{w}}{\overline{w}}  \right]H + \text{regular terms}
\end{equation}
near $w = 1/z = 0$, where $H = \diag(1, -1)$.

Let $\Afr_\zeta$ denote the space of framed connections $(E, \nabla, g)$ which are of the above form.

\begin{definitionalph}[=\,\cref{def:reg-form}] \label{def:INTRO-reg-form}
    Define a \emph{regularized Atiyah-Bott form} $\Omega^\reg$ on $\Afr_\zeta$ by
    \begin{equation}
        \Omega^\reg(\dot{\nabla}_1, \dot{\nabla}_2) = \lim_{R \to 0} \left[\int_{C_R} \tr (\dot{\nabla}_1 \wedge \dot{\nabla}_2) - 2 \pi \log R \cdot \tr \left( \mu_1 \lambda_2 - \mu_2 \lambda_1 \right) \right],
    \end{equation}
    where
    $C_R \coloneqq \CP^1 \setminus \{|w| < R \}$
    and, in polar coordinates $w = re^{i \theta}$ near $w=0$,
    \begin{equation} 
    \dot{\nabla}_i = (\mu_i + \O(r)) d \theta + (\lambda_i + \O(r)) \frac{dr}{r} \quad \text{for some diagonal matrices } \mu_i, \lambda_i.
    \end{equation}
    (The ``regularization term'' $- 2 \pi \log R \cdot  \tr \left( \mu_1 \lambda_2 - \mu_2 \lambda_1 \right)$ can be explicitly calculated using \eqref{eq:INTRO-framed-connection-form}.)
\end{definitionalph}

We can pull back $\Omega^\reg$ to a form on $\Hfr$ via $\NAH_\zeta: (E, \theta) \mapsto (E, \nabla_\zeta)$, and it furthermore descends to the moduli space $\Xfr$.
Slightly abusing notation, we will denote both of these pulled-back forms by $\Omega^\reg_\zeta$.

\subsubsection{The Ooguri-Vafa space} 
\label{ssub:summary-ov-space}

The Ooguri-Vafa space $\Mov$ is a hyperkähler space of complex dimension $2$.
Technical details on the construction of $\Mov$ using the Gibbons-Hawking ansatz can be found in \cite{Gross:2000}; see also the summaries in \cite{Gaiotto:2010,Tulli:2019}. We include a brief recap in \cref{sec:ooguri-vafa-construction}, but the most important attributes for our purposes are as follows.

The space $\Mov$ is  a singular torus fibration over a disc $\B$ in $\C$, whose central fibre over $0$ is a torus with a node. 
The other fibres are nonsingular tori parametrized by an electric angle $\theta_e$ and magnetic angle $\theta_m$ (see \cref{fig:ov-space}).

\begin{figure}[ht]
    \centering
    \includegraphics[scale=1]{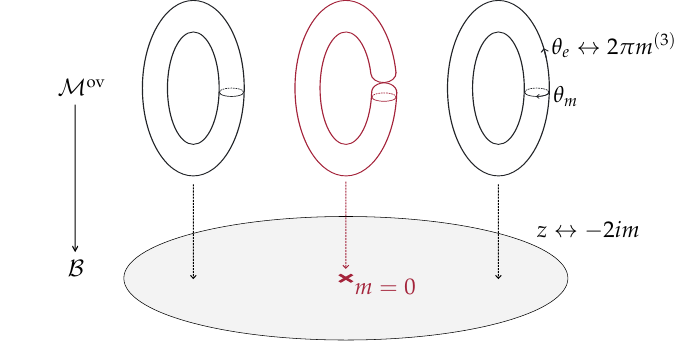}
    \caption{The Ooguri-Vafa space $\Mov$, regarded as a singular torus fibration over a disc $\B \subset \C$.}
    \label{fig:ov-space}
\end{figure}

\begin{remark}[$\theta_m$ monodromy]
    The coordinate $\theta_m$ is not globally defined; it has monodromy
    \begin{equation}
        \theta_m \to \theta_m + \theta_e - \pi
    \end{equation}
     as we go counterclockwise around $z=0$.
\end{remark}

Under the correspondence $\Mov \cong \Xfr$ in \cite{Tulli:2019}:\footnote{More precisely, \cite{Tulli:2019} identifies a \emph{subset} of $\Xfr$ (consisting of isomorphism classes whose parameter $m$ lies in a sufficiently small disc) with the Ooguri-Vafa space $\Mov(\Lambda)$, for a certain cutoff $\Lambda \in \C$. See \cref{sub:ov-correspondence} for more details.}
\begin{itemize}
    \item The angles $\theta_e$ and $\theta_m$ are expressed in terms of certain explicit quantities involving $m, m^{(3)}$, and the framing $g$.
    
    \item The twistor coordinates $\Xe(\zeta)$ and $\Xm(\zeta)$ of the holomorphic symplectic form $\Omega^\ov_\zeta$ are interpreted in terms of the Stokes data of the connections $\nabla_\zeta$ (namely the formal monodromy of $\nabla_\zeta$ and the non-trivial element of a certain Stokes matrix, respectively).
\end{itemize}

We will say more about this correspondence below, but at this point we can formulate our main result.
\begin{theoremalph}[=\,\cref{thm:reg-equals-ov}] \label{thm:INTRO-a}
    Under the identification of spaces $\Mov \cong \Xfr$,  
    \begin{equation} \label{eq:INTRO-reg-equals-ov}
         \Omega^\ov_\zeta = - \frac{1}{4\pi^2} \Omega^\reg_\zeta,
    \end{equation}
    i.e.\ the Ooguri-Vafa symplectic form coincides with (a multiple of) the regularized Atiyah-Bott form, pulled back to $\Xfr$.
\end{theoremalph}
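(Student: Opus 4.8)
The plan is to prove the equivalent identity on the space of framed flat connections and then transport it back through the nonabelian Hodge correspondence. By definition $\Omega^\reg_\zeta = \NAH_\zeta^*\Omega^\reg$, and \eqref{eq:INTRO-ov-stokes} will express the Ooguri-Vafa form as $\Omega^\ov_\zeta = \NAH_\zeta^*\Omega^\ov_\Stokes$ for a Stokes-theoretic form $\Omega^\ov_\Stokes$ on $\Afr_\zeta$; since $\NAH_\zeta$ is a bijection, it therefore suffices to prove $\Omega^\ov_\Stokes = -\tfrac{1}{4\pi^2}\Omega^\reg$ directly on $\Afr_\zeta$. Following the commutative diagram \eqref{eq:forms-cd}, I would establish this by passing to the spectral double cover $\pi\colon\Sigma\to C$: abelianizing a framed $\SL(2)$-connection produces a framed $\GL(1)$-connection on $\Sigma$, and I would factor the desired equality through two auxiliary forms on $\abAfr_\zeta$, namely the regularized abelian Atiyah-Bott form $\Omega^{\reg, \ab}$ and a glued form $\Omega^\glue$.

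First I would check that abelianization respects the regularized pairing, i.e.\ $\Omega^\reg = \ab^*\Omega^{\reg, \ab}$, which is \eqref{eq:INTRO-ab-symplectomorphism}. Because the abelianized connection is diagonal, the integrand $\tr(\dot\nabla_1\wedge\dot\nabla_2)$ on $C$ splits as a sum over the two sheets, and pushing the integral over $C_R$ down along $\pi$ identifies it with the abelian Atiyah-Bott integral over the truncated spectral cover. The diagonal residue data $\mu_i,\lambda_i$ entering the counterterm of \cref{def:INTRO-reg-form} are exactly the residues of the abelianized connection at the two points of $\pi^{-1}(\infty)$, so the $-2\pi\log R$ regularization terms match term by term; the only thing to verify here is that the branch locus of $\pi$, which lies in the interior away from $w=0$, contributes no boundary terms in the $R\to 0$ limit.

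The technical heart is the identity $\Omega^{\reg, \ab} = \Omega^\glue$, which is \eqref{eq:INTRO-glue-equals-reg}, and I expect this to be the main obstacle. The idea is to trade the analytic regularization---the divergent integral with its $\log R$ counterterm---for a geometric one. Since $\Sigma$ has genus zero with two punctures over $z=\infty$, I would glue these two punctures together into a single handle, producing a compact two-torus $T$, and extend the abelianized connections across the glued collar by a model connection built from the explicit singular data in \eqref{eq:INTRO-framed-connection-form}; then $\Omega^\glue$ is the pullback of the (convergent) abelian Atiyah-Bott form on flat $\GL(1)$-connections on $T$. The crux is an integration-by-parts computation near $w=0$: using the asymptotics \eqref{eq:INTRO-framed-connection-form}, the boundary contribution of the integral over $C_R$ as $R\to0$ must reproduce precisely the divergent term absorbed by the counterterm, so that the regularized form on the open surface equals the honest Atiyah-Bott form on $T$. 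This matching of boundary asymptotics to the prescribed counterterm---together with the verification that the extension across the collar contributes nothing further---is where the divergences are actually tamed and the normalization constants are pinned down.

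Finally I would identify the glued form with the Stokes-theoretic Ooguri-Vafa form, i.e.\ $\Omega^\ov_\Stokes = -\tfrac{1}{4\pi^2}\ab^*\Omega^\glue$, which is \eqref{eq:INTRO-glue-equals-ov}. On the torus $T$ the Atiyah-Bott form reduces to the cup-product pairing on $H^1(T)$, computed by the holonomies of the glued connection around the two generating cycles; by construction one of these is the original cycle of $\Sigma$ while the other runs through the glued handle, and \cite{Tulli:2019} identifies exactly this holonomy data with the twistor coordinates $\Xe(\zeta)$ and $\Xm(\zeta)$ (the formal monodromy and the relevant Stokes matrix entry of $\nabla_\zeta$). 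Evaluating the pairing thus gives $\ab^*\Omega^\glue = d\log\Xe(\zeta)\wedge d\log\Xm(\zeta)$, and comparison with the twistor expression \eqref{eq:INTRO-ov-twistor-coords} supplies the factor $-\tfrac{1}{4\pi^2}$. Chaining the three equalities around the diagram \eqref{eq:forms-cd} gives $\Omega^\ov_\Stokes = -\tfrac{1}{4\pi^2}\Omega^\reg$, and pulling back along $\NAH_\zeta$ then yields \eqref{eq:INTRO-reg-equals-ov}.
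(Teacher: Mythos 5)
Your plan follows the same route as the paper: reduce to $\Afr_\zeta$ via $\NAH_\zeta$, abelianize, and factor the comparison through the regularized abelian form $\Omega^{\reg,\ab}$ and a glued form $\Omega^\glue$ on the torus obtained by closing up the truncated spectral cover, with the $\log R$ counterterm matched against a boundary contribution of the gluing gauge transformation. The skeleton is right, but two steps are justified by reasons that do not suffice. For $\ab^*\Omega^{\reg,\ab}=\Omega^\reg$ you assert that the integrand splits over the two sheets ``because the abelianized connection is diagonal.'' It is not globally diagonal: the diagonalizing frame exists only cell-by-cell and jumps by unipotent Stokes factors across the walls of $\W_\zeta$, so a variation whose support meets a wall acquires an off-diagonal contribution supported near the wall. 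The missing idea is that these jump terms are strictly triangular and hence drop out of $\tr(\dot\nabla_1\wedge\dot\nabla_2)$; this is exactly what the proof of \cref{prop:ab-symplectomorphism} supplies (after smoothing the jump with a cutoff). Relatedly, the truncated integrals over $C_R$ and $\Sigma_R$ are \emph{not} equal at any finite $R$ --- the nonabelian one is not even gauge invariant there --- so the comparison only holds for the full regularized limits, not ``term by term'' as your sketch suggests.

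The second gap is in the last step, where you write that \cite{Tulli:2019} ``identifies exactly this holonomy data with the twistor coordinates.'' Tulli identifies $\Xm$ with a Stokes matrix entry, $a(\zeta)$ or $-1/b(\zeta)$ as in \eqref{eq:magnetic-coord}; he does not identify it with any holonomy of the abelianized connection. The statement you actually need --- that the $\chi$-regularized transport of $\nabla^\ab$ along the open path through the handle computes that Stokes entry --- is proved in this paper, not in the reference: it requires choosing $\gamma_m$ to wind around a branch point, normalizing the abelian flat sections by $c_i=e^{-A(p_i)}$ so that their pushforwards have the correct sectorial asymptotics (\cref{lem:normalization-constants}), using the $-1$ holonomy around the branch point to convert $b(\zeta)$ into $-1/b(\zeta)$ in the second chamber, and observing that the residual $(\Xe)^{\vartheta/2\pi}$ factor coming from the gluing angle cancels in the wedge product $d\log\Xe\wedge d\log\Xm$. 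As written, your proposal outsources the genuinely new computation to a citation that does not contain it.
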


\subsubsection{Stokes data and abelianization} 
\label{ssub:summary-stokes-data}

The interpretation of $\Xe$ and $\Xm$ from \cite{Tulli:2019} can be fully stated in terms of the Stokes data of the connections $\nabla_\zeta$ (see \cref{ssub:stokes-conventions,sub:ov-correspondence}), but for our purposes it will be more useful to describe them---especially $\Xm$---in terms of the corresponding \emph{abelianized} connections $\nabla^\ab_\zeta$.

An $\SL(2)$-Higgs bundle $(E, \theta)$ on a surface $C$ has an associated double cover, its \emph{spectral curve} 
\begin{equation*}
    \Sigma = \{ \lambda \in T^*C : \det (\theta - \lambda I) = 0 \} \subseteq T^*C.
\end{equation*} 
Many of the related geometric objects have simpler $\GL(1)$-versions on $\Sigma$.
In particular, flat connections on $C$ can be lifted to abelian connections on $\Sigma$ using a \emph{spectral network} \cite{Gaiotto:2013}.
(See \cref{sub:spectral-networks-review} for a detailed review of the relevant material.)

Each connection $\nabla_\zeta$ coming from $(E, \theta)$ has an associated spectral network $\W_\zeta$, which is a collection of walls\footnote{with certain labels, as described in \cref{def:spectral-network}} on $C$.
In our case, for a connection coming from $(E, \theta, g) \in \Hfr$, the topology of the spectral network $\W_\zeta$ depends on the values of $m$ and $\zeta$ (see \cref{fig:intro-sn} for an example).

\begin{figure}[ht]
    \centering
    \includegraphics[scale=1]{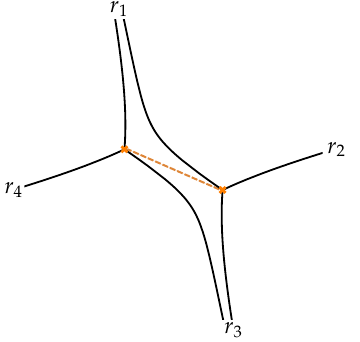}
    \caption{One of the two generic topologies for a spectral network $\W_\zeta$ coming from $(E, \theta, g) \in \Hfr$, shown here for $\Re(\zeta^{-1} m) > 0$.}
    \label{fig:intro-sn}
\end{figure}

The abelianization procedure uses $\W_\zeta$ to lift $\nabla_\zeta$ to an ``almost-flat'' connection $\nabla_\zeta^\ab$ over $\Sigma$.
Concretely, $\nabla^\ab_\zeta$ can be constructed by choosing a basis of flat sections $(s_i, s_j)$ in each cell of the network, as shown in \cref{fig:intro-sn-labelled}.

\begin{figure}[ht]
    \centering
    \includegraphics[scale=1]{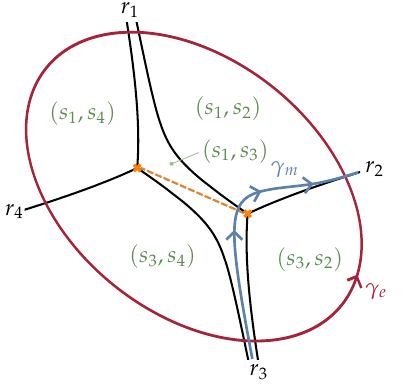}
    \caption{Additional decorations for a spectral network $\W_\zeta$ coming from $(E, \theta, g) \in \Hfr$, shown for $\Re(\zeta^{-1} m) > 0$.
     The flat sections $(s_i, s_j)$ used for abelianization in each cell are labelled in green.
    The paths $\gamma_e$ and $\gamma_m$ on $\Sigma$ are used to interpret $\Xe$ and $\Xm$ as parallel transports of $\nabla^\ab_\zeta$.}
    \label{fig:intro-sn-labelled}
\end{figure}

The Ooguri-Vafa twistor coordinates can be interpreted Stokes-theoretically in terms of the flat sections $s_i$, or equivalently in terms of parallel transports of the abelianized connection $\nabla^\ab_\zeta$.

\begin{itemize}
    \item $\Xe$ is one of the diagonal entries of the formal monodromy of $\nabla_\zeta$. 
    It can be calculated by the cross-ratio
    \begin{equation} \label{eq:Xe-cross-ratio}
        \Xe(\zeta) =  \frac{s_1 \wedge s_4}{s_3 \wedge s_4} \, \frac{s_3 \wedge s_2}{s_1 \wedge s_2}
    \end{equation}
    which corresponds to the parallel transport of $\nabla^\ab_\zeta$ around the path $\gamma_e$ shown in \cref{fig:intro-sn-labelled}.
    (Note that \eqref{eq:Xe-cross-ratio} is the usual formula for a \emph{spectral coordinate} \cite{Gaiotto:2014a}/Fock-Goncharov coordinate \cite{Fock:2006}, and is invariant under rescaling $s_i \to c_i s_i$.)

    \item $\Xm$ is the off-diagonal entry of one of the Stokes matrices.
    In order to make gauge-invariant sense of this, the framing $g$ of the bundles $(E, \nabla_\zeta)$ is crucial: it allows us to single out a normalization for each section $s_i$ by prescribing the asymptotics near the singularity \cite{Gaiotto:2013a}.
    Choosing suitably normalized sections (as described in \cref{ssub:stokes-conventions}), we can calculate $\Xm$ by the ratio
    \begin{equation} \label{eq:Xm-ratio}
        \Xm(\zeta) = \begin{cases}
            \dfrac{s_3 \wedge s_1}{s_2 \wedge s_1} & \text{if }  \Re(\zeta^{-1} m) > 0, \\\\
            -\dfrac{s_3\wedge s_2}{s_4 \wedge s_2} & \text{if } \Re(\zeta^{-1} m) < 0,
        \end{cases} 
    \end{equation}
    which corresponds to a \emph{regularized parallel transport} of $\nabla^\ab_\zeta$ along the open path $\gamma_m$ shown in \cref{fig:intro-sn-labelled}.
    (See the recent paper \cite{Alekseev:2024} for discussion of spectral coordinates for open paths.)
\end{itemize}

We can think of these Stokes-theoretic interpretations of $\Xe$ and $\Xm$ as defining a form
\begin{equation}
    \Omega^\ov_\Stokes \coloneqq -\frac{1}{4 \pi^2} d \log \Xe \wedge d \log \Xm
\end{equation}
on $\Afr_\zeta$, which pulls back to 
\begin{equation} \label{eq:INTRO-ov-stokes}
    (\NAH_\zeta)^*\Omega^\ov_\Stokes = \Omega^\ov_\zeta
\end{equation}
on $\Xfr \cong \Mov$ under the identification in \cite{Tulli:2019}.
(We discuss this further in \cref{sub:ov-correspondence}.)

In \cref{sub:framing-near-the-punctures} we explain how the given frame $g$ for $\nabla_\zeta$ naturally induces a frame $g^\ab$ for $\nabla_\zeta^\ab$.
This allows us to define a space $\abAfr_\zeta$ of framed abelian connections and a corresponding regularized abelian Atiyah-Bott form $\Omega^{\reg, \ab}$, analogously to \cref{def:INTRO-reg-form} above.
We prove in \cref{sub:reg-and-abelianization} that abelianization preserves these forms; that is,
\begin{equation} \label{eq:INTRO-ab-symplectomorphism}
    \ab^* \Omega^{\reg, \ab} = \Omega^\reg.
\end{equation}

\subsubsection{Gluing and regularization} 
\label{ssub:summary-gluing}

The next question is how to relate the Ooguri-Vafa form \eqref{eq:INTRO-ov-twistor-coords} to the regularized Atiyah-Bott form.

As motivation, note that if $S$ is a compact surface of genus $g$ with standard homology basis $a_1, \dots, a_g, b_1, \dots, b_g$, then the logs of the holonomies of flat $\C^*$-connections along $a_i$ and $b_i$ are Darboux coordinates for the abelian Atiyah-Bott form:
\begin{equation}
    \int_S \delta \nabla \wedge \delta \nabla = \sum_{i=1}^{g} d \log \Hol_{a_i} \! \nabla \wedge \, d \log \Hol_{b_i} \! \nabla.
\end{equation}
(This is essentially just a restatement of the Riemann bilinear identity.)

In our case the spectral cover $\Sigma \xrightarrow{\pi} C$ is not compact, and the (non-regularized) Atiyah-Bott form on $\abAfr_\zeta$ is divergent.
To remedy this we instead consider the cut off surface $\Sigma_R \coloneqq \pi^{-1} (C_R)$, which is topologically a cylinder, and glue the ends to form a torus $T$.

The framed $\C^*$-connections in $\abAfr_\zeta$ have a prescribed form near the boundary $\partial \Sigma_R$, which does \emph{not} automatically glue to define a connection on $T$, but we can glue them by making an appropriate gauge transformation $\chi=\chi(\alpha)$ for each connection $\nabla = d + \alpha$.
This allows us to pull back the abelian Atiyah-Bott form from $T$ to obtain a ``glued symplectic form'' 
\begin{equation}
    \Omega^\glue(\dot{\alpha}_1, \dot{\alpha}_2) =  \int_{\Sigma_R} (\dot{\alpha}_1 - d \dot{\chi}_1) \wedge (\dot{\alpha}_2 - d \dot{\chi}_2) 
\end{equation}
on $\abAfr_\zeta$ (which is in fact independent of the cutoff $R \ll 1$).
We describe this gluing construction in more detail in \cref{sub:general-gluing-construction}.

Now we reach the key point---once suitably chosen, \emph{the gluing map $\chi$ simultaneously provides the regularization} for the other constructions:
\begin{itemize}
    \item On the one hand, we can rewrite 
\begin{equation}
    \Omega^\glue (\dot{\alpha}_1, \dot{\alpha}_2) =  \int_{\Sigma_R} \dot{\alpha}_1 \wedge \dot{\alpha}_2 + \int_{\partial  \Sigma_R} (\dot{\chi}_2 \dot{\alpha}_1 -  \dot{\chi}_1 \dot{\alpha}_2 + \dot{\chi}_1 d \dot{\chi}_2).
\end{equation}
We show in \cref{ssub:regularization-atiyah-bott} that the boundary integral coincides with the regularization term of the regularized abelian Atiyah-Bott form $\Omega^{\reg, \ab}$ as $R \to 0$, and consequently
\begin{equation} \label{eq:INTRO-glue-equals-reg}
    \Omega^\glue = \Omega^{\reg, \ab}.
\end{equation}

\item On the other hand, it follows from the Riemann bilinear identity on the torus that
\begin{equation}
     \Omega^\glue(\dot{\alpha}_1, \dot{\alpha}_2) = \int_{\gamma_e} \dot{\alpha}_1 \int_{\gamma_{m, R}} (\dot{\alpha}_2 - d \dot{\chi}_2) - \int_{\gamma_e} \dot{\alpha}_2 \int_{\gamma_{m, R}} (\dot{\alpha}_1 - d \dot{\chi}_1)
\end{equation}
(where $\gamma_{m, R}$ denotes the restriction of the open path from \cref{fig:intro-sn-labelled} to $\Sigma_R$).
When $\Omega^\glue$ is pulled back to $\Afr_\zeta$, the integrals $\int_{\gamma_e} \dot{\alpha}$ correspond to the parallel transport for $\Xe$, and we show in \cref{sub:interpreting-magnetic-coordinate} that the integrals $\int_{\gamma_{m, R}} (\dot{\alpha} - d \dot{\chi})$ calculate the appropriate regularized parallel transports for $\Xm$.
Consequently
 \begin{equation} \label{eq:INTRO-glue-equals-ov}
     \ab^* \Omega^\glue = \Omega^\ov_\Stokes.
 \end{equation}
\end{itemize}

Combining all of these identifications via the commutative diagram \eqref{eq:forms-cd} gives Theorem A.

\subsubsection{The semiflat story} 
\label{ssub:summary-semiflat}

Solutions to the abelian Hitchin equation on the spectral cover $\Sigma$ lead to a corresponding \emph{semiflat} hyperkähler metric $g_{L^2}^\sf$ on the Higgs moduli space, defined away from the discriminant locus (at which $\Sigma$ fails to be smooth).

This simpler metric is also part of the picture of Hitchin's metric described by Gaiotto-Moore-Neitzke: they predicted that $g_{L^2}$ exponentially approaches $g_{L^2}^\sf$ along a ray $(E, t \theta)$ as $t \to \infty$.
Many versions of this statement have now been proved, such as in \cite{Dumas:2019} for $\SL(2)$-Higgs bundles on the Hitchin section, \cite{Fredrickson:2022} for the parabolic case, and \cite{Fredrickson:2020} for higher rank.
The asymptotic behaviour of $g_{L^2}$ continues to be a subject of active research. 
The recent work \cite{He:2025} has extended these exponential convergence statements to classes of Higgs bundles in the discriminant locus of the (ordinary) $\SL(2)$-Hitchin moduli space.

Returning to our setting, one could ask how the semiflat metric $g_{L^2}^\sf$ (or its corresponding holomorphic symplectic form) behaves near the discriminant locus.
There are natural semiflat versions of all of the constructions described above, such as a semiflat Ooguri-Vafa form $\Omega^{\ov, \sf}_\zeta$, and a regularized Atiyah-Bott form $\Omega_\zeta^{\reg, \sf}$ for the ``semiflat connections''
\begin{equation} \label{eq:INTRO-sf-connection}
    \nabla^\sf_\zeta = \zeta^{-1} \theta + D_{h_\sf} + \zeta \theta^{\dagger_{h_\sf}}.
\end{equation}

However, the argument in \cite{Tulli:2019} does \emph{not} prove that the semiflat Ooguri-Vafa magnetic coordinate is given by the Stokes data of the  semiflat connections. 
Our task is now reversed:
\begin{itemize}
    \item Before, we started with a Stokes-theoretic interpretation of the (non-explicit) magnetic coordinate $\Xm$, but had to develop the gluing procedure to study the corresponding integral.

    \item Now, we can follow essentially the same gluing procedure, but still need to match up the Stokes-theoretic integral with the explicit formula for $\Xm^\sf$ .
\end{itemize}

The formula for the magnetic angle $\theta_m$ (and hence $\Xm^{\sf}$) under the correspondence $\Mov \cong \Xfr$ involves an integral of the Chern connection $D_h$.
In the semiflat setting it is more natural to consider a ``shifted angle'' $\theta_m^\shift$ defined in terms of the \emph{semiflat} Chern connection $D_{h_\sf}$, leading to a corresponding shifted form $\Omega^{\ov, \shift}_\zeta$ (see \cref{def:shifted-magnetic}).
Adapting our previous argument to this setting, we obtain the following analogue of \cref{thm:INTRO-a}.

\begin{theoremalph}[=\,\cref{thm:sf-reg-equals-shifted-ov}] 
    Under the identification of spaces $\Mov \cong \Xfr$,  
        \begin{equation} \label{eq:INTRO-sf-reg-equals-shifted-ov}
             \Omega^{\ov, \shift}_\zeta = - \frac{1}{4\pi^2} \Omega^{\reg, \sf}_\zeta,
        \end{equation}
    i.e.\ the shifted semiflat Ooguri-Vafa form coincides with the regularized semiflat Atiyah-Bott form, pulled back to $\Xfr$.
\end{theoremalph}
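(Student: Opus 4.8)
The plan is to mirror the argument of \cref{thm:INTRO-a} in the semiflat setting, building a commutative diagram of forms analogous to \eqref{eq:forms-cd}. First I would introduce the semiflat counterparts of every object in \cref{fig:summary-of-spaces}: the set $\sfHfr$ of semiflat harmonic bundles, the framed semiflat connections $\sfAfr_\zeta$ produced by \eqref{eq:INTRO-sf-connection}, their abelianizations $\sfabAfr_\zeta$ on the spectral cover, and the forms $\Omega^{\reg, \sf}$, $\Omega^{\reg, \sf, \ab}$, a glued form $\Omega^{\glue, \sf}$, and a Stokes-theoretic form $\Omega^{\ov, \shift}_\Stokes$. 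The observation that makes this feasible is that $\nabla^\sf_\zeta$ has a leading singular expansion of exactly the same type as \eqref{eq:INTRO-framed-connection-form}: its irregular part and its residue are governed by the Higgs field $\theta$, hence unchanged from the full connection, while only its diagonal part is modified, now involving the semiflat Chern connection $D_{h_\sf}$ in place of $D_h$. Consequently the regularization scheme of \cref{def:INTRO-reg-form} and the gluing gauge transformation $\chi$ both apply without essential change.

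Granting this, the two ``abelian'' edges of the diagram go through by the same local computations as in the nonabelian Hodge case, since both the regularization term and the gluing transformation are supported near the puncture, where abelianization alters nothing. I would first establish the semiflat analogue of \eqref{eq:INTRO-ab-symplectomorphism}, namely $\ab^*\Omega^{\reg, \sf, \ab} = \Omega^{\reg, \sf}$, and then the semiflat gluing--regularization identity $\Omega^{\glue, \sf} = \Omega^{\reg, \sf, \ab}$ (the analogue of \eqref{eq:INTRO-glue-equals-reg}), both of which are boundary-integral computations depending only on the local form of the connection. Applying the Riemann bilinear identity on the glued torus then expresses $\Omega^{\glue, \sf}$ as an antisymmetrized product of the electric period integral along $\gamma_e$ and the regularized magnetic period integral along $\gamma_{m,R}$, exactly as in the nonabelian case; the electric factor computes $d\log\Xe^\sf$ for the semiflat formal monodromy by the same cross-ratio argument.

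The step requiring genuinely new input---and which I expect to be the crux---is the identification of the regularized magnetic period with the magnetic coordinate. In \cref{thm:INTRO-a} the Stokes interpretation \eqref{eq:Xm-ratio} of $\Xm$ was supplied by \cite{Tulli:2019}, so the analogue of \eqref{eq:INTRO-glue-equals-ov} came for free and the gluing construction merely served to \emph{compute} an already-meaningful integral. In the semiflat setting this input is absent: \cite{Tulli:2019} does not show that the semiflat magnetic coordinate is the Stokes datum of $\nabla^\sf_\zeta$. I must therefore compute the regularized parallel transport $\int_{\gamma_{m,R}}(\dot\alpha - d\dot\chi)$ of the semiflat abelian connection directly and match it, by hand, against the explicit Gibbons--Hawking formula for the magnetic angle. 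That formula contains an integral of the Chern connection $D_h$; replacing $D_h$ by $D_{h_\sf}$ yields the shifted angle $\theta_m^\shift$ of \cref{def:shifted-magnetic} and the shifted form $\Omega^{\ov, \shift}_\Stokes$. The technical heart of the proof is to show that this particular shift is precisely the discrepancy produced by the gluing correction $-d\dot\chi$ along $\gamma_{m,R}$ as $R\to 0$---that is, that the boundary contribution of the gluing transformation exactly accounts for the replacement of $D_h$ by $D_{h_\sf}$ in the magnetic period. Once the resulting matching $\ab^*\Omega^{\glue, \sf} = \Omega^{\ov, \shift}_\Stokes$ is established, chaining it with the two abelian identities and pulling back along the semiflat analogue of $\NAH_\zeta$ yields \eqref{eq:INTRO-sf-reg-equals-shifted-ov}.
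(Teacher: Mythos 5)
Your overall architecture coincides with the paper's: semiflat analogues of every space, the same four-form commutative diagram, abelianization preserving the regularized form (which in fact becomes immediate here, since $\nabla^\sf_\zeta$ is by construction the pushforward of the abelian connection $\zeta^{-1}\lambda + D_{h_\L} + \zeta\overline{\lambda}$ and the Stokes matrices are trivial --- cf.\ \cref{prop:sf-ab-symplectomorphism}), the gluing--regularization identity, and the Riemann bilinear identity on the glued torus. You also correctly locate the genuinely new difficulty: with no Stokes-theoretic interpretation of the semiflat magnetic coordinate available from \cite{Tulli:2019}, the regularized magnetic period must be matched by hand against the explicit formula.

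However, your description of that crucial step rests on a claim that is not true and would derail the argument if pursued literally. You propose to show that ``the boundary contribution of the gluing transformation exactly accounts for the replacement of $D_h$ by $D_{h_\sf}$ in the magnetic period.'' The gluing correction $\chi^\sf$ has nothing to do with the difference between $h$ and $h_\sf$: the full harmonic metric $h$ never enters the semiflat computation at all, and $\chi^\sf$ serves only to cancel the divergent ($w^{-2}$ and logarithmic) parts of the open-path period. The shifted angle $\theta_m^\shift$ appears not as a discrepancy produced by gluing but simply because every ingredient of the computation is built from $h_\sf$; the comparison of $\theta_m$ with $\theta_m^\shift$ is deliberately deferred to \cref{sec:duality-on-hitchin-section} and plays no role in this theorem. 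The actual content of the matching (\cref{prop:sf-magnetic-integral}) is twofold and absent from your sketch: (i) the regularized period of $\zeta^{-1}\lambda + \zeta\overline{\lambda}$ along $\gamma_m$ is evaluated explicitly as $-\zeta^{-1}Z_B - \zeta\overline{Z_B}$ by deforming $\gamma_m$ through the branch point and using the explicit antiderivative $\Lambda_0$ (\cref{lem:regularized-lambda-integrals}); and (ii) the $\gamma_m$-integral of the abelian Chern connection $A_{h_\L}$, together with the boundary antiderivative corrections $C_0(p_3^t)+C_0(p_2^b)$, is converted into the integral over the lifted WKB curve $\Gamma_\WKB$ appearing in the definition of $\theta_m^\shift$ by a homotopy argument around the branch point, which supplies the $\pi$ from the $-1$ holonomy and the $m^{(3)}\arg(-m)$ term from evaluating $C_0$ at the endpoints of $\Gamma_\WKB$ (\cref{lem:br-vs-gamma}). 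Without these two computations the identification $\ab^*\Omega^{\glue,\sf} = -4\pi^2\,\Omega^{\ov,\shift}_\zeta$ does not follow, so the proof as outlined is incomplete at its central step.
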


The angles $\theta_m$ and $\theta_m^\shift$ are not obviously the same in general, but we prove in \cref{sec:duality-on-hitchin-section} that they both vanish on a suitable framed version of the Hitchin section $\Bfr \subset \Xfr$.
Here $\Bfr$ consists of bundles $E = K_C^{-1/2} \oplus K_C^{1/2}$ with Higgs field
\begin{equation}
    \theta =\begin{pmatrix}
            0 & 1 \\ 
            (z^2 + 2m)  dz^2 & 0
        \end{pmatrix}, \quad m \in \C^*,
\end{equation}
and a specific choice of framing and parabolic weights (see \cref{def:framed-hitchin-section} for the details).
This Hitchin section $\Bfr$ exhibits a natural notion of self-duality, which we use to calculate $\theta_m$ and $\theta_m^\shift$.

\begin{theoremalph}[=\,\cref{thm:hitchin-section-shift}] 
    Restricted to the Hitchin section $\Bfr \subset \Xfr$,
    \begin{equation}
         \theta_m|_{\Bfr} \equiv 0 \equiv  \theta_m^\shift|_{\Bfr}.
     \end{equation}
    Consequently
    \begin{equation}
        \Omega^{\ov, \sf}_\zeta|_{\Bfr}  = -\frac{1}{4 \pi^2} \Omega^{\reg, \sf}_\zeta|_{\Bfr},
    \end{equation}
    i.e.\ the (usual) semiflat Ooguri-Vafa form coincides with the regularized semiflat Atiyah-Bott form.
\end{theoremalph}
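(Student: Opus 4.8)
The plan is to reduce the final assertion about forms to the vanishing of the two magnetic angles, and then to prove that vanishing using the self-duality of $\Bfr$. For the reduction, observe that $\Omega^{\ov,\sf}_\zeta$ and $\Omega^{\ov,\shift}_\zeta$ are built from the \emph{same} semiflat electric coordinate $\Xe^\sf$ and differ only through their magnetic coordinates, which by \cref{def:shifted-magnetic} differ only in the angular term, $\log(\Xm^\sf/\Xm^\shift) = i(\theta_m - \theta_m^\shift)$. Hence
\begin{equation}
    \Omega^{\ov,\sf}_\zeta - \Omega^{\ov,\shift}_\zeta = -\frac{i}{4\pi^2}\, d\log\Xe^\sf \wedge d\bigl(\theta_m - \theta_m^\shift\bigr).
\end{equation}
If $\theta_m$ and $\theta_m^\shift$ agree on $\Bfr$ — in particular if both vanish there — then $\theta_m - \theta_m^\shift$ vanishes on $\Bfr$, so its differential pulls back to zero and $\Omega^{\ov,\sf}_\zeta|_{\Bfr} = \Omega^{\ov,\shift}_\zeta|_{\Bfr}$. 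Combining this with \cref{thm:sf-reg-equals-shifted-ov} then gives the stated identity $\Omega^{\ov,\sf}_\zeta|_{\Bfr} = -\tfrac{1}{4\pi^2}\Omega^{\reg,\sf}_\zeta|_{\Bfr}$. The whole theorem therefore reduces to proving $\theta_m|_{\Bfr}\equiv 0 \equiv \theta_m^\shift|_{\Bfr}$.

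To prove $\theta_m|_{\Bfr} \equiv 0$, I would exploit the self-duality of the framed Hitchin section. The point is that for a Hitchin-section Higgs bundle the harmonic metric $h$ respects the holomorphic splitting $E = K_C^{-1/2}\oplus K_C^{1/2}$ and pairs the two summands dually, so the Chern connection $D_h$ is \emph{diagonal} and its two diagonal entries are negatives of one another. Translating to the spectral cover $\Sigma = \{\lambda^2 = z^2 + 2m\}$, this means the abelianized connection $D_h^\ab$ is \emph{odd} under the involution $\sigma$ interchanging the two summands (equivalently, the self-duality $E\cong E^*$). Since $\theta_m$ is the angular part of the regularized parallel transport $\int_{\gamma_m} D_h^\ab$, and the magnetic path $\gamma_m$ is carried to itself by this involution (up to the orientation bookkeeping one must track, using that $z^2+2m$ is even in $z$ so that the two turning points $z=\pm\sqrt{-2m}$ are interchanged), the two sign flips combine to send $\theta_m$ to $-\theta_m$. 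Hence $\theta_m|_{\Bfr}$ is valued in $\{0,\pi\}$, and I would pin it to $0$ by continuity in $m$ together with evaluation at a convenient reference point where $h$ and $\gamma_m$ are manifestly real.

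For the shifted angle the same argument applies verbatim with $h$ replaced by the semiflat metric $h_\sf$, and the situation is in fact more explicit: on $\Bfr$ the metric $h_\sf$ is the diagonal flat metric on $K_C^{\mp 1/2}$ determined by $|q| = |z^2+2m|\,|dz|^2$, which manifestly respects the splitting and satisfies the same duality. Thus $D_{h_\sf}^\ab$ is again odd under $\sigma$, the same symmetry of $\gamma_m$ applies, and $\theta_m^\shift|_{\Bfr}\equiv 0$ follows.

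The step I expect to be the main obstacle is reconciling the self-duality with the \emph{regularization} intrinsic to $\theta_m$ and $\theta_m^\shift$. Both angles are defined only after subtracting a divergent boundary term near $w=0$ dictated by the prescribed framing $g$ (cf.\ \eqref{eq:INTRO-framed-connection-form} and \cref{def:INTRO-reg-form}), so it is not enough that the naive divergent integral formally reverses sign: I must verify that the framing and parabolic weights cutting out $\Bfr$ (\cref{def:framed-hitchin-section}) are themselves preserved by the involution, so that the subtracted counterterm — being built from the same diagonal data — is likewise sent to its negative and the \emph{regularized} transport is genuinely odd. I would check this directly from the singular normal form \eqref{eq:INTRO-framed-connection-form} using the reality of $m^{(3)}$. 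The only remaining loose end, ruling out the value $\theta_m = \pi$, is comparatively mild and handled by the connectedness/reference-point argument mentioned above.
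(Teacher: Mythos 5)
Your overall strategy is the same as the paper's: reduce the identity of forms to $\theta_m|_{\Bfr}\equiv 0\equiv\theta_m^\shift|_{\Bfr}$ via \cref{thm:sf-reg-equals-shifted-ov} (your reduction is correct and matches \eqref{eq:shifted-form}), and then derive the vanishing from the fact that a duality fixing the points of $\Bfr$ negates the magnetic angles. You have also correctly spotted that $\theta_m=-\theta_m$ in $\R/2\pi\Z$ only forces $\theta_m\in\{0,\pi\}$ --- a point the paper's own proof passes over --- though your continuity-plus-reference-point resolution is left unexecuted, so the value $\pi$ is not actually excluded in your write-up.

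The genuine gaps are in the duality step, which is where all the content lives. The verification you defer to the end --- that the framing and weights cutting out $\Bfr$ are preserved by the duality --- is the heart of the proof, and the ingredient you propose for it (``the reality of $m^{(3)}$'') is not the right one. What makes $\Bfr$ self-dual is the \emph{specific phase} $e^{i\pi/4}$ built into the frame $g_0$ of \cref{def:framed-hitchin-section}: by \cref{prop:u1-action-magnetic-angle} rotating the frame shifts $\theta_m$ by the rotation angle, so for any other choice in the $U(1)$-orbit the section is \emph{not} self-dual and the angles do \emph{not} vanish; the paper's \cref{lem:dual-frame-calculation} is the explicit computation showing that only this phase returns $g_0$ under $S(\cdot)^*$, and even then only up to factors $w/|w|$, $|w|/w$. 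That residual factor is the second missing ingredient: since $m^{(3)}=\tfrac12$ on $\Bfr$, the naive dual weight $-\tfrac12$ lies outside $(-\tfrac12,\tfrac12]$ and must be shifted back by a unitary Hecke modification, and one must separately prove that $\theta_m$ and $\theta_m^\shift$ are invariant under this modification (the paper devotes a proposition to showing the integral shifts by exactly $-\arg(-m)$ to compensate $m^{(3)}\to m^{(3)}+1$); your proposal does not mention this at all. Finally, your mechanism conflates two different involutions: the paper's duality corresponds on $\Sigma$ to the deck transformation $(z,s)\mapsto(z,-s)$, which \emph{fixes} the branch points, whereas you invoke an involution ``interchanging the two turning points $z=\pm\sqrt{-2m}$,'' i.e.\ one covering $z\mapsto -z$; the latter does not carry $\gamma_m$ (which winds around a single branch point) to itself. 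Relatedly, the eigenframe actually used to define $(A_h)_{11}$ satisfies $\eta_1\wedge\eta_2=z\,e_1\wedge e_2$, so $D_h$ is not traceless in that frame and the two diagonal entries are not literally negatives of one another --- the discrepancy happens to integrate to zero along the straight ray $\gamma$, but this needs to be checked rather than asserted from the splitting $K_C^{-1/2}\oplus K_C^{1/2}$.
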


We leave analysis of the difference $\theta_m - \theta_m^\shift$ away from the Hitchin section as an interesting question for future work.

The other natural next step is to translate our results from the symplectic forms to their corresponding metrics.
On the Hitchin section, the $L^2$-metric can be written down more explicitly and the complex structure $I$ is easy to understand.
In \cref{sec:metric-on-the-hitchin-section} we use these explicit descriptions to define a regularized metric $g^{\reg}_{L^2}$ and deduce the analogue of \cref{thm:INTRO-a}.

\begin{theoremalph}[=\,\cref{thm:reg-equals-ov-metric}] 
    On the Hitchin section parametrized by quadratic differentials $\phi(z) = (z^2 + 2m) dz^2$ for $m \in \C$,
    \begin{equation}
        g^\reg_{L^2} = 4 \pi^2 \cdot g^\ov,
    \end{equation}
    i.e.\ the regularized version of Hitchin's metric coincides with the Ooguri-Vafa metric.
\end{theoremalph}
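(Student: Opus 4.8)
The plan is to descend from the symplectic identity of \cref{thm:INTRO-a} to the metric identity by means of the standard hyperkähler twistor formalism, specialized to the Hitchin section $\Bfr \subset \Xfr$, where the complex structure $I = I_{\zeta = 0}$ and the metric can be written explicitly. Recall that for a hyperkähler manifold the holomorphic symplectic forms of the twistor family have a Laurent expansion (in the convention of \cite[Section 3]{Gaiotto:2010}, cf.\ \cite{Hitchin:1987b})
\[
    \Omega_\zeta = -\frac{i}{2\zeta}\,\omega_+ + \omega_I - \frac{i\zeta}{2}\,\omega_-, \qquad \omega_\pm = \omega_J \pm i\,\omega_K,
\]
with no terms of order $\zeta^{\pm 2}$ because there are no $(2,0)$- or $(0,2)$-forms on the curve $C$. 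Here the residue $\omega_+ = \omega_J + i\,\omega_K$ is exactly the holomorphic symplectic $(2,0)$-form $\Omega_I$, so it determines the complex structure $I$ (its $(0,1)$-tangent space is the annihilator of $\Omega_I$), while the $\zeta^0$-coefficient $\omega_I$ is the real Kähler form of $I$; the metric is recovered as $g = \omega_I(\,\cdot\,, I\,\cdot\,)$.

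The steps are then as follows. First I would recall the explicit parametrization of $\Bfr$ by $m \in \C$ together with the explicit complex structure $I$ on it, and use the resulting coordinate to define the regularized metric $g^\reg_{L^2}$, carrying out for the $L^2$-norm the same $\log R$ subtraction as $R \to 0$ that defines $\Omega^\reg$ in \cref{def:INTRO-reg-form}. Second, I would observe that \cref{thm:INTRO-a} identifies the two twistor families up to the real scalar $-\frac{1}{4\pi^2}$; comparing $\zeta^{-1}$-coefficients shows that $\Omega^\ov_\zeta$ and $\Omega^\reg_\zeta$ induce the same complex structure $I$ (proportional $(2,0)$-forms have the same annihilator), so the $I$ implicit in $g^\ov$ agrees with the $I$ used to define $g^\reg_{L^2}$. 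Third, comparing $\zeta^0$-coefficients in \eqref{eq:INTRO-reg-equals-ov} gives an identity of real Kähler forms, which upon contraction with the common $I$ yields the asserted proportionality of metrics.

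The main obstacle is bookkeeping the overall constant, and in particular its sign: the factor $-\frac{1}{4\pi^2}$ relating the holomorphic symplectic forms must combine with the separate conventions relating a positive-definite metric to its Kähler form --- on the one side the $L^2$-norm of a Higgs-field variation, on the other the Gibbons-Hawking normalization of $g^\ov$ --- so as to produce the positive factor $4\pi^2$. This is precisely where the explicit description on $\Bfr$ is indispensable: rather than chasing abstract twistor normalizations, I would pin down the constant by evaluating both $g^\reg_{L^2}(\partial_m, \partial_m)$ and $g^\ov(\partial_m, \partial_m)$ directly in the $m$-coordinate, using the explicit forms of the metric and of $I$ on the Hitchin section. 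A secondary technical point is to confirm that the regularization counterterm built into $g^\reg_{L^2}$ is compatible with the one built into $\Omega^\reg$, so that the unregularized relationship between the $L^2$-metric and the Atiyah-Bott form survives regularization; this should follow because both counterterms are governed by the same diagonal boundary data $(\mu_i, \lambda_i)$ appearing in \eqref{eq:INTRO-framed-connection-form}.
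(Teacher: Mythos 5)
Your overall architecture is the paper's: define $g_{L^2}^\reg$ by applying a $\log R$ counterterm to the explicit Dumas--Neitzke integral for the $L^2$-metric on a polynomial Hitchin section; verify that the counterterms on the metric and symplectic sides agree (this is exactly \cref{prop:reg-compatible-form}, and your observation that both are governed by the same diagonal boundary data is the right reason --- on $\Bfr$ one has $\dot m^{(3)}=0$, so the term collapses to $16\pi \log R\, |\dot m|^2$ up to sign on both sides); then feed \cref{thm:reg-equals-ov} through the hyperkähler identity $g=\omega_I(\cdot,I\cdot)=\Re\,\Omega_{\zeta=1}(\cdot,I\cdot)$. Extracting the $\zeta^0$ Laurent coefficient rather than taking $\Re\,\Omega_{\zeta=1}$ is an equivalent formulation of the same linear algebra.

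The one step that fails as stated is your method for matching the complex structures. You propose to read off $I$ from the $\zeta^{-1}$-coefficient $\omega_+=\Omega_I$, on the grounds that proportional $(2,0)$-forms have the same annihilator. But $\Bfr$ is a one-complex-dimensional $I$-complex submanifold of the two-complex-dimensional $\Xfr$ (in fact a complex Lagrangian for $\Omega_I$), so the restriction of $\omega_+$ to $T\Bfr$ vanishes identically for type reasons and determines nothing about $I|_{T\Bfr}$. Nor can you run the comparison on the ambient $\Xfr$ and then restrict: the natural complex structure there --- how it acts on variations of $m^{(3)}$ and of the framing --- is precisely what the paper declares not to be understood, which is why the theorem is confined to $\Bfr$ in the first place. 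The repair is the elementary one you gesture at in your fallback and which the paper actually uses: on $\Bfr$ the Higgs complex structure acts by $I(\dot m)=i\dot m$ (the restriction of $I(\dot\theta)=i\dot\theta$), $I^\ov$ acts by $+i$ on $\dot z$, and the dictionary $z=-2im$ is $\C$-linear, so $I=I^\ov$ on $T\Bfr$ with no symplectic geometry required. Finally, note that your plan to ``pin down the constant by evaluating both sides directly in the $m$-coordinate'' can only be carried out in closed form for the semiflat metric (where the paper does exactly this computation); for the instanton-corrected $g^\ov$ there is no explicit evaluation, and the abstract chain through \cref{thm:reg-equals-ov} together with \cref{prop:reg-compatible-form} is genuinely needed.
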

The same is also true of the corresponding semiflat metrics.

Off the Hitchin section (i.e.\ on the rest of the space $\Xfr$), it is less clear how the complex structure should act on variations of the parabolic weights or framing.
We leave these considerations for future work.

More broadly, it would be interesting to extend our techniques to study larger classes of hyperkähler structures, such as for other wild Higgs moduli spaces (e.g.\ with multiple poles or in higher rank), or for Poisson-Lie groups (cf.\ \cite{Alekseev:2024}).

\subsection{Organization of the paper} 
\label{sub:organization}

In \cref{sec:setup} we introduce the main spaces under consideration and recall the relevant background.
In \cref{sec:abelianization-and-framing} we review the abelianization procedure and describe how it extends to framed bundles. 
In \cref{sec:regularized-forms} we introduce a regularized version of the Atiyah-Bott form and show that it is preserved by abelianization.
In \cref{sec:glued-symplectic-form} we describe a construction for a glued symplectic form on the space of abelianized connections, and use it to show that the Ooguri-Vafa form coincides with the regularized Atiyah-Bott form.

The next sections are focused on the analogous semiflat story.
In \cref{sec:sf-analysis} we apply our earlier arguments from \crefrange{sec:setup}{sec:glued-symplectic-form} to this modified setting.
In \cref{sec:duality-on-hitchin-section} we introduce a framed Hitchin section on which we further study the Ooguri-Vafa magnetic angle.

In \cref{sec:metric-on-the-hitchin-section} we use our main results about the Ooguri-Vafa and Atiyah-Bott symplectic forms to describe the metric on the Hitchin section.

The appendices contain some more technical background and calculations.
The reader may find it useful to look through these sections first.

\subsection*{Acknowledgements} 

I would like to sincerely thank my advisor, Andy Neitzke, for introducing this problem to me and for the extremely helpful discussions, suggestions, and encouragement over the course of preparing this paper.
Many of the spectral network figures below were produced using his Mathematica notebook \texttt{swn-plotter.nb} \cite{Neitzke:swn}.

I also thank the Simons Center for Geometry and Physics for hospitality during the \emph{Geometric, Algebraic, and Physical Structures around the moduli of Meromorphic Quadratic Differentials} program in Spring 2024, during which part of this work was completed.

\section{General background and setup} 
\label{sec:setup}

In this section we give an overview of the main objects and spaces that will appear throughout the text.
We will relegate some of the other technical background to the later sections and introduce additional tools as they become relevant.

\subsection{Framed harmonic bundles} 
\label{sub:framed-harmonic-bundles}

First, we recall the spaces of Higgs bundles studied in \cite{Tulli:2019}.
For consistency we will use the same notation and conventions.

\begin{definition}[Harmonic bundles in $\H$] \label{def:harmonic-bundles}
    Let $\H$ denote the set of rank $2$ wild\footnote{We will only work with unramified wild objects.} harmonic bundles $(E, \bar{\partial}_E, \theta, h)$ over $\CP^1 \setminus \{\infty\}$ such that $\tr \theta =0 $ and $\det \theta = -(z^2 + 2m) dz^2$ for some $m \in \C$.

    Recall that ``wild harmonic'' means:
    \begin{itemize}
        \item $(E, \bar{\partial}_E)$ is a holomorphic vector bundle over $\CP^1 \setminus \{ \infty \}$.
        \item the Higgs field $\theta$ is an $\End(E)$-valued $1$-form with $\bar{\partial}_E \theta = 0$.
        \item the harmonic metric $h$ is a hermitian metric satisfying Hitchin's equation
      \begin{equation} \label{eq:hitchin}
          F_{D_h} + [ \theta, \theta^{\dagger_h}] = 0,
      \end{equation}
    where $D_h$ is the Chern connection for $(\bar{\partial}_E, h)$ and $F_{D_h}$ is its curvature.

    \item (wildness): there is a holomorphic coordinate $w$ in a neighbourhood $U$ of $\infty$ and a decomposition
    \begin{equation*}
        (E, \bar{\partial}_E, \theta)|_U = \bigoplus_{a \in \mathcal{I}} (E_a, \bar{\partial}_{E_a}, \theta_a),
    \end{equation*}
    where $\mathcal{I} \subset w^{-1} \C[w^{-1}]$ is the set of \emph{irregular types} and each $\theta_a - da \cdot \id_{E_a}$ has at worst a simple pole.
    \end{itemize}
\end{definition}

\begin{definition}[Framed harmonic bundles in $\Hfr$] \label{def:framed-harmonic}
    Let $\Hfr$ denote the set of \emph{compatibly framed harmonic bundles} $(E, \bar{\partial}_E, \theta, h, g)$ where:

    \begin{itemize}
    \item $(E, \bar{\partial}_E, \theta, h)|_{\CP^1 \setminus \{\infty\}} \in \H$.
    \item $(E, h)$ is an $\SU(2)$-bundle over $\CP^1$ (i.e.\ a unitary extension of the above bundle over $\infty$).\footnote{This also means that $(E, h)$ comes with a volume form trivializing $\det E$, but this won't play much of a role for us.}

    \item $g$ is an $\SU(2)$-frame of $E_\infty$ that extends to an $\SU(2)$-frame in a neighbourhood of $z = \infty$ with respect to which
        \begin{gather}
            \theta = -H \frac{dw}{w^3} - m H \frac{dw}{w} + \text{regular terms}, \label{eq:higgs-framed-form} \\
            \bar{\partial}_E = \bar{\partial} - \frac{m^{(3)}}{2} H \frac{d \overline{w}}{\overline{w}} + \text{regular terms} \quad \text{for some $ m^{(3)} \in (-\tfrac{1}{2}, \tfrac{1}{2}]$.} \label{eq:holo-str-framed-form}
        \end{gather}
        Here and throughout, $w = 1/z$ and $H = \begin{pmatrix}
            1 & 0 \\ 0 & -1
        \end{pmatrix}$.
    \end{itemize}
\end{definition}

For brevity we will usually omit $\bar{\partial}_E$ from the notation, and write $(E, \theta, h)\in \H$ for the underlying wild harmonic bundle and $(E, \theta, h, g) \in \Hfr$ for the framed bundle.
(Note that the former is a bundle over $\CP^1 \setminus \{\infty\}$ while the latter is a bundle over $\CP^1$.)
We will sometimes also omit $h$ when it is not relevant.

\begin{remark}[Parabolic interpretation of $m^{(3)}$] \label{rem:parabolic-weight-cases}
    The bundles $(E, \theta, h) \in \H$ naturally carry a \emph{filtered structure}, defined in terms of growth rates with respect to the harmonic metric $h$.
    This in turn induces a \emph{parabolic structure} with weights in $(-\frac{1}{2}, \frac{1}{2}]$.
    (We review these definitions in \cref{sub:parabolic-and-filtered-bundles}.)
    The parameter $m^{(3)}$ describes the parabolic weights: 
    \begin{itemize}
        \item If $m^{(3)} \in (-\frac{1}{2}, \frac{1}{2})$, then the parabolic weights are $\pm m^{(3)}$, associated to the $\theta$-eigenlines near $z=\infty$ with respective eigenvalues $\pm (z + m/z +\dots) dz = \pm (-1/w^3 -m/w + \dots) dw$.

        \item If $m^{(3)} = \frac{1}{2}$, then the parabolic weight is $\frac{1}{2}$ with multiplicity $2$, associated to the trivial filtration near $\infty$.
    \end{itemize}
\end{remark}

\begin{definition}[Sets of isomorphism classes] 
    \begin{enumerate}[(i)]
        \item[]
        \item Let $\Xfr$ denote the set of isomorphism classes of $\Hfr$.
    
        \item For fixed $m \in \C$ and $m^{(3)} \in (-\frac{1}{2}, \frac{1}{2}]$, let $\Xfr(m, m^{(3)}) \subseteq \Xfr$ consist of the classes of framed Higgs bundles whose singularity is described by the parameters $m$ and $m^{(3)}$ in \eqref{eq:higgs-framed-form} and \eqref{eq:holo-str-framed-form} respectively.
    \end{enumerate}
\end{definition}

\begin{prop}[$U(1)$-action, {\cite[Proposition 4.1 \& Lemmas 4.3 and 4.4]{Tulli:2019}}]
    For $g = (e_1, e_2)$, let $e^{i \vartheta} \cdot g \coloneqq (e^{i \vartheta} e_1, e^{-i \vartheta} e_2)$.
    Then
    \begin{equation}
        e^{i \vartheta} \cdot [(E, \theta, g)] \coloneqq [(E, \theta, e^{i \frac{\vartheta}{2}} \cdot g)]
    \end{equation}
    defines a $U(1)$-action on $\Xfr$. 
    \begin{enumerate}
    \item For $m \neq 0$ and any $m^{(3)} \in (-\frac{1}{2}, \frac{1}{2}]$, the set $\Xfr(m, m^{(3)})$ is a $U(1)$-torsor under this action. 

    \item For $m=0$, the set $\Xfr(0, m^{(3)})$ is a $U(1)$-torsor if $m^{(3)} \neq 0$ and a single point if $m^{(3)} = 0$. 
    \end{enumerate}
    (cf.\ the picture of the Ooguri-Vafa space in \cref{fig:ov-space}.)
\end{prop}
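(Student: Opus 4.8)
The plan is to realize the $U(1)$-action as the residual diagonal gauge freedom in the framing, and then extract the torsor statement by describing, for a \emph{fixed} underlying harmonic bundle, the set of compatible framings as a torsor under the maximal torus $T = \{\diag(a, a^{-1}) : |a| = 1\} \cong U(1)$ of $\SU(2)$ and quotienting by the automorphism group. Throughout I would take as input the wild nonabelian Hodge theory standing behind \cref{def:harmonic-bundles,def:framed-harmonic}: for each $(m, m^{(3)})$ there exists a harmonic bundle $(E, \theta, h) \in \H$ realizing the prescribed singular and parabolic data, it is unique up to isomorphism, and a compatible $\SU(2)$-framing exists (its existence being exactly the content of the asymptotic analysis of $h$ near $\infty$).

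First I would verify the action is well-defined. Since $\diag(e^{i\vartheta/2}, e^{-i\vartheta/2})$ commutes with $H$, conjugating the normal forms \eqref{eq:higgs-framed-form} and \eqref{eq:holo-str-framed-form} by it preserves every singular leading term and keeps the regular part regular, so $e^{i\vartheta/2}\cdot g$ is again a compatible $\SU(2)$-framing and $(E, \theta, e^{i\vartheta/2}\cdot g) \in \Hfr$. The group axioms are immediate from $e^{i\vartheta_1/2}e^{i\vartheta_2/2} = e^{i(\vartheta_1 + \vartheta_2)/2}$, and independence of the representative holds because any framed isomorphism is $\C$-linear and hence commutes with the componentwise rescaling defining the frame action. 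The only delicate point is $2\pi$-periodicity: replacing $\vartheta$ by $\vartheta + 2\pi$ sends $e^{i\vartheta/2}\cdot g$ to its negative, but $-\id_E \in \Aut(E, \theta, h)$ supplies a framed isomorphism $(E, \theta, -g') \cong (E, \theta, g')$, so the two values agree in $\Xfr$. This is precisely why the action is written with the half-angle $e^{i\vartheta/2}$.

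Next I would establish the torsor property in the generic range ($m \ne 0$, or $m = 0$ with $m^{(3)} \ne 0$). After fixing the essentially unique underlying bundle, any two compatible framings differ near $\infty$ by a transition preserving the eigenline decomposition cut out by the distinct irregular types $\pm(-1/w^3 - m/w + \cdots)$ (or, for $m = 0$, by the distinct parabolic weights $\pm m^{(3)}$), hence by a constant element of $T$; thus the compatible framings form a $T$-torsor. In this range the Higgs bundle is stable, so $\Aut(E, \theta, h) = \{\pm\id\}$, and $-\id$ acts on the torsor coordinate $a \in U(1)$ by $a \mapsto -a$. Therefore $\Xfr(m, m^{(3)}) \cong U(1)/\{\pm 1\}$, and under $a \mapsto a^2 = e^{i\vartheta}$ the defined action becomes the standard free and transitive translation of $U(1)$ on itself.

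Finally, the degenerate case $m = m^{(3)} = 0$ is where the argument genuinely changes, and I expect this to be the main obstacle. Here $\det\theta = -(z\,dz)^2$ is a perfect square, so the spectral cover degenerates to a nodal curve, the eigenvalue forms $\pm z\,dz$ are globally well-defined, and the simultaneous vanishing of the parabolic weight removes the last obstruction to a global eigenline splitting; the harmonic bundle becomes strictly polystable and $\Aut(E, \theta, h)$ enlarges to contain the full diagonal torus $T$. This $T$ acts transitively on the $T$-torsor of compatible framings, collapsing $\Xfr(0,0)$ to a single point (the unique fixed point of the $U(1)$-action), whereas for $m^{(3)} \ne 0$ the distinct weights obstruct the splitting and restore $\Aut = \{\pm\id\}$, so the torsor structure persists. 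Making this dichotomy precise---pinning down exactly when the automorphism group jumps---rests on the asymptotic and filtered analysis of the harmonic metric near the singularity, and is the real content; I would invoke the detailed estimates of \cite{Tulli:2019}, as the statement is recorded there as \cite[Proposition 4.1 \& Lemmas 4.3 and 4.4]{Tulli:2019}.
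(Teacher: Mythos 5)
The paper does not prove this proposition at all: it is imported verbatim from \cite[Proposition 4.1 and Lemmas 4.3, 4.4]{Tulli:2019}, so there is no in-paper argument to compare against. Judged on its own terms, your reconstruction gets the easy parts right. The well-definedness of the action (conjugation by $\diag(e^{i\vartheta/2}, e^{-i\vartheta/2})$ commutes with $H$ and so preserves the normal forms \eqref{eq:higgs-framed-form}--\eqref{eq:holo-str-framed-form}), the role of the half-angle and of $-\id_E \in \Aut(E,\theta,h)$ in securing $2\pi$-periodicity, and the identification of the set of compatible framings of a \emph{fixed} underlying bundle as a torsor under $T = \{\diag(a,a^{-1})\} \subset \SU(2)$ (forced by $[A,H]=0$ at leading order) are all correct, and the passage $T/\{\pm 1\} \cong U(1)$ via $a \mapsto a^2$ correctly converts the half-angle action into the standard free transitive one.

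The genuine gap is in transitivity, and it is concentrated exactly where you flag uncertainty. Your argument computes the orbit of the action \emph{through a fixed underlying harmonic bundle}; for the action to be transitive on all of $\Xfr(m, m^{(3)})$ you must additionally know that every element of $\H$ with the given $(m, m^{(3)})$ has the \emph{same} underlying isomorphism class, since the action never changes the unframed bundle. You list this uniqueness as "standing input" from wild nonabelian Hodge theory, but Biquard--Boalch only gives existence and uniqueness of the harmonic metric for a given stable or polystable parabolic Higgs bundle; the classification of which parabolic Higgs bundles on $\CP^1$ realize the prescribed singularity data is a separate step and is precisely the content of Tulli's Lemmas 4.3--4.4. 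The issue bites hardest at $m = m^{(3)} = 0$: your mechanism there (the bundle splits, $\Aut$ contains the full torus, the framing circle collapses) requires knowing that the decomposable representative is the \emph{only} class in $\H(0,0)$. A priori one must also rule out an indecomposable competitor with $\det\theta = -z^2\,dz^2$ and $\theta$ regular nilpotent at $z=0$ (where the two eigenlines collide); a direct computation in the eigenbasis $(1,\pm z)$ shows $\diag(\mu,\mu^{-1})$ acquires a pole at $z=0$ unless $\mu = \pm 1$, so such a bundle would have $\Aut = \{\pm\id\}$ and would contribute an entire extra circle to $\Xfr(0,0)$, contradicting the claim. Showing that this competitor is excluded (by the stability/parabolic-degree analysis at $\infty$), and dually that the split bundle is excluded for $m^{(3)} \neq 0$ because its summands then have nonzero parabolic degree, is the real work, and your appeal to "strictly polystable" does not yet close it.
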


In \cref{sub:constructing-and-extending-compatible-frames} we discuss explicit constructions of compatibly framed bundles, but this will not come into play until \cref{sec:sf-analysis}.

\subsection{Framed connections} 
\label{sub:framed-connections}

For each $(E, \theta, h) \in \H$ and $\zeta \in \C^*$, there is a corresponding flat connection 
\begin{equation} \label{eq:NAH}
    \nabla_\zeta \coloneqq \zeta^{-1} \theta + D_h + \zeta \theta^{\dagger_h}.
\end{equation}
(Flatness of $\nabla_\zeta$ is equivalent to $h$ satisfying Hitchin's equation \eqref{eq:hitchin}.)
This defines the \emph{nonabelian Hodge map} $\NAH_\zeta: (E, \theta) \mapsto (E, \nabla_\zeta)$.

If we start with a framed harmonic bundle $(E, \theta, h, g) \in \Hfr$, then with respect to the (extension of the) frame $g$ in a neighbourhood of $w=0$,
the Chern connection is of the form 
\begin{equation} \label{eq:chern-framed-form}
    D_h = d  + \frac{m^{(3)}}{2} H \left( \frac{dw}{w} - \frac{d \overline{w}}{\overline{w}} \right) + \text{regular terms,}
\end{equation}
and hence
\begin{align}
    \begin{split}
        \nabla_\zeta &= d + \left[\zeta^{-1} \left(- \frac{dw}{w^3} - m \frac{dw}{w}  \right) + \frac{m^{(3)}}{2}  \left( \frac{dw}{w} - \frac{d \overline{w}}{\overline{w}} \right) + \zeta \left(- \frac{d \overline{w}}{\overline{w}^3} - \overline{m} \frac{d \overline{w}}{\overline{w}}\right)\right]H  \\
    &\qquad + \text{regular terms}
    \end{split} \\
    \begin{split} \label{eq:framed-connection-form}
        &= d + \left[- \zeta^{-1} \frac{dw}{w^3}  - \zeta \frac{d \overline{w}}{\overline{w}^3} - (\zeta^{-1} m - \frac{1}{2} m^{(3)}) \frac{dw}{w} -  (\zeta \overline{m} + \frac{1}{2} m^{(3)})\frac{d \overline{w}}{\overline{w}}  \right]H \\
        &\qquad+ \text{regular terms}.
    \end{split}
\end{align}

These framed connections will be our primary objects of interest, so we will give a name to the corresponding space.

\begin{definition}[Framed flat bundles in $\Afr_\zeta$] \label{def:framed-connections}
    For fixed $\zeta \in \C^*$, let $\Afr_\zeta$ denote the set of \emph{$\zeta$-compatibly framed flat bundles} $(E, \nabla, g)$ where:
        \begin{itemize}
        \item $E$ is a rank $2$ holomorphic vector bundle over $\CP^1$, with parabolic weights at $z=\infty$ described by the parameter $m^{(3)} \in (-\frac{1}{2}, \frac{1}{2}]$ as in \cref{rem:parabolic-weight-cases}.

        \item $\nabla$ is a flat (complex) connection on $E$ with irregular singularity at $\infty$, of the form \eqref{eq:framed-connection-form} with respect to the framing $g$ near $\infty$.
    \end{itemize}
    Say that $(E, \nabla, g) \cong (E', \nabla', g')$ if there is a bundle isomorphism $E \xrightarrow{\sim} E'$ preserving the additional structure, and let $\Mfr_\zeta$ denote the set of isomorphism classes of $\Afr_\zeta$.
\end{definition}

To summarize, \cref{tab:notation-summary} lists the main spaces of objects and their isomorphism classes introduced so far (cf.\ \cref{fig:summary-of-spaces}).

\begin{table}[ht] 
    \renewcommand{\arraystretch}{1.3}
    \begin{tabular}{c|c|c}
     & framed harmonic bundles & framed connections \\ \hline
    sets of objects & \color{green} $\Hfr$ & \color{orange} $\Afr_\zeta$ \\
    moduli spaces & \color{green}$\Xfr$ & \color{orange} $\Mfr_\zeta$
    \end{tabular}
    \caption{Spaces of framed bundles over $C = \CP^1$.}
    \label{tab:notation-summary}
\end{table}

\begin{remark}[$\zeta$-compatible framings]
    The terminology above is nonstandard; our notion of a $\zeta$-compatible framing is just a translation of the definition of a compatible framing for a harmonic bundle under the nonabelian Hodge map $\NAH_\zeta$.

    There is already a more common notion of compatible framing in the Stokes theory of \emph{meromorphic} connections (see e.g.\ \cite{Boalch:2001}), where it means that the leading coefficient of the singular part of the connection is diagonal.\footnote{This is the definition used in \cite{Tulli:2019}, in which it is shown how to obtain such a compatible frame $\tau$ from $g$. We summarize the relevant details in \cref{sub:holomorphic-frames}.}
    In our case it will be more convenient to work in the $C^\infty$ setting, but the needed results from the classical Stokes theory carry over, as we discuss in \cref{sec:classical-stokes-theory}.
\end{remark}

By construction, the nonabelian Hodge formula \eqref{eq:NAH} for $\nabla_\zeta$ defines a map
\begin{align} \label{eq:NAH-map-set}
\begin{split}
    \NAH_\zeta: \Hfr &\to \Afr_\zeta \\ 
    (E, \theta, h, g) &\mapsto (E, \nabla_\zeta, g),
\end{split}
\end{align}
which descends to a map of moduli spaces
\begin{align} \label{eq:NAH-map-moduli}
\begin{split}
    \NAH_\zeta: \Xfr &\to \Mfr_\zeta \\
    [(E, \theta, h, g)] &\mapsto [(E, \nabla_\zeta, g)].
\end{split}
\end{align}
Throughout the paper we will use $\NAH_\zeta$ to pull back various symplectic forms from the space $\Afr_\zeta$ (resp.\ $\Mfr_\zeta$) to $\Hfr$ (resp.\ $\Xfr$).

\begin{remark}[Moduli space expectations] \label{rem:nonabelian-moduli-expectations}
    In \cite{Tulli:2019}, the moduli space $\Xfr$ of isomorphism classes in $\Hfr$ is really just defined as a \emph{set}; it later obtains an induced hyperkähler structure from the identification with the Ooguri-Vafa space $\Mov$, but we do not have a direct gauge-theoretic construction of the moduli space.
    (The parabolic weights and residues of the bundles in $\Xfr$ are allowed to vary, unlike the usual spaces of wild Higgs bundles in \cite{Biquard:2004}.
    However, the associated moduli space $\Mfr_\zeta$ of connections is similar to the ``extended moduli spaces'' in \cite{Boalch:2001}, where the formal type of the connections is not fixed.\footnote{These extended moduli spaces were used to construct numerous new complete hyperkähler manifolds; see the recent paper \cite{Boalch:2024} for discussion of their deformation classes.}
    It would be interesting to compare these constructions and their respective symplectic forms.)

    We will assume in the following calculations that our moduli spaces have the ``obvious'' tangent spaces, i.e.\ that variations of elements in $\Xfr$ or $\Mfr_\zeta$ can be represented by endomorphism-valued $1$-forms which are of the appropriate framed form near $z=\infty$, modulo the action of gauge transformations which approach the identity near $\infty$ and preserve the framed form.

    We also expect (but will not prove or need) that other features of the usual wild nonabelian Hodge correspondence \cite{Biquard:2004} hold, e.g.\ that $\NAH_\zeta^\fr$ gives a homeomorphism of moduli spaces $\Xfr \xrightarrow{\sim} \Mfr_\zeta$ which is a diffeomorphism away from the discriminant locus $m = 0$.
\end{remark}

At this point we can define the regularized Atiyah-Bott form $\Omega^\reg$ on $\Afr_\zeta$, as in \cref{def:INTRO-reg-form}.
We will return to this and study its properties in \cref{sec:regularized-forms}.

\subsubsection{Conventions for Stokes data} 
\label{ssub:stokes-conventions}

Given $(E, \theta, h, g) \in \Hfr$, we can consider the classical Stokes data of the corresponding irregular connection $\nabla_\zeta$.\footnote{More specifically, $(E, \nabla_\zeta, g)$ determines a family of compatibly framed \emph{meromorphic} connections $(E_a, \nabla_\zeta, \tau_{a, \zeta})$ for $a \in \R$, where $\tau_{a, \zeta}$ is obtained by an explicit modification of $g$ (again see \cref{sec:classical-stokes-theory}).
All of the relevant Stokes data can be defined in terms of $(E_a, \nabla_\zeta, \tau_{a, \zeta})$, and is independent of the choice of $a$.}
We briefly summarize some of the key points below.

The connection $\nabla_\zeta$ has four anti-Stokes rays $r_1, \dots, r_4$ and four Stokes rays, corresponding in this case to directions in the $w$-plane where $\zeta^{-1} w^{-2}$ is real resp.\ imaginary.
With these naming conventions, the \emph{anti-Stokes} rays are the asymptotic directions of the relevant spectral network (described in \cref{sec:abelianization-and-framing}), and flat sections exchange dominance when crossing a \emph{Stokes} ray.

Let $\Sect_i$ denote the sector bounded by the anti-Stokes rays $r_i$ and $r_{i+1}$, and let $\eSect_i$ denote the extended sector bounded by the adjacent Stokes rays (see \cref{fig:stokes-sectors}). 
There is a canonical way of diagonalizing $\nabla_\zeta$ in each extended sector near $w=0$, which allows us (after choosing a branch of the logarithm) to define a corresponding sectorial frame of flat sections.

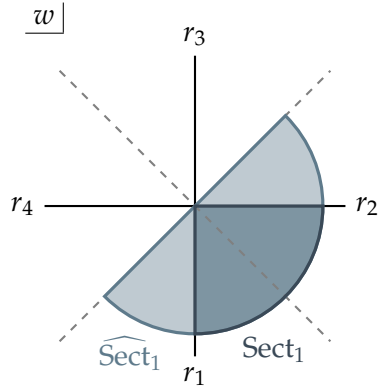
\begin{figure}[ht]
    \centering
    \begin{tikzpicture}[scale=0.9]
    \node at (-2,2.5) {\fbox[rb]{$w$}};

    \draw[fill=slateblue!40] (0,0) -- (45:-1.7) arc(45:225:-1.7) -- cycle;
    \draw[fill=slateblue!80] (0,0) -- (90:-1.7) arc(90:180:-1.7) -- cycle;

    \node[below left, slateblue] at (-0.3, -1.6) {$\eSect_1$};
    \node[below right, darkbluegrey] at (0.5, -1.6) {$\Sect_1$};

    \draw[thick] (-2, 0) -- (2, 0);
    \draw[thick] (0, -2) -- (0, 2);
    \draw[thick, dashed, gray] (-1.8, -1.8) -- (1.8, 1.8);
    \draw[thick, dashed, gray] (-1.8, 1.8) -- (1.8, -1.8);

    \draw[very thick, slateblue] (0,0) -- (45:-1.7) arc(45:225:-1.7) -- cycle;
    \draw[very thick, darkbluegrey] (0,0) -- (90:-1.7) arc(90:180:-1.7) -- cycle;

    \node[black, above] at (0, 2) {$r_3$};
    \node[black, left] at (-2, 0) {$r_4$};
    \node[black, below] at (0, -2) {$r_1$};
    \node[black, right] at (2, 0) {$r_2$};
    \end{tikzpicture}

    \caption{A sector $\Sect_1$ and extended sector $\eSect_1$ in a neighbourhood of $w=0$ (shown here for $\zeta = m < 0$). 
    The solid labelled rays are anti-Stokes rays, and the dotted rays are Stokes rays.}
    \label{fig:stokes-sectors}
\end{figure}

\begin{remark}[Labelling conventions]
    The formulas for the Stokes data depend on the labelling of the rays and the choice of logarithm branch.
    We will follow the same conventions as in \cite[Section 3.4.4]{Tulli:2019}, which depend on the parameters $\zeta, m \in \C^*$.
    In that paper, these choices were essential for determining the jumps of the Stokes data as $\zeta$ varies.
    For us, the exact details will not be as important, except to make sure our constructions match up with \cite{Tulli:2019}.

    Briefly: for $\zeta = m$, we denote the anti-Stokes ray in the direction $w=e^{-\frac{1}{2} i \arg (m)}$ by $r_1$ and number the others counterclockwise.
    As $\zeta \in \C^* \setminus \{ \zeta^{-1} m i < 0\}$ varies from $\zeta=m$, the rays (and choice of logarithm branch) also vary continuously; see \cref{fig:zeta-plane}.
\end{remark}

\begin{figure}[ht]
    \centering
    \includegraphics[scale=1]{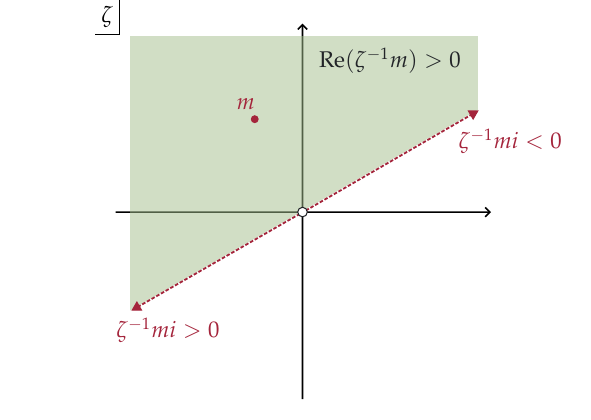}
    \caption{The half-plane $\{\zeta \in \C^*: \Re(\zeta^{-1} m) > 0 \}$. The Ooguri-Vafa magnetic coordinate $\Xm^\ov(\zeta)$, interpreted Stokes-theoretically in \eqref{eq:magnetic-coord} below, has jumps at its boundary rays $\{ \pm \zeta^{-1} m i< 0 \}$.}
    \label{fig:zeta-plane}
\end{figure}

More importantly for our application, the sectorial frames can be described by their asymptotics.

\begin{prop}[Sectorial asymptotics, cf.\ {\cite[Lemmas 3.5 and 3.6]{Tulli:2019}}] \label{prop:sectorial-asymptotics}
    In each extended sector $\eSect_i$ there exists a frame of flat sections $\Phi_i$ uniquely characterized by the following asymptotic condition: 
    
    If we write $\Phi_i = g \cdot M_i$ with respect to the original compatible frame $g$, then the matrix $M_i$ satisfies
    \begin{equation} \label{eq:sectorial-normalization}
        M_i \cdot e^{A_0(w) H} \to 1 \quad \text{as $w \to 0$ in $\eSect_i$},
    \end{equation}
    where 
    \begin{equation} \label{eq:normalization-antiderivative}
        A_0(w) \coloneqq  \frac{1}{2} \zeta^{-1} w^{-2}  +\frac{1}{2} \zeta \overline{w}^{-2}  - (\zeta^{-1} m - \frac{1}{2} m^{(3)}) \log w - (\zeta \overline{m} + \frac{1}{2} m^{(3)}) \log \overline{w}.
    \end{equation}
\end{prop}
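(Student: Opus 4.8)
The plan is to recognize \eqref{eq:sectorial-normalization} as saying that $\Phi_i$ is the canonically normalized (subdominant) sectorial frame of flat sections, and to produce it by perturbing off an explicit model solution. First I would observe that the antiderivative \eqref{eq:normalization-antiderivative} is rigged precisely so that $dA_0 \cdot H$ is the singular part of the connection form in \eqref{eq:framed-connection-form}: differentiating \eqref{eq:normalization-antiderivative} yields $dA_0 = -\zeta^{-1}w^{-3}\,dw - \zeta\overline{w}^{-3}\,d\overline{w} - (\zeta^{-1}m - \tfrac12 m^{(3)})\tfrac{dw}{w} - (\zeta\overline{m}+\tfrac12 m^{(3)})\tfrac{d\overline{w}}{\overline{w}}$, which is exactly the bracketed coefficient of $H$. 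Hence the \emph{model connection} $d + dA_0\,H$ has the exact flat frame $e^{-A_0 H}$, which satisfies \eqref{eq:sectorial-normalization} with equality. Writing the true connection as $\nabla_\zeta = d + dA_0\,H + \beta$, where $\beta$ collects the ($C^\infty$, bounded) regular terms, and substituting the ansatz $\Phi_i = g\, N\, e^{-A_0 H}$, flatness becomes the perturbed linear system $dN + dA_0\,[H,N] + \beta N = 0$, under which the normalization \eqref{eq:sectorial-normalization} reads simply $N \to 1$ as $w\to 0$ in $\eSect_i$.

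Next I would solve this system on $\eSect_i$ with the boundary condition $N\to 1$. The cleanest route is to reduce to classical irregular ODE theory: the $(0,1)$-part of $\nabla_\zeta$ is a (singular) holomorphic structure, and passing to a holomorphic frame $\tau$ for it, as constructed in \cref{sub:holomorphic-frames} and \cref{sec:classical-stokes-theory}, turns $\nabla_\zeta$ into a genuine meromorphic connection $d + \Theta$ with unramified irregular type $Q(w)H$, where $Q$ is the holomorphic part of $A_0$. For such a connection the Hukuhara--Turrittin / Sibuya sectorial existence theorem provides, on any sector spanning the correct Stokes directions, a unique fundamental solution $\Psi_i$ with $\Psi_i\, e^{Q H}\to 1$; the extended sector $\eSect_i$ (bounded by its adjacent Stokes rays) is exactly a sector of this maximal width. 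Converting $\Psi_i$ back to the $C^\infty$ frame $g$ reinstates the antiholomorphic part of $A_0$ via the explicit change-of-frame factor recorded in \cref{sub:holomorphic-frames}, upgrading $\Psi_i e^{QH}\to 1$ to the desired $M_i e^{A_0 H}\to 1$. Alternatively one can solve $dN + dA_0[H,N] + \beta N = 0$ directly by a Volterra iteration along rays into $w=0$: the off-diagonal components pick up kernels $e^{\pm 2A_0}$, and since $A_0 \sim \tfrac12 \zeta^{-1}w^{-2}$ has a double pole, the dominant factor decays like $e^{-c/r^2}$, which dominates the bounded $\beta$ and makes the integral operator a contraction, giving existence.

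For uniqueness I would note that any two frames $\Phi_i, \Phi_i'$ satisfying \eqref{eq:sectorial-normalization} are related by a constant matrix, $\Phi_i' = \Phi_i C$, since both are flat frames on the simply connected punctured sector; the two normalizations then force $e^{-A_0 H} C\, e^{A_0 H}\to 1$, i.e.\ the diagonal entries of $C$ tend to $1$ while its off-diagonal entries are weighted by $e^{\mp 2A_0}$. Because $\eSect_i$ is wide enough to contain directions in which each of $e^{\pm 2A_0}$ blows up (the sign of $\Re(\zeta^{-1}w^{-2})$ governing dominance switches across the Stokes ray interior to $\eSect_i$), both off-diagonal entries of $C$ must vanish and the diagonal must be $1$, so $C = 1$. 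The main obstacle throughout is reconciling the $C^\infty$ setting, in particular the genuine $\overline{w}$-dependence of $A_0$ and of $\nabla_\zeta$, with classical holomorphic Stokes theory; this is exactly the role of \cref{sec:classical-stokes-theory}, where one must check that the regular terms $\beta$ are genuinely subleading and that the change between the frame $g$ and the holomorphic frame $\tau$ transports the normalization \eqref{eq:sectorial-normalization} correctly. Verifying that $\eSect_i$ has the correct opening to pin down \emph{both} off-diagonal Stokes directions, and hence uniqueness, is the other point requiring care.
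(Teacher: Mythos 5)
Your proposal is correct and follows essentially the same route as the paper, which defers this statement to Tulli's Lemmas 3.5 and 3.6 and recaps the mechanism in \cref{sec:classical-stokes-theory}: pass to the compatible holomorphic frame $\tau_{a,\zeta}$, invoke classical sectorial asymptotic existence for the resulting meromorphic connection, and translate the normalization back to the smooth frame $g$ via the explicit change-of-frame factor (the content of \cref{cor:appendix-asymptotic-existence-g}), with uniqueness following from the standard observation that the extended sector contains a Stokes ray in its interior so both off-diagonal exponentials blow up somewhere. The only minor imprecision is that the irregular type in the holomorphic frame has leading coefficient $-(\zeta^{-1}+\overline{\zeta})$ rather than the holomorphic part of $A_0$ alone, but you correctly flag that the change-of-frame factor absorbs this discrepancy.
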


\begin{remark} \label{rem:asymptotics-uniqueness}
    In fact, the uniqueness part only uses that the asymptotics \eqref{eq:sectorial-normalization} hold as $w \to 0$ in a neighbourhood of the Stokes ray inside $\Sect_i$ (cf.\ the proof of \cite[Lemma 3.6]{Tulli:2019}).
\end{remark}

The Stokes matrix $S_i$ is defined as the transition matrix from $\Phi_i$ to $\Phi_{i+1}$ on $\eSect_i \cap \eSect_{i+1}$.
For $i=4$, we interpret $\Phi_5 \coloneqq \Phi_1 \cdot M_0$, where
\begin{equation} \label{eq:formal-monodromy}
    M_0 = \exp\left (-2 \pi i (- \zeta^{-1} m + m^{(3)} + \zeta \overline{m} ) H \right)
\end{equation}
is the formal monodromy of $\nabla_\zeta$.

We will write the frames of flat sections in each sector as
\begin{equation} \label{eq:flat-section-frames}
    \Phi_1 = (s_1, s_2), \qquad
    \Phi_2 = (s_3, s_2), \qquad
    \Phi_3 = (s_3, s_4), \qquad
    \Phi_4 = (s_1, s_4),
\end{equation}
so that $s_i$ is exponentially decreasing along the $i$th anti-Stokes ray $r_i$ (see \cref{fig:stokes-sectors-sections} and cf.\ the spectral networks in \cref{fig:sn-labelled-sections}).
Then the Stokes matrices $S_1, S_3$ are lower-triangular and $S_2, S_4$ are upper-triangular.

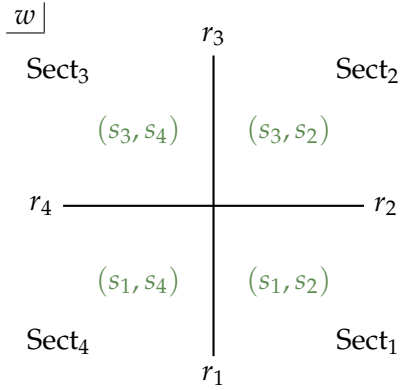
\begin{figure}[ht]
    \centering
    \begin{tikzpicture}[scale=0.9]
    \node at (-2.5,2.5) {\fbox[rb]{$w$}};

    \node[above left] at (-1.5, 1.5) {$\Sect_3$};
    \node[below left] at (-1.5, -1.5) {$\Sect_4$};
    \node[below right] at (1.5, -1.5) {$\Sect_1$};
    \node[above right] at (1.5, 1.5) {$\Sect_2$};

    \draw[thick] (-2, 0) -- (2, 0);
    \draw[thick] (0, -2) -- (0, 2);

    \node[black, above] at (0, 2) {$r_3$};
    \node[black, left] at (-2, 0) {$r_4$};
    \node[black, below] at (0, -2) {$r_1$};
    \node[black, right] at (2, 0) {$r_2$};

    \node[green] at (-1,1) {$(s_3, s_4)$};
    \node[green] at (-1,-1) {$(s_1, s_4)$};
    \node[green] at (1,-1) {$(s_1, s_2)$};
    \node[green] at (1,1) {$(s_3, s_2)$};
    \end{tikzpicture}

    \caption{Labelling for the frames of flat sections $\Phi_i$ in each sector $\Sect_i$.}
    \label{fig:stokes-sectors-sections}
\end{figure}
In particular, we will denote
\begin{equation} \label{eq:stokes-matrices}
    S_1 = \begin{pmatrix}
        1 & 0 \\
        a & 1
    \end{pmatrix} \quad \text{and} \quad 
    S_2 = \begin{pmatrix}
        1 & b \\
        0 & 1
    \end{pmatrix}
\end{equation}
so that we have the relations
\begin{equation}
    s_3 = s_1 + a s_2 \quad \text{and} \quad 
    s_4 = s_2 + b s_3,
\end{equation}
which we can rewrite as
\begin{equation} \label{eq:a-b-ratios}
     a = \frac{s_3 \wedge s_1}{s_2 \wedge s_1} \quad \text{and} \quad 
    b = \frac{s_4 \wedge s_2}{s_3 \wedge s_2}.
\end{equation}

We emphasize that everything described above depends on $\zeta \in \C^*$, including the sectors, frames of flat sections, and Stokes matrix elements $a = a(\zeta)$ and $b = b(\zeta)$.

\subsection{Correspondence with the Ooguri-Vafa space} 
\label{sub:ov-correspondence}

Here we will summarize the main aspects of the correspondence between $\Xfr$ and the Ooguri-Vafa space from \cite{Tulli:2019}.

Let $\Mov(\Lambda)$ denote the Ooguri-Vafa space with cutoff $\Lambda \in \C^*$. 
As described in \cref{ssub:summary-ov-space}, $\Mov(\Lambda)$ is a singular torus fibration over a neighbourhood $\B_\Lambda$ of the origin in $\C$, the size of which depends on $|\Lambda|$.
The fibres over nonzero $z \in \B_\Lambda$ are tori parametrized by an electric angle $\theta_e$ and magnetic angle $\theta_m$.
The central fibre over $z=0$ is a torus with a node, where the circle corresponding to $\theta_m$ degenerates to a point (again see \cref{fig:ov-space}).

We will describe the hyperkähler structure of $\Mov(\Lambda)$ via the electric and magnetic twistor coordinates $\Xe^\ov, \Xm^\ov$ \cite{Gaiotto:2010} for its holomorphic symplectic form 
\begin{equation} \label{eq:ov-twistor-coords}
    \Omega^\ov_\zeta = -\frac{1}{4 \pi^2} d \log \Xe^\ov(\zeta) \wedge d \log \Xm^\ov(\zeta), \quad \zeta \in \C^*.
\end{equation}
Explicitly, $\Xe^\ov$ is given by
\begin{equation} \label{eq:ov-electric-coord}
    \Xe^\ov(\zeta) =  \exp \left(\zeta^{-1} \pi z + i \theta_e + \pi \zeta \overline{z}\right),
\end{equation}
and $\Xm^\ov$ has the semiflat approximation
\begin{equation} \label{eq:sf-ov-magnetic-coord}
    \Xm^{\ov, \sf}(\zeta) = \exp \left( \zeta^{-1} \frac{z \log (z/\Lambda) - z}{2i} + i \theta_m - \zeta \frac{\overline{z} \log (\overline{z} / \overline{\Lambda}) - \overline{z}}{2i} \right).
\end{equation}
(These coordinates define the \emph{semiflat Ooguri-Vafa form}
\begin{equation}
    \Omega^{\ov, \sf}_\zeta = -\frac{1}{4 \pi^2} d \log \Xe^\ov(\zeta) \wedge d \log \Xm^{\ov, \sf}(\zeta),
\end{equation}
which we will study further starting in \cref{ssub:sf-ov-form}.)
The full magnetic coordinate $\Xm^\ov$ is given by an ``instanton correction'' $\Xm^\ov = \Xm^{\ov, \sf} \Xm^{\ov, \inst}$, involving an integral formula which we will not need to explicitly describe here.

Let $\Xfr(\Lambda) \subseteq \Xfr$ consist of the isomorphism classes of framed Higgs bundles in $\Hfr$ for which $-2 i m \in \B_\Lambda$.
\cite{Tulli:2019} identifies the Ooguri-Vafa space $\Mov(\Lambda = 4i)$ with $\Xfr(\Lambda=4i)$ via the correspondence in \cref{tab:dictionary}.

\begin{table}[ht] 
    \renewcommand{\arraystretch}{1.3}
    \begin{tabular}{c|c} 
        $\Mov(4i)$ & $\Xfr(4i)$ \\
        \hline 
        $z$ & $-2im$ \\ 
        $\theta_e$ & $2 \pi m^{(3)}$\\
        $\theta_m$ & [see \eqref{eq:magnetic-angle}]
    \end{tabular}
    \caption{Partial dictionary between $\Mov$ and $\Xfr$ parameters from \cite{Tulli:2019}. 
    The  formula for $\theta_m$ involves slightly more technical setup, so we postpone it to \cref{ssub:sf-ov-form}. 
    Until then, we will work directly with the twistor coordinate $\Xm$ instead.}
    \label{tab:dictionary}
\end{table}

This correspondence makes $\Xfr(4i)$ into a hyperkähler space, with twistor coordinates given by
\begin{equation} \label{eq:electric-coord}
    \Xe^\ov(\zeta) \leftrightarrow \Xe(\zeta) = \exp \left( -2 \pi i (\zeta^{-1} m - m^{(3)} - \overline{m} \zeta) \right)
\end{equation}
and
\begin{equation} \label{eq:magnetic-coord}
    \Xm^\ov(\zeta) \leftrightarrow \Xm(\zeta) = \begin{cases}
        a(\zeta) & \text{if }   \Re(\zeta^{-1} m) > 0, \\
        -1/b(\zeta) & \text{if } \Re(\zeta^{-1} m) < 0, 
    \end{cases}
\end{equation}
where $a(\zeta)$ and $b(\zeta)$ are the Stokes matrices elements associated to $\nabla_\zeta$ as in \cref{ssub:stokes-conventions}.
Note that $\Xe$ also has a Stokes-theoretic interpretation, as one of the diagonal entries of the formal monodromy \eqref{eq:formal-monodromy} of $\nabla_\zeta$.

\begin{remark}
    Although the above identification for $\Xe$ can be seen directly from \eqref{eq:ov-electric-coord} using \cref{tab:dictionary}, the identification for $\Xm$ is very nontrivial: it was proved in \cite{Tulli:2019} by matching up the asymptotics and jumps of the Stokes data (as $\zeta \to 0, \infty$ and at $\Re(\zeta^{-1} m) = 0$, respectively)  with those of the Ooguri-Vafa magnetic coordinate.
\end{remark}

Let us introduce some notation to spell out the identifications \eqref{eq:electric-coord} and \eqref{eq:magnetic-coord} a bit more carefully.
We will think of $\Xe$ and $\Xm$ (without superscripts) as functions on the space $\Afr_\zeta$, where they assign to each connection its Stokes data as described above.
These functions define a corresponding form
\begin{equation}
    \Omega^\ov_\Stokes \coloneqq -\frac{1}{4 \pi^2} d \log \Xe \wedge d \log \Xm
\end{equation}
on $\Afr_\zeta$.
The pulled-back functions $(\NAH_\zeta)^* \Xe$ and $(\NAH_\zeta)^* \Xm$ on $\Hfr$ are isomorphism invariant and hence descend to the moduli space $\Xfr$, where they coincide with the corresponding Ooguri-Vafa coordinates $\Xe^\ov$ and $\Xm^\ov$ under the identification $\Xfr \cong \Mfr$.
Consequently we can write
\begin{equation} \label{eq:ov-stokes}
    \Omega^\ov_\zeta = (\NAH_\zeta)^* \Omega^\ov_\Stokes
\end{equation}
for the induced form on $\Xfr \cong \Mov$, as in \eqref{eq:forms-cd}.

\section{Abelianization and framing} 
\label{sec:abelianization-and-framing}

In the last section we established our setup in terms of framed flat $\SL(2)$-connections $(E, \nabla_\zeta, g)$ on the base curve $C = \CP^1$.
In this section we explain how to \emph{abelianize} this data; that is, how to lift the relevant structures (particularly the framing) to corresponding rank $1$ objects on the spectral cover $\Sigma$ of $C$. 

\subsection{Review of spectral networks and abelianization} 
\label{sub:spectral-networks-review}

We start by briefly recalling some of the main definitions and constructions involving spectral networks \cite{Gaiotto:2013}, especially pertaining to the abelianization of flat connections.
We mostly follow the approach and exposition of \cite{Hollands:2016}; see also \cite{Gaiotto:2013a, Hollands:2021} for additional discussion of the relevant irregular singularity case.
Many of the definitions below admit generalizations (e.g.\ to higher rank), but we will just describe what is needed for our application.

\subsubsection{WKB spectral networks (in rank 2)} 
\label{ssub:wkb-spectral-networks}

Fix a compact Riemann surface $C$ and a meromorphic quadratic differential $\phi_2$ on $C$ (i.e.\ a meromorphic section of $K_C^{\otimes 2}$, the square of the canonical bundle).

Assume that all of the zeros of $\phi_2$ are simple, and that $\phi_2$ has at least one pole.
We will refer to the poles $p_i$ of $\phi_2$ as \emph{punctures} of $C$.

\begin{definition}[Spectral curve]
    The \emph{spectral curve} defined by $\phi_2$ is
    \begin{equation}
        \Sigma_{\phi_2} \coloneqq \{ \lambda \in T^* C : \lambda^2 - \phi_2 = 0  \} \subseteq T^*C.
    \end{equation}
\end{definition}
Note that $\Sigma = \Sigma_{\phi_2}$ is smooth since $\phi_2$ has only simple zeros. 
The projection $\pi: \Sigma \to C$ is a double cover branched at the zeros of $\phi_2$, and $\Sigma$ has punctures lying over the $p_i$.

Fix a phase $\vartheta \in \R/2\pi\Z$.

\begin{definition}[$\vartheta$-trajectories] \label{def:theta-traj}
    A \emph{$\vartheta$-trajectory} of $\phi_2$ is a curve $\gamma$ on $C$ such that 
    \begin{equation} \label{eq:theta-traj-condition}
        e^{-2 i \vartheta} \phi_2(v^2) \in \R_{> 0}
    \end{equation}
    for all nonzero tangent vectors $v$ along $\gamma$.
\end{definition}

The $\vartheta$-trajectories constitute the leaves of a singular foliation $\mathcal{F}_{\phi_2}^\vartheta$ on $C$.
(For $\vartheta = 0$ this is the classical ``horizontal foliation'' defined by $\phi_2$.)
Both endpoints of a generic trajectory are at punctures of $C$.
In particular, around each pole of order $k > 2$, each trajectory is asymptotic to one of $k-2$ distinguished tangent directions.
We will call these \emph{anti-Stokes directions}, for reasons to be justified below.

A trajectory is called \emph{critical} if (at least) one of its endpoints is at a branch point.
There are three critical trajectories emanating from each branch point (see \cref{fig:branch-point}).
The union of the critical leaves is the \emph{critical graph} $\CG(\phi_2, \vartheta)$.
In the current setting, the relevant spectral network $\W(\phi_2, \vartheta)$ is given by $\CG(\phi_2, \vartheta)$ along with some additional labels.

\begin{figure}[ht]
    \centering
    \includegraphics[scale=1]{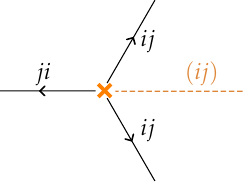}
    \caption{The three critical leaves emanating from a branch point, labelled as in \cref{def:spectral-network}. 
    The dashed orange line denotes a branch cut.}
    \label{fig:branch-point}
\end{figure}

\begin{definition}[Spectral network] \label{def:spectral-network}
    The \emph{(WKB) spectral network} $\W(\phi_2, \vartheta)$ is the following collection of oriented labelled paths on $C$, called \emph{walls}:
    \begin{itemize}
        \item Each leaf of $\CG(\phi_2, \vartheta)$ is a wall, oriented away from the branch point.
        (In particular, the network can contain ``double walls'' arising from a critical trajectory which has both of its endpoints on branch points, i.e.\ a saddle connection.)

        \item Each wall $w$ is labelled with an ordering of the sheets of $\Sigma$ over $w$.
        After choosing branch cuts on $C$ and labelling the sheets of $\Sigma$ by $k=1,2$, we write the label as either ``$12$'' or ``$21$'', determined as follows:
        \begin{itemize}
            \item The two sheets of $\Sigma$ correspond to the two square roots $\lambda_k$ of $\phi_2$.
        If $v$ is a positively oriented tangent vector along $w$, then by \eqref{eq:theta-traj-condition},
        \begin{equation}
            e^{-i \vartheta} \lambda_k (v) \in \R_{\neq 0}.
        \end{equation}
        \item Let $k_\pm$ denote the appropriate index so that $e^{-i \vartheta} \lambda_{k_\pm}(v) \in \R_\pm$.
        Then label $w$ by ``$k_-k_+$''.
        \end{itemize}
    \end{itemize}
\end{definition}

\begin{example} \label{ex:sn-quadratic-differential}
    We will be interested in the quadratic differential $\phi_2 = (z^2+2m) dz^2$ on $C = \CP^1$, with $m \in \C^*$.
    Three examples of the corresponding network $\W(\phi_2, \vartheta)$ (when $m < 0$) are shown in \cref{fig:sn-saddle-examples}, plotted using the Mathematica notebook \texttt{swn-plotter.nb} \cite{Neitzke:swn}.\footnote{Due to differing sign conventions, these are obtained by using the quadratic differential  $-\phi_2$ in the code.}

    Note that $\phi_2$ has a pole of order $k=6$ at $z=\infty$, where there are $4$ anti-Stokes directions to which trajectories are asymptotic.
    The network has a double wall at the phases $\vartheta = \arg(m) + \pi/2$ and $\vartheta = \arg(m) + 3\pi/2$.

    \begin{figure}[ht]
        \subcaptionbox{$\vartheta = (\pi/2)^-$}
            {\includegraphics[scale=1]{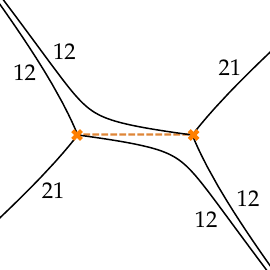}}
        \hspace{20pt}
        \subcaptionbox{$\vartheta = \pi/2$}[0.3\textwidth]
            {\includegraphics[scale=1]{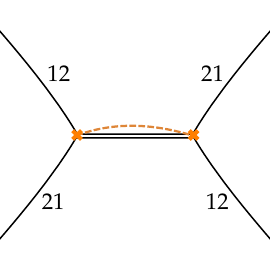}}
        \hspace{20pt}
        \subcaptionbox{$\vartheta = (\pi/2)^+$}
            {\includegraphics[scale=1]{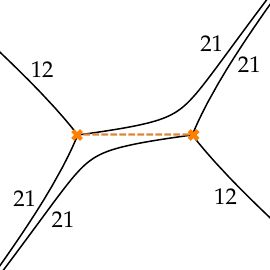}}
        \caption{The spectral network $\W(\phi_2, \vartheta)$ with $\phi_2 = (z^2 + 2m) dz^2$, shown here for $m < 0$ and for phases $\vartheta$  before, at, and after the ``critical phase'' $\vartheta_c = \arg(m) + \pi/2$.
        (The orientations of the walls are omitted.)}
        \label{fig:sn-saddle-examples}
    \end{figure}
\end{example}

\subsubsection{Spectral networks and nonabelian Hodge} 
\label{ssub:networks-and-NAH}

Given a (possibly singular) $\SL(2)$-Higgs bundle $(E, \theta)$ with harmonic metric $h$, we can consider the corresponding one-parameter family of flat connections
\begin{equation*}
    \nabla_\zeta = \zeta^{-1} \theta + D_h + \zeta \theta^{\dagger_h}, \quad \zeta \in \C^*.
\end{equation*}

We will associate to $(E, \theta, h)$ the spectral network 
\begin{equation}
    \W_\zeta(\theta) \coloneqq \W(\phi_2, \vartheta)
\end{equation}
where
\begin{equation}
    \begin{cases}
        \phi_2 = - \det \theta,   \\
        \vartheta = \arg(\zeta).
    \end{cases}
\end{equation}
With these choices:
\begin{itemize}
    \item The spectral curve $\Sigma_{\phi_2}$ coincides with the Higgs spectral curve
    \begin{equation}
        \Sigma_\theta \coloneqq \{ \lambda \in T^*C : \det (\theta - \lambda I) = 0 \} \subseteq T^*C.
    \end{equation}

    \item The anti-Stokes rays of the network at each puncture coincide with the anti-Stokes rays of the connection $\nabla_\zeta$ \cite{Gaiotto:2013a}.
\end{itemize}
(This choice of network is also relevant for studying the WKB asymptotics of $\nabla_\zeta$ as $\zeta \to 0$, as discussed in \cite{Gaiotto:2013, Gaiotto:2013a}.)

\begin{example}
    For $(E, \theta, g) \in \Hfr$, the associated quadratic differential is $\phi_2 = (z^2 + 2m)  dz^2$, so the corresponding spectral network $\W = \W_\zeta(\theta)$ is one of those described in \cref{ex:sn-quadratic-differential}, i.e.\ it topologically looks like one of the networks in \cref{fig:sn-saddle-examples}.
\end{example}

\subsubsection{Abelianization} 
\label{ssub:abelianization}

Spectral networks can be used to lift nonabelian connections on the base surface to abelian connections on its cover.
We continue to follow the presentation of \cite{Hollands:2016}.
As above, let $C$ be a compact Riemann surface with a spectral cover $\pi: \Sigma \to C$ and a spectral network $\W$ (both arising from the same quadratic differential $\phi_2$).
For simplicity we assume that $\W$ has no double walls.
Let $\Sigma'$ denote $\Sigma$ with the branch points removed.

\begin{definition}[Abelianization via $\W$-pairs]\label{def:w-pair}
    A \emph{$\W$-pair} is a tuple $(E, \nabla; \L, \nabla^\ab;  \iota)$ where:
    \begin{itemize}
        \item $(E, \nabla)$ is a flat $\SL(2)$-bundle over $C$,

        \item $(\L, \nabla^\ab)$ is a flat $\C^*$-bundle over $\Sigma'$, and

        \item $\iota: E|_{C \setminus \W} \xrightarrow{\sim} \pi_* \L|_{C \setminus \W}$ is an isomorphism,
    \end{itemize}
    such that
    \begin{enumerate}[(i)]
        \item $\iota$ takes $\nabla$ to $\pi_* \nabla^\ab$,
        \item \label{item:unipotent-jumps} at a wall $w$ of type $ij$, $\iota$ jumps by an automorphism
        \begin{equation}
             S_w = \bbid + e_w
         \end{equation}
        of $\pi_* \L \cong \L_1 \oplus \L_2$, where $e_w: \L_i \to \L_j$ (and the subscripts indicate the corresponding sheets of $\Sigma$).
    \end{enumerate}
    In this case we call $(\L, \nabla^\ab)$ an \emph{abelianization} of $(E, \nabla)$, and $(E, \nabla)$ a \emph{nonabelianization} of $(\L, \nabla^\ab)$.
\end{definition}

More concretely, finding an abelianization of $(E, \nabla)$ amounts to specifying a basis of sections $(s_1, s_2)$ in each cell of $C \setminus \W$ which ``diagonalize $\nabla$'' (that is, have $\nabla s_i = d_i s_i$ for some closed $1$-forms $d_i$) and have appropriate jumps at the walls of $\W$.
For instance, condition \ref{item:unipotent-jumps} says that on the two sides of a wall $w$ of type $21$, there are bases $(s_1, s_2)$ and $(s_1', s_2')$ related by
\begin{equation}
    s_1' = s_1 \quad \text{and} \quad s_2' = s_2 + \alpha s_1
\end{equation}
for some function $\alpha$, i.e.\ the jump $S_w$ is unipotent and upper-triangular with respect to the trivialization $(s_1, s_2)$.

One consequence of the form of these jumps is that any abelianized connection $\nabla^\ab$ has holonomy $-1$ around the branch points of $\Sigma$; such a connection is called \emph{almost-flat}.
Another consequence is that any connection $\nabla^\ab$ satisfying \cref{def:w-pair} but only defined over $\Sigma' \setminus \pi^{-1}(\W)$ automatically extends over the walls $\pi^{-1}(\W)$.
Therefore to construct an abelianization of $(E, \nabla)$ it suffices to provide an appropriate basis of $E$ in each cell of $C \setminus \W$.

\subsubsection{Constructions and computations} 
\label{ssub:constructions-and-computations}

Given $(E, \nabla)$ and $\W$, the choice of an abelianization $(\L, \nabla^\ab)$ is generally not unique.
This can be rectified by specifying an additional decoration called a ``$\W$-framing''. 
Roughly this is a choice of $\nabla$-invariant line subbundle at each puncture.
We will explain how this works for irregular singularities, which is the only case we will encounter.

\begin{construction}[$\W$-framing with irregular punctures, cf.\ {\cite[Section 8]{Gaiotto:2013a}}]
    \begin{enumerate}
    \item[]
    \item Consider an infinitesimal circle around each singularity $p$ of order $k > 2$, with $k-2$ marked points $q_i$ corresponding to the anti-Stokes directions of the foliation at $p$.\footnote{More precisely we could define the marked points on the real blow-up of $C$ at $p$, but we will be informal---this is ultimately just a labelling procedure.}
    Each wall ending at $p$ is asymptotic to one of the directions; we think of it as ending at the corresponding marked point. (See \cref{fig:w-framing-circle}.)

    \begin{figure}[ht]
        \centering
        \includegraphics[scale=1]{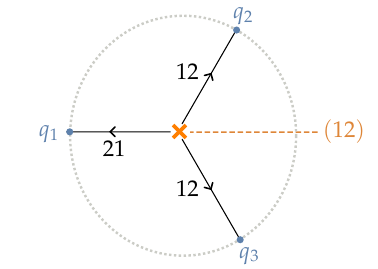}
        \caption{Marked points for a pole of order $k=5$ at $p = \infty$. The dotted circle should be thought of as an infinitesimal circle around $\infty$.}
        \label{fig:w-framing-circle}
    \end{figure}

    \item Choose a $\nabla$-invariant line subbundle $\ell_i$ of $E$ near each marked point\footnote{or regular puncture, if there are any} $q_i$.
    At any point $z \in C$ that can be joined to $q_i$ without crossing any walls, let $\ell_i(z)$ denote the parallel transport of $\ell_i$ to $E_z$.
    Assume that if two points $q_i$ and $q_j$ can be connected to a common point $z$ without crossing any walls, then $\ell_i(z) \neq \ell_j(z)$ (this holds generically).

    \item Choose sections $s_i$ such that $s_i(z) \in \ell_i(z)$.

    \item Each cell (without branch cuts) has trajectories of type $21$ going into one point $q_i$, and trajectories of type $12$ going into some $q_j$.
    Assign the basis $(s_i, s_j)$ to this cell, as shown in \cref{fig:w-framing-cell}.

    \begin{figure}[ht]
    \centering
    \includegraphics[scale=1]{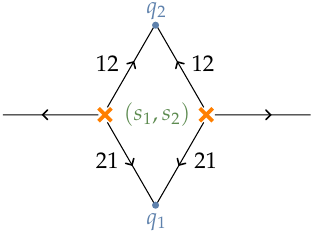}
    \caption{$\W$-framing in a single cell.
    We make a choice of line subbundles $\ell_i$ near each marked point $q_i$, and the basis $(s_1, s_2)$ is chosen so that $s_i(z) \in \ell_i(z)$.}
    \label{fig:w-framing-cell}
    \end{figure}

    \item[($4'$)] If there are branch cuts, the notation is adjusted in the natural way (see \cref{fig:w-framing-one-branch-point}).

    \begin{figure}[ht]
        \centering
        \includegraphics[scale=1]{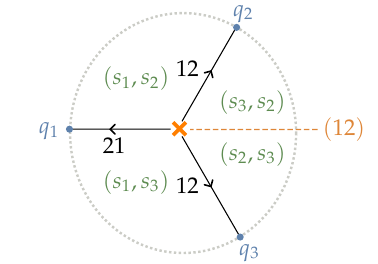}
        \caption{$\W$-framing for the network from \cref{fig:w-framing-circle}. 
        The order of the labelled sections is swapped upon crossing a branch cut.}
        \label{fig:w-framing-one-branch-point}
    \end{figure}
\end{enumerate}
\end{construction}

We can omit the labels $12$ and $21$ once we have specified a $\W$-framing, since they are determined by the labelling conventions.

The connection $\nabla$ is diagonal with respect to the basis $(s_i, s_j)$ in each cell.
By construction, it can be viewed as the pushforward of a connection $\nabla^\ab$ on the line bundle $\L$ defined by $\ell_i$ and $\ell_j$ on the two sheets of $\Sigma$.
By appropriately rescaling the sections
\begin{equation*}
    s_i(z) \to  \tilde{s}_i(z) \coloneqq c_i(z) s_i(z)
\end{equation*}
we can ensure they have the required unipotent jumps across walls to define $\iota$, but we can compute parallel transport even without rescaling.

\begin{lemma}[Parallel transport formulas] \label{lem:ab-parallel-transport}
    Suppose $(s_i, s_j)$ and $(s_k, s_j)$ are sections diagonalizing $\nabla$ on either side of a wall $w$ of type $12$, and view them as sections of the abelianized line bundle $\L$.

    Then the $\nabla^\ab$-parallel transports of $s_i$ and $s_j$ across $w$ are given by
    \begin{equation}
        s_i \mapsto \frac{s_i \wedge s_j}{s_k \wedge s_j} s_k
    \end{equation}
    and
    \begin{equation}
        s_j \mapsto s_j.
    \end{equation}
\end{lemma}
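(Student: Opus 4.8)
The plan is to reduce the statement to one linear-algebra identity together with a careful reading of the jump condition in \cref{def:w-pair}. Since $s_i$, $s_j$, and $s_k$ are all flat sections of the same rank-$2$ flat connection $\nabla$, they lie in a common two-dimensional space of flat sections over a neighbourhood of the wall $w$. As $(s_k, s_j)$ is a basis of this space, Cramer's rule gives
\begin{equation*}
    s_i = \frac{s_i \wedge s_j}{s_k \wedge s_j}\, s_k + \frac{s_k \wedge s_i}{s_k \wedge s_j}\, s_j .
\end{equation*}
This is the only genuine computation involved; everything else is interpreting the two coefficients through the abelianization data $\iota$.

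Next I would unwind what $\nabla^\ab$-parallel transport means in terms of $\iota$ and the sheet decomposition $\pi_* \L \cong \L_1 \oplus \L_2$. For a wall of type $12$ the jump is $S_w = \bbid + e_w$ with $e_w \colon \L_1 \to \L_2$, so $S_w$ fixes $\L_2$ and modifies $\L_1$ by adjoining an $\L_2$-component; accordingly the common section $s_j$ sits on the fixed sheet $\L_2$ while $s_i, s_k$ sit on $\L_1$. On the $(s_i, s_j)$-side, $\iota$ identifies $s_i$ with the $\nabla^\ab$-flat section of $\L_1$ and $s_j$ with that of $\L_2$; on the $(s_k, s_j)$-side it identifies $s_k$ with the $\L_1$-section and $s_j$ again with the $\L_2$-section. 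The flat section $s_i$ of $E$ continues across $w$ with no jump (it is $\iota$, not $s_i$, that jumps), so applying $\iota$ on the far side to the displayed expansion shows that $\iota(s_i)$ there has $\L_1$-component $\tfrac{s_i \wedge s_j}{s_k \wedge s_j}\, s_k$ and $\L_2$-component $\tfrac{s_k \wedge s_i}{s_k \wedge s_j}\, s_j$. On the other hand, continuing the sheet-$1$ section $\iota(s_i)$ across $w$ by $\nabla^\ab$-flatness and then applying $S_w$ produces a vector whose $\L_1$-part is precisely the $\nabla^\ab$-transport of $s_i$ expressed as a multiple of the new flat section $s_k$. Comparing the two $\L_1$-components yields the claimed factor $\tfrac{s_i \wedge s_j}{s_k \wedge s_j}$, and the formula $s_j \mapsto s_j$ is immediate since $e_w$ annihilates $\L_2$, so the sheet-$2$ section is transported trivially.

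The step I expect to require the most care is this middle one: justifying that $S_w$ acts as the identity on the $\L_1$-component, so that it contributes no scaling to the transport on the modified sheet and only the Cramer coefficient survives. This rests on the precise unipotent, upper-triangular shape dictated by condition \ref{item:unipotent-jumps} for a type-$12$ wall, the mirror image of the type-$21$ convention $s_1' = s_1$, $s_2' = s_2 + \alpha s_1$ recorded after \cref{def:w-pair}. Once the orientation of $w$ and the assignment of sheets to $(s_i, s_j)$ versus $(s_k, s_j)$ are fixed consistently with \cref{def:spectral-network}, the shape of $S_w$ guarantees that the $\L_1$-coefficient passes through unchanged. I would close by noting that both formulas are compatible with the later rescaling $s_i \mapsto c_i s_i$ used to enforce the exact unipotent jumps of $\iota$, so the computation is valid for the unnormalized diagonalizing sections exactly as stated.
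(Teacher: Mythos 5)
Your argument is correct and is exactly the one the paper relies on: the paper's proof is a one-line appeal to ``the triangular form of the jumps'' with a citation to \cite[Section 8.3]{Hollands:2016}, and your Cramer's-rule expansion of $s_i$ in the basis $(s_k, s_j)$ combined with the observation that for a type-$12$ wall $S_w = \bbid + e_w$ preserves $\L_1$-components and fixes $\L_2$ pointwise is precisely that argument written out. No gaps.
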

\begin{proof}
    From the triangular form of the jumps; see \cite[Section 8.3]{Hollands:2016}.
\end{proof}

As a useful mnemonic, note that the section $s_i$ associated to a puncture $q_i$ does not jump when crossing a wall going into $q_i$.

\begin{remark}[Canonical decoration]
    When $\W = \W_\zeta(\theta)$ corresponds to an irregular connection $\nabla_\zeta$, there is a canonical ``small flat section'' $s_i$ near each $q_i$ which exponentially decays along the corresponding anti-Stokes ray \cite{Gaiotto:2013a}.
    We will use these sections as our decorations.
\end{remark}

\begin{example}
    The canonical $\W$-framing for a spectral network coming from $(E, \theta, g) \in \Hfr$ is shown in \cref{fig:sn-labelled-sections}, depending on the values of $m$ and $\zeta$. 
    (The network has a double wall if $\Re(\zeta^{-1} m) = 0$, in which case it can be resolved into either of the ones shown.)

    \begin{figure}[ht]
        \subcaptionbox{$\Re(\zeta^{-1} m) > 0$}
            {\includegraphics[scale=1]{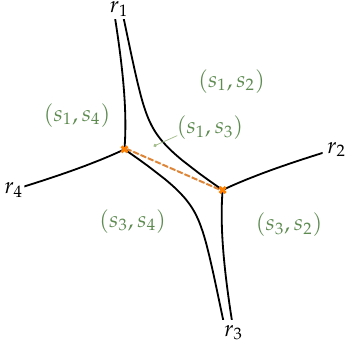}}
        \hspace{1cm}
        \subcaptionbox{$\Re(\zeta^{-1} m) < 0$}
            {\includegraphics[scale=1]{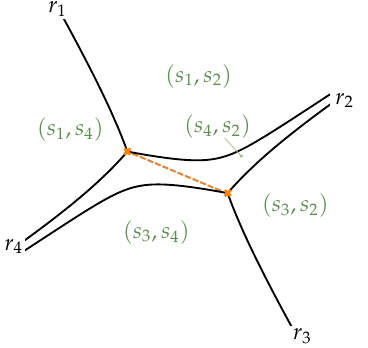}}
        \caption{$\W$-framings for the two generic topologies of a spectral network $\W_\zeta$ coming from $(E, \theta, g) \in \Hfr$.
        The canonical flat sections used for abelianization in each cell are labelled in green.}
        \label{fig:sn-labelled-sections}
    \end{figure}
\end{example}

\subsection{Framing near the punctures} 
\label{sub:framing-near-the-punctures}

Let $C = \CP^1$. 
Given a Higgs bundle $(E, \theta) \in \H$ with parameter $m \neq 0$, let 
\begin{equation}
    \Sigma \coloneqq \{ \lambda \in T^*C : \det (\theta - \lambda I) = 0 \} \subseteq T^*C
\end{equation}
be its spectral curve, with projection $\pi: \Sigma \to C$.
More explicitly,
\begin{align}
    \Sigma &= \{ \lambda \in T^*C : \lambda^2 + \det \theta  = 0 \} \\
        &= \{ (z \in C,\, s \in T_z^*C) : s^2 - (z^2 + 2m) dz^2 = 0 \}. \label{eq:spectral-curve-explicit}
\end{align}
In the notation of \cref{ssub:wkb-spectral-networks}, $\Sigma = \Sigma_{\phi_2}$ for the quadratic differential $\phi_2 = - \det \theta = (z^2 + 2m) dz^2$.

Note that $\Sigma$ is a branched double cover of $\CP^1$ with two branch points at $z = \pm \sqrt{-2m}$ and no ramification at $z =\infty$; consequently it has genus zero with two punctures lying over $\infty$.
Let $\Sigma'$ denote $\Sigma$ with the two branch points removed.

\begin{remark}[Punctures and compactification]
    For notational purposes, we will refer to the two punctures of $\Sigma$ as $\infty_-$ and $\infty_+$.
    We will not work directly with a compactification $\overline{\Sigma} = \Sigma \cup \{\infty_\pm\}$, however; when we say ``near $\infty_\pm$'', we just mean in the corresponding punctured neighbourhood on $\Sigma$.
\end{remark}

Fix $\zeta \in \C^*$ and consider the flat connection $(E, \nabla_\zeta)$ with associated spectral network $\W = \W_\zeta(\theta)$, as shown in \cref{fig:sn-labelled-sections}. 
The abelianization procedure uses $\W$ to define a rank 1 connection $(\L^\ab_\zeta, \nabla_\zeta^\ab) = \ab (E, \nabla_\zeta)$ over the  spectral cover $\Sigma$.
The resulting connection $\nabla^\ab_\zeta$ is almost-flat, i.e.\ it is flat on $\Sigma'$ but has holonomy $-1$ around the branch points $b_\pm = (\pm \sqrt{-2m}, 0) \in \Sigma$.

Our first goal is to extend the abelianization correspondence to framed bundles, that is, to explain how abelianization of a framed connection $(E, \nabla, g)$ induces a framing upstairs for $(\L^\ab_\zeta, \nabla_\zeta^\ab)$ near the punctures $\infty_\pm$.
We will show that there exists a natural frame $g^\ab_{\zeta, \pm}$ with respect to which $\nabla^\ab_\zeta$ is given by the diagonal entries of the singular part of $\nabla_\zeta$, so that $(\L_\zeta, \nabla^\ab_\zeta, g^\ab_{\zeta, \pm})$ belongs to the following space of connections. 

\begin{definition}[Framed abelian bundles in $\abAfr_\zeta$]\label{def:ab-framed-connections}
    For fixed $\zeta \in \C^*$, let $\abAfr_\zeta$ denote the set of framed almost-flat bundles $(\L, \nabla^\ab, g^\ab_\pm)$ where:
    \begin{itemize}    
        \item $\L$ is a holomorphic line bundle over $\Sigma$.
    
        \item $\nabla^\ab$ is an almost-flat (complex) connection on $\L$ with irregular singularities at $\infty_\pm$, of the form 
        \begin{equation} \label{eq:ab-framed-form}
            \nabla^\ab = d \pm \pi^* \left[ \zeta^{-1} \frac{dw}{w^3} + \zeta \frac{d \overline{w}}{\overline{w}^3} + (\zeta^{-1} m - \frac{1}{2} m^{(3)}) \frac{dw}{w} + (\zeta \overline{m} + \frac{1}{2} m^{(3)})\frac{d \overline{w}}{\overline{w}}   \right]
        \end{equation}
         with respect to the framing $g^\ab_\pm$ near $\infty_\pm$.
    \end{itemize}
    Say that $(\L, \nabla^\ab, g^\ab_\pm) \cong (\L', \nabla^{\ab,\prime}, g^{\ab, \prime}_\pm)$ if there is a bundle isomorphism $\L \xrightarrow{\sim} \L'$ preserving the additional structure, and let $\abMfr_\zeta$ denote the set of isomorphism classes of $\abAfr_\zeta$.
\end{definition}

\begin{remark}[Rigidity of framed form] \label{rem:abelian-form-rigidity}
    This definition is more rigid than \cref{def:framed-connections} for the space $\Afr_\zeta$ of nonabelian framed connections, in the following sense:
    \begin{itemize}
        \item For $\Afr_\zeta$, the framed form \eqref{eq:framed-connection-form} of the connections $\nabla$ specified the singular terms, but allowed for unspecified regular terms.
        A gauge transformation approaching the identity near the puncture could still modify these regular terms.

        \item For $\abAfr_\zeta$, we are \emph{fully specifying} the form of $\nabla^\ab$, i.e.\ specifying the singular terms and requiring that there are \emph{no additional regular terms}. 
        A gauge transformation preserving the framed form must therefore be identically equal to $1$ near the punctures.
    \end{itemize}
\end{remark}
The abelianization construction requires a specification of flat sections $(s_i, s_j)$ in each cell of $\W$ (again see \cref{fig:sn-labelled-sections}).
More generally we can look for a frame $(f_i, f_j)$ in each cell with respect to which $\nabla$ is diagonal.
We will use these formulations interchangeably: given a frame $(f_i, f_j)$ with respect to which $\nabla$ is of the form
\begin{equation}
    \nabla = d + dQ + \Lambda \frac{dw}{w} + \Lambda' \frac{d \overline{w}}{\overline{w}}
\end{equation}
where $Q, \Lambda, \Lambda'$ are diagonal,
a corresponding frame of flat sections is given by
\begin{equation}\label{eq:sections-vs-frames}
    (s_i, s_j) = (f_i, f_j) \cdot w^{-\Lambda} \overline{w}^{- \Lambda'} e^{-Q},
\end{equation}
and vice versa. 

For now we will restrict our focus to a neighbourhood of the puncture.
We assume for notational concreteness that $\Re(\zeta^{-1} m) > 0$, but the argument for the other case is the same.
Label the ``big cells'' near $w=0$ by $\mathcal{C}_1, \dots, \mathcal{C}_4$, and label the two components of the ``small cell'' by $\mathcal{C}_{s,1}$ and $\mathcal{C}_{s,3}$, as shown in \cref{subfig:sn-cell-components}.
We can choose the frames in each cell using a slight modification of the classical Stokes theory.

\begin{figure}[ht]
    \centering
    \subcaptionbox{Cells near $z=\infty$\label{subfig:sn-cell-components}}
        {\includegraphics[scale=1]{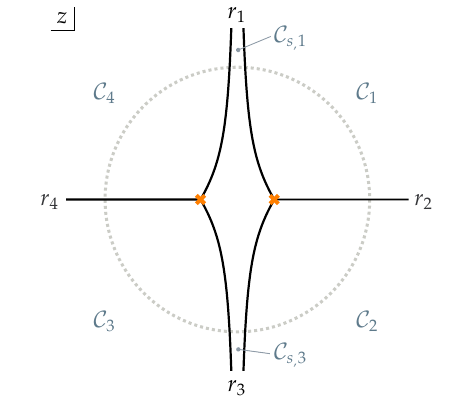}}
    \hspace{1cm}
    \subcaptionbox{Choice of frames near $w=0$\label{subfig:sn-w-frames}}
    {\begin{tikzpicture}[scale=1]
      \draw[thick] (-2, 0) -- (2, 0);
      \draw[dashed, gray!50] (0, -3) -- (0, 3);

      \draw[thick] (1,2.5) arc(120:240:2 and 3);
      \draw[thick] (-1, 2.5) arc(60:-60:2 and 3);

      \node[black] at (0, 3.1) {$r_3$};
      \node[black, left] at (-2, 0) {$r_4$};
      \node[black] at (0, -3.1) {$r_1$};
      \node[black, right] at (2, 0) {$r_2$};

      \node[green] at (-1,0.5) {$(f_3, f_4)$};
      \node[green] at (-1,-1) {$(f_1, f_4)$};
      \node[green] at (1,-1) {$(f_1, f_2)$};
      \node[green] at (1,0.5) {$(f_3, f_2)$};
      \node[green] at (0, 2.15) {$(f_3, {\color{red} f_1})$};
      \node[green] at (0, -2.5) {$(f_1, {\color{red} f_3})$};

      \node[slateblue] at (-1,1) {$\mathcal{C}_3$};
      \node[slateblue] at (-1,-0.5) {$\mathcal{C}_4$};
      \node[slateblue] at (1,-0.5) {$\mathcal{C}_1$};
      \node[slateblue] at (1,1) {$\mathcal{C}_2$};
      \node[slateblue] at (0, 2.65) {$\mathcal{C}_{s,3}$};
      \node[slateblue] at (0, -2) {$\mathcal{C}_{s,1}$};

      \node at (-2,3) {\fbox[rb]{$w$}
      };
    \end{tikzpicture}
    }

    \caption{The spectral network $\W$ in a neighbourhood of $w=0$, shown here for $\Re(\zeta^{-1} m) > 0$.
    The walls are asymptotic to the anti-Stokes rays, which are labelled as in \cref{fig:stokes-sectors}.
    The indicated frames $(f_i, f_j)$ diagonalize $\nabla$ in each cell (cf.\ \cref{fig:sn-labelled-sections}); those labelled in green can be obtained by restricting the sectorial frames from the classical theory.}
    \label{fig:sn-components}
\end{figure}

\begin{prop}[Sectorial asymptotic existence in terms of $g$] \label{prop:asymptotic-existence-g}
    In a neighbourhood of $w=0$ in each extended sector $\eSect_i$, there is an invertible matrix $\tilde{\Sigma}_i$ of smooth functions such that $\nabla_\zeta$ has the diagonal form 
    \begin{equation}\label{eq:diag-form}
            \nabla_\zeta = d + \left[ -\zeta^{-1} \frac{dw}{w^3} - \zeta \frac{d \overline{w}}{\overline{w}^3} - (\zeta^{-1} m - \frac{1}{2} m^{(3)}) \frac{dw}{w} - (\zeta \overline{m} + \frac{1}{2} m^{(3)})\frac{d \overline{w}}{\overline{w}}  \right]H
        \end{equation}
    with respect to the sectorial frame $g \cdot \tilde{\Sigma}_i $.

  Furthermore, each $\tilde{\Sigma}_i \to \bbid$ as $w \to 0$ in $\eSect_i$.
\end{prop}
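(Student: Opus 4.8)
The plan is to extract $\tilde\Sigma_i$ directly from the normalized sectorial flat frames already furnished by \cref{prop:sectorial-asymptotics}, since the analytic content (existence of flat frames with the prescribed asymptotics) is recorded there. The one conceptual observation that makes this work is that the target diagonal form \eqref{eq:diag-form} is nothing but $d + dA_0 \cdot H$, where $A_0$ is the antiderivative of \eqref{eq:normalization-antiderivative}; the whole statement then collapses to a change-of-frame computation.

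First I would confirm by direct differentiation of \eqref{eq:normalization-antiderivative} that $dA_0 \cdot H$ reproduces the bracketed expression in \eqref{eq:diag-form}: the $\tfrac12\zeta^{-1}w^{-2}$ and $\tfrac12\zeta\overline w^{-2}$ terms differentiate to the cubic-pole terms $-\zeta^{-1}\,dw/w^3 - \zeta\,d\overline w/\overline w^3$, while the two logarithmic terms differentiate to the simple-pole terms, matching \eqref{eq:diag-form} exactly. Next, writing the flat frame from \cref{prop:sectorial-asymptotics} as $\Phi_i = g\cdot M_i$, I would set $\tilde\Sigma_i \coloneqq M_i\, e^{A_0(w)H}$, so that $g\cdot\tilde\Sigma_i = \Phi_i\, e^{A_0(w)H}$. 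As a product of the smooth invertible transition matrix $M_i$ and the invertible diagonal matrix $e^{A_0 H}$, this $\tilde\Sigma_i$ is smooth and invertible on a punctured neighbourhood of $w=0$ inside $\eSect_i$.

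The connection matrix of $\nabla_\zeta$ in the flat frame $\Phi_i$ vanishes, so under the gauge change by $P = e^{A_0 H}$ the new connection matrix is $P^{-1}dP$. Since $H$ is diagonal and commutes with $A_0 H$, one has $d(e^{A_0 H}) = dA_0\cdot H\, e^{A_0 H}$ and hence $P^{-1}dP = dA_0\cdot H$, which by the first step is exactly the diagonal form \eqref{eq:diag-form}. This establishes the main claim for the frame $g\cdot\tilde\Sigma_i$. The ``furthermore'' assertion $\tilde\Sigma_i \to \bbid$ is then immediate, being a verbatim restatement of the normalization \eqref{eq:sectorial-normalization}, namely $M_i\cdot e^{A_0(w)H}\to 1$ as $w\to 0$ in $\eSect_i$.

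I do not anticipate a real obstacle, as all the hard analysis is delegated to \cref{prop:sectorial-asymptotics}; the only point demanding care is bookkeeping of conventions, so that the sign of $A_0$ and the direction of the gauge transformation are arranged to genuinely absorb the ``regular terms'' of \eqref{eq:framed-connection-form} into $\tilde\Sigma_i$ rather than leaving a residual off-diagonal or regular piece. Were one to seek a proof not relying on \cref{prop:sectorial-asymptotics}, the substantive work would instead be the classical Hukuhara--Turrittin sectorial reduction of the irregular remainder in each extended sector; but with \cref{prop:sectorial-asymptotics} in hand this is unnecessary.
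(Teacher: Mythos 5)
Your proof is correct, but it takes a genuinely different route from the paper's. You derive \cref{prop:asymptotic-existence-g} formally from \cref{prop:sectorial-asymptotics} by setting $\tilde{\Sigma}_i \coloneqq M_i\, e^{A_0 H}$, so that $g\cdot\tilde{\Sigma}_i = \Phi_i\, e^{A_0 H}$ and the connection form becomes $e^{-A_0 H}\, d(e^{A_0 H}) = dA_0\cdot H$, which is indeed the bracket in \eqref{eq:diag-form}; the limit $\tilde{\Sigma}_i \to \bbid$ is then literally the normalization \eqref{eq:sectorial-normalization}. The paper instead proves the statement in \cref{cor:appendix-asymptotic-existence-g} by starting from the classical \emph{holomorphic} sectorial reduction (\cref{prop:asymptotic-existence-holo}) and the explicit formula \eqref{eq:holo-frame} relating $g$ to the compatible holomorphic frame $\tau_{a,\zeta}$, setting $\tilde{\Sigma}_i = g_\zeta M \Sigma_i M^{-1}$; the nontrivial step there is showing that conjugation by the singular diagonal matrix $M(w)$ does not destroy the convergence $\Sigma_i\to\bbid$, which requires the off-diagonal entries of $\Sigma_i$ to be $\O(|w|)$ while the conjugation only costs a factor $|w|^{\pm\delta}$ with $\delta\in(-1,1)$ (a difference of parabolic weights). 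Your approach buys brevity by pushing all of that analysis into the quoted \cref{prop:sectorial-asymptotics} (Tulli's Lemmas 3.5--3.6); since that proposition is imported as a black box, there is no circularity within this paper's logic, but be aware that its proof is essentially the same conjugation estimate, so the two arguments are the same analysis packaged differently --- yours exhibits the two statements as equivalent under multiplication by $e^{\pm A_0 H}$ (flat-section version versus connection-form version), while the paper's version is self-contained and additionally sets up the holomorphic frame $\tau_{a,\zeta}$ needed for the Stokes-data conventions. One small point worth making explicit in your write-up: $e^{A_0 H}$ involves $\log w$ and $\log\overline{w}$, so its single-valuedness on the extended sector depends on the branch choice already fixed in \cref{prop:sectorial-asymptotics}.
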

\begin{proof}
    See \cref{sub:sectorial-stokes-data}.
\end{proof}

We can thereby obtain the desired frames $(f_1, f_2), (f_3, f_2), (f_3, f_4), (f_1, f_4)$ in the big cells by restricting each of the sectorial frames $g \cdot \tilde{\Sigma}_i$ from $\eSect_i$ to $\mathcal{C}_i$.
The corresponding flat sections $(s_1, s_2), (s_3, s_2), (s_3, s_4), (s_1, s_4)$ are the restrictions of the sectorially-defined sections $\Phi_i$ from the classical theory.

In fact, $f_1$ is also defined in the small cell $\mathcal{C}_{s,1}$, so it only remains to extend $f_3$ to this region (and likewise with $1$ and $3$ swapped; i.e.\ we must describe the $f_i$  labelled in red in \cref{subfig:sn-w-frames}).
To this end, we can analytically continue the corresponding flat section $s_3$ from $\mathcal{C}_3$ to $\mathcal{C}_{s,1}$ (counterclockwise, say) using the differential equation $\nabla s_3 = 0$. 
The fact that $s_1$ and $s_3$ form a basis follows from the nonvanishing of Stokes data \cite[Proposition 3.11]{Tulli:2019}, i.e.\ that the Stokes matrix element $a$ is nonzero when $m \neq 0$. 
We can then get the desired frame $(f_1, f_3)$ from $(s_1, s_3)$ by using \eqref{eq:sections-vs-frames} with
\begin{equation} \label{eq:singular-type}
    \begin{cases}
     Q   = \frac{1}{2} ( \zeta^{-1} w^{-2} + \zeta \overline{w}^{-2}) H, \\
    \Lambda  = - (\zeta^{-1} m - \frac{1}{2} m^{(3)})H, \\
    \Lambda' = - (\zeta \overline{m} + \frac{1}{2} m^{(3)})H.
    \end{cases}
\end{equation}

To summarize, we have chosen frames $(f_i, f_j)$ in a neighbourhood of $w=0$ in each cell with respect to which the connection $\nabla_\zeta$ has the diagonal form $\eqref{eq:diag-form}$.
By construction, these frames glue together to give sections of the abelianized bundle near the punctures.

\begin{cor}[Induced frame for $\L^\ab$] \label{cor:induced-frame}
    In a neighbourhood of each puncture $\infty_\pm$ of $\Sigma$, there exists a frame $g^\ab_{\zeta, \pm}$ for $\L^\ab_\zeta$ with respect to which 
  \begin{equation} \label{eq:induced-frame-form}
      \nabla^\ab_\zeta = d \pm \pi^* \left[ \zeta^{-1} \frac{dw}{w^3} + \zeta \frac{d \overline{w}}{\overline{w}^3} + (\zeta^{-1} m - \frac{1}{2} m^{(3)}) \frac{dw}{w} + (\zeta \overline{m} + \frac{1}{2} m^{(3)})\frac{d \overline{w}}{\overline{w}}   \right].
  \end{equation}
\end{cor}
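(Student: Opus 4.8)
The plan is to obtain $g^\ab_{\zeta,\pm}$ directly from the cell-wise diagonalizing frames $(f_i,f_j)$ constructed above, and then to verify that on each sheet of $\Sigma$ these local frames patch into a single frame of $\L^\ab_\zeta$ on a punctured neighbourhood of the corresponding puncture.

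First I would read off the connection form on each sheet. Writing $\omega$ for the $1$-form appearing in \eqref{eq:induced-frame-form}, the diagonal form \eqref{eq:diag-form} reads $\nabla_\zeta = d - \omega H$ with $H = \diag(1,-1)$, so its $(1,1)$- and $(2,2)$-entries are $-\omega$ and $+\omega$. Under the abelianization identification $\iota$ (\cref{def:w-pair}), the two components of a frame $(f_i,f_j)$ span $\L^\ab_\zeta$ on the two sheets of $\Sigma$ lying over the cell, and the induced rank-$1$ connection on each sheet is $d + \pi^*(\text{the corresponding diagonal entry})$. Labelling the punctures $\infty_\pm$ by the sign of this entry therefore yields exactly \eqref{eq:induced-frame-form}, with the $\pi^*$ reflecting that the entries are pulled back from $C$.

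The substantive step is the gluing. Fixing a sheet, I would check that the components of the frames $(f_i,f_j)$ lying over it agree, as local frames of $\L^\ab_\zeta$, across each wall crossing. This is exactly the content of \cref{lem:ab-parallel-transport}: across a wall of type $ij$ the section attached to the puncture into which the wall runs is fixed, so on that sheet the frame continues without jump, while on the other sheet the Stokes factor is the prescribed transition of $\L^\ab_\zeta$, which extends smoothly over $\pi^{-1}(\W)$ as recorded after \cref{def:w-pair}. Since every $\tilde{\Sigma}_i \to \bbid$ as $w \to 0$ (\cref{prop:asymptotic-existence-g}), the resulting frame is smooth up to the puncture and its connection form has no regular part, as demanded by the rigid \cref{rem:abelian-form-rigidity}.

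The main obstacle is the consistency across the small cell and around the puncture. There the frame on one sheet ($f_3$ near $\infty_+$ when $\Re(\zeta^{-1}m)>0$, say) is defined by analytically continuing the flat section $s_3$ and converting via \eqref{eq:sections-vs-frames}, rather than sectorially; I would use the nonvanishing $a \neq 0$ for $m \neq 0$ (\cite[Proposition 3.11]{Tulli:2019}) to guarantee that $(s_1,s_3)$ is a genuine frame there, and then confirm that the resulting $f_3$ matches the adjacent sectorial frames, so that transporting the frame once around the puncture is single-valued and consistent with the formal monodromy \eqref{eq:formal-monodromy}. The only remaining bookkeeping is the swap of the ordering of $(f_i,f_j)$ across branch cuts, which is routine.
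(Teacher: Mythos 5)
Your proposal is correct and follows essentially the same route as the paper: the corollary is obtained there exactly by restricting the sectorial frames $g\cdot\tilde{\Sigma}_i$ of \cref{prop:asymptotic-existence-g} to the big cells, extending into the small cells by analytic continuation of $s_3$ (resp.\ $s_1$) using the nonvanishing of the Stokes datum $a$, and observing that the unipotent form of the wall jumps lets the components glue to sections of $\L^\ab_\zeta$ whose connection form is the corresponding diagonal entry of \eqref{eq:diag-form}. The only point worth tightening is your gluing step: since the jumps are unipotent, \emph{both} components of the frame continue across a wall (the fixed section literally, the other via its class modulo the fixed line), and single-valuedness around the puncture follows because the multivaluedness of the factor $w^{-\Lambda}\overline{w}^{-\Lambda'}$ in \eqref{eq:sections-vs-frames} exactly cancels the formal monodromy \eqref{eq:formal-monodromy} of the flat sections.
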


This gives us the desired framed abelianization map
\begin{align} \label{eq:ab-map-set}
\begin{split}
    \ab: \Afr_\zeta &\to \abAfr_\zeta \\ 
    (E, \nabla_\zeta, g) &\mapsto (\L^\ab_\zeta, \nabla^\ab_\zeta, g^\ab_{\zeta, \pm}),
\end{split}
\end{align}
which descends to a map of moduli spaces
\begin{align} \label{eq:ab-map-moduli}
\begin{split}
    \ab: \Mfr_\zeta &\to \abMfr_\zeta \\
    [(E, \nabla_\zeta, g)] &\mapsto [(\L_\zeta^\ab, \nabla^\ab_\zeta, g^\ab_{\zeta, \pm})].
\end{split}
\end{align}

\begin{remark}[Abelian moduli space expectations]\label{abelian-moduli-expectations}
    Analogously to \cref{rem:nonabelian-moduli-expectations} about $\Mfr_\zeta$, we will carry out calculations for $\abMfr_\zeta$ using the ``obvious tangent spaces'', and we expect (but will not prove or need) that $\ab$ gives an isomorphism of moduli spaces $\Mfr_\zeta \xrightarrow{\sim} \abMfr_\zeta$.
\end{remark}

\subsection{Interlude: extending frames} 
\label{sub:extending-frames}

Soon we will want to extend the frames $g^\ab_\pm$ of the bundles $(\L, \nabla^\ab, g^\ab_\pm) \in \abAfr_\zeta$ from neighbourhoods of the punctures $\infty_\pm$ to the rest of $\Sigma$.
We will address this problem here in slightly more generality.

Let $S = \CP^1 \setminus \{p_1, \dots, p_m\}$ be an $m$-punctured sphere, and let $L$ be a complex line bundle over $S$ with a flat $\C^*$-connection $\nabla$.
Choose a trivialization $\tau_i$ of $L$ in a neighbourhood $U_i$ of each puncture, and let $\alpha_i$ denote the corresponding connection form of $\nabla$. 
Let $\gamma_i$ be a counterclockwise loop around $p_i$ in $U_i$ (see \cref{fig:punctured-sphere}).

\begin{figure}[ht]
    \centering
    \includegraphics[scale=1]{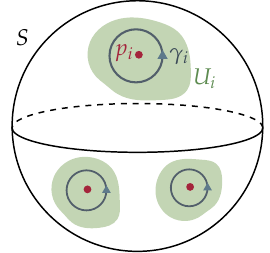}
    \caption{An $m$-punctured sphere $S$. Each puncture $p_i$ is surrounded by a loop $\gamma_i$, and a neighbourhood $U_i$ in which we have a fixed trivialization $\tau_i$ of the line bundle $L$ over $S$.}
    \label{fig:punctured-sphere}
\end{figure}

\begin{lemma}[Global extension condition]\label{lem:extend-triv}
    The local trivializations $\tau_i$ extend to a global trivialization $\tau$ of $L$ if and only if
    \begin{equation} \label{eq:winding-condition}
        \sum_i \int_{\gamma_i} \alpha_i = 0.
    \end{equation}
\end{lemma}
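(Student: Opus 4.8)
The plan is to reduce this to a purely topological statement about $\C^*$-valued functions and then translate the resulting winding-number criterion into the connection-theoretic condition \eqref{eq:winding-condition}. First I would observe that $S$ is homotopy equivalent to a wedge of circles, so $H^2(S;\Z) = 0$ and $L$ is \emph{topologically} trivial; fix a global smooth trivialization $\sigma$ and write $\nabla = d + \beta$ in $\sigma$, where flatness forces $\beta$ to be a \emph{closed} $1$-form on all of $S$. Each local trivialization can be compared to $\sigma$ by a smooth nowhere-zero function, $\tau_i = \sigma \cdot f_i$ with $f_i : U_i \to \C^*$, and the change-of-frame computation gives $\alpha_i = \beta + d\log f_i$. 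The whole question now becomes: when do the $f_i$ (defined near the punctures) extend to a single global function $g : S \to \C^*$? Indeed, such a $g$ exists if and only if $\tau \coloneqq \sigma \cdot g$ is a global trivialization restricting to each $\tau_i$ near $p_i$.

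Second, I would establish the topological extension criterion: the $f_i$ extend to a global $g : S \to \C^*$ if and only if $\sum_i n_i = 0$, where $n_i \coloneqq \wind(f_i|_{\gamma_i}) = \tfrac{1}{2\pi i}\int_{\gamma_i} d\log f_i$. For \emph{necessity}, note that $\wind(g|_{\gamma}) = \tfrac{1}{2\pi i}\int_\gamma d\log g$ descends to a homomorphism $H_1(S;\Z)\to\Z$, while the small loops satisfy the standard relation $\sum_i [\gamma_i] = 0$ in $H_1(S;\Z)$ (the $2$-chain $S' \coloneqq \CP^1 \setminus \bigcup_i D_i$, for small discs $D_i \ni p_i$, exhibits $\sum_i \gamma_i$ as a boundary up to sign), so $\sum_i n_i = \sum_i \wind(g|_{\gamma_i}) = 0$. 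For \emph{sufficiency}, when $\sum_i n_i = 0$ the divisor $\sum_i n_i\, p_i$ on $\CP^1$ has degree zero and is therefore principal: there is a rational function $r$ with $\operatorname{div}(r) = \sum_i n_i\, p_i$, so $r|_S$ is nowhere zero with $\wind(r|_{\gamma_i}) = n_i$. Then $f_i / r$ has winding zero around each puncture, hence admits a single-valued logarithm $h_i$ on a (shrunk) $U_i$; extending the disjoint collection $\{h_i\}$ by a partition of unity to a global smooth $h : S \to \C$ and setting $g \coloneqq r\, e^{h}$ produces the desired extension with $g|_{U_i} = f_i$.

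Finally I would translate this back. Integrating $\alpha_i = \beta + d\log f_i$ over $\gamma_i$ and summing gives
\begin{equation*}
    \sum_i \int_{\gamma_i}\alpha_i = \sum_i \int_{\gamma_i}\beta + 2\pi i \sum_i n_i .
\end{equation*}
Since $\beta$ is closed and $\sum_i[\gamma_i] = 0$ (equivalently, by Stokes on $S'$), the first sum vanishes, leaving $\sum_i\int_{\gamma_i}\alpha_i = 2\pi i\sum_i n_i$. As each $n_i \in \Z$, this equals zero precisely when $\sum_i n_i = 0$, which by the previous step is exactly the condition for the $\tau_i$ to extend. I expect the main obstacle to be the sufficiency half of the topological criterion — producing an honest global extension — where the degree-zero divisor being principal on $\CP^1$ does the essential work; some care is also needed with the orientation conventions making $\sum_i[\gamma_i] = 0$ hold, since it is exactly this relation that forces the $\beta$-contribution (and hence all dependence on the connection itself) to drop out, reconciling the connection-theoretic phrasing of \eqref{eq:winding-condition} with the underlying topological nature of the extension problem.
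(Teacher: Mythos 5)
Your proof is correct and follows essentially the same route as the paper's: your rational function $r$ with $\operatorname{div}(r)=\sum_i n_i\,p_i$ is the paper's explicit gauge transformation $g^{(1)}(z)=\prod_i(z-p_i)^{n_i}$, and your single-valued logarithms $h_i$ glued by a bump function are the paper's second gauge transformation $g^{(2)}$. The only difference is organizational — you isolate the winding-number criterion as a separate topological step before translating it into \eqref{eq:winding-condition} — but the substance is identical.
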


\begin{proof}
    Any closed $1$-form $\alpha$ which is defined on all of $S$ (e.g.\ the connection form of $\nabla$ with respect to a global trivialization) must satisfy 
    \begin{equation*}
        \sum_i \int_{\gamma_i} \alpha = 0
    \end{equation*}
    by Stokes' theorem, 
    so the condition \eqref{eq:winding-condition} is certainly necessary in order for the $\tau_i$ to extend.

    Conversely, given local trivializations satisfying \eqref{eq:winding-condition}, we can produce an extension as follows.
    Begin by choosing an arbitrary global trivialization $\tau^{(0)}$ of $L$ on $S$ (which exists since $L$ is a complex line bundle over a punctured surface), and let $\alpha^{(0)} \in \Omega^1(S)$ be the corresponding connection form of $\nabla$. 
    We want to find a gauge transformation $g: S \to \C^*$ such that $\nabla$ is of the desired form with respect to $g \tau^{(0)}$; that is, such that 
    \begin{equation*}
        (\alpha^{(0)} + d \log g)|_{U_i} = \alpha_i \quad \text{for each $i$.}
    \end{equation*}
    We split this up in two steps.

    Step 1 (matching up the integrals): there is a gauge transformation $g^{(1)}: S \to \C^*$ such that $\alpha^{(1)} \coloneqq \alpha^{(0)} + d \log g^{(1)}$ satisfies
    \begin{equation} \label{eq:gauge-same-integrals}
        \int_{\gamma_i} \alpha^{(1)} = \int_{\gamma_i} \alpha_i \quad \text{for each $i$.}
    \end{equation}
    For this we can explicitly take 
    \begin{equation*}
        g^{(1)}(z) = \prod_{i=1}^m (z-p_i)^{n_i},
    \end{equation*}
    where
    \begin{equation*}
        n_i \coloneqq \frac{1}{2 \pi i} \left( \int_{\gamma_i} \alpha_i  - \int_{\gamma_i}  \alpha^{(0)}\right) \in \Z.
    \end{equation*}
    Note that $\sum n_i = 0$ since both $\sum \int_{\gamma_i} \alpha_i = 0$ (by assumption) and $\sum \int_{\gamma_i} \alpha^{(0)} = 0$ (since $\alpha^{(0)}$ is a globally defined closed $1$-form), so $g^{(1)}$ indeed maps $S \to \C^*$.

    Step 2 (matching up the connection forms): there is a gauge transformation $g^{(2)}: S \to \C^*$ such that $\alpha^{(2)} \coloneqq \alpha^{(1)} + d \log g^{(2)}$ satisfies
    \begin{equation*}
        \alpha^{(2)}|_{U_i} = \alpha_i \quad \text{for each $i$.}
    \end{equation*}
    Indeed, in each $U_i$ we can define
    \begin{equation*}
        g^{(2)}(z)|_{U_i} = \exp \left(\int_*^z \alpha_i - \alpha^{(1)}   \right)
    \end{equation*}
    for some choice of basepoint $* \in U_i$. 
    Then
    \begin{equation*}
        \int_{\gamma_i} d \log g^{(2)} = \int_{\gamma_i} (\alpha_i - \alpha^{(1)}) = 0
    \end{equation*}
    by \eqref{eq:gauge-same-integrals}. But on the other hand 
    \begin{equation*}
        \int_{\gamma_i} d \log g^{(2)} = 2 \pi i \cdot \wind_0(g^{(2)} (\gamma_i)),
    \end{equation*}
    and so $g^{(2)} (\gamma_i) \subset \C^*$ has winding number zero around the origin for each $i$. 
    Hence we can choose an extension of $g^{(2)}$ from the $U_i$ to all of $S$.

    Combining these, the frame $\tau \coloneqq g^{(1)} g^{(2)} \tau^{(0)}$ gives us the desired global extension of the $\tau_i$.
\end{proof}

Now we will apply this to the bundles $(\L, \nabla^\ab, g^\ab_\pm) \in \abAfr_\zeta$ over the spectral curve $\Sigma$. 
In this case, by definition, we have trivializations $g^\ab_\pm$ near $\infty_\pm$ with respect to which $\nabla^\ab$ is of the prescribed form \eqref{eq:ab-framed-form}.
We can further prescribe trivializations at the branch points $b_\pm \in \Sigma$ (around which $\nabla^\ab$ has holonomy $-1$), as long as the condition \eqref{eq:winding-condition} of the above lemma is still satisfied.

\begin{cor}[Global frame for $\L$]\label{cor:global-frame}
    Given $(\L, \nabla^\ab, g^\ab_\pm) \in \abAfr_\zeta$, the frame $g^\ab_\pm$ extends to a global frame $g^\ab$ for $\L$ with respect to which:
    \begin{enumerate}
        \item near each puncture $\infty_{\pm}$, 
         \begin{equation}  \label{eq:ab-form-punctures}
         \nabla^\ab = d \pm \pi^* \left[ \zeta^{-1} \frac{dw}{w^3} + \zeta \frac{d \overline{w}}{\overline{w}^3} +(\zeta^{-1} m - \frac{1}{2} m^{(3)}) \frac{dw}{w} + (\zeta \overline{m} + \frac{1}{2} m^{(3)})\frac{d \overline{w}}{\overline{w}}   \right].
      \end{equation}

      \item near each branch point $b_\pm$,
       \begin{equation} \label{eq:ab-form-branch-points}
          \nabla^\ab = d \pm \frac{dt_\pm}{2t_\pm},
      \end{equation}
      where $t_\pm$ is a (fixed) coordinate for $\Sigma$ centred at $b_\pm$. 
    \end{enumerate}
\end{cor}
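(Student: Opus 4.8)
The plan is to apply \cref{lem:extend-triv} to the line bundle $\L$ over the spectral curve $\Sigma$, viewed as a punctured sphere. Recall that $\Sigma$ has genus zero, so removing the relevant marked points gives a punctured $\CP^1$ to which the lemma applies. The marked points here are the two punctures $\infty_\pm$ lying over $z=\infty$, together with the two branch points $b_\pm = (\pm\sqrt{-2m}, 0)$. So I would regard $S = \Sigma \setminus \{\infty_+, \infty_-, b_+, b_-\}$ as a $4$-punctured sphere and check that the winding condition \eqref{eq:winding-condition} is satisfied for the prescribed local trivializations.

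First I would write down the four local connection forms and compute each of the four loop integrals $\int_{\gamma_i}\alpha_i$. Near $\infty_\pm$ the prescribed form is \eqref{eq:ab-form-punctures}, so the only contribution to the contour integral around a small loop comes from the simple-pole terms $\pm\pi^*[(\zeta^{-1}m - \tfrac12 m^{(3)})\tfrac{dw}{w} + (\zeta\overline m + \tfrac12 m^{(3)})\tfrac{d\overline w}{\overline w}]$ (the $w^{-3}$ terms are exact away from the puncture and integrate to zero around a loop, and the regular terms contribute nothing in the limit). Near each branch point $b_\pm$ the prescribed form is \eqref{eq:ab-form-branch-points}, namely $\pm\tfrac{dt_\pm}{2t_\pm}$, whose residue contributes $\pm\pi i$ to the loop integral (reflecting the holonomy $-1$ of the almost-flat connection around each branch point).

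The key computational step is to verify that these four contributions sum to zero. Orienting the loops consistently (as boundaries of $S$), I expect the two puncture contributions to carry opposite signs because of the $\pm$ in \eqref{eq:ab-framed-form}, so their simple-pole parts cancel in pairs; similarly the two branch-point contributions $\pm\pi i$ cancel against each other. One has to be careful about orientation conventions on the double cover and about the fact that $\overline w$ is not holomorphic, so the $\tfrac{d\overline w}{\overline w}$ term must be handled as a genuine real loop integral; but its contour integral around a small loop is $-2\pi i$ times a coefficient (conjugate to the holomorphic case) and again appears with opposite signs at $\infty_+$ versus $\infty_-$. Once all four integrals are tallied the total vanishes, and \cref{lem:extend-triv} produces the desired global trivialization $g^\ab$ restricting to $g^\ab_\pm$ near the punctures and realizing the prescribed forms \eqref{eq:ab-form-branch-points} near the branch points.

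The main obstacle I anticipate is bookkeeping rather than anything conceptual: getting the orientations and the branch-cut conventions exactly right so that the signs line up, and justifying that the residue at a branch point is precisely $\pm\pi i$ (equivalently, that fixing the local form \eqref{eq:ab-form-branch-points} at $b_\pm$ is consistent with the $-1$ holonomy and with the extension coming from downstairs). A clean way to sidestep sign ambiguities is to observe that $\nabla^\ab$ is by construction the abelianization of an actual globally defined $\SL(2)$-connection $\nabla_\zeta$ on $C$, so a global (multivalued-in-the-$\pm$-sheets) flat structure already exists on $\Sigma'$; this a priori guarantees that the total winding must vanish, and the explicit residue computation then serves only to confirm the local normal forms. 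I would present the residue tally as the substance of the proof and invoke this global consistency as the conceptual reason the winding condition holds.
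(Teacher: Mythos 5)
Your proof is correct and follows essentially the same route as the paper: \cref{cor:global-frame} is stated there as an immediate consequence of \cref{lem:extend-triv}, with the winding condition \eqref{eq:winding-condition} verified exactly as you describe---the simple-pole contributions at $\infty_\pm$ cancel because of the opposite signs in \eqref{eq:ab-framed-form}, and the residues $\pm\tfrac{1}{2}$ prescribed at the two branch points cancel each other while remaining consistent with the holonomy $-1$. The paper leaves this residue tally implicit, so your explicit bookkeeping (including the remark that the $\tfrac{d\overline{w}}{\overline{w}}$ term contributes $-2\pi i$ times its coefficient) only makes the argument more complete.
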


\begin{remark}
    For an abelianized bundle $(\L^\ab_\zeta, \nabla^\ab_\zeta, g^\ab_{\zeta, \pm}) = \ab (E, \nabla_\zeta, g)$, we can extend the frames $(f_i, f_j)$ in each cell of the spectral network from a neighbourhood of $w=0$  to the entire cell by pushing down the extended frame $g^\ab$.
    By construction these frames diagonalize $\nabla_\zeta$ in the entire cell. 
    However, we emphasize that we only have an explicit formula for the connection form near $w=0$.
\end{remark}

\section{Regularized Atiyah-Bott forms}
\label{sec:regularized-forms}

We have now defined our main objects of study: the framed connections $(E, \nabla, g) \in \Afr_\zeta$ over $C = \CP^1$ and their abelianizations $(\L^\ab, \nabla^\ab, g^\ab_\pm) \in \abAfr_\zeta$ over the spectral cover $\Sigma$.
In this section we will define a regularized version of the Atiyah-Bott form on each of these spaces of connections, and show that it is preserved by abelianization.

The fact that abelianization preserves the standard Atiyah-Bott form is discussed in \cite[Section 10.4]{Gaiotto:2013} for connections on a closed surface with vanishing variations near the punctures, in which case the usual Atiyah-Bott formulas converge.
In our case, the integrals
\begin{equation*}
    \int_C \dot{\nabla}_1 \wedge \dot{\nabla}_2 \quad \text{and} \quad \int_\Sigma \dot{\nabla}^\ab_1 \wedge \dot{\nabla}^\ab_2
\end{equation*}
are (logarithmically) divergent when the parameters $m$ and $m^{(3)}$ are allowed to vary, so we will need to incorporate a regularization term.

\begin{remark}[Generalizations]
    The framed connections in $\Afr_\zeta$ and $\abAfr_\zeta$ have fixed higher-order singular terms, while their first-order singular part (i.e.\ the coefficients of $\frac{dw}{w}$ and $\frac{d \overline{w}}{\overline{w}}$) can vary.
    Many of the definitions in this section would make sense more generally for similar spaces of connections (and on other punctured Riemann surfaces), but for simplicity we will just focus on the two relevant cases.
\end{remark}

\subsection{On the base curve} 
\label{sub:reg-form-base}

To start, we will restate the definition of $\Omega^\reg$ on $\Afr_\zeta$ (cf.\ \cref{def:INTRO-reg-form}).

Let $C_R \coloneqq \CP^1 \setminus \{|w| < R\}$, and write
    \begin{equation}
        \mathfrak{h} \coloneqq \C \cdot H = \left\{ \begin{pmatrix}
            a & 0 \\  
            0 & -a
    \end{pmatrix} : a \in \C \right\} \subset \mathfrak{sl}(2, \C).
    \end{equation}

Recall that the framed connections $(E, \nabla, g) \in \Afr_\zeta$ are of the form
\begin{equation}
\begin{split} \label{eq:framed-connection-form-rewrite}
     \nabla =  d &- \zeta^{-1} H \frac{dw}{w^3}  - \zeta H \frac{d \overline{w}}{\overline{w}^3} - (\zeta^{-1} m - \frac{1}{2} m^{(3)}) H \frac{dw}{w} -  (\zeta \overline{m} + \frac{1}{2} m^{(3)}) H\frac{d \overline{w}}{\overline{w}}  \\
     &+ \text{regular terms}
\end{split}
\end{equation}
near $w=0$, and so their variations can be written
\begin{align}
    \dot{\nabla} &= - (\zeta^{-1} \dot{m} - \frac{1}{2} \dot{m}^{(3)})H  \frac{dw}{w} -  (\zeta \dot{\overline{m}} + \frac{1}{2} \dot{m}^{(3)}) H \frac{d \overline{w}}{\overline{w}}  + \text{regular terms} \\
    &= - i (\zeta^{-1} \dot{m} - \dot{m}^{(3)} - \zeta \dot{\overline{m}})H\, d\theta 
    - (\zeta^{-1} \dot{m} + \zeta \dot{\overline{m}})H \, \frac{dr}{r} + \text{regular terms} \label{eq:polar-variation-form}
\end{align}
in polar coordinates $w = re^{i \theta}$.
This is of the form
\begin{equation}
    \dot{\nabla} = (\mu + \O(r)) d \theta + (\lambda + \O(r)) \frac{dr}{r}
\end{equation}
for some $\mu, \lambda \in \mathfrak{h}$, namely
\begin{equation} \label{eq:explicit-mu-lambda-coeffs}
    \mu = - i (\zeta^{-1} \dot{m} - \dot{m}^{(3)} - \zeta \dot{\overline{m}})H \quad \text{and} \quad \lambda = - (\zeta^{-1} \dot{m} + \zeta \dot{\overline{m}})H.
\end{equation}

\begin{definition}[Regularized form] \label{def:reg-form}
    Define a \emph{regularized Atiyah-Bott form} $\Omega^\reg$ on $\Afr_\zeta$ by 
    \begin{equation} \label{eq:reg-form}
        \Omega^\reg(\dot{\nabla}_1, \dot{\nabla}_2) = \lim_{R \to 0} \left[\int_{C_R} \tr (\dot{\nabla}_1 \wedge \dot{\nabla}_2) - 2 \pi \log R \cdot \mathcal{R}(\dot{\nabla}_1, \dot{\nabla}_2) \right],
    \end{equation}
    where 
    \begin{equation}
        \mathcal{R}(\dot{\nabla}_1, \dot{\nabla}_2) = \tr \left( \mu_1 \lambda_2 - \mu_2 \lambda_1 \right)
    \end{equation}
    and
    \begin{equation} \label{eq:asymptotic-polar-coeffs}
    \dot{\nabla}_i = (\mu_i + \O(r)) d \theta + (\lambda_i + \O(r)) \frac{dr}{r} \quad \text{for some } \mu_i, \lambda_i \in \mathfrak{h}
    \end{equation}
    in polar coordinates $w = re^{i \theta}$ near $w=0$.
\end{definition}

\begin{remark}[Regularization term] \label{rem:reg-term}
Using the coefficients \eqref{eq:explicit-mu-lambda-coeffs}, we can explicitly calculate the regularization term
\begin{equation} \label{eq:regularization-term}
    \begin{split}
    -2\pi \log R \cdot \tr(\mu_1 \lambda_2 - \mu_2 \lambda_1) = -4\pi i \log R [
    &(\zeta^{-1} \dot{m}_1 - \dot{m}^{(3)}_1 - \zeta \dot{\overline{m}}_1)
    (\zeta^{-1} \dot{m}_2 + \zeta \dot{\overline{m}}_2) \\
    - &(\zeta^{-1} \dot{m}_2 - \dot{m}^{(3)}_2 - \zeta \dot{\overline{m}}_2)
     (\zeta^{-1} \dot{m}_1 + \zeta \dot{\overline{m}}_1)
    ]. \end{split}
\end{equation}
\end{remark}

\begin{remark}[L.\ Jeffrey form]
    A similar bilinear form appears in \cite[Definition 3.2]{Jeffrey:1994}, under the identification of her half-open cylinder $[0, \infty) \times S^1$ with the punctured disc $\Delta^\times$ via $(t, s) \mapsto e^{-t + 2\pi i s}$.
    (More precisely, \cite{Jeffrey:1994} defines a one-parameter family of forms indexed by $r \in [0, \infty)$; our definition for $\Omega^\reg$ coincides when $r = 0$.)

    The form in \cite{Jeffrey:1994} is introduced as an auxiliary construction to prove nondegeneracy of the Atiyah-Bott form on an ``extended moduli space'' of connections on a surface with boundary.
    We will instead introduce an abelian version of the form in \cref{sub:reg-form-cover} below, and study it using a gluing construction in \cref{sec:glued-symplectic-form}.
\end{remark}

The form $\Omega^\reg$ has the following basic properties.

\begin{lemma}[Convergence] \label{lem:reg-form-convergence}
    For any variations $\dot{\nabla}_1, \dot{\nabla}_2$, the limit defining $\Omega^\reg(\dot{\nabla}_1, \dot{\nabla}_2)$ is convergent.
\end{lemma}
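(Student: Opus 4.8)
The plan is to show that the potentially divergent part of the integral $\int_{C_R} \tr(\dot\nabla_1 \wedge \dot\nabla_2)$ is exactly cancelled by the regularization term $-2\pi \log R \cdot \mathcal{R}(\dot\nabla_1, \dot\nabla_2)$, so that the limit as $R \to 0$ exists. The key observation is that the divergence is purely logarithmic and comes entirely from the leading singular behaviour of the variations near $w = 0$, which is captured by the constant matrices $\mu_i, \lambda_i \in \mathfrak{h}$.

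\begin{proof}[Proof sketch]
Fix a small $\epsilon > 0$ and split $C_R = (C_\epsilon) \cup (A_{R, \epsilon})$, where $A_{R, \epsilon} \coloneqq \{R \le |w| \le \epsilon\}$ is the annulus near the puncture. The integral over the fixed region $C_\epsilon$ is independent of $R$ and finite, so it contributes a constant to the limit and can be ignored. It therefore suffices to analyze the integral over the annulus $A_{R, \epsilon}$ as $R \to 0$ and to check that its divergent part matches the regularization term.

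On the annulus, I would substitute the asymptotic expansions \eqref{eq:asymptotic-polar-coeffs} for $\dot\nabla_1$ and $\dot\nabla_2$ in polar coordinates $w = re^{i\theta}$ and expand the wedge product $\tr(\dot\nabla_1 \wedge \dot\nabla_2)$. Writing each $\dot\nabla_i = (\mu_i + \O(r))\,d\theta + (\lambda_i + \O(r))\,\frac{dr}{r}$, the wedge $d\theta \wedge \frac{dr}{r}$ picks out the cross terms, giving an integrand whose leading part is $\tr(\mu_1 \lambda_2 - \mu_2 \lambda_1)\,\frac{dr}{r}\wedge d\theta$ plus correction terms of order $\O(r) \cdot \frac{dr}{r} \wedge d\theta = \O(1)\, dr \wedge d\theta$ and $\O(r)$. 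Integrating the leading term over $\theta \in [0, 2\pi)$ and $r \in [R, \epsilon]$ produces
\begin{equation*}
    2\pi\, \tr(\mu_1 \lambda_2 - \mu_2 \lambda_1) \int_R^\epsilon \frac{dr}{r} = 2\pi\, \mathcal{R}(\dot\nabla_1, \dot\nabla_2)\,(\log \epsilon - \log R).
\end{equation*}
The $-2\pi \log R \cdot \mathcal{R}$ piece here is precisely what is cancelled by the regularization term in \eqref{eq:reg-form}, while the $2\pi \log \epsilon \cdot \mathcal{R}$ piece is a finite constant. The correction terms, being $\O(1)$ as $2$-forms (i.e.\ integrable densities in $r, \theta$ with no $\frac{1}{r}$ singularity), contribute a finite integral over the annulus that converges as $R \to 0$. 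Hence the bracketed expression in \eqref{eq:reg-form} converges.
\end{proof}

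The main thing to verify carefully is that the error terms really are benign: one must confirm that the $\O(r)$ remainders in \eqref{eq:asymptotic-polar-coeffs} combine so that no further $\frac{dr}{r}$-type singularity survives in the integrand, and that the wedge of two such one-forms yields an honest $2$-form whose coefficient is $L^1$ near $r = 0$. Since $d\theta \wedge d\theta = 0$ and $\frac{dr}{r} \wedge \frac{dr}{r} = 0$, only the mixed terms survive, and the worst surviving singularity is exactly the $\frac{dr}{r}$ coming from the product of the two leading constant matrices; every other term carries at least one extra power of $r$, which tames the singularity. This is the routine but essential point, and it is what makes the single logarithmic counterterm sufficient.
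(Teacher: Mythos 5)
Your strategy is exactly the paper's: split off a fixed compact piece, expand the variations in polar coordinates on the annulus $\{R \le |w| \le \epsilon\}$, observe that only the mixed $d\theta \wedge \tfrac{dr}{r}$ terms survive, and match the resulting logarithmic divergence against the counterterm. The identification of the error terms as integrable is also handled correctly.

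There is, however, a sign error that as written defeats the whole point of the lemma. Computing the wedge product, the coefficient of $d\theta \wedge \tfrac{dr}{r}$ is $\tr(\mu_1\lambda_2 - \mu_2\lambda_1)$, so after reordering one gets
\begin{equation*}
    \tr(\dot{\nabla}_1 \wedge \dot{\nabla}_2) = \left[-\tr(\mu_1\lambda_2 - \mu_2\lambda_1) + \O(r)\right]\tfrac{dr}{r}\wedge d\theta,
\end{equation*}
with a minus sign that you dropped when passing from $d\theta\wedge\tfrac{dr}{r}$ to $\tfrac{dr}{r}\wedge d\theta$. Consequently the divergent part of $\int_{A_{R,\epsilon}}$ is $+2\pi\log R\cdot\mathcal{R}(\dot{\nabla}_1,\dot{\nabla}_2)$, not $-2\pi\log R\cdot\mathcal{R}$ as you state; it is this $+2\pi\log R\cdot\mathcal{R}$ that is cancelled by the subtraction of $2\pi\log R\cdot\mathcal{R}$ in \eqref{eq:reg-form}. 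With your stated sign the bracketed expression would behave like $-4\pi\log R\cdot\mathcal{R}$ and diverge, so the sentence ``the $-2\pi\log R\cdot\mathcal{R}$ piece here is precisely what is cancelled by the regularization term'' does not follow from your own display. Once the sign is corrected, the argument coincides with the paper's proof.
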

\begin{proof}
    Fix a sufficiently small radius $R_0$ so that the framed form \eqref{eq:polar-variation-form} holds for $|w| < R_0$.
    Consider the annulus $A_R \coloneqq \{ R \leq |w| \leq R_0 \}$, and split up
    \begin{align*}
        \Omega^\reg(\dot{\nabla}_1, \dot{\nabla}_2) = \underbrace{\int_{C_{R_0}} \tr(\dot{\nabla}_1 \wedge \dot{\nabla}_2)}_{\text{finite}} + \lim_{R \to 0} \left[\int_{A_R} \tr(\dot{\nabla}_1 \wedge \dot{\nabla}_2) - 2 \pi \log R \cdot \tr \left( \mu_1 \lambda_2 - \mu_2 \lambda_1 \right) \right].
    \end{align*}
    Since
    \begin{equation*}
        \tr (\dot{\nabla}_1 \wedge \dot{\nabla}_2) = \left[ -\tr (\mu_1 \lambda_2 - \mu_2 \lambda_1) + \O(r)  \right] \frac{dr}{r} \wedge d\theta,
    \end{equation*}
    it follows that
    \begin{equation*}
        \int_{A_R}  \tr (\dot{\nabla}_1 \wedge \dot{\nabla}_2) = 2 \pi (\log R - \log R_0) \cdot  \tr (\mu_1 \lambda_2 - \mu_2 \lambda_1 ) + \O(1),
    \end{equation*}
    and so the regularization term of $\Omega^\reg$ cancels out the divergent term as $R \to 0$.
\end{proof}

\begin{lemma}[Gauge invariance] \label{lem:reg-form-gauge-invariance}
    $\Omega^\reg$ descends to a form on the moduli space $\Mfr_\zeta$. 
\end{lemma}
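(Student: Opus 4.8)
The plan is to show that $\Omega^\reg$ is \emph{basic} for the action of the group $\mathcal{G}$ of gauge transformations that approach the identity near $\infty$ and preserve the framed form; that is, that $\Omega^\reg$ is both $\mathcal{G}$-invariant and horizontal. By the tangent-space conventions of \cref{rem:nonabelian-moduli-expectations}, variations on $\Mfr_\zeta$ are admissible framed variations modulo infinitesimal gauge directions $\nabla\xi$ (with $\xi\to 0$ near $\infty$), so these two properties are exactly what is needed for the form to be well defined on the quotient.

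Invariance I expect to be essentially manifest: the integrand $\tr(\dot\nabla_1\wedge\dot\nabla_2)$ is pointwise $\Ad$-invariant, and by the explicit formulas \eqref{eq:explicit-mu-lambda-coeffs} the regularization coefficients $\mu_i,\lambda_i$ depend only on the gauge-invariant data $\dot m,\dot m^{(3)}$; hence every term in \eqref{eq:reg-form} is unchanged under $\mathcal{G}$. The real content is therefore horizontality, and I would fix an infinitesimal gauge transformation $\xi$ with $\xi\to 0$ near $\infty$ together with an arbitrary admissible variation $\dot\nabla_2$, and prove $\Omega^\reg(\nabla\xi,\dot\nabla_2)=0$.

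First I would dispose of the regularization term. A gauge direction does not move the parameters $m,m^{(3)}$, so it corresponds to $\dot m=\dot m^{(3)}=0$ and hence $\mu_1=\lambda_1=0$ by \eqref{eq:explicit-mu-lambda-coeffs}; concretely, $\xi\to 0$ leaves $d\xi$ with no constant leading polar coefficient, while the \emph{diagonal} singular part of the connection form kills the diagonal part of $\xi$ in $[\alpha,\xi]$ and sends the off-diagonal part only into off-diagonal (non-$\mathfrak{h}$) terms. Thus the regularization term in \eqref{eq:reg-form} vanishes identically, and it remains to evaluate $\lim_{R\to 0}\int_{C_R}\tr(\nabla\xi\wedge\dot\nabla_2)$. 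Here I would integrate by parts: since every connection in $\Afr_\zeta$ is flat, $\dot\nabla_2$ satisfies the linearized flatness equation $\nabla\dot\nabla_2=0$, so the standard identity $\tr(\nabla\xi\wedge\dot\nabla_2)=d\,\tr(\xi\,\dot\nabla_2)-\tr(\xi\,\nabla\dot\nabla_2)$ collapses to $\tr(\nabla\xi\wedge\dot\nabla_2)=d\,\tr(\xi\,\dot\nabla_2)$. By Stokes' theorem on $C_R$, whose only boundary is the circle $|w|=R$, the bulk integral becomes $\oint_{|w|=R}\tr(\xi\,\dot\nabla_2)$ (up to orientation). On this circle the $\tfrac{dr}{r}$ component of $\dot\nabla_2$ drops out, leaving integrand $\tr\big(\xi\,(\mu_2+\O(R))\big)\,d\theta$; since $\mu_2$ is a fixed diagonal matrix and $\xi\to 0$ uniformly as $R\to 0$, this boundary integral tends to $0$. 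Combining with the vanishing regularization term gives $\Omega^\reg(\nabla\xi,\dot\nabla_2)=0$, which together with invariance yields the descent.

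The hard part will be the asymptotic bookkeeping near the puncture, complicated by the informal ``obvious tangent space'' framework of \cref{rem:nonabelian-moduli-expectations}. Specifically, I must check that the requirement ``$\chi\to 1$ near $\infty$ and preserves the framed form'' forces enough decay of $\xi$---especially of its off-diagonal part against the $w^{-3}$ singularity of $\alpha$---for $\nabla\xi$ to remain an admissible variation with vanishing diagonal leading coefficients, and I must justify exchanging the limit $R\to 0$ with the boundary integral. Once these asymptotics are pinned down, the flatness-based integration by parts and the vanishing of the boundary term are routine.
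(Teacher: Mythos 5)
Your proposal is correct and follows essentially the same route as the paper: you show $\Omega^\reg$ is basic, handle invariance via conjugation-invariance of the trace, kill the regularization term because gauge directions have $\mu=\lambda=0$, and dispose of the integral term by the standard Atiyah--Bott integration by parts (linearized flatness plus Stokes' theorem) with the boundary term vanishing since $\xi\to 0$ near the puncture. The paper's proof is just a compressed version of exactly this argument.
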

\begin{proof}
    Let $\mathcal{G}$ denote the group of gauge transformations $g: C \to \GL_2(\C)$ which approach the identity near $w=0$ and preserve the framed form \eqref{eq:framed-connection-form-rewrite} of the connections $\nabla \in \Afr$.
    We must show that $\Omega^\reg$ is \emph{basic} with respect to the action of $\mathcal{G}$; that is, that:
    \begin{enumerate}[(i)]
        \item $\Omega^\reg$ is $\mathcal{G}$-invariant, and
        \item $\Omega^\reg$ vanishes on vertical tangent vectors (i.e.\ along gauge orbits).
    \end{enumerate}

    The first statement is just conjugation-invariance of the trace.
    For the second statement, the integral term of $\Omega^\reg$ vanishes along vertical tangent vectors by the usual Atiyah-Bott argument using Stokes' theorem (and the fact that $g \to \id$ as $R \to 0$).
    The regularization term also vanishes because variations in the gauge direction have $\mu = \lambda = 0$, since the gauge transformations preserve \eqref{eq:framed-connection-form-rewrite}.
\end{proof}

We will let $\Omega^\reg_\zeta$ denote the form on $\Hfr$ obtained by pulling back $\Omega^\reg$ via $\NAH_\zeta: \Hfr \to \Afr_\zeta$, as well as the induced form on the moduli space $\Xfr$ (see \cref{fig:regularized-forms}).

\begin{figure}[ht]
    \begin{tikzcd}
    &[-25pt] \text{\underline{Higgs bundles}:} & \text{\underline{Connections}:} \\[-20pt]
    \text{\underline{Sets of objects}:} & \color{green} (\Hfr, \Omega^\reg_\zeta) \arrow[d, two heads] \arrow[r, "\NAH_\zeta"] & \color{orange} (\Afr_\zeta, \Omega^\reg)\arrow[d, two heads] \\
    \text{\underline{Moduli spaces}:} & \color{green} (\Xfr, \Omega^\reg_\zeta) \arrow[r, "\NAH_\zeta"] & \color{orange} (\Mfr_\zeta, \Omega^\reg)                     
    \end{tikzcd}
    \caption{Forms induced by the regularized Atiyah-Bott form $\Omega^\reg$ on $\Afr_\zeta$.}
    \label{fig:regularized-forms}
\end{figure}

\subsection{On the spectral cover} 
\label{sub:reg-form-cover}

There is a natural analogue of \cref{def:reg-form} for the abelianized connections on the spectral cover $\Sigma$, where now the regularization term involves contributions from both of the punctures $\infty_\pm$.

Write $\Sigma_R \coloneqq \pi^{-1}(C_R)$, where $\pi: \Sigma \to C$ is the spectral cover.

\begin{definition}[Regularized abelian form] \label{def:ab-reg-form}
    Define a \emph{regularized abelian Atiyah-Bott form} $\Omega^{\reg, \ab}$ on $\abAfr_\zeta$ by
    \begin{equation} \label{eq:ab-reg-form}
        \Omega^{\reg, \ab}(\dot{\nabla}_1^\ab, \dot{\nabla}_2^\ab) = \lim_{R \to 0} \left[ \int_{\Sigma_R} \dot{\nabla}_1^\ab \wedge \dot{\nabla}_2^\ab - 2 \pi \log R \cdot \mathcal{R}^\ab(\dot{\nabla}_1^\ab, \dot{\nabla}_2^\ab)  \right],
    \end{equation}
    where
    \begin{equation} \label{eq:ab-reg-term}
        \mathcal{R}^\ab (\dot{\nabla}_1^\ab, \dot{\nabla}_2^\ab) = 2 ( \mu^\ab_1 \lambda^\ab_2 - \mu^\ab_2 \lambda^\ab_1)
    \end{equation}
    and
    \begin{equation} \label{eq:ab-asymptotic-polar-coeffs}
    \dot{\nabla}_i^\ab = (\pm \mu^\ab_i + \O(r)) d \theta + (\pm \lambda^\ab_i + \O(r)) \frac{dr}{r} \quad \text{for some } \mu^\ab_i, \lambda^\ab_i \in \C
    \end{equation}
    in polar coordinates $(r, \theta)$ centred at each puncture $\infty_\mp$.\footnote{More generally we could define $\mathcal{R}^\ab (\dot{\nabla}_1^\ab, \dot{\nabla}_2^\ab)  = ( \mu^+_1 \lambda^+_2 - \mu^+_2 \lambda^+_1) + ( \mu^-_1 \lambda^-_2 - \mu^-_2 \lambda^-_1)$ for variations of the form $\dot{\nabla}_i^\ab = (\mu^\pm_i + \O(r)) d \theta + (\lambda^\pm_i + \O(r)) \frac{dr}{r}$ near $\infty_\pm$.}
\end{definition}

\begin{remark}
    Because of the rigid prescribed framed form \eqref{eq:ab-framed-form} of the connections $\nabla^\ab \in \Afr_\zeta$ (cf.\ \cref{rem:abelian-form-rigidity}), their variations are \emph{exactly} of the form
\begin{equation}
    \dot{\nabla}_i^\ab = \pm \mu^\ab  d \theta \pm \lambda^\ab  \frac{dr}{r} \quad \text{for some } \mu^\ab_i, \lambda^\ab_i \in \C,
\end{equation}
i.e.\ the  $\O(r)$ terms in \eqref{eq:ab-asymptotic-polar-coeffs} are actually zero.
This will not be needed for the arguments below, however, and the more general definition will carry over to the semiflat setting in  \cref{sub:sf-regularized-atiyah-bott-forms}.
\end{remark}

The values $\mu^\ab_i$ and $\lambda^\ab_i$ for the abelianized variations $\dot{\nabla}_i^\ab$ are just the diagonal entries of $\mu_i$ and $\lambda_i$ for $\dot{\nabla}_i$, that is,
\begin{equation}
     \mu_i = \diag(\mu^\ab_i, -\mu^\ab_i) \quad \text{and} \quad  \lambda_i = \diag(\lambda^\ab_i, -\lambda^\ab_i),
\end{equation}
and so
\begin{equation}
     \tr \left( \mu_1 \lambda_2 - \mu_2 \lambda_1 \right) = 2 ( \mu^\ab_1 \lambda^\ab_2 - \mu^\ab_2 \lambda^\ab_1).
\end{equation}

\begin{cor}[Abelianization preserves regularization terms] \label{lem:reg-terms-coincide}
    \begin{equation}
         \mathcal{R}(\dot{\nabla}_1, \dot{\nabla}_2) = \mathcal{R}^\ab(\dot{\nabla}^\ab_1, \dot{\nabla}^\ab_2),
    \end{equation}
    i.e.\ $\Omega^{\reg, \ab}(\dot{\nabla}_1^\ab, \dot{\nabla}_2^\ab)$ and $\Omega^\reg(\dot{\nabla}_1, \dot{\nabla}_2)$ have the same regularization terms, namely given by \eqref{eq:regularization-term}.
\end{cor}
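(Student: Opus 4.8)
The plan is to reduce the statement to the scalar trace identity recorded immediately above the corollary, so the only genuine content is to confirm that the $\C$-valued polar coefficients $\mu^\ab_i, \lambda^\ab_i$ of the abelianized variations are exactly the diagonal entries of the $\mathfrak{h}$-valued coefficients $\mu_i, \lambda_i$ of the nonabelian variations. First I would invoke \cref{cor:induced-frame}: with respect to the induced frame $g^\ab_{\zeta, \pm}$, the abelianized connection $\nabla^\ab_\zeta$ takes the rigid form \eqref{eq:induced-frame-form}, which is $\pm$ the diagonal (coefficient-of-$H$) part of the singular terms of the nonabelian connection \eqref{eq:framed-connection-form}, pulled back along $\pi$. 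Since variations preserve this framed form, each allowed variation $\dot{\nabla}^\ab_i$ near $\infty_\mp$ is $\pm$ the corresponding diagonal entry of $\dot{\nabla}_i$. Writing $\mu_i = \mu^\ab_i H$ and $\lambda_i = \lambda^\ab_i H$ with $H = \diag(1,-1)$, this is precisely the identification $\mu_i = \diag(\mu^\ab_i, -\mu^\ab_i)$ and $\lambda_i = \diag(\lambda^\ab_i, -\lambda^\ab_i)$.

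With this in hand the computation is immediate, using $\tr(H^2) = 2$:
\begin{equation*}
\mathcal{R}(\dot{\nabla}_1, \dot{\nabla}_2) = \tr(\mu_1 \lambda_2 - \mu_2 \lambda_1) = (\mu^\ab_1 \lambda^\ab_2 - \mu^\ab_2 \lambda^\ab_1)\,\tr(H^2) = 2(\mu^\ab_1 \lambda^\ab_2 - \mu^\ab_2 \lambda^\ab_1),
\end{equation*}
which is exactly $\mathcal{R}^\ab(\dot{\nabla}^\ab_1, \dot{\nabla}^\ab_2)$ by the defining formula \eqref{eq:ab-reg-term}.

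The only point warranting care—though it is not a real obstacle—is the sign bookkeeping across the two punctures, and in particular checking that the factor of $2$ appearing in $\mathcal{R}^\ab$ is correctly accounted for on both sides. On the nonabelian side the $2$ comes from $\tr(H^2)$; on the abelian side, using the symmetric form $\mathcal{R}^\ab = (\mu^+_1 \lambda^+_2 - \mu^+_2 \lambda^+_1) + (\mu^-_1 \lambda^-_2 - \mu^-_2 \lambda^-_1)$ from the footnote to \cref{def:ab-reg-form}, the coefficients at $\infty_-$ differ from those at $\infty_+$ by an overall sign (the $\pm$ of \eqref{eq:ab-asymptotic-polar-coeffs}), which cancels in each bilinear term. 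Hence the two punctures contribute the \emph{same} quantity $\mu^\ab_1 \lambda^\ab_2 - \mu^\ab_2 \lambda^\ab_1$, and their sum independently reproduces the factor of $2$, consistently with the trace computation above.
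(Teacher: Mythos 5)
Your proposal is correct and follows essentially the same route as the paper, which establishes the corollary via the identification $\mu_i = \diag(\mu^\ab_i, -\mu^\ab_i)$, $\lambda_i = \diag(\lambda^\ab_i, -\lambda^\ab_i)$ and the resulting trace identity $\tr(\mu_1\lambda_2 - \mu_2\lambda_1) = 2(\mu^\ab_1\lambda^\ab_2 - \mu^\ab_2\lambda^\ab_1)$ stated immediately before the corollary. Your additional check that the two punctures each contribute the same bilinear quantity (the overall sign cancelling in each term) is a correct and welcome piece of bookkeeping that the paper leaves implicit.
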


\subsection{Regularization and abelianization} 
\label{sub:reg-and-abelianization}

Now we show that as expected, abelianization preserves the regularized forms.
(This is equality \eqref{eq:INTRO-ab-symplectomorphism} in the schematic commutative diagram \eqref{eq:forms-cd}.)

\begin{prop}[``Abelianization is a symplectomorphism''] \label{prop:ab-symplectomorphism}
As forms on $\Afr_\zeta$,
\begin{equation} \label{eq:ab-symplectomorphism}
    \ab^* \Omega^{\reg, \ab} = \Omega^\reg,
\end{equation}
i.e.\
\begin{equation}
    \Omega^\reg(\dot{\nabla}_1, \dot{\nabla}_2) = \Omega^{\reg, \ab}(\dot{\nabla}^\ab_1, \dot{\nabla}^\ab_2).
\end{equation}
\end{prop}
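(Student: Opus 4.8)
The plan is to leverage \cref{lem:reg-terms-coincide}, which already tells us the two regularization terms agree, so that the entire problem collapses to matching the integral parts. Since $\mathcal{R}(\dot{\nabla}_1, \dot{\nabla}_2) = \mathcal{R}^\ab(\dot{\nabla}^\ab_1, \dot{\nabla}^\ab_2)$, pulling back gives
\begin{equation*}
\Omega^\reg(\dot{\nabla}_1, \dot{\nabla}_2) - \Omega^{\reg, \ab}(\dot{\nabla}^\ab_1, \dot{\nabla}^\ab_2) = \lim_{R \to 0} \left[ \int_{C_R} \tr(\dot{\nabla}_1 \wedge \dot{\nabla}_2) - \int_{\Sigma_R} \dot{\nabla}^\ab_1 \wedge \dot{\nabla}^\ab_2 \right],
\end{equation*}
so it suffices to show that $\int_{C_R} \tr(\dot{\nabla}_1 \wedge \dot{\nabla}_2) = \int_{\Sigma_R} \dot{\nabla}^\ab_1 \wedge \dot{\nabla}^\ab_2$ for all sufficiently small $R$.

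The heart of the argument is a pointwise, frame-independent comparison of the two integrands, carried out cell by cell as in the unpunctured case \cite[Section 10.4]{Gaiotto:2013}. On the interior of each cell $U$ of $C \setminus \W$ I would pick a diagonalizing frame, so that by tracelessness $\nabla = d + \diag(\alpha, -\alpha)$ and $\dot{\nabla} = \diag(\dot\alpha, -\dot\alpha)$, while the two preimages $U^{(1)}, U^{(2)} = \pi^{-1}(U)$ carry $\dot{\nabla}^\ab = \dot\alpha$ and $\dot{\nabla}^\ab = -\dot\alpha$ respectively. A one-line computation then gives
\begin{equation*}
\tr(\dot{\nabla}_1 \wedge \dot{\nabla}_2) = 2\, \dot\alpha_1 \wedge \dot\alpha_2 = \pi_*\!\left(\dot{\nabla}^\ab_1 \wedge \dot{\nabla}^\ab_2\right)\big|_U,
\end{equation*}
where $\pi_*$ is the fiberwise sum over the (coherently oriented) double cover; the factor of $2$ from the trace matches the factor of $2$ from summing over the two sheets, which is precisely the $\SL(2)$-versus-$\GL(1)$ bookkeeping. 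Because $\tr(\dot{\nabla}_1\wedge\dot{\nabla}_2)$ is gauge invariant, hence a globally defined smooth $2$-form on $C_R$, this identity is independent of the chosen frames and holds on the union of all cell interiors.

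To integrate the local identity up, I would verify that the exceptional set contributes nothing. The $2$-form $\dot{\nabla}^\ab_1 \wedge \dot{\nabla}^\ab_2$ extends smoothly across the walls $\pi^{-1}(\W)$, since the almost-flat connection $\nabla^\ab$ itself extends across them (as recalled after \cref{def:w-pair}), so its variation does too; and near the branch points $b_\pm$ the frame of \cref{cor:global-frame} places $\nabla^\ab$ in the fixed form $d \pm \frac{dt_\pm}{2 t_\pm}$, so that $\dot{\nabla}^\ab \equiv 0$ there (using the identification of nearby spectral covers with a fixed $\Sigma$). Thus $\dot{\nabla}^\ab_1\wedge\dot{\nabla}^\ab_2$ is a globally defined integrable $2$-form on $\Sigma_R$ agreeing with the cellwise pushforward away from the measure-zero set $\pi^{-1}(\W) \cup \{b_\pm\}$, and the pushforward formula for the branched double cover gives
\begin{equation*}
\int_{\Sigma_R} \dot{\nabla}^\ab_1 \wedge \dot{\nabla}^\ab_2 = \int_{C_R} \pi_*\!\left(\dot{\nabla}^\ab_1 \wedge \dot{\nabla}^\ab_2\right) = \int_{C_R} \tr(\dot{\nabla}_1 \wedge \dot{\nabla}_2),
\end{equation*}
closing the argument.

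I expect the main obstacle to be the careful treatment of the branch points and walls, i.e.\ confirming that no boundary contributions are hidden there. The walls are in fact harmless, because we compare the two top-degree forms directly rather than integrating by parts, so only frame-independence and integrability are at stake. The genuinely delicate point is justifying that the variation $\dot{\nabla}^\ab$ is regular (indeed vanishing, in the frame of \cref{cor:global-frame}) at the branch points even as their locations $b_\pm = \pm\sqrt{-2m}$ move with the moduli — which is exactly where the convention of identifying nearby spectral covers with a fixed surface does the essential work.
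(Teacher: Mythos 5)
Your overall strategy (reduce to the integral terms via \cref{lem:reg-terms-coincide}, then compare the integrands cell by cell as in \cite[Section 10.4]{Gaiotto:2013}) is the same as the paper's, but there is a genuine gap at the crucial step, namely the sentence ``Because $\tr(\dot{\nabla}_1 \wedge \dot{\nabla}_2)$ is gauge invariant \dots this identity is independent of the chosen frames.'' The trace is invariant under a \emph{fixed} gauge transformation, but the diagonalizing frame in each cell is \emph{connection-dependent}: when you vary $\nabla$, the frame varies too, so the variation expressed in the diagonalizing frame differs from the variation in a fixed reference frame by an infinitesimal gauge term $\nabla\psi$ with $\psi = \dot{g}g^{-1}$. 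This shifts the integrand by an exact form, producing (a) a boundary contribution on $|w|=R$ that vanishes only in the limit $R \to 0$, and (b) potential contributions along the walls, since the diagonalizing frames jump across each wall by a connection-dependent Stokes matrix. Point (b) is exactly what the paper's proof is about: it interpolates the two frames across a thickened wall with a cutoff function and observes that the resulting extra terms in $\dot{\nabla}$ are \emph{strictly triangular}, hence drop out of $\tr(\dot{\nabla}_1 \wedge \dot{\nabla}_2)$. Your proposal never confronts this; asserting that ``the walls are harmless'' because you compare top-degree forms directly begs the question, since the two forms being compared are computed in frames that disagree across the walls by a varying amount.

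A second, related error: point (a) above means your reduction to ``$\int_{C_R} \tr(\dot{\nabla}_1 \wedge \dot{\nabla}_2) = \int_{\Sigma_R} \dot{\nabla}_1^\ab \wedge \dot{\nabla}_2^\ab$ for all sufficiently small $R$'' is asking for something false. The paper explicitly remarks after its proof that the two integrals are \emph{not} equal at finite $R$ (indeed $\int_{C_R}\tr(\dot{\nabla}_1\wedge\dot{\nabla}_2)$ is not even invariant under gauge transformations approaching the identity at $\infty$); only the regularized limits agree. The correct reduction is that the difference of the two integrals tends to $0$ as $R \to 0$. Your treatment of the branch points (vanishing of $\dot{\nabla}^\ab$ in the frame of \cref{cor:global-frame}, identification of nearby spectral covers) is fine, but the argument as written does not close without the triangularity observation at the walls and a limit statement rather than a finite-$R$ identity.
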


By \cref{lem:reg-terms-coincide}, the regularization terms appearing in both of the forms are the same, so we only need to compare the two integrals
\begin{equation*}
    \int_{C_R} \tr (\dot{\nabla}_1 \wedge \dot{\nabla}_2) \quad \text{and} \quad \int_{\Sigma_R} \dot{\nabla}_1^\ab \wedge \dot{\nabla}_2^\ab
\end{equation*}
which are regularized as $R \to 0$.
This boils down to a local calculation as argued in \cite[Section 10.4]{Gaiotto:2013}.
The main idea is that the Stokes jump upon crossing a wall of the spectral network should be thought of as contributing an off-diagonal distribution term supported at the wall, whose product with the other diagonal terms of the variations is traceless.

For completeness we include a slightly expanded version of their proof below. 
We will not say anything fundamentally new, but will emphasize how some of the details fit in with our setup and ``smooth out the delta function'' to remain in the $C^\infty$ setting.

\begin{proof}
    If the variations $\dot{\nabla}_i$ have support $V$ away from the walls of the spectral network $\W$, then $\dot{\nabla}_i = \pi_* \dot{\nabla}_i^\ab$, and so $\tr(\dot{\nabla}_1 \wedge \dot{\nabla}_2)|_V$ is just the sum of $\dot{\nabla}_1^\ab \wedge \dot{\nabla}_2^\ab$ on the two sheets of $\pi^{-1}(V)$.

    Suppose the support of a variation, say $\dot{\nabla}_1$, intersects $\W$. 
    We can assume the gauge has been chosen so that $\dot{\nabla}_1$ vanishes near the branch points, so it suffices to consider the intersection with a single wall $w$.
    For simplicity, choose coordinates $(x,y)$ on $C$ so that the wall is at $y=0$.

    Let $f = (f_1, f_2)$ and $f' = (f_1', f_2')$ be frames below and above the wall (see \cref{fig:wall-frames}) which diagonalize
    \begin{equation*}
    \nabla = d + \begin{pmatrix}
        \beta^- & \\ 
         & \beta^+
        \end{pmatrix}.
    \end{equation*}
    They are related by $f' = f \cdot S$ for a unipotent and (without loss of generality) upper-diagonal Stokes matrix
    \begin{equation*}
        S = \begin{pmatrix}
            1 & \alpha \\
            0 & 1
        \end{pmatrix}.
    \end{equation*}

    In order to obtain a single frame with which to compute variations of $\nabla$, we interpolate between $f$ and $f'$ as follows.
    Let $\widetilde{w}$ be a thickening of the wall $w$ to some region $-\epsilon < y < \epsilon$ inside of which the Stokes matrix element $\alpha$ is defined.    
    Let $\eta$ be a smooth cutoff function with 
    \begin{equation*}
        \eta(x,y) = \begin{cases}
            0 & \text{for } y < -\epsilon,  \\
            1 & \text{for } y > \epsilon.
        \end{cases}
    \end{equation*}
        Then the frame $\tilde{f} \coloneqq f \cdot \tilde{S}$, where
        \begin{equation*}
        \tilde{S} = \begin{pmatrix}
            1 & \eta \alpha \\
            0 & 1
        \end{pmatrix},
    \end{equation*}
    smoothly interpolates between $f$ and $f'$.

    \begin{figure}[ht]
        \centering
        \includegraphics[scale=1]{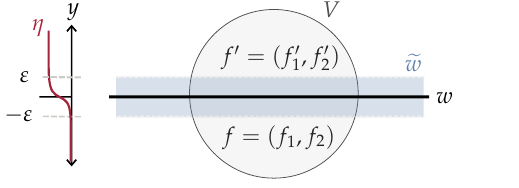}
        \caption{Here the support $V$ of the variations intersects a wall $w$ of the spectral network. 
        The frames $f$ and $f'$ are used for abelianization on each side of the wall.
        We interpolate between them using a cutoff function $\eta$ which smoothly goes from $0$ to $1$ as we move through the thickened wall $\widetilde{w}$.}
        \label{fig:wall-frames}
    \end{figure}

    With respect to $\tilde{f}$ inside $\widetilde{w}$,
    \begin{equation}
        \nabla = d + \begin{pmatrix}
            0 & d(\eta \alpha) \\
            0 & 0
        \end{pmatrix} + \begin{pmatrix}
            \beta^- & \eta \alpha (\beta^- - \beta^+) \\
            0 & \beta^+
        \end{pmatrix},
    \end{equation}
    and so variations are of the form
    \begin{equation}
        \dot{\nabla} = \begin{pmatrix}
            0 & * \\
            0 & 0
        \end{pmatrix} + \begin{pmatrix}
            \dot{\beta}^- & * \\
            0 & \dot{\beta}^+
        \end{pmatrix}.
    \end{equation}
    Crucially, the off-diagonal term does not contribute to $\tr(\dot{\nabla}_1 \wedge \dot{\nabla}_2)$, and we again get 
    \begin{equation*}
        \tr(\dot{\nabla}_1 \wedge \dot{\nabla}_2) = \dot{\beta}^-_1 \wedge \dot{\beta}^-_2 + \dot{\beta}^+_1 \wedge \dot{\beta}^+_2,
    \end{equation*}
    the sum of $\dot{\nabla}_1^\ab \wedge \dot{\nabla}_2^\ab$ on the two sheets.
\end{proof}

\begin{remark}
    To avoid potential confusion, we emphasize that the argument above does \emph{not} say that 
    \begin{equation*}
        \int_{C_R} \tr (\dot{\nabla}_1 \wedge \dot{\nabla}_2) \quad \text{and} \quad  \int_{\Sigma_R} \dot{\nabla}_1^\ab \wedge \dot{\nabla}_2^\ab
    \end{equation*}
    are equal at a finite radius $R > 0$.
    In fact, the first integral is not even invariant under gauge transformations that approach the identity near $z=\infty$ (which we utilized in the above proof).
    In order to obtain a gauge-invariant equality, we really need the full regularized integrals
    \begin{equation*}
        \lim_{R \to 0 }\left[\int_{C_R} \tr (\dot{\nabla}_1 \wedge \dot{\nabla}_2) -2 \pi \log R \cdot \mathcal{R}(\dot{\nabla}_1, \dot{\nabla}_2)\right] 
        = \lim_{R \to 0} \left[\int_{\Sigma_R} \dot{\nabla}_1^\ab \wedge \dot{\nabla}_2^\ab - 2 \pi \log R \cdot \mathcal{R}^\ab(\dot{\nabla}_1^\ab, \dot{\nabla}_2^\ab) \right].
    \end{equation*}
\end{remark}

\section{Glued symplectic form}
\label{sec:glued-symplectic-form}

In the previous section we showed that we can study the regularized Atiyah-Bott form $\Omega^\reg$ via its abelian analogue $\Omega^{\reg, \ab}$.
To relate this to the Ooguri-Vafa form, we will introduce an intermediary \emph{glued symplectic form} $\Omega^\glue$ on $\abAfr_\zeta$.
Then, using a kind of ``glued Riemann bilinear identity'' and the geometry of the relevant spectral networks, we will show that $\Xe$ and $\Xm$ are Darboux coordinates for $\Omega^\glue$.

This section is dedicated to explaining the construction of $\Omega^\glue$ and proving the following equalities of forms.  
(These are respectively \eqref{eq:INTRO-glue-equals-reg} and \eqref{eq:INTRO-glue-equals-ov} in the commutative diagram \eqref{eq:forms-cd}.)

\begin{prop}[Gluing and regularization] \label{prop:glue-equals-reg}
    \begin{equation} \label{eq:glue-equals-reg}
        \Omega^\glue = \Omega^{\reg, \ab},
    \end{equation}
    i.e.\ the glued form $\Omega^\glue$ coincides with the regularized abelian Atiyah-Bott form on $\abAfr_\zeta$.
\end{prop}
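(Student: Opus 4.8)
The plan is to start from the definition $\Omega^\glue(\dot\alpha_1,\dot\alpha_2) = \int_{\Sigma_R}(\dot\alpha_1 - d\dot\chi_1)\wedge(\dot\alpha_2 - d\dot\chi_2)$, expand the wedge, integrate by parts to expose a boundary integral over $\partial\Sigma_R$, and then match that boundary integral term-by-term against the regularization term of $\Omega^{\reg,\ab}$ as $R\to0$.

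First I would record two structural facts. Since the connections in $\abAfr_\zeta$ are almost-flat, their variations $\dot\alpha_i$ are closed $1$-forms; moreover, because the form near each branch point is the \emph{fixed} expression \eqref{eq:ab-form-branch-points}, each $\dot\alpha_i$ vanishes near the branch points (and $\dot\chi_i$ can be taken trivial there), so both are smooth on all of $\Sigma_R$ and $\partial\Sigma_R$ consists only of the two circles $\{r=R\}$ around the punctures $\infty_\pm$. Using $d\dot\alpha_i=0$ and the commutativity of scalar-valued forms one has $\dot\alpha_1\wedge d\dot\chi_2 = -d(\dot\chi_2\dot\alpha_1)$, $d\dot\chi_1\wedge\dot\alpha_2 = d(\dot\chi_1\dot\alpha_2)$, and $d\dot\chi_1\wedge d\dot\chi_2 = d(\dot\chi_1 d\dot\chi_2)$; Stokes' theorem then yields, for every $R\ll1$,
\begin{equation}
\Omega^\glue(\dot\alpha_1,\dot\alpha_2) = \int_{\Sigma_R}\dot\alpha_1\wedge\dot\alpha_2 + \int_{\partial\Sigma_R}(\dot\chi_2\dot\alpha_1 - \dot\chi_1\dot\alpha_2 + \dot\chi_1 d\dot\chi_2),
\end{equation}
reproducing the boundary formula announced in the introduction.

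The heart of the proof is then the $R\to0$ asymptotics of the boundary integral. Near $\infty_\pm$, \cref{cor:global-frame} gives $\dot\alpha_i = \pm(\mu^\ab_i\, d\theta + \lambda^\ab_i\,\tfrac{dr}{r})$ exactly (by the rigidity of the framed form), while the gluing transformation from \cref{sub:general-gluing-construction} has $\dot\chi_i = \pm\lambda^\ab_i\log r + \psi_i$ with $\psi_i$ bounded and vanishing as $r\to0$ — indeed $\chi$ is chosen precisely to cancel the $\tfrac{dr}{r}$ part. Restricting to the circle $r=R$ kills the radial components, giving $\dot\alpha_i|_{r=R} = \pm\mu^\ab_i\,d\theta$ and $\dot\chi_i|_{r=R} = \pm\lambda^\ab_i\log R + \psi_i$. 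Integrating over $\theta\in[0,2\pi)$ on each circle with the inner-boundary orientation, the $\psi_i$-terms stay bounded and each puncture contributes equally to the divergent part (the two sign flips in $\dot\alpha_i$ and $\dot\chi_i$ cancel), so
\begin{equation}
\int_{\partial\Sigma_R}(\dot\chi_2\dot\alpha_1 - \dot\chi_1\dot\alpha_2 + \dot\chi_1 d\dot\chi_2) = -2\pi\log R\cdot 2(\mu^\ab_1\lambda^\ab_2 - \mu^\ab_2\lambda^\ab_1) + \rho_R = -2\pi\log R\cdot\mathcal{R}^\ab + \rho_R,
\end{equation}
with $\rho_R\to0$. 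Substituting back and using that $\Omega^\glue$ is independent of $R$ (established in \cref{sub:general-gluing-construction}) together with the convergence of the abelian regularized form (the analogue of \cref{def:ab-reg-form}), I take $R\to0$ to conclude $\Omega^\glue = \lim_{R\to0}[\int_{\Sigma_R}\dot\alpha_1\wedge\dot\alpha_2 - 2\pi\log R\cdot\mathcal{R}^\ab] = \Omega^{\reg,\ab}$.

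The main obstacle is the bookkeeping in the boundary computation: fixing the orientations of the two puncture circles, verifying that the $\infty_+$ and $\infty_-$ ends contribute with the \emph{same} sign, and confirming that the finite remainder $\rho_R$ genuinely vanishes. This last point rests on the precise normalization of $\chi$ near the punctures — namely that it is a pure radial logarithm with controlled bounded correction — which is exactly what the rigidity of the framed form (cf.\ \cref{rem:abelian-form-rigidity}) and the gluing construction are designed to supply. Everything else is routine once the boundary formula and the local expressions for $\dot\alpha_i$ and $\dot\chi_i$ are in place.
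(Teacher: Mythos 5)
Your proposal is correct and follows essentially the same route as the paper: the boundary formula you derive by integration by parts is the paper's glued bilinear identity (\cref{prop:glued-form-identity}), and your asymptotic analysis of the boundary integral near the punctures — using the exact radial-logarithm form of $\dot\chi_i$, the vanishing of $d\dot\chi_i$ along the circles, the cancelling sign flips between the two sheets, and the $r$-independence of $\Omega^\glue$ — is the paper's ``alternative proof'' of \cref{prop:glue-equals-reg}, merely organized over the two boundary circles separately rather than collapsed to a single integral over $a$ via the symmetry $\sigma^*\chi = -\chi$.
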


\begin{prop}[Gluing and Ooguri-Vafa] \label{prop:glue-equals-ov}
    \begin{equation} \label{eq:glue-equals-ov}
         \ab^* \Omega^\glue = -4 \pi^2 \cdot \Omega^\ov_\Stokes,
    \end{equation}
    i.e.\ the pullback of the glued form $\Omega^\glue$ to $\Afr_\zeta$ coincides with the Ooguri-Vafa form $\Omega^\ov_\Stokes$.
\end{prop}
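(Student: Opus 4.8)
The plan is to evaluate $\Omega^\glue$ on $\abAfr_\zeta$ directly in terms of period integrals and then match these periods to the twistor coordinates $\Xe, \Xm$ after pulling back along $\ab$. By construction, $\Omega^\glue(\dot\alpha_1, \dot\alpha_2) = \int_{\Sigma_R}(\dot\alpha_1 - d\dot\chi_1)\wedge(\dot\alpha_2 - d\dot\chi_2)$ is the pullback of the abelian Atiyah-Bott form from the glued torus $T$, and each $\dot\alpha_i - d\dot\chi_i$ descends to a genuine closed $1$-form on $T$. The first step is therefore to apply the Riemann bilinear identity on $T$, using the basis of cycles given by the electric loop $\gamma_e$ (which lies in the interior of $\Sigma_R$ and survives the gluing) and the magnetic cycle obtained by gluing the endpoints of the open path $\gamma_{m,R}$. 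This gives
\begin{equation*}
    \Omega^\glue(\dot\alpha_1, \dot\alpha_2) = \int_{\gamma_e}(\dot\alpha_1 - d\dot\chi_1)\int_{\gamma_{m,R}}(\dot\alpha_2 - d\dot\chi_2) - \int_{\gamma_e}(\dot\alpha_2 - d\dot\chi_2)\int_{\gamma_{m,R}}(\dot\alpha_1 - d\dot\chi_1),
\end{equation*}
so that $\Omega^\glue$ has the Darboux form $P_e \wedge P_m$ for the period functionals $P_e(\dot\alpha) = \int_{\gamma_e}(\dot\alpha - d\dot\chi)$ and $P_m(\dot\alpha) = \int_{\gamma_{m,R}}(\dot\alpha - d\dot\chi)$.

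Next I would simplify the electric period. Since $\gamma_e$ is a closed cycle lying in the interior of $\Sigma_R$, away from the boundary region where the gluing transformation $\chi$ is supported, each $\dot\chi_i$ is single-valued along $\gamma_e$, so $\int_{\gamma_e}d\dot\chi_i = 0$ and hence $P_e(\dot\alpha) = \int_{\gamma_e}\dot\alpha$. This is precisely the logarithmic variation of the holonomy of $\nabla^\ab_\zeta = d + \alpha$ around $\gamma_e$. Combined with the cross-ratio interpretation \eqref{eq:Xe-cross-ratio} of $\Xe$ as exactly this holonomy---which follows from the parallel transport formulas of \cref{lem:ab-parallel-transport}---this identifies $\ab^* P_e = \pm\, d\log\Xe$, up to the orientation and sign conventions fixed in \cref{ssub:stokes-conventions}.

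The third and most delicate step, carried out in \cref{sub:interpreting-magnetic-coordinate}, is to interpret the magnetic period $P_m$ as a regularized parallel transport computing $d\log\Xm$. Because $\gamma_{m,R}$ is an \emph{open} path running from one boundary component of $\Sigma_R$ to the other, the naive transport $\int_{\gamma_{m,R}}\dot\alpha$ diverges as $R \to 0$; the key point is that subtracting $d\dot\chi$ supplies exactly the counterterm needed to make the endpoints match across the seam on $T$, producing a finite regularized transport (in contrast to the electric cycle, $\dot\chi$ is \emph{not} single-valued along the glued magnetic cycle). I would then verify that, once the flat sections are normalized via the induced framing $g^\ab_\pm$ so that their asymptotics are prescribed as in \cref{prop:sectorial-asymptotics}, this regularized transport reproduces the off-diagonal Stokes matrix element defining $\Xm$ in \eqref{eq:Xm-ratio}, i.e.\ $\ab^* P_m = d\log\Xm$.

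Substituting the two identifications into the Riemann bilinear formula yields $\ab^*\Omega^\glue = d\log\Xe \wedge d\log\Xm = -4\pi^2 \cdot \Omega^\ov_\Stokes$, using the definition $\Omega^\ov_\Stokes = -\tfrac{1}{4\pi^2}d\log\Xe\wedge d\log\Xm$. I expect the main obstacle to be the third step: tracking the normalizations of the flat sections through the framing and confirming that the gauge counterterm $d\dot\chi$ regularizes the open-path transport to precisely the gauge-invariant Stokes datum $\Xm$, rather than some other gauge-dependent quantity. In particular, the signs and factors of $2\pi i$ coming from the holonomy, the formal monodromy, and the framing normalizations must be carried through carefully to land on the stated constant $-4\pi^2$.
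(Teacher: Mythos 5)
Your proposal follows essentially the same route as the paper: apply the Riemann bilinear identity on the glued torus, identify the electric period with $d\log\Xe$ via the holonomy around $\gamma_e$, and identify the $\chi$-regularized open-path transport along $\gamma_m$ with $d\log\Xm$ via the normalized flat sections and the Stokes matrix elements. Two small corrections: $\gamma_e$ is the \emph{boundary} loop around $\infty_-$ (not an interior cycle away from the support of $\chi$), and $\int_{\gamma_e} d\dot{\chi} = 0$ holds simply because $\dot{\chi}$ is single-valued; moreover the paper's magnetic period actually comes out as $-d\log\Xm - \tfrac{\vartheta}{2\pi}\, d\log\Xe$ rather than $d\log\Xm$ on the nose, the extra multiple of $d\log\Xe$ being harmless since it vanishes in the wedge with the electric factor.
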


Combining these with \cref{prop:ab-symplectomorphism}, we will obtain our first main result:
\begin{theorem}[Ooguri-Vafa and regularization] \label{thm:reg-equals-ov}
    Under the identification of spaces $\Mov \cong \Xfr$,  
    \begin{equation} \label{eq:reg-equals-ov}
         \Omega^\ov_\zeta = - \frac{1}{4\pi^2} \Omega^\reg_\zeta,
    \end{equation}
    i.e.\ the Ooguri-Vafa symplectic form coincides with the regularized Atiyah-Bott form, pulled back to $\Xfr$.
\end{theorem}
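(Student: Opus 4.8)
The plan is to assemble Theorem~\ref{thm:reg-equals-ov} directly from the chain of identities encoded in the commutative diagram \eqref{eq:forms-cd}, letting Propositions~\ref{prop:ab-symplectomorphism}, \ref{prop:glue-equals-reg}, and \ref{prop:glue-equals-ov} carry all of the real content. Each of those results is an equality of $2$-forms on a single space, so the remaining task is purely formal: compose them on $\Afr_\zeta$ to obtain one intermediate identity, and then transport that identity along $\NAH_\zeta$ back to $\Hfr$ and hence to $\Xfr \cong \Mov$.

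First I would work entirely on the space $\Afr_\zeta$ of framed connections over $C = \CP^1$ and establish the single intermediate equality $\Omega^\reg = -4\pi^2 \cdot \Omega^\ov_\Stokes$. Reading off the right-hand half of \eqref{eq:forms-cd}, this is the computation
\begin{align*}
    \Omega^\reg
    &= \ab^* \Omega^{\reg, \ab} && \text{(\cref{prop:ab-symplectomorphism})} \\
    &= \ab^* \Omega^\glue && \text{(\cref{prop:glue-equals-reg})} \\
    &= -4\pi^2 \cdot \Omega^\ov_\Stokes && \text{(\cref{prop:glue-equals-ov}),}
\end{align*}
where the middle step inserts the equality $\Omega^{\reg, \ab} = \Omega^\glue$ of forms on $\abAfr_\zeta$ inside the pullback $\ab^*$.

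Second, I would pull this back under $\NAH_\zeta \colon \Hfr \to \Afr_\zeta$. By the Stokes-theoretic interpretation \eqref{eq:ov-stokes} we have $(\NAH_\zeta)^* \Omega^\ov_\Stokes = \Omega^\ov_\zeta$, and by our convention $(\NAH_\zeta)^* \Omega^\reg = \Omega^\reg_\zeta$. Applying $(\NAH_\zeta)^*$ to the intermediate identity therefore gives $\Omega^\reg_\zeta = -4\pi^2 \cdot \Omega^\ov_\zeta$, which is exactly \eqref{eq:reg-equals-ov} after dividing by $-4\pi^2$. To see that the statement is meaningful on the moduli space itself, I would note that $\Omega^\reg$ descends to $\Mfr_\zeta$ by \cref{lem:reg-form-gauge-invariance}, while $\Omega^\ov_\Stokes$ descends because the twistor coordinates $\Xe$ and $\Xm$ are built from isomorphism-invariant Stokes data; hence the equality persists on $\Xfr$.

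I do not expect a genuine obstacle at this final stage: the conceptual and computational weight of the theorem lies entirely in the three propositions and in Tulli's matching of the magnetic coordinate. If anything requires care here, it is only the bookkeeping that makes the diagram \eqref{eq:forms-cd} commute on the \emph{nose} rather than up to constants—namely keeping track of the single factor of $-4\pi^2$ through the pullbacks—and confirming that the identification $\Mov \cong \Xfr$ under which we read off $\Omega^\ov_\zeta$ is precisely the one for which \eqref{eq:ov-stokes} holds, so that $\Omega^\ov_\Stokes$ really does pull back to the Ooguri-Vafa form. Both points are already secured by the cited results, so no new estimate or construction is needed to close the argument.
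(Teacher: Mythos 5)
Your proposal is correct and is essentially the paper's own proof: Section 5.6 assembles the theorem from exactly the same chain $\Omega^\reg = \ab^*\Omega^{\reg,\ab} = \ab^*\Omega^\glue = -4\pi^2\,\Omega^\ov_\Stokes$ on $\Afr_\zeta$ (using \cref{prop:ab-symplectomorphism}, \cref{prop:glue-equals-reg}, and \cref{prop:glue-equals-ov}) and then pulls back via $\NAH_\zeta$ using \eqref{eq:ov-stokes}. The only content not delegated to the cited propositions is the bookkeeping of the factor $-4\pi^2$ and the descent to $\Xfr$, both of which you handle as the paper does.
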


\subsection{General gluing construction on a cylinder} 
\label{sub:general-gluing-construction}

First we will describe a general construction for a closed $2$-form on a moduli space of framed flat $\C^*$-connections on a cylinder.
Roughly, it will be induced from the Atiyah-Bott symplectic form on the torus by gluing the ends of the cylinder together and choosing a family of gauge transformations to glue the connections.

\begin{enumerate}[(a)]
\item \underline{topological setup}:

Fix a topological cylinder $S$, and let $S_\top$ and $S_\bot$ denote neighbourhoods of its two boundary components.
Choose an orientation-reversing diffeomorphism $\sigma: S_\top \xrightarrow{\sim} S_\bot$ with which we can glue the ends together to form a torus $T = S/\sigma$.

\begin{figure}[ht]
    \centering
    \includegraphics[scale=1]{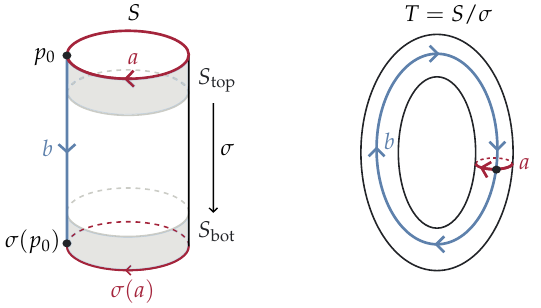}
    \caption{A cylinder $S$ with a loop $a$ and a longitudinal path $b$ from $p_0$ to $\sigma(p_0)$, glued to a torus $T$ via $\sigma$.}
    \label{fig:cylinder}
\end{figure}

Let $a$ and $b$ be paths as indicated in \cref{fig:cylinder} (so that their classes form a homology basis for $T$ after gluing), and let $p_0$
denote their point of intersection on the top boundary edge.

\item \underline{framed connections}:

Identify the space of smooth $\C^*$-connections on the trivial bundle over $S$ with $\Omega^1(S)$.
Then consider a space $\mathcal{A}_S$ of flat connections, consisting of closed $1$-forms $\alpha \in \Omega^1(S)$ which are of some prescribed ``framed form'' in a neighbourhood of the boundary $\partial S$.\footnote{Soon we will specify precise boundary conditions, but for now we are just interested in the formal setup.}
Let $\mathcal{G}_S$ denote the set of gauge transformations $g: S \to \C^*$ such that $g \equiv 1$ in a neighbourhood of $\partial S$.
Then we can define a moduli space $\M_S \coloneqq \mathcal{A}_S/\mathcal{G}_S$ of framed flat connections on $S$.

\begin{remark}[L.~Jeffrey moduli spaces] \label{rem:extended-spaces}
    Unlike the standard moduli spaces of connections on a surface with boundary, the boundary holonomies of our connections are \emph{not} fixed. 
    Our definition of $\M_S$ is similar to (an abelian version of) the ``extended moduli spaces'' of \cite{Jeffrey:1994}, which consist of framed connections whose boundary holonomies are allowed to vary, although we will prescribe a slightly different framed form.\footnote{The connections in the extended spaces from \cite[Section 2.1]{Jeffrey:1994} are of the form $\alpha = \mu \,  d \theta$ (for certain constant coefficients $\mu$) in polar coordinates near the boundary circles. In our case, we will also allow a nonzero $dr$ term.}
\end{remark}

\item \underline{gluing data}:

Connections on $S$ can be ``glued'' to $T$ by making suitable gauge transformations.

To construct the space of connections on the torus $T$, note that a $1$-form $\beta \in \Omega^1(S)$ is pulled back from $\Omega^1(T)$ if and only if $\beta|_\top = \sigma^* \beta|_\top$, so we can write
\begin{equation*}
    \mathcal{A}_T \coloneqq \{\beta \in \Omega^1(S): d\beta = 0 ,\ \beta|_\top = \sigma^* \beta|_\top\}.
\end{equation*}
Similarly, let $\mathcal{G}_T$ be the set of gauge transformations $g: S \to \C^*$ with $g|_\top = \sigma^* g|_\top$.
Then $\M_T \coloneqq \mathcal{A}_T/\mathcal{G}_T$ is the (usual) moduli space of flat $\C^*$-connections on the torus.

Given a connection form $\alpha \in \mathcal{A}_S$ (\emph{not} necessarily in $\mathcal{A}_T$), choose a smooth gauge transformation $g = e^\chi: S \to \C^*$ such that 
\begin{equation} \label{eq:gluing-condition}
     (\alpha - d \chi)|_\top = \sigma^*(\alpha - d \chi)|_\top.
\end{equation}
(We emphasize that such a gauge transformation $g$ is generally nontrivial on $\partial S$.)
Then $\alpha - d \chi \in \mathcal{A}_T$, i.e.\ we can regard $\alpha - d \chi$ as a connection form on $T$.

\begin{remark}[$\chi$-dependence]
    The following gluing construction \emph{will} depend on the choice of $\chi$, but only in a neighbourhood of $\partial S$.
    Indeed, $g = e^\chi$ extends from $\partial S$ to $S$ if $\chi$ satisfies the winding number compatibility condition
    \begin{equation} \label{eq:gluing-map-compatibility}
        \int_a d\chi = \int_a \sigma^* d\chi
    \end{equation}
    (cf.\ the proof of \cref{lem:extend-triv}), and any two such extensions will be gauge equivalent by a map in $\mathcal{G}_T$.
    Therefore it will be enough to just specify $\chi|_\top$ and $\chi|_\bot$ satisfying \eqref{eq:gluing-map-compatibility}.
\end{remark}

Now suppose we have a smoothly varying family of such maps $\chi = \chi(\alpha)$ for each $\alpha \in \mathcal{A}_S$, and package them into a ``connection gluing map'' 
\begin{align}
\begin{split}
    \Gamma_\chi: \mathcal{A}_S &\to \mathcal{A}_T \\
    \alpha &\mapsto \alpha - d \chi. 
\end{split}
\end{align}
Assume further that the choice of $\chi$ is gauge invariant in the sense that $\chi(\alpha) = \chi(\alpha')$ when $\alpha$ and $\alpha'$ are gauge equivalent.\footnote{In practice this is not a restrictive assumption:
if $\alpha$ and $\alpha'$ are gauge equivalent on $S$, then they agree in a neighbourhood of $\partial S$, and so the condition will automatically be satisfied if the choice of $\chi(\alpha)|_{\partial S}$ depends only on $\alpha|_{\partial S}$.}
Then it follows that $\Gamma_\chi$ descends to a map of moduli spaces 
\begin{equation}
    \widetilde{\Gamma}_\chi: \M_S \to \M_T.
\end{equation}

\begin{remark}[Symmetric gluing]
    We could, for instance, choose $\chi$ so that
    \begin{equation} \label{eq:sym-gluing-condition}
        d\chi = \frac{\alpha - \sigma^* \alpha}{2}
    \end{equation}
    on both the top and bottom of $S$.
    This satisfies the gluing condition \eqref{eq:gluing-condition}, with 
        \begin{equation*}
        \sigma^* (\alpha - d \chi) = \frac{\alpha + \sigma^* \alpha}{2} = \alpha - d \chi,
    \end{equation*}
    and the compatibility condition \eqref{eq:gluing-map-compatibility}, with
    \begin{equation*}
        \int_a d \chi = 0 = \int_a \sigma^* d \chi
    \end{equation*}
    (since $\int_a \alpha = \int_a \sigma^* \alpha$).
    We will call \eqref{eq:sym-gluing-condition} the \emph{symmetric gluing condition}, since the formula is the same on both boundary components.
\end{remark}
\end{enumerate}

Recall that the moduli space $\M_T$ of flat $\C^*$-connections on the torus carries the abelian Atiyah-Bott symplectic form $\widetilde{\Omega}^\AB_T$, induced from the form
\begin{equation}
    \Omega^\AB_T (\dot{\beta}_1, \dot{\beta}_2) = \int_T \dot{\beta}_1 \wedge \dot{\beta}_2
\end{equation}
on $\mathcal{A}_T$, where $\dot{\beta}_i$ are variations of $\beta \in \mathcal{A}_T$.
The desired glued form on $S$ will be obtained by pulling back the Atiyah-Bott form via $\Gamma_\chi$.

\begin{definition}[Glued form]
    Define a \emph{glued form} on $\mathcal{A}_S$ by
    \begin{equation}
        \omega_{S, \chi}^\glue \coloneqq (\Gamma_\chi)^* \Omega^\AB_T,
    \end{equation}
    i.e.
    \begin{equation}\label{eq:glued-form-0}
        \omega^\glue_{S, \chi}(\dot{\alpha}_1, \dot{\alpha}_2) = \int_S (\dot{\alpha}_1 - d \dot{\chi}_1) \wedge (\dot{\alpha}_2 - d \dot{\chi}_2),
    \end{equation}
    where $\dot{\alpha}_i$ are variations of $\alpha \in \mathcal{A}_S$ and $\dot{\chi}_i$ are the corresponding induced variations of $\chi(\alpha)$.
\end{definition}
The glued form on $\mathcal{A}_S$ descends to a form
\begin{equation}
    \widetilde{\omega}^\glue_{S, \chi} = (\widetilde{\Gamma}_\chi)^* \widetilde{\Omega}^\AB_T
\end{equation}
on the moduli space $\M_S$.
Abusing notation, we will denote both forms by $\omega^\glue_{S, \chi}$ (see \cref{fig:gluing-map-pullback}).

\begin{figure}[ht]
    \begin{tikzcd}
    &[-25pt] \text{\underline{Cylinder $S$}:} & \text{\underline{Torus $T$}:} \\[-20pt]
    \text{\underline{Sets of connections}:} &  (\A_S, \omega^\glue_{S, \chi}) \arrow[d, two heads] \arrow[r, "\Gamma_\chi"] &  (\A_T, \Omega^\AB_T)\arrow[d, two heads] \\
    \text{\underline{Moduli spaces}:} &  (\M_S, \omega^\glue_{S, \chi}) \arrow[r, "\widetilde{\Gamma}_\chi"] &  (\M_T, \widetilde{\Omega}^\AB_T)                     
    \end{tikzcd}
    \caption{Forms induced by the abelian Atiyah-Bott form $\Omega^\AB_T$ on the torus $T$.}
    \label{fig:gluing-map-pullback}
\end{figure}

The upshot of this construction is that $\omega^\glue_{S, \chi}$ can be computed using the following expressions, which can be thought of as a kind of ``Riemann bilinear identity on the cylinder'' with correction terms involving $\chi$.

\begin{prop}[Glued bilinear identity] \label{prop:glued-form-identity}
\begin{align}
    \omega^\glue_{S, \chi}(\dot{\alpha}_1, \dot{\alpha}_2)
    &=\int_S \dot{\alpha}_1 \wedge \dot{\alpha}_2 + \int_{\partial S} (\dot{\chi}_2 \dot{\alpha}_1 -  \dot{\chi}_1 \dot{\alpha}_2 + \dot{\chi}_1 d \dot{\chi}_2) \label{eq:glued-form-1} \\
    &= \int_a \dot{\alpha}_1 \left( \dot{\chi}_2(p_0) - \dot{\chi}_2(\sigma(p_0)) + \int_b \dot{\alpha}_2 \right) 
    - \int_a \dot{\alpha}_2 \left( \dot{\chi}_1(p_0) - \dot{\chi}_1(\sigma(p_0)) + \int_b \dot{\alpha}_1 \right) \label{eq:glued-form-2}
\end{align}
\end{prop}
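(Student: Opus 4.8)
The plan is to prove the two equalities separately, both starting from the definition $\omega^\glue_{S, \chi}(\dot{\alpha}_1, \dot{\alpha}_2) = \int_S (\dot{\alpha}_1 - d \dot{\chi}_1) \wedge (\dot{\alpha}_2 - d \dot{\chi}_2)$ in \eqref{eq:glued-form-0}. Throughout I use that the variations $\dot{\alpha}_i$ are closed (being variations of flat connections, $d\dot{\alpha}_i = 0$) and that $\dot{\chi}_i$ is a single-valued smooth function on $S$, the latter because $\dot{\chi}_i = \tfrac{d}{dt}\chi(\alpha_t)$ is the variation of a family of gauge potentials, so any integer winding contribution is locally constant in $t$ and drops out upon differentiating.

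For the first equality \eqref{eq:glued-form-1}, I would expand the wedge product into its four terms and integrate the mixed terms by parts. Since $d\dot{\alpha}_i = 0$, we have $d(\dot{\chi}_j \dot{\alpha}_i) = d\dot{\chi}_j \wedge \dot{\alpha}_i$, and combined with the anticommutativity of $1$-forms this gives $\int_S \dot{\alpha}_1 \wedge d\dot{\chi}_2 = -\int_{\partial S} \dot{\chi}_2 \dot{\alpha}_1$ and $\int_S d\dot{\chi}_1 \wedge \dot{\alpha}_2 = \int_{\partial S} \dot{\chi}_1 \dot{\alpha}_2$ by Stokes' theorem; likewise $\int_S d\dot{\chi}_1 \wedge d\dot{\chi}_2 = \int_{\partial S} \dot{\chi}_1 \, d\dot{\chi}_2$. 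Collecting the signs yields \eqref{eq:glued-form-1}. This step is routine.

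For the second equality \eqref{eq:glued-form-2}, I would instead exploit the definition $\omega^\glue_{S, \chi} = (\Gamma_\chi)^* \Omega^\AB_T$ directly. By construction $\dot{\beta}_i \coloneqq \dot{\alpha}_i - d\dot{\chi}_i$ satisfies the (varied) gluing condition $\dot{\beta}_i|_\top = \sigma^* \dot{\beta}_i|_\top$ and hence descends to a smooth closed $1$-form on the torus $T = S/\sigma$, so that $\omega^\glue_{S, \chi}(\dot{\alpha}_1, \dot{\alpha}_2) = \int_T \dot{\beta}_1 \wedge \dot{\beta}_2$. Applying the Riemann bilinear identity on $T$ with the homology basis $([a], [b])$ (normalized so that $[a] \cdot [b] = 1$) gives $\int_T \dot{\beta}_1 \wedge \dot{\beta}_2 = \int_a \dot{\beta}_1 \int_b \dot{\beta}_2 - \int_a \dot{\beta}_2 \int_b \dot{\beta}_1$. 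It then remains to evaluate the periods. Since $a$ is a closed loop and $\dot{\chi}_i$ is single-valued, $\int_a d\dot{\chi}_i = 0$, so $\int_a \dot{\beta}_i = \int_a \dot{\alpha}_i$; and since $b$ runs from $p_0$ to $\sigma(p_0)$, the fundamental theorem of calculus gives $\int_b \dot{\beta}_i = \int_b \dot{\alpha}_i - \bigl(\dot{\chi}_i(\sigma(p_0)) - \dot{\chi}_i(p_0)\bigr)$. Substituting these into the bilinear identity produces \eqref{eq:glued-form-2}.

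The main obstacle is this second equality. The one genuinely nontrivial input is the appearance of the boundary correction $\dot{\chi}_i(p_0) - \dot{\chi}_i(\sigma(p_0))$: it arises precisely because $b$ is an \emph{open} path on $S$ whose endpoints are identified only after gluing, so the potential $\dot{\chi}_i$ contributes its jump across the seam. I would take care to verify that $\dot{\beta}_i$ really does define a smooth closed $1$-form on $T$ (which follows from varying the gluing condition \eqref{eq:gluing-condition} along the family), and to track the orientation and intersection conventions so that the sign in the Riemann bilinear identity comes out correctly. The vanishing of $\int_a d\dot{\chi}_i$, justified by single-valuedness as noted above, is the remaining point to confirm.
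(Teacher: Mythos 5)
Your proposal is correct and follows essentially the same route as the paper: the first identity by expanding the wedge product and applying Stokes' theorem using closedness of the $\dot{\alpha}_i$, and the second by viewing $\dot{\alpha}_i - d\dot{\chi}_i$ as closed forms on the glued torus, applying the Riemann bilinear identity, and evaluating the $a$- and $b$-periods (with $\int_a d\dot{\chi}_i = 0$ and $\int_b d\dot{\chi}_i = \dot{\chi}_i(\sigma(p_0)) - \dot{\chi}_i(p_0)$). Your additional remarks on single-valuedness of $\dot{\chi}_i$ and on checking that the varied gluing condition holds are sensible refinements of the same argument.
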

\begin{proof}  
    The first expression is obtained by directly expanding \eqref{eq:glued-form-0} and using Stokes' theorem, along with the fact that the $\dot{\alpha}_i$ are closed.

    The second is obtained by viewing $\dot{\alpha}_i - d \dot{\chi}_i$ as forms on the torus $T$ and applying the usual Riemann bilinear identity
    \begin{align*}
      \omega^\glue_{S, \chi}(\dot{\alpha}_1, \dot{\alpha}_2) &= \int_a (\dot{\alpha}_1 - d \dot{\chi}_1) \int_b (\dot{\alpha}_2 - d \dot{\chi}_2) - \int_a (\dot{\alpha}_2 - d \dot{\chi}_2) \int_b (\dot{\alpha}_1 - d \dot{\chi}_1)
    \end{align*}
    (this also uses closedness of the $\dot{\alpha}_i$).
    Viewing the individual components as forms on $S$ again, $\int_a d \dot{\chi}_i = 0$ and $\int_b d \dot{\chi}_i = \dot{\chi}_i(\sigma(p_0)) - \dot{\chi}_i(p_0)$, so the result follows.
\end{proof}

\begin{remark}\label{rem:closedness-of-variations}
    For our later application, note that the calculations in the above proof only used the fact that the variations $\dot{\alpha}_i$ were closed forms, not the stronger condition that the original connection was flat.
\end{remark}

Once we further specialize the gluing setup, we will use \eqref{eq:glued-form-1} to identify $\Omega^\glue$  with the regularized Atiyah-Bott form on $\abAfr_\zeta$, and interpret \eqref{eq:glued-form-2} in terms of the Ooguri-Vafa twistor coordinates $\Xe$ and $\Xm$.

\subsection{\texorpdfstring{Gluing on $\Sigma$ with a cutoff}{Gluing on Sigma with a cutoff}} 
\label{sub:gluing-on-sigma}

Fix $\zeta \in \C^*$ and consider the abelianized connections in $\abAfr_\zeta$ (from \cref{def:ab-framed-connections}).
We will apply the preceding gluing construction to a subset of the spectral curve $\Sigma$ obtained by removing two discs around the punctures, namely $\Sigma_r \coloneqq \Sigma \setminus \pi^{-1}(D_r)$, where $D_r = \{|w| < r\} \subseteq \CP^1$ (see \cref{fig:cylinder-cutoff}).
We will refer to the boundary edge around $\infty_-$ as the top of $\Sigma_r$.
Assume that the cutoff radius $r$ is sufficiently small so that the trivializations \eqref{eq:ab-form-punctures} for $\nabla^\ab$ hold in neighbourhoods of the top and bottom of $\Sigma_r$.

\begin{figure}[ht]
    \centering
    \begin{tikzpicture}[scale=1]
        \fill[gray!20] (0,0) circle (2cm);
        \draw (-2,0) arc (180:360:2 and 0.3);
        \draw[dashed] (2,0) arc (0:180:2 and 0.3);
        
        \fill[white] (1.41,1.41) arc (0:180:1.41 and -0.2);
        \fill[white] (1.41,1.41) arc (0:180:1.41 and 0.6);
        \draw[red, ultra thick] (1.41,1.41) arc (0:180:1.41 and -0.2);
        \draw[red, ultra thick, ->] (1.41,1.41) arc (0:110:1.41 and -0.2) node[below=2pt] {$a$};
        \draw[red, thick, dashed] (1.41,1.41) arc (0:180:1.41 and 0.2);
        \fill[fill=black] (0,2) circle (2pt) node[above right=-2pt] {$\infty_-$};
                
        \fill[white] (1.41,-1.41) arc (0:180:1.41 and -0.6);
        \fill[gray!20] (1.41,-1.41) arc (0:180:1.41 and -0.2);
        \draw[thick] (1.41,-1.41) arc (0:180:1.41 and -0.2);
        \draw[thick, dashed] (1.41,-1.41) arc (0:180:1.41 and 0.2);
         \fill[fill=black] (0,-2) circle (2pt) node[below right] {$\infty_+$};

        \draw[blue, ultra thick] (-1.41, 1.41) arc (135:225:2) node[left, midway] {$b_r$};
        \draw[blue, ultra thick, ->] (-1.41, 1.41) arc (135:180:2);
        \fill[fill=black] (-1.41,1.41) circle (2pt) node[above left=-2pt] {$p_0$};

        \draw (0,0) circle (2cm);
        \node at (0,0) {$\Sigma_r$};

        \draw[->] (2.5,0) -- node[above] {$\pi$} (4, 0) ;

        \fill [gray!20] (4.5, -1.5) rectangle (7.5,1.5);
        \fill[white] (6,0) circle (1cm);
        \draw[->] (4.5,0) -- (7.5, 0);
        \draw[->] (6,-1.5) -- (6, 1.5);
        \draw[ultra thick, red] (6,0) circle (1cm);
        \draw[ultra thick, red, <-] (7, 0) arc (0:180:1);
        \node[above right] at (6,0) {$D_r$};
        \node at (6,-2) {$w$–plane $\subset \CP^1$};
    \end{tikzpicture}

    \caption{The cut off cylinder $\Sigma_r$ inside the spectral curve $\Sigma$, and its projection to $\CP^1$ in a neighbourhood of $w=0$.}
    \label{fig:cylinder-cutoff}
\end{figure}
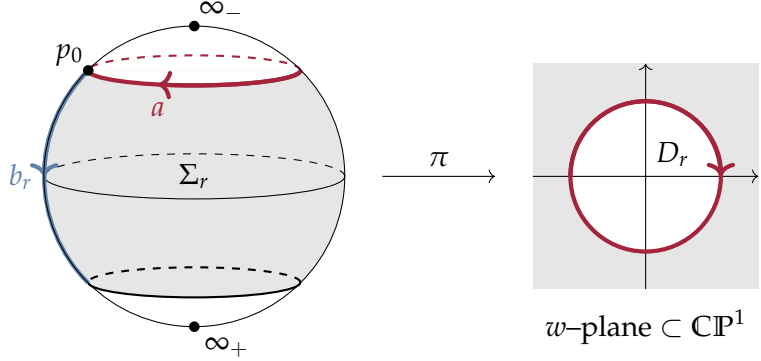

We will explain how to use the gluing construction to obtain the desired form $\Omega^\glue$ on $\abAfr_\zeta$.

\begin{remark}[Variations of abelianized connections] \label{rem:var-of-ab-connection}
There are two preliminary technicalities to address regarding variations of elements $(\L^\ab, \nabla^\ab, g^\ab_\pm) \in \abAfr_\zeta$. 
\begin{itemize}
    \item The spectral curve $\Sigma \supset \Sigma_r$ is itself defined in terms of the parameter $m$, so it varies with $\nabla^\ab$. 
    However, the resulting surfaces are diffeomorphic for all nonzero $m$, so we will identify them all with a fixed reference surface.
    Similarly, we will identify the varying bundles $\L^\ab$ over $\Sigma$ with a fixed (trivial) bundle using the global frame $g^\ab$ from \cref{cor:global-frame}.

    \item The general gluing construction was described for flat connections, but $\nabla^\ab$ is only almost-flat.
    However, its prescribed connection form $\alpha$ is constant in a neighbourhood of the branch points (see \eqref{eq:ab-form-branch-points}), so its variations are zero there, and thus $\dot{\alpha}$ is a closed $1$-form defined on all of $\Sigma_r$.
    The formulas for the glued bilinear identity in \cref{prop:glued-form-identity} therefore still apply (see \cref{rem:closedness-of-variations}).
\end{itemize}
\end{remark}

At first we will consider a slightly more general choice of path $b_r$ that winds around the cylinder $\frac{\vartheta}{2\pi}$ times.
(Eventually we will choose a specific basepoint $p_0$ and angle $\vartheta$ using the geometry of the relevant spectral network, but the construction makes sense more generally.)

We run the construction from \cref{sub:general-gluing-construction} with the following data:

\begin{enumerate}[(a)]
\item \underline{topological setup}:

Fix the cylinder $S = \Sigma_r \subseteq \Sigma$.
Consider the map $\sigma = \sigma_\vartheta: \Sigma \to \Sigma$,
\begin{equation} \label{eq:gluing-diffeo}
    \sigma(w,s) = (e^{i \vartheta} \overline{w}, -s)
\end{equation}
which restricts to an orientation-reversing diffeomorphism between neighbourhoods of the top and bottom of $\Sigma_r$.
Let $\Sigma_{r, \top} \xrightarrow{\sim} \Sigma_{r, \bot}$ be two such neighbourhoods, chosen sufficiently small so that the prescribed framed form \eqref{eq:ab-form-punctures} holds.

As in \cref{fig:cylinder-cutoff}, let $a$ be the boundary loop around $\infty_-$.
Choose a base point $p_0$ along $a$, and let $b_r=b_r(p_0, \vartheta)$ be the open path from $p_0$ to $\sigma(p_0)$, winding around the cylinder $\frac{\vartheta}{2\pi}$ times in the same direction as $a$ (see \cref{fig:twisted-cylinder-path}).

\begin{figure}[ht]
    \centering
    \includegraphics[scale=1]{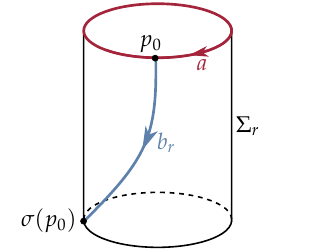}
    \caption{Path $b_r = b_r(p_0, \vartheta)$ winding around $\Sigma_r$, with $\vartheta = \pi/2.$}
    \label{fig:twisted-cylinder-path}
\end{figure}

\pagebreak

\item \underline{framed connections}:

Consider the space $\abAfr_\zeta$ of connections $\nabla^\ab = d + \alpha$, trivialized using the frames $g^\ab$ from \cref{cor:global-frame} so that near the boundary they have the prescribed form
\begin{align}
    \alpha|_\top &= \pi^* \left[ -\zeta^{-1} \frac{dw}{w^3} - \zeta \frac{d \overline{w}}{\overline{w}^3} - (\zeta^{-1} m - \frac{1}{2} m^{(3)}) \frac{dw}{w} - (\zeta \overline{m} + \frac{1}{2} m^{(3)})\frac{d \overline{w}}{\overline{w}} \right],  \label{eq:ab-form-top} \\
    \alpha|_\bot &= \pi^* \left[ \zeta^{-1} \frac{dw}{w^3} + \zeta \frac{d \overline{w}}{\overline{w}^3} + (\zeta^{-1} m - \frac{1}{2} m^{(3)}) \frac{dw}{w} + (\zeta \overline{m} + \frac{1}{2} m^{(3)})\frac{d \overline{w}}{\overline{w}} \right]. \label{eq:ab-form-bot} 
\end{align} 
Note that the allowable gauge transformations in this setting are identically equal to $1$ near $\partial \Sigma_r$ (see \cref{rem:abelian-form-rigidity}), which is consistent with the general gluing construction.

For notational purposes, let
\begin{equation}
    \alpha_0 \coloneqq -\zeta^{-1} \frac{dw}{w^3} - \zeta \frac{d \overline{w}}{\overline{w}^3} - (\zeta^{-1} m - \frac{1}{2} m^{(3)}) \frac{dw}{w} - (\zeta \overline{m} + \frac{1}{2} m^{(3)})\frac{d \overline{w}}{\overline{w}}
\end{equation}
and introduce the sign 
\begin{equation} \label{eq:epsilon-sign}
    \epsilon \coloneqq \begin{cases}
        +1 & \text{on } \Sigma_{r, \top}   \\
        -1 & \text{on } \Sigma_{r, \bot}, 
    \end{cases}
\end{equation}
so that we can write
\begin{equation} \label{eq:ab-form-sign}
    \alpha = \epsilon \cdot \pi^* \alpha_0
\end{equation}
near $\partial \Sigma_r$.
In what follows we will usually suppress $\pi^*$ from the notation.

\item \underline{gluing data}:

Explicitly, the symmetric gluing condition \eqref{eq:sym-gluing-condition} for $\chi$   becomes
\begin{equation}
    d \chi = \frac{\epsilon}{2} \left[ -(\zeta^{-1} + \zeta e^{2i \vartheta})\frac{dw}{w^3} - (\zeta + \zeta^{-1} e^{-2i \vartheta}) \frac{d \overline{w}}{\overline{w}^3}  -  (\zeta^{-1} m + \zeta \overline{m}) \left( \frac{dw}{w} + \frac{d \overline{w}}{\overline{w}} \right) \right].
\end{equation}
In order to specify our choice of $\chi$ it will be useful to introduce some more notation:
\begin{itemize}
    \item Define an antiderivative of $\alpha$ in a neighbourhood of $\partial \Sigma_r$ by 
    \begin{equation} \label{eq:lifted-antiderivative}
        A \coloneqq \epsilon \cdot \pi^* A_0,
    \end{equation}
    where
    \begin{equation*} 
        A_0(w) = \frac{1}{2} \zeta^{-1} w^{-2}  + \frac{1}{2} \zeta \overline{w}^{-2}  - (\zeta^{-1} m - \frac{1}{2} m^{(3)}) \log w - (\zeta \overline{m} + \frac{1}{2} m^{(3)}) \log \overline{w}.
    \end{equation*}
    (Note that $A_0$ is the same function \eqref{eq:normalization-antiderivative} appearing in the normalization condition for the canonical flat sections $s_i$.)
    Then $d A_0 = \alpha_0$, and so $dA = \alpha$ near $\partial \Sigma_r$.

    \item Let
    \begin{equation}
        x_e \coloneqq -2\pi i (\zeta^{-1} m - m^{(3)} - \zeta \overline{m})
    \end{equation}
    so that
    \begin{equation}
        \Xe = \exp (x_e).
    \end{equation}
    Then $A_0$ has monodromy $A_0 \to A_0 + x_e$ around $w=0$.
\end{itemize}

We will choose the gluing map
\begin{equation} \label{eq:sym-gluing-map-antiderivative}
    \boxed{\chi = \frac{A - \sigma^* A}{2} - \frac{\epsilon}{2} \cdot \frac{\vartheta}{2\pi} x_e}
\end{equation}
to satisfy the symmetric condition \eqref{eq:sym-gluing-condition}.
(Note that $\chi$ is single-valued even though $A_0$ has monodromy.)
The $- \frac{\epsilon}{2} \cdot \frac{\vartheta}{2\pi} x_e$ term was chosen so that the explicit expansion
\begin{equation} \label{eq:gluing-map}
    \boxed{ \chi = \frac{\epsilon}{2} \left[ \frac{1}{2} (\zeta^{-1} + \zeta e^{2 i \vartheta}) w^{-2} + \frac{1}{2} (\zeta + \zeta^{-1} e^{-2i \vartheta}) \overline{w}^{-2} - 2 (\zeta^{-1} m + \zeta \overline{m}) \log |w| \right] }
\end{equation}
has no constant term.
\end{enumerate}

\begin{construction}[Glued form $\Omega_r^\glue$] 
    For any choice of path $b_r = b_r(p_0, \vartheta)$ as above, the gluing construction produces a form
    \begin{equation*}
        \Omega^\glue_r \coloneqq \omega^\glue_{\Sigma_r, \chi}
    \end{equation*}
     on $\abAfr_\zeta$ (and its moduli space $\abMfr_\zeta$).
    According to \cref{prop:glued-form-identity}, it can be calculated by
    \begin{align}
    \begin{split}
       \Omega^\glue_r(\dot{\nabla}^\ab_1, \dot{\nabla}^\ab_2)
        &=\int_{\Sigma_r} \dot{\alpha}_1 \wedge \dot{\alpha}_2 
        + \int_{\partial  \Sigma_r} (\dot{\chi}_2 \dot{\alpha}_1 -  \dot{\chi}_1 \dot{\alpha}_2 + \dot{\chi}_1 d \dot{\chi}_2) \label{eq:cutoff-glued-form-1} 
    \end{split} \\
    \begin{split}
        &= \int_a \dot{\alpha}_1 \left( \dot{\chi}_2(p_0) - \dot{\chi}_2(\sigma(p_0)) + \int_{b_r} \dot{\alpha}_2 \right) \\
        &\qquad- \int_a \dot{\alpha}_2 \left( \dot{\chi}_1(p_0) - \dot{\chi}_1(\sigma(p_0)) +  \int_{b_r} \dot{\alpha}_1 \right). \label{eq:cutoff-glued-form-2}
    \end{split}
    \end{align} 
\end{construction}
In \cref{sub:interpreting-magnetic-coordinate} we will specify the correct  choice of path $b_r$, but first we will discuss how to interpret the above integrals more generally.

\subsection{Gluing and regularization} 
\label{sub:gluing-and-regularization}

The two formulas \eqref{eq:cutoff-glued-form-1} and \eqref{eq:cutoff-glued-form-2} for $\Omega^\glue_r$ can each be thought of as corresponding to a certain kind of regularization, as alluded to in \cref{ssub:summary-gluing}.

\subsubsection{Regularization for parallel transport} 
\label{ssub:regularization-for-parallel-transport}

First we will consider the expressions
\begin{equation}
    \int^{\reg, \chi}_{b_r} \alpha \coloneqq \chi(p_0) - \chi(\sigma(p_0)) + \int_{b_r} \alpha
\end{equation}
whose variations appear in \eqref{eq:cutoff-glued-form-2}.

Using the formula \eqref{eq:sym-gluing-map-antiderivative} for $\chi$ in terms of the antiderivative $A$ of $\alpha$ near $\partial \Sigma_r$, we can rewrite
\begin{equation} \label{eq:reg-integral-antiderivatives}
     \int^{\reg, \chi}_{b_r} \alpha = \left(A(p_0) - A(\sigma(p_0)) + \int_{b_r} \alpha\right) - \frac{\vartheta}{2\pi} x_e.
\end{equation}
It follows that the expressions $\int^{\reg, \chi}_{b_r} \alpha$ are independent of $r$ (assuming, as always, that $r$ is sufficiently small so that the setup for the gluing construction is defined).
This suggests an interpretation of the gluing map as a means of regularizing the divergent integral
\begin{equation*}
    \lim_{r \to 0} \int_{b_r} \alpha
\end{equation*}
and calculating the (log of the) holonomy of the framed connection $\nabla = d + \alpha$ along an open path; cf.\ the discussion of regularized parallel transports in \cref{ssub:summary-stokes-data}.
We will show in \cref{sub:interpreting-magnetic-coordinate} that by choosing $b_r$ appropriately we can identify the bracketed expression in \eqref{eq:reg-integral-antiderivatives} with the logarithm of the magnetic coordinate $\Xm$.

Taking variations, we see that none of the factors $(\dot{\chi}(p_0) - \dot{\chi}(\sigma(p_0)) + \int_{b_r} \dot{\alpha})$ or $\int_a \dot{\alpha}$
appearing in the expression \eqref{eq:cutoff-glued-form-2} for $\Omega^\glue_r$ depend on $r$.

\begin{cor}[$r$-independence of $\Omega^\glue_r$] \label{cor:glued-form-r-indep}
    The form $\Omega^\glue_r$ is independent of $r$. 
\end{cor}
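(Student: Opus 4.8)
The plan is to read off $r$-independence directly from the two formulas \eqref{eq:cutoff-glued-form-1} and \eqref{eq:cutoff-glued-form-2} that the construction already produces, together with the observation in \eqref{eq:reg-integral-antiderivatives} that the regularized parallel transport $\int^{\reg,\chi}_{b_r}\alpha$ does not depend on $r$. Concretely, I would work from the second expression \eqref{eq:cutoff-glued-form-2}, since it exhibits $\Omega^\glue_r$ as built entirely out of two kinds of ingredients: the loop integrals $\int_a \dot\alpha_i$ around $\infty_-$, and the bracketed quantities $\dot\chi_i(p_0)-\dot\chi_i(\sigma(p_0))+\int_{b_r}\dot\alpha_i$. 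If each ingredient is shown to be independent of $r$, then so is their bilinear combination, and we are done.

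First I would dispense with the loop integrals $\int_a\dot\alpha_i$. The loop $a$ is the boundary circle around $\infty_-$, and I may freely homotope it within the region where the prescribed framed form \eqref{eq:ab-form-top} holds; since $\dot\alpha_i$ is closed (as noted in \cref{rem:var-of-ab-connection} and \cref{rem:closedness-of-variations}, the variations are closed $1$-forms on all of $\Sigma_r$), the integral $\int_a\dot\alpha_i$ is unchanged under such homotopies and in particular does not depend on the cutoff radius $r$. Next I would treat the bracketed regularized-transport terms. Taking the variation of the identity \eqref{eq:reg-integral-antiderivatives} gives
\begin{equation}
    \dot\chi_i(p_0)-\dot\chi_i(\sigma(p_0))+\int_{b_r}\dot\alpha_i
    = \left(\dot A_i(p_0)-\dot A_i(\sigma(p_0))+\int_{b_r}\dot\alpha_i\right)-\frac{\vartheta}{2\pi}\,\dot x_{e,i},
\end{equation}
where $\dot x_{e,i}$ is the variation of $x_e$, which is manifestly independent of $r$. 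So it remains to check that the combination $\dot A_i(p_0)-\dot A_i(\sigma(p_0))+\int_{b_r}\dot\alpha_i$ is $r$-independent. This is exactly the content already asserted following \eqref{eq:reg-integral-antiderivatives}: because $dA=\alpha$ holds in a neighbourhood of $\partial\Sigma_r$, shrinking $r$ extends the path $b_r$ by an extra segment lying in that neighbourhood, and the fundamental theorem of calculus shows that the extra contribution $\int_{\text{segment}}\dot\alpha$ is precisely cancelled by the change in the endpoint antiderivative values $\dot A_i$. Formally, if $r'<r$ then $b_{r'}$ and $b_r$ differ by two short arcs near the two punctures contained in the framed region, and on each such arc $\dot\alpha=d\dot A$, so $\int_{b_{r'}}\dot\alpha-\int_{b_r}\dot\alpha$ equals the difference of boundary values of $\dot A$, which is absorbed into the $\dot A(p_0)-\dot A(\sigma(p_0))$ correction.

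Assembling these two facts, every factor in \eqref{eq:cutoff-glued-form-2} is independent of $r$, hence $\Omega^\glue_r(\dot\nabla_1^\ab,\dot\nabla_2^\ab)$ is as well, proving \cref{cor:glued-form-r-indep}. I expect the only genuinely delicate point to be the bookkeeping for the bracketed transport term: one must be careful that the endpoints $p_0$ and $\sigma(p_0)$ of $b_r$ move as $r$ changes, and that the antiderivative $A$ is only locally defined (it has monodromy $x_e$ around $\infty_-$). The reason the argument closes cleanly is that $A$ is single-valued on the relevant boundary neighbourhoods and $\chi$ itself is globally single-valued by construction (the $-\frac{\epsilon}{2}\cdot\frac{\vartheta}{2\pi}x_e$ term in \eqref{eq:sym-gluing-map-antiderivative} precisely cancels the monodromy), so no branch ambiguity enters the difference of endpoint values. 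An alternative, perhaps cleaner, route would be to argue directly from \eqref{eq:cutoff-glued-form-1}: the annular regions between $\Sigma_{r'}$ and $\Sigma_r$ contribute a bulk integral $\int\dot\alpha_1\wedge\dot\alpha_2$ plus boundary terms, and one checks these cancel using $\dot\alpha_i=d\dot A_i$ and the explicit form of $\dot\chi_i$ near the boundary; but the path-integral formulation above is the most economical.
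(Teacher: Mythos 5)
Your proposal is correct and follows essentially the same route as the paper: the paper also derives $r$-independence from the bilinear expression \eqref{eq:cutoff-glued-form-2}, observing that $\int_a\dot\alpha$ is $r$-independent and that the rewriting \eqref{eq:reg-integral-antiderivatives} in terms of the antiderivative $A$ (with $dA=\alpha$ near $\partial\Sigma_r$) makes the bracketed regularized transport manifestly independent of the cutoff. Your extra care about the moving endpoints and the monodromy of $A_0$ matches the paper's remark that $\chi$ is single-valued because the $-\frac{\epsilon}{2}\cdot\frac{\vartheta}{2\pi}x_e$ term cancels the monodromy.
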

We will henceforth drop the $r$ subscript and just write $\Omega^\glue$.

\subsubsection{Regularization for the Atiyah-Bott form} 
\label{ssub:regularization-atiyah-bott}

Switching to the other expression \eqref{eq:cutoff-glued-form-1} for $\Omega^\glue$, we will now consider the boundary integral 
\begin{equation}
    \mathcal{R}^\chi_r(\dot{\nabla}^\ab_1, \dot{\nabla}^\ab_2) \coloneqq \int_{\partial  \Sigma_r} (\dot{\chi}_2 \dot{\alpha}_1 -  \dot{\chi}_1 \dot{\alpha}_2 + \dot{\chi}_1 d \dot{\chi}_2).
\end{equation}

\begin{remark}[Explicit boundary integral]
In the current setting, we have explicit closed formulas for $\chi$ and $\alpha$ near $\partial \Sigma_r$, so we can directly calculate
\begin{equation} \label{eq:boundary-term}
    \begin{split}
    \mathcal{R}^\chi_r(\dot{\nabla}^\ab_1, \dot{\nabla}^\ab_2)
    = -4 \pi i \log r [
    &(\zeta^{-1} \dot{m}_1 - \dot{m}^{(3)}_1 - \zeta \dot{\overline{m}}_1)
    (\zeta^{-1} \dot{m}_2 + \zeta \dot{\overline{m}}_2) \\
    - &(\zeta^{-1} \dot{m}_2 - \dot{m}^{(3)}_2 - \zeta \dot{\overline{m}}_2)
     (\zeta^{-1} \dot{m}_1 + \zeta \dot{\overline{m}}_1)
    ].
    \end{split}
\end{equation}
This coincides with the regularization term $-2 \pi \log r \cdot \mathcal{R}^\ab(\dot{\alpha}_1^\ab, \dot{\alpha}_2^\ab)$ of the regularized abelian Atiyah-Bott form $\Omega^{\reg, \ab}$ (see \cref{lem:reg-terms-coincide}), and so it follows that $\Omega^\glue = \Omega^{\reg, \ab}$.
This proves \cref{prop:glue-equals-reg}.
\end{remark}

It will be useful for later (and perhaps more enlightening) to give a more general argument in terms of the gluing map \eqref{eq:sym-gluing-map-antiderivative}.
The same proof will carry over to the semiflat setting in \cref{ssub:sf-gluing-and-regularization}.

\begin{proof}[Alternative proof of \cref{prop:glue-equals-reg}]
First, we can rewrite
\begin{equation} \label{eq:rewritten-boundary-term}
    \mathcal{R}^\chi_r(\dot{\nabla}^\ab_1, \dot{\nabla}^\ab_2) = \int_a (\dot{\chi}_2 \cdot (\dot{\alpha}_1 + \sigma^* \dot{\alpha}_1)  -  \dot{\chi}_1 \cdot (\dot{\alpha}_2 + \sigma^* \dot{\alpha}_2)),
\end{equation}
using that $\int_{\partial S} (\cdot) = \int_a (\cdot) - \int_a \sigma^* (\cdot)$ and $\sigma^* \chi = -\chi$.
If we express the variations in polar coordinates near each puncture $\infty_\mp$ (cf.\ \eqref{eq:ab-asymptotic-polar-coeffs}) as
\begin{align} 
    \dot{\alpha}_i &= (\pm \mu^\ab_i + \O(r)) d \theta + (\pm \lambda^\ab_i + \O(r)) \frac{dr}{r} \quad \text{for some } \mu^\ab_i, \lambda^\ab_i \in \C \\
    &= \epsilon \left[(\mu^\ab_i + \O(r)) d \theta + (\lambda^\ab_i + \O(r)) \frac{dr}{r}\right],
    \end{align}
then the terms appearing in \eqref{eq:rewritten-boundary-term} are
\begin{equation}
    \dot{\alpha}_i + \sigma^* \dot{\alpha}_i = 2 \epsilon (\mu^\ab_i + \O(r)) d \theta
\end{equation}
and
\begin{equation}
    \dot{\chi}_i = \epsilon (\lambda^\ab_i + \O(r)) \log r.
\end{equation}

We can therefore write
\begin{align*}
    \mathcal{R}^\chi_r(\dot{\nabla}^\ab_1, \dot{\nabla}^\ab_2) &= 2 \log r \int_a \left[ (\lambda^\ab_2 + \O(r))  (\mu^\ab_1 + \O(r))  -  (\lambda^\ab_1 + \O(r))  (\mu^\ab_2 + \O(r)) \right] d \theta   \\
    &= -2 \pi \log r  \cdot \left[ 2 (\mu^\ab_1 \lambda^\ab_2 - \mu^\ab_2 \lambda^\ab_1) + \O(r) \right] \\
    &= -2 \pi \log r  \cdot \left[ \mathcal{R}^\ab (\dot{\nabla}^\ab_1, \dot{\nabla}^\ab_2) + \O(r) \right],
\end{align*}
using the general definition \eqref{eq:ab-reg-term} of $\mathcal{R}^\ab$.
But on the other hand we know that
\begin{equation*}
    \Omega^\glue(\dot{\nabla}^\ab_1, \dot{\nabla}^\ab_2) \defeq \int_{\Sigma_r} \dot{\alpha}_1 \wedge \dot{\alpha}_2  + \mathcal{R}^\chi_r(\dot{\nabla}^\ab_1, \dot{\nabla}^\ab_2)
\end{equation*}
is $r$-independent, so it follows that
\begin{align*}
    \Omega^\glue(\dot{\nabla}^\ab_1, \dot{\nabla}^\ab_2) &= \lim_{r \to 0} \left[ \int_{\Sigma_r} \dot{\alpha}_1 \wedge \dot{\alpha}_2  - 2 \pi \log r \cdot \mathcal{R}^\ab(\dot{\nabla}_1^\ab, \dot{\nabla}_2^\ab ) \right] \\
    &= \Omega^{\reg, \ab} (\dot{\nabla}^\ab_1, \dot{\nabla}^\ab_2). \qedhere
\end{align*}
\end{proof}

\subsection{Interpreting the electric twistor coordinate} 
\label{sub:interpreting-electric-coordinate}

Now we return to the question of interpreting $\Xe$ and $\Xm$ in terms of the integrals
\begin{equation*}
    \int_a \alpha \quad \text{and} \quad \int^{\reg, \chi}_{b_r} \alpha = \chi(p_0) - \chi(\sigma(p_0)) + \int_{b_r} \alpha.
\end{equation*}
We will start with the electric twistor coordinate $\Xe$ as a quick sanity check.
Using the expression \eqref{eq:ab-form-top} for $\alpha|_\top$, we can directly compute 
\begin{align*}
    \int_a \alpha  &= \int_a  \left(-\zeta^{-1} \frac{dw}{w^3} - \zeta \frac{d \overline{w}}{\overline{w}^3} - (\zeta^{-1} m - \frac{1}{2} m^{(3)}) \frac{dw}{w} - (\zeta \overline{m} + \frac{1}{2} m^{(3)})\frac{d \overline{w}}{\overline{w}}\right) \\
    &=  2 \pi i (\zeta^{-1} m - \zeta \overline{m} - m^{(3)}) \\
    &= -x_e,
\end{align*}
which is consistent with the interpretation of $\Xe$ in terms of the formal monodromy of $\nabla_\zeta$ \cite{Tulli:2019}.

We will henceforth call $a = \gamma_e$, so that we have the following integral interpretation of $\Xe$ as a holonomy along $\gamma_e$.

\begin{prop}[$\gamma_e$ integral] \label{prop:electric-integral}
    For a connection $\nabla \in \Afr_\zeta$ with abelianization $\nabla^\ab = d + \alpha$,
    \begin{equation}
        \exp\left( -\int_{\gamma_e} \alpha \right) = \Xe(\nabla).
    \end{equation}
\end{prop}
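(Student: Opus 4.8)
The plan is to reduce the statement to a direct contour integral, since the integrand $\alpha$ has an explicit prescribed form near the puncture. First I would observe that $\gamma_e = a$ is the boundary loop of $\Sigma_r$ encircling $\infty_-$, and by our standing assumption that $r$ is small enough, it lies in the region where the framed form \eqref{eq:ab-form-top} holds. Since the spectral cover $\pi \colon \Sigma \to C$ is unramified over $\infty$, the loop $\gamma_e$ projects homeomorphically onto a simple loop around $w = 0$ in the base $\CP^1$, and near $\infty_-$ we have $\alpha = \pi^* \alpha_0$ (with $\epsilon = +1$). Hence $\int_{\gamma_e} \alpha = \int_{\pi(\gamma_e)} \alpha_0$ reduces to a contour integral on the base.

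Next I would evaluate this contour integral term by term. The two higher-order singular terms $-\zeta^{-1} dw/w^3$ and $-\zeta\, d\overline{w}/\overline{w}^3$ contribute nothing, since $w^{-3}\,dw$ and $\overline{w}^{-3}\,d\overline{w}$ have no residue and integrate to zero around a closed loop. Only the simple-pole terms survive, with $\oint dw/w = 2\pi i$ and $\oint d\overline{w}/\overline{w} = -2\pi i$ for the appropriate orientation of $\gamma_e$. Collecting the coefficients of the two surviving terms reproduces exactly the displayed computation immediately preceding the statement, namely $\int_{\gamma_e}\alpha = 2\pi i(\zeta^{-1} m - \zeta \overline{m} - m^{(3)}) = -x_e$.

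Finally, exponentiating and recalling $\Xe = \exp(x_e)$ gives $\exp(-\int_{\gamma_e}\alpha) = \exp(x_e) = \Xe$, as desired. This is consistent with the interpretation of $\Xe$ as a diagonal entry of the formal monodromy $M_0$ in \eqref{eq:formal-monodromy}: the contour integral of the diagonal connection form picks out precisely the residue of its simple-pole part, which is the logarithm of the corresponding formal monodromy eigenvalue.

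The computation itself is routine; the only point requiring genuine care is the orientation bookkeeping — tracking how the chosen orientation of $\gamma_e$ on $\Sigma$ descends to the $w$-plane, together with the sign flip $\oint d\overline{w}/\overline{w} = -2\pi i$ coming from conjugation. Pinning these signs down is exactly what identifies the integral with $x_e$ (rather than $-x_e$), and hence with $\Xe$ rather than $\Xe^{-1}$.
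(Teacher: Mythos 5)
Your proposal is correct and is essentially the paper's own proof: both evaluate $\int_{\gamma_e}\alpha$ directly from the prescribed framed form \eqref{eq:ab-form-top} of $\alpha$ near $\infty_-$, discard the residue-free higher-order terms, and read off $-x_e$ from the simple-pole coefficients. The one thing to pin down is exactly the point you flag: with the standard counterclockwise values $\oint dw/w = 2\pi i$ and $\oint d\overline{w}/\overline{w} = -2\pi i$ one would get $+x_e$, so the orientation of $\gamma_e$ used in the gluing construction must project to the \emph{clockwise} loop in the $w$-plane (as in the paper's figure) to produce $\int_{\gamma_e}\alpha = -x_e$ and hence $\exp(-\int_{\gamma_e}\alpha) = \Xe$.
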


\subsection{Interpreting the magnetic twistor coordinate} 
\label{sub:interpreting-magnetic-coordinate}

\subsubsection{\texorpdfstring{Choice of path $\gamma_m$}{Choice of path gamma\_m}} 
\label{ssub:choice-of-path}

In order to relate
\begin{equation*}
    \int^{\reg, \chi}_{b_r} \alpha
\end{equation*}
to the magnetic coordinate $\Xm$, the desired path $\gamma_m = b_r(p_0, \vartheta)$ will be described in terms of the spectral network on the cut off double cover $\Sigma_r$, as shown in \cref{fig:cutoff-paths} below.

\begin{figure}[ht]
    \subcaptionbox{$\Re(\zeta^{-1} m) > 0$}{\includegraphics[scale=1]{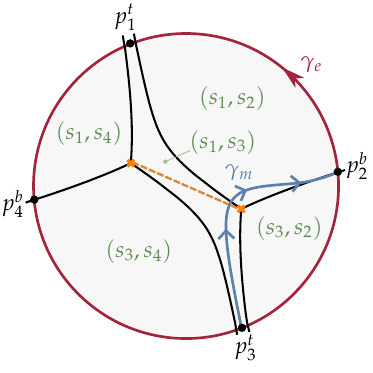}}
    \hspace{1cm}
    \subcaptionbox{$\Re(\zeta^{-1} m) < 0$}{\includegraphics[scale=1]{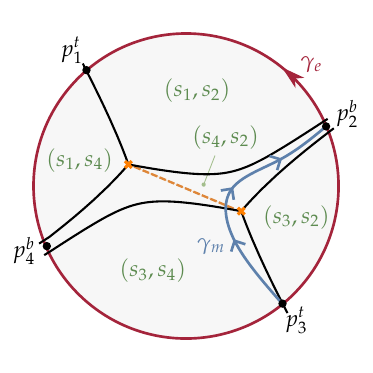}}
    \caption{The relevant paths $\gamma_e, \gamma_m$ and spectral network $\W_\zeta$ on the cut off double cover $\Sigma_r$, with four points $p_i^t$ or $p_i^b$ lying on the indicated sheets and anti-Stokes rays.}
    \label{fig:cutoff-paths}
\end{figure}

Recall that $\gamma_e = a$ is the boundary loop around $\infty_-$, i.e.\ the circle $|w| = r$ on the top sheet.
Define the four points
\begin{equation*}
    p_1^t, p_2^b, p_3^t, p_4^b \in \partial \Sigma_r
\end{equation*}
that lie on the anti-Stokes ray indicated by their subscripts and the sheet indicated by their superscripts.
(We will sometimes omit the superscripts if we do not need to emphasize them.)

Take $\gamma_m$ to be a path from $p_3^t$ to $p_2^b$ that winds around the branch point in the triangle 123 (cf.\ \cite[Section 9.4.3]{Gaiotto:2013a}).
We will choose the gluing angle $\vartheta$ so that the map $\sigma(w,s)  =(e^{i \vartheta} \overline{w}, -s)$ sends the basepoint
\begin{equation*}
    p_0 \coloneqq p_3^t
\end{equation*}
to $p_2^b$ (see \cref{fig:gluing-angle}).

\begin{figure}[ht]
    \centering
    \includegraphics[scale=1]{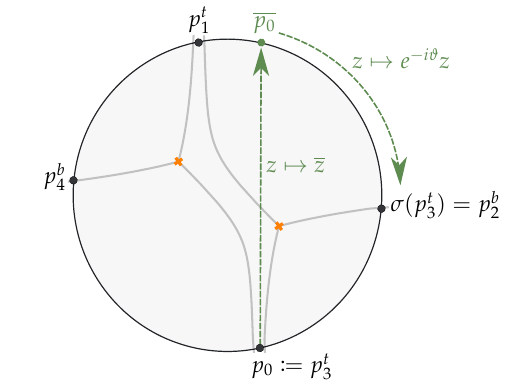} 
    \caption{We choose the gluing angle $\vartheta$ so that the orientation-reversing map $\sigma(w,s)  =(e^{i \vartheta} \overline{w}, -s)$ sends the basepoint $p_3^t$ to $p_2^b$.
    This is illustrated above in $z$-coordinates (when $\Re(\zeta^{-1} m) > 0$).}
    \label{fig:gluing-angle}
\end{figure}

With such a choice of $p_0$ and $\theta$ we will now study the integral
\begin{equation}
    \int^{\reg, \chi}_{\gamma_m} \alpha = \chi(p_0) - \chi(\sigma(p_0)) + \int_{\gamma_m} \alpha.
\end{equation}

\begin{remark}[Variations of spectral network and gluing angle]
    As we vary the parameter $m$, the spectral network also varies, and hence so do the boundary points $p_i$ and gluing angle $\vartheta$. 
    However, the topology of this picture does not change for nearby $m$, and hence we will treat this data as fixed when computing variations of the connection $\nabla^\ab$ (cf.\ \cref{rem:var-of-ab-connection}).
\end{remark}

\subsubsection{\texorpdfstring{Regularized holonomies along $\gamma_m$}{Regularized holonomy along gamma\_m}} 
\label{ssub:regularized-holonomies}

We will prove that $\Xm$ has the following integral interpretation (cf.\ \cref{prop:electric-integral} for $\Xe$).

\begin{prop}[$\gamma_m$ glued integral] \label{prop:magnetic-integral}
    For a connection $\nabla \in \Afr_\zeta$ with abelianization $\nabla^\ab = d + \alpha$,
    \begin{equation}
        \exp \left( -\int^{\reg, \chi}_{\gamma_m} \alpha  \right) 
        = \Xm(\nabla) \cdot (\Xe(\nabla))^{\frac{\vartheta}{2\pi}}.
    \end{equation}
\end{prop}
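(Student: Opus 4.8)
The plan is to relate the regularized integral $\int^{\reg,\chi}_{\gamma_m}\alpha$ to the cross-ratio/ratio formulas for $\Xm$ given in \eqref{eq:Xm-ratio} via the abelianized parallel transport formulas of \cref{lem:ab-parallel-transport}. Using \eqref{eq:reg-integral-antiderivatives}, I would first rewrite
\begin{equation}
    \int^{\reg, \chi}_{\gamma_m} \alpha = \left(A(p_0) - A(\sigma(p_0)) + \int_{\gamma_m} \alpha\right) - \frac{\vartheta}{2\pi} x_e,
\end{equation}
so that the claimed identity $\exp(-\int^{\reg,\chi}_{\gamma_m}\alpha) = \Xm \cdot \Xe^{\vartheta/2\pi}$ becomes, after taking $\exp$ and using $\Xe = e^{x_e}$, the equivalent statement
\begin{equation}
    \exp\left(-A(p_0) + A(\sigma(p_0)) - \int_{\gamma_m}\alpha\right) = \Xm(\nabla).
\end{equation}
Thus the $-\frac{\vartheta}{2\pi}x_e$ term in $\chi$ is exactly what absorbs the extra factor $\Xe^{\vartheta/2\pi}$ coming from the winding of $\gamma_m$, and the remaining task is a pure parallel-transport computation along the open path $\gamma_m$ from $p_3^t$ to $p_2^b$.

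Next I would compute $\exp(-\int_{\gamma_m}\alpha)$ as the $\nabla^\ab$-parallel transport of a flat section of $\L$ along $\gamma_m$, breaking $\gamma_m$ into segments according to which cell of $\W_\zeta$ it passes through and applying \cref{lem:ab-parallel-transport} at each wall crossing. In the case $\Re(\zeta^{-1}m)>0$, the path $\gamma_m$ runs from the cell containing $(s_3,\ast)$ to the cell containing $(s_2,\ast)$ while winding around the branch point in the triangle $123$; each wall crossing contributes a factor of the form $\frac{s_i\wedge s_j}{s_k\wedge s_j}$, and the branch-point holonomy contributes the factor $-1$ characteristic of an almost-flat connection. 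Collecting these factors and using the relation $s_3 = s_1 + a s_2$ from \eqref{eq:a-b-ratios}, the telescoping product should collapse to $a(\zeta) = \frac{s_3\wedge s_1}{s_2\wedge s_1}$, matching the top case of \eqref{eq:Xm-ratio}; the case $\Re(\zeta^{-1}m)<0$ is handled symmetrically and yields $-1/b(\zeta)$.

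The role of the antiderivative boundary terms $A(p_0)-A(\sigma(p_0))$ is to convert the raw parallel transport of the \emph{frame sections} $f_i$ into the transport of the normalized \emph{flat sections} $s_i$: since $(s_i,s_j)=(f_i,f_j)\cdot w^{-\Lambda}\overline{w}^{-\Lambda'}e^{-Q}$ from \eqref{eq:sections-vs-frames} and $A_0$ is precisely the diagonal antiderivative appearing in the sectorial normalization \eqref{eq:normalization-antiderivative}, the endpoint contributions $e^{A(p_0)-A(\sigma(p_0))}$ supply exactly the asymptotic normalization factors prescribed by \cref{prop:sectorial-asymptotics}. The main obstacle, I expect, is the careful bookkeeping at the endpoints and at the branch-point crossing: one must verify that the chosen basepoint $p_0 = p_3^t$ and gluing angle $\vartheta$ (fixed so that $\sigma(p_3^t)=p_2^b$) make the boundary normalizations land on precisely the sections $s_3$ and $s_2$ used in \eqref{eq:Xm-ratio}, with the correct branch of logarithm and the correct sign from the branch point, so that the regularized transport reproduces the gauge-invariant Stokes ratio rather than an unnormalized frame-dependent quantity. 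Once these normalizations are matched the two cases follow by the same telescoping argument, completing the proof.
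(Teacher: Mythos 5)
Your proposal follows essentially the same route as the paper: reduce via \eqref{eq:reg-integral-antiderivatives} to the identity $\exp(-A(p_3^t)+A(p_2^b)-\int_{\gamma_m}\alpha)=\Xm$, interpret the exponentiated integral as the $\nabla^\ab$-parallel transport of flat sections $\hat{s}_i(q)=c_i\exp(-\int_{p_i}^q\alpha)$ across the wall via \cref{lem:ab-parallel-transport}, and show that the choice $c_i=e^{-A(p_i)}$ reproduces the sectorial normalization of \cref{prop:sectorial-asymptotics} so that the ratio is the gauge-invariant Stokes element (with the $\Re(\zeta^{-1}m)<0$ case handled by relating $\gamma_m$ to the auxiliary path computing $b(\zeta)$ through the $-1$ branch-point holonomy). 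This matches the paper's proof in both structure and the key normalization lemma.
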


For our current choice of path $\gamma_m = b_r(p_3^t, \vartheta)$, \eqref{eq:reg-integral-antiderivatives} says that
\begin{align*}
     \exp \left( -\int^{\reg, \chi}_{\gamma_m} \alpha  \right) &= \exp\left( - A(p_3^t) + A(p_2^b) - \int_{\gamma_m} \alpha + \frac{\vartheta}{2\pi} x_e \right) \\
    &= \exp\left( -A(p_3^t) + A(p_2^b) - \int_{\gamma_m} \alpha  \right) \cdot (\Xe)^{\frac{\vartheta}{2\pi}},
\end{align*}
so it suffices to prove the following:
\begin{lemma}[$\Xm$ as regularized holonomy] \label{lem:reg-holonomy-xm}
    \begin{equation}\label{eq:reg-holonomy-xm}
    \exp \left(-A(p_3^t) + A(p_2^b) - \int_{\gamma_m} \alpha \right) = \Xm
\end{equation}
\end{lemma}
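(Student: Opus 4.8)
The plan is to compute the regularized abelianized parallel transport on the left-hand side of \eqref{eq:reg-holonomy-xm} directly, and show that it collapses to the Stokes matrix element $a$, which equals $\Xm$ for $\Re(\zeta^{-1}m) > 0$ by \eqref{eq:magnetic-coord} together with \eqref{eq:a-b-ratios}. The companion case $\Re(\zeta^{-1}m) < 0$ will then follow by the symmetric argument with the sections $s_2, s_4$, producing $-1/b = \Xm$. The key structural insight is that the two antiderivative terms $-A(p_3^t) + A(p_2^b)$ are engineered to strip off exactly the endpoint asymptotics of the canonical flat sections, turning a divergent frame-based transport into a finite comparison of flat sections.

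First I would record the crucial preliminary fact relating flat sections to the antiderivative. Recall from \eqref{eq:sections-vs-frames} and the normalization condition \eqref{eq:sectorial-normalization} that, in the global frame $g^\ab$ of \cref{cor:global-frame}, each canonical flat section $s_i$ of $\nabla^\ab = d + \alpha$ has value $e^{-A}$ near its home ray $r_i$, where $A = \epsilon\, \pi^* A_0$ is the antiderivative from \eqref{eq:lifted-antiderivative}. One checks this consistently on both sheets: on the top sheet $A = A_0$ matches the $+H$-entry $s_i \sim e^{-A_0}$, while on the bottom sheet $A = -A_0$ matches the $-H$-entry $s_i \sim e^{+A_0}$. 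In particular $s_3$ has value $e^{-A(p_3^t)}$ at $p_3^t$ and $s_2$ has value $e^{-A(p_2^b)}$ at $p_2^b$.

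Next I would evaluate the ordinary parallel transport $\exp(-\int_{\gamma_m}\alpha)$ of $\nabla^\ab$ in the frame $g^\ab$ by applying it to the flat section $s_3$. Continuing $s_3$ from $p_3^t$ along $\gamma_m$, which winds once around the branch point in the triangle $123$ and crosses the walls separating the cells carrying $s_1$ and $s_2$, the jump formulas of \cref{lem:ab-parallel-transport} express the continuation as $a\, s_2$, the intermediate section $s_1$ entering precisely through the cross-ratio $\frac{s_3 \wedge s_1}{s_2 \wedge s_1} = a$. Evaluating at the endpoint $p_2^b$ and using the preliminary fact gives $\exp(-\int_{\gamma_m}\alpha)\, e^{-A(p_3^t)} = a\, e^{-A(p_2^b)}$, so that $\exp(-\int_{\gamma_m}\alpha) = a\, e^{-A(p_2^b) + A(p_3^t)}$. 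Multiplying by $e^{-A(p_3^t) + A(p_2^b)}$ cancels the antiderivative factors and yields exactly $a = \Xm$, establishing \eqref{eq:reg-holonomy-xm}.

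The main obstacle is the third step: carefully tracking which walls $\gamma_m$ crosses and in what order, and verifying that the resulting product of jump factors (together with the holonomy $-1$ around the branch point) collapses to precisely $a$ rather than $a^{-1}$ or $-a$. This requires pinning down the branch-cut and orientation conventions of \cref{fig:cutoff-paths} and \cref{fig:gluing-angle}, and is exactly where the geometry of the spectral network is doing real work; everything else is routine bookkeeping with \cref{lem:ab-parallel-transport} and the asymptotics of the $s_i$.
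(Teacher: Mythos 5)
Your proposal is correct and follows essentially the same route as the paper: normalize the flat sections by the factors $e^{-A(p_i)}$ so that they match the canonical sectorial asymptotics, identify the continuation of $s_3$ along $\gamma_m$ with $a\,s_2$ via the wall-crossing formula of \cref{lem:ab-parallel-transport}, and cancel the antiderivative factors. The only place you are lighter than the paper is the case $\Re(\zeta^{-1}m)<0$, which is not literally symmetric — the paper routes it through an auxiliary path $\gamma_m'$ from $p_4^b$ to $p_3^t$ computing $b$, and then uses the concatenation $\gamma_m' + \gamma_m + \delta \simeq$ (loop around the branch point) with holonomy $-1$ to produce the reciprocal and the sign in $-1/b$ — but you correctly flag the branch-point holonomy as the source of that sign, so this is a matter of detail rather than a gap.
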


We will use the interpretation \eqref{eq:magnetic-coord} of the magnetic twistor coordinate from \cite{Tulli:2019}: 
\begin{equation}
    \Xm(\zeta) = \begin{cases}
     a(\zeta) \equiv \dfrac{s_3 \wedge s_1}{s_2 \wedge s_1}  & \text{when } \Re(\zeta^{-1} m) > 0,  \\\\
    -\dfrac{1}{b(\zeta)} \equiv - \dfrac{s_3 \wedge s_2}{s_4 \wedge s_2} & \text{when } \Re(\zeta^{-1} m) < 0.
    \end{cases}
\end{equation}
The sectorial sections $s_i$ can be described in terms of the abelianized bundle over the spectral cover.
Write $\nabla^\ab = d + \alpha$ with respect to the global frame $g^\ab$ of \cref{cor:global-frame}. 
Near each of the marked boundary points $p_i$ we can define a flat section $\hat{s}_i$ of $\L^\ab$ with respect to $g^\ab$ by
\begin{equation}\label{eq:upstairs-sections}
    \hat{s}_i(q) = c_i \exp\left( -\int_{p_i}^q \alpha \right), 
\end{equation}
where $c_i$ is a constant depending on $p_i$ (and therefore also the cutoff radius $r$).

\begin{lemma}[Normalization for upstairs sections] \label{lem:normalization-constants}
    If we choose the constants
    \begin{equation} \label{eq:section-normalization-constants}
        c_i = \exp(-A(p_i))
    \end{equation}
    in \eqref{eq:upstairs-sections}, then the pushforward of $\hat{s}_i$ to the extended sector $\eSect_i$ coincides with the section $s_i$.
\end{lemma}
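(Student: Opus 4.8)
The plan is to compute the pushforward $\pi_* \hat{s}_i$ directly and match it against the frame relation \eqref{eq:sections-vs-frames} that defines $s_i$ from the diagonalizing cell frame $(f_i, f_j)$. First I would evaluate the flat section \eqref{eq:upstairs-sections} explicitly using the antiderivative $A$ from \eqref{eq:lifted-antiderivative}. Since $dA = \alpha$ near $\partial \Sigma_r$, we have $\int_{p_i}^q \alpha = A(q) - A(p_i)$ along a path staying on one sheet near the puncture, so with the normalization $c_i = \exp(-A(p_i))$ the defining formula collapses to
\begin{equation*}
    \hat{s}_i(q) = \exp(-A(p_i)) \cdot \exp\bigl(-(A(q) - A(p_i))\bigr) = \exp(-A(q)),
\end{equation*}
i.e.\ $\hat{s}_i$ is simply $e^{-A}$ expressed in the global frame $g^\ab$.

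Next I would translate this upstairs expression into the downstairs eigenframe. By the construction of $g^\ab$ in \cref{cor:induced-frame,cor:global-frame}, its pushforward over the sheet carrying the marked point $p_i$ is exactly the cell frame vector $f_i$, so $\pi_* \hat{s}_i = e^{-A} f_i$. The antiderivatives are related by $A = \epsilon \cdot \pi^* A_0$ via \eqref{eq:lifted-antiderivative}, where $\epsilon = +1$ on the top sheet (containing $p_1, p_3$) and $\epsilon = -1$ on the bottom sheet (containing $p_2, p_4$). On the other hand, substituting the singular data \eqref{eq:singular-type} into the frame relation \eqref{eq:sections-vs-frames} gives the identity $w^{-\Lambda} \overline{w}^{-\Lambda'} e^{-Q} = e^{-A_0 H}$, so that $s_i = f_i e^{-\epsilon A_0}$ on the sheet of $p_i$ (the relevant diagonal entry of $H$ being exactly $\epsilon$ there). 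Combining these, $\pi_* \hat{s}_i = e^{-\epsilon A_0} f_i = s_i$, which is the claim.

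The main obstacle — the part requiring genuine care rather than routine algebra — is the bookkeeping of signs and branches: one must confirm that the sheet assignments for the four points $p_i$ are consistent with the labelling in \eqref{eq:flat-section-frames}, and that the branch of the multivalued antiderivative $A_0$ (which has monodromy $A_0 \to A_0 + x_e$) used to define $\hat{s}_i$ agrees with the branch fixed by the sectorial normalization. A clean way to discharge this is to appeal to the uniqueness in \cref{prop:sectorial-asymptotics} together with \cref{rem:asymptotics-uniqueness}: since $f_i = g \cdot \tilde{\Sigma}_i$ with $\tilde{\Sigma}_i \to \bbid$ by \cref{prop:asymptotic-existence-g}, the candidate section $\pi_* \hat{s}_i = e^{-\epsilon A_0} f_i$ satisfies precisely the asymptotic normalization \eqref{eq:sectorial-normalization} characterizing $s_i$ in $\eSect_i$, so the two must coincide — allowing one to sidestep tracking the logarithm branch by hand.
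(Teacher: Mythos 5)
Your proposal is correct and follows essentially the same route as the paper: compute $\pi_*\hat{s}_i = e^{-\epsilon A_0} f_i$ using the normalization $c_i = \exp(-A(p_i))$, then write $f_i$ as a column of $g\cdot\tilde{\Sigma}_i$ with $\tilde{\Sigma}_i \to \bbid$ (\cref{prop:asymptotic-existence-g}) and invoke the uniqueness of the sectorial asymptotics (\cref{prop:sectorial-asymptotics}, \cref{rem:asymptotics-uniqueness}) to identify the result with $s_i$. Your closing observation that the asymptotic characterization is the right way to sidestep the branch bookkeeping is exactly how the paper's proof is organized.
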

\begin{proof}
    The pushed-forward sections have no jumps and are $\nabla$-flat by the abelianization construction,
    so we must only check that they have the right asymptotics \eqref{eq:sectorial-normalization} with respect to the original frame $g$.

    For each $i$, let $t_i$ denote the pushed-forward section $\pi_*(\hat{s}_i)$ and let $\vec{t}_i$ denote its vector representation with respect to the frame $g$, so that $t_i = g \cdot \vec{t}_i$.
    In the cases $i=1$ and $2$ (the others are analogous), we must verify that
    \begin{equation} \label{eq:r1-normalization-condition}
        \vec{t}_1(w) \cdot e^{A_0(w)} \to \begin{pmatrix} 1 \\ 0 \end{pmatrix} 
        \quad \text{as $w \to 0$ in $\eSect_1$}
    \end{equation}
    and
    \begin{equation} \label{eq:r2-normalization-condition}
        \vec{t}_2(w) \cdot e^{-A_0(w)} \to \begin{pmatrix} 0 \\ 1 \end{pmatrix} 
        \quad \text{as $w \to 0$ in $\eSect_2$.}
    \end{equation}
    By \cref{rem:asymptotics-uniqueness}, it suffices to verify these asymptotics as $w \to 0$ in the ``big cells'' $\mathcal{C}_1$ and $\mathcal{C}_2$ shown in \cref{fig:sn-components}.

    We have an explicit antiderivative $A$ for $\alpha$ in a neighbourhood of $\infty_-$ (containing $p_1^t$), so we can write
    \begin{equation*}
        \hat{s}_1(q) = c_1 \exp \left( - \int_{p_1^t}^q \alpha  \right) = c_1 \exp \left( A(p_1^t) - A(q) \right)
    \end{equation*}
    for $q$ near $p_1^t$.
    If we choose $c_1 = \exp(-A(p_1^t))$, the pushed-forward section will be of the form
    \begin{equation*}
        t_1(w) = f_1 \cdot \exp \left(-A_0(w) \right)
    \end{equation*}
    for $w = \pi(q)$ near $0$, where $f_1$ is part of the frame used for abelianization in $\mathcal{C}_1$ (see \cref{subfig:sn-w-frames}).

    To calculate the asymptotics in terms of the original frame $g$, recall that by \cref{prop:asymptotic-existence-g} we can write
    \begin{align*}
        (f_1, f_2) = g \cdot \tilde{\Sigma}_1,
    \end{align*}
    where $\tilde{\Sigma}_1 \to \bbid$ as $w \to 0$ in $\eSect_1$.
    Let $\tilde{\sigma}_1$ denote the first column of $\tilde{\Sigma}_1$.
    Then
    \begin{equation*}
        t_1(w) = g \cdot \underbrace{\tilde{\sigma}_1 \exp \left(-A_0(w) \right)}_{\vec{t}_1(w)},
    \end{equation*}
    and so
    \begin{equation*}
        \vec{t}_1(w) \cdot e^{A_0(w)} = \tilde{\sigma}_1(w) \to \begin{pmatrix} 1 \\ 0 \end{pmatrix},
    \end{equation*}
    as required by \eqref{eq:r1-normalization-condition}.

    The argument for $\hat{s}_2$ is the same, except that
    \begin{equation*}
        t_2(w) = f_2 \cdot \exp (A_0(w))
    \end{equation*}
    due to the flipped signs on the bottom sheet (see \eqref{eq:lifted-antiderivative}).
\end{proof}

We will use these normalizations of the sections to relate the magnetic coordinate to the desired regularized integral.
The formula for $\hat{s}_i$ extends to define a (multivalued) flat section of $\L^\ab$ on the rest of $\Sigma' \supset \Sigma_r$, i.e.\ the punctured surface minus the branch points\footnote{The value of $\hat{s}_i$ depends on the choice of path going around a branch point, since $\nabla^\ab$ is almost-flat.}, which can be used to compute parallel transports.

The Stokes matrix element
\begin{equation*}
    a(\zeta) = \frac{s_3 \wedge s_1}{s_2 \wedge s_1}
\end{equation*}
is naturally described in terms of parallel transport along $\gamma_m$ when $\Re(\zeta^{-1} m) > 0$.
It will be useful to introduce a related path $\gamma_m'$ from $p_4^b$ to $p_3^t$, winding around the same branch point as $\gamma_m$ (see \cref{fig:gamma-m-comparisons}), in order to describe
\begin{equation*}
    b(\zeta) = \frac{s_4 \wedge s_2}{s_3 \wedge s_2}
\end{equation*}
when $\Re(\zeta^{-1} m) < 0$.

\begin{figure}[ht]
    \subcaptionbox{$\Re(\zeta^{-1} m) > 0$}
        {\includegraphics[scale=1]{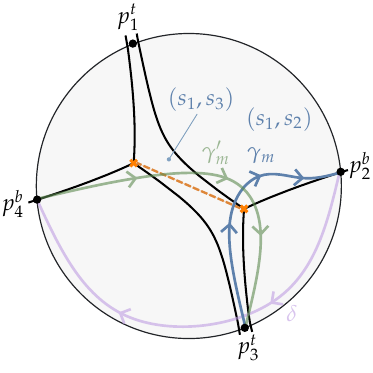}}
    \subcaptionbox{$\Re(\zeta^{-1} m) < 0$}
        {\includegraphics[scale=1]{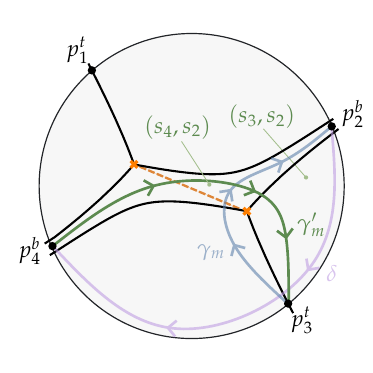}}
    \caption{$\gamma_m$ is related to a path $\gamma_m'$, which can be used to compute $b(\zeta)$ when $\Re(\zeta^{-1} m) < 0$.}
    \label{fig:gamma-m-comparisons}
\end{figure}

\begin{lemma}[Ratios as regularized parallel transport]
For \emph{any} choice of normalization constants $c_i$ and corresponding sections $s_i$,
    \begin{equation} \label{eq:a-wedge-as-integral}
    \frac{c_3}{c_2} \exp\left(-\int_{\gamma_m} \alpha \right) = \frac{s_3 \wedge s_1}{s_2 \wedge s_1}  \quad \textup{when } \Re(\zeta^{-1} m) > 0
    \end{equation} 
    and
    \begin{equation} \label{eq:b-wedge-as-integral}
        \frac{c_4}{c_3} \exp\left(-\int_{\gamma_m'} \alpha \right) = \frac{s_4 \wedge s_2}{s_3 \wedge s_2} \quad \textup{when } \Re(\zeta^{-1} m) < 0.
    \end{equation}

\end{lemma}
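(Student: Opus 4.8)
The plan is to relate the wedge-ratios defining $a(\zeta)$ and $b(\zeta)$ to abelian parallel transports of the sections $\hat{s}_i$ along $\gamma_m$ (resp.\ $\gamma_m'$), using the parallel transport formulas of \cref{lem:ab-parallel-transport}. First I would work on the abelianized side: the sections $\hat{s}_i$ defined by \eqref{eq:upstairs-sections} are flat sections of $\L^\ab$ with respect to the global frame $g^\ab$, and their pushforwards are the sectorial sections $s_i$ by \cref{lem:normalization-constants}. The key observation is that, as noted after \cref{lem:ab-parallel-transport}, a section $\hat{s}_i$ associated to a puncture $q_i$ does \emph{not} jump when crossing a wall going into $q_i$, so its parallel transport along $\gamma_m$ is governed entirely by the abelian connection $\alpha$ together with the wedge-ratio jumps at the walls that $\gamma_m$ \emph{does} cross transversally.

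The core computation is the case $\Re(\zeta^{-1}m)>0$. Here $\gamma_m$ runs from $p_3^t$ to $p_2^b$, winding around the branch point in triangle $123$. I would trace $\hat{s}_3$ along $\gamma_m$: since $\hat{s}_3$ is a $\nabla^\ab$-flat section, its value after transport along $\gamma_m$ is $c_3 \exp(-\int_{\gamma_m}\alpha)$ times the local frame, and upon pushing down through the walls the jump formula from \cref{lem:ab-parallel-transport} converts the diagonal transport into the cross-ratio relating $s_3$, $s_1$, and $s_2$. Concretely, the wedge $s_3 \wedge s_1$ in the numerator and $s_2 \wedge s_1$ in the denominator arise because $s_1$ is the common ``small'' section shared across the relevant cells (it does not jump along the portion of the path hugging the $r_1$ ray), while the transport of $\hat{s}_3$ to the cell containing $s_2$ picks up precisely the factor $\frac{c_3}{c_2}\exp(-\int_{\gamma_m}\alpha)$. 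Matching the normalization constants $c_i$ with the $\wedge$-normalizations then yields \eqref{eq:a-wedge-as-integral}. Crucially, because \emph{both} sides scale in the same way under $s_i \mapsto c_i s_i$ (the left-hand side through the explicit $c_3/c_2$ prefactor and the right-hand side through the bilinearity of $\wedge$), the identity holds for \emph{any} choice of normalization constants, as claimed; this is why the statement is normalization-independent.

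For the case $\Re(\zeta^{-1}m)<0$ I would run the identical argument using the auxiliary path $\gamma_m'$ from $p_4^b$ to $p_3^t$ of \cref{fig:gamma-m-comparisons}, tracing $\hat{s}_4$ and using that $s_3$ is now the common unjumped section, producing $\frac{c_4}{c_3}\exp(-\int_{\gamma_m'}\alpha) = \frac{s_4\wedge s_2}{s_3\wedge s_2}$. The main obstacle I anticipate is bookkeeping: one must carefully track which walls $\gamma_m$ crosses versus runs parallel to, and verify that the holonomy $-1$ around the enclosed branch point is correctly absorbed (the path winds \emph{once} around the branch point, contributing the expected sign consistent with almost-flatness). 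Getting the orientation conventions, the branch-cut crossings (which swap section labels, cf.\ \cref{fig:w-framing-one-branch-point}), and the direction of transport all mutually consistent with the figures is the delicate part; the analytic content is entirely contained in \cref{lem:ab-parallel-transport} and the flatness of $\hat{s}_i$, so no new estimates are needed once the combinatorics are pinned down.
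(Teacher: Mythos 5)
Your proposal is correct and follows essentially the same route as the paper: compute the parallel transport of the flat section $\hat{s}_3$ (resp.\ $\hat{s}_4$) from $p_3^t$ to $p_2^b$ along $\gamma_m$ (resp.\ along $\gamma_m'$) in two ways---directly from the defining integral formula, giving $c_3\exp(-\int_{\gamma_m}\alpha)$, and via the wall-crossing formula of \cref{lem:ab-parallel-transport}, giving $\frac{s_3\wedge s_1}{s_2\wedge s_1}\cdot c_2$---and equate. One minor simplification over your worries: since $\gamma_m$ is an open path from $p_3^t$ to $p_2^b$ that merely winds around the branch point without closing up, no holonomy factor of $-1$ enters this lemma (that sign only appears later when relating $\gamma_m'$ to $\gamma_m$ via the closed concatenation $\gamma_m'+\gamma_m+\delta$).
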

\begin{proof}
    For the first case, consider the parallel transport of $\hat{s}_3(p_3^t)$ to $p_2^b$ along the path $\gamma_m$. 
    Since $\hat{s}_3$ is flat, we can directly use its definition \eqref{eq:upstairs-sections} to write the resulting vector as
    \begin{equation*}
        \hat{s}_3(p_2^b) = c_3 \exp \left(- \int_{\gamma_m}\alpha  \right).
    \end{equation*}
    On the other hand, the transported vector can also be written as
    \begin{equation*}
        \frac{s_3 \wedge s_1}{s_2 \wedge s_1} \cdot \underbrace{\hat{s}_2(p_2^b)}_{c_2} 
    \end{equation*}
    by the parallel transport formula in \cref{lem:ab-parallel-transport}, since $\gamma_m$ crosses the wall of the network from the $(s_1, s_3)$ cell to the $(s_1, s_2)$ cell. 
    Setting these equal gives \eqref{eq:a-wedge-as-integral}.
    The other argument is identical.
\end{proof}

\begin{cor}
    If we choose the normalization constants $c_i = \exp(-A(p_i))$ as in \cref{lem:normalization-constants}, then
\begin{equation} \label{eq:regularized-a-wedge-as-integral}
    \exp\left( -A(p_3^t) + A(p_2^b)  - \int_{\gamma_m} \alpha \right) = \frac{s_3 \wedge s_1}{s_2 \wedge s_1} \equiv a(\zeta) \quad \textup{when } \Re(\zeta^{-1} m) > 0
\end{equation}
and
\begin{equation} \label{eq:regularized-b-wedge-as-integral}
    \exp\left( -A(p_4^b) + A(p_3^t)  - \int_{\gamma_m'} \alpha  \right) = \frac{s_4 \wedge s_2}{s_3 \wedge s_2}  \equiv b(\zeta) \quad \textup{when } \Re(\zeta^{-1} m) < 0.
\end{equation}
\end{cor}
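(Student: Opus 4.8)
The plan is to obtain this corollary as an immediate consequence of the preceding lemma (Ratios as regularized parallel transport), specialized to the normalization constants $c_i = \exp(-A(p_i))$ supplied by \cref{lem:normalization-constants}. First I would recall that the preceding lemma holds for an \emph{arbitrary} choice of constants $c_i$; the only inputs it uses are the flatness of the sections $\hat{s}_i$ and the parallel transport formula of \cref{lem:ab-parallel-transport}. Thus the two identities to be proved follow purely by plugging in the specific $c_i$ and rewriting the prefactor $c_j/c_k$ as a single exponential of a difference of antiderivatives.

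Concretely, in the case $\Re(\zeta^{-1} m) > 0$ I would compute
\begin{equation*}
    \frac{c_3}{c_2} = \frac{\exp(-A(p_3^t))}{\exp(-A(p_2^b))} = \exp\!\left(-A(p_3^t) + A(p_2^b)\right),
\end{equation*}
and then fold this factor into the exponent of \eqref{eq:a-wedge-as-integral} to obtain \eqref{eq:regularized-a-wedge-as-integral}. The case $\Re(\zeta^{-1} m) < 0$ is identical, substituting $c_4/c_3 = \exp(-A(p_4^b) + A(p_3^t))$ into \eqref{eq:b-wedge-as-integral} to obtain \eqref{eq:regularized-b-wedge-as-integral}. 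The final identification of the wedge ratios with the Stokes matrix elements $a(\zeta)$ and $b(\zeta)$ is then exactly \eqref{eq:a-b-ratios}.

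There is essentially no obstacle here: the corollary is a bookkeeping specialization of the preceding lemma. The one point that genuinely requires the earlier work — and which I would cite explicitly — is that the constants $c_i = \exp(-A(p_i))$ are precisely those making the pushed-forward sections agree with the classically normalized sectorial frames (\cref{lem:normalization-constants}). Without that input the wedge ratios would still compute \emph{some} regularized parallel transport, but not the specific Stokes data $a(\zeta), b(\zeta)$ that enter the magnetic coordinate $\Xm$ via \eqref{eq:magnetic-coord}; it is the matching of normalizations that pins down the prefactor and yields the stated equalities on the nose.
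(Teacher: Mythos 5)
Your proposal is correct and matches the paper's (implicit) argument exactly: the corollary is left without a separate proof precisely because it is the substitution $c_3/c_2 = \exp(-A(p_3^t)+A(p_2^b))$ (resp.\ $c_4/c_3$) into the preceding lemma, with \cref{lem:normalization-constants} guaranteeing that the sections in the wedge ratios are the classically normalized ones so that \eqref{eq:a-b-ratios} identifies them with $a(\zeta)$ and $b(\zeta)$. You also correctly flag the one non-trivial input — that the wedge ratios are not rescaling-invariant, so the specific normalization is what pins down the Stokes data — which is the right thing to emphasize.
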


To express the second case \eqref{eq:regularized-b-wedge-as-integral} in terms of $\gamma_m$, note that the concatenation
\begin{equation*}
    \gamma_m' + \gamma_m + \delta
\end{equation*}
is homotopic to a small loop around the branch point, 
where $\delta$ is the path from $p_2^b$ to $p_4^b$ along $\partial \Sigma_r$ shown in \cref{fig:gamma-m-comparisons}.
Therefore
\begin{equation}
    \exp\left( \int_{\gamma_m' + \gamma_m + \delta} \alpha  \right) = -1,
\end{equation}
and so
\begin{align*}
    \exp\left( -A(p_3^t) + A(p_2^b)  - \int_{\gamma_m} \alpha \right) 
    &= -\exp\left( -A(p_3^b) + A(p_2^t)  + \int_\delta \alpha + \int_{\gamma_m'} \alpha  \right) \\
    &= -\exp\left( A(p_4^b) - A(p_3^t) +  \int_{\gamma_m'} \alpha  \right) \\
    &= -\frac{1}{b(\zeta)} \quad \text{when } \Re(\zeta^{-1} m) < 0.
\end{align*}
We conclude that
\begin{equation*}
    \exp\left( -A(p_3^t) + A(p_2^b)  - \int_{\gamma_m} \alpha \right) = \Xm(\zeta)
\end{equation*}
in both cases, proving \cref{lem:reg-holonomy-xm} and hence also \cref{prop:magnetic-integral}.

\subsection{Summary of twistor interpretations and completing the proof} 
\label{sub:twistor-summary}

\cref{prop:electric-integral,prop:magnetic-integral} say that for a connection $\nabla \in \Afr_\zeta$ with abelianization $\nabla^\ab = d + \alpha$,
\begin{equation}\label{eq:gamma-e-integral}
     \int_{\gamma_e} \alpha  = - \log \Xe(\nabla) \pmod{2 \pi i}
\end{equation}
and
\begin{equation}\label{eq:gamma-m-integral}
    \chi(p_0) - \chi(\sigma(p_0)) + \int_{\gamma_m} \alpha = - \log \Xm(\nabla) - \frac{\vartheta}{2\pi} \log \Xe(\nabla) \pmod{2 \pi i}.
\end{equation}

Taking variations, we see that the corresponding integrals appearing in the expression \eqref{eq:glued-form-2} for the glued form $\Omega^\glue$ are
\begin{equation*}
    \int_{\gamma_e} \dot{\alpha} = - d \log \Xe (\dot{\nabla})
\end{equation*}
and
\begin{equation*}
    \dot{\chi}(p_0) - \dot{\chi}(\sigma(p_0)) + \int_{\gamma_m} \dot{\alpha} = \left(- d\log \Xm - \frac{\vartheta}{2\pi} d\log \Xe\right) (\dot{\nabla}).
\end{equation*}
It follows that
\begin{align*}
\begin{split}
    \ab^* \Omega^\glue(\dot{\nabla}_1, \dot{\nabla}_2) 
    &\defeq \int_{\gamma_e} \dot{\alpha}_1 \left(\dot{\chi}_2(p_0) - \dot{\chi}_2(\sigma(p_0)) + \int_{\gamma_m} \dot{\alpha}_2 \right) \\
    &\qquad - \int_{\gamma_e} \dot{\alpha}_2 \left( \dot{\chi}_1(p_0) - \dot{\chi}_1(\sigma(p_0)) + \int_{\gamma_m} \dot{\alpha}_1 \right)
\end{split}\\
\begin{split}
    &=  d \log \Xe \wedge d \log \Xm (\dot{\nabla}_1, \dot{\nabla}_2) 
\end{split} \\
\begin{split}
    &\defeq -4 \pi^2 \cdot \Omega^\ov (\dot{\nabla}_1, \dot{\nabla}_2),
\end{split}
\end{align*}
which proves \cref{prop:glue-equals-ov}.

Finally, by combining the results as indicated in the diagram \eqref{eq:forms-cd}, we get the equality of forms
\begin{equation}
    \Omega^\ov = -\frac{1}{4 \pi^2} \ab^* \Omega^\glue = -\frac{1}{4 \pi^2} \ab^* \Omega^{\glue, \reg} = -\frac{1}{4 \pi^2} \Omega^\reg
\end{equation}
on $\Hfr$.
Pulling back to $\Xfr$ via $\NAH_\zeta$ gives
\begin{equation*}
    \Omega^\ov_\zeta = -\frac{1}{4 \pi^2} \Omega^\reg_\zeta,
\end{equation*}
completing the proof of \cref{thm:reg-equals-ov}.

\section{Semiflat analysis} 
\label{sec:sf-analysis}

In this section we will carry out a version of the preceding analysis in order to study the \emph{semiflat} Ooguri-Vafa form $\Omega^{\ov, \sf}_\zeta$.
All of the main objects and spaces defined earlier have natural semiflat analogues (see \cref{fig:summary-of-sf-spaces}).
The overall argument will be of a similar flavour, but with some key technical differences; in particular, we will now have to work directly with the magnetic angle $\theta_m$ instead of just the magnetic twistor coordinate.

\begin{figure}[ht]
    \begin{equation*}
    \begin{tikzcd}
    & & & \color{red} \sfabAfr_\zeta \\
    & & \color{red} \abAfr_\zeta & \\
    & \color{green} \sfHfr \arrow[rr, "\phantom{\NAH_\zeta}" near start] & & \color{orange} \sfAfr_\zeta \arrow[uu, "\phantom{\ab}"'] \\
    \color{green} \Hfr \arrow[rr, "\NAH_\zeta"] \arrow[ru, "\iota^\sf"] & & \color{orange} \Afr_\zeta \arrow[uu, "\ab"' {yshift=-5pt}, crossing over] &                           
    \end{tikzcd}
    \end{equation*}
    \caption{The semiflat analogues of the spaces in \cref{fig:summary-of-spaces}, to be defined below.}
    \label{fig:summary-of-sf-spaces}
\end{figure}
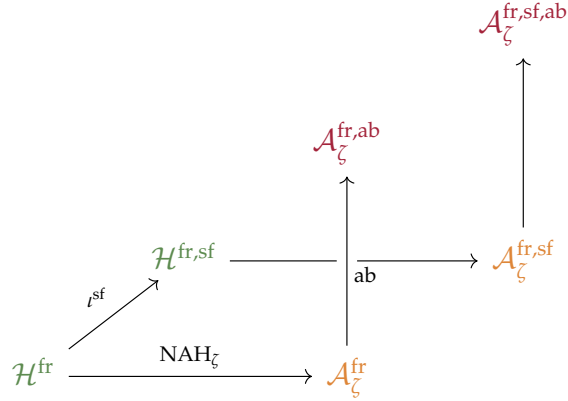

Our main result in this section, analogous to \cref{thm:reg-equals-ov}, will be an identification
\begin{equation*}
    \Omega^{\ov, \shift}_\zeta = - \frac{1}{4\pi^2} \Omega^{\reg, \sf}_\zeta
\end{equation*}
of a \emph{shifted} version of the semiflat Ooguri-Vafa form (\cref{def:shifted-magnetic}) with the regularized Atiyah-Bott form for the corresponding semiflat connections---this is \cref{thm:sf-reg-equals-shifted-ov} below.
The shift involves a modified integral in the definition of the magnetic angle $\theta_m$, which we will study further in \cref{sec:duality-on-hitchin-section} (on the Hitchin section).

The four subsections below correspond to \crefrange{sec:setup}{sec:glued-symplectic-form}.
We will follow our previous steps but try to avoid repetition, instead emphasizing the novel aspects of the calculations.
We begin with a brief summary of the relevant semiflat objects.

\subsection{Overview of semiflat constructions} 
\label{sub:overview-of-sf-constructions}

\subsubsection{Semiflat metrics and harmonic bundles} 
\label{ssub:sf-metrics}

There are various ways to describe the semiflat hyperkähler metric $g_{L^2}^\sf$ on Higgs moduli spaces, such as a twistorial construction in \cite{Gaiotto:2010}, and a more general construction involving the theory of special Kähler manifolds and algebraic integrable systems \cite{Freed:1999a}.
For our purposes it will be most useful to use a characterization in terms of \emph{limiting configurations} $(E, \theta, h_\sf)$ for Hitchin's equation, following \cite{Fredrickson:2022} (see also \cite{Mazzeo:2019,Fredrickson:2020}).\footnote{Recently, \cite{He:2025} has further studied the deformation theory of the moduli space of limiting configurations over singular fibres.}

\begin{remark}[Irregular singularities]
The approach in \cite{Fredrickson:2022} applies in the setting of Higgs bundles with simple poles, where it was proved that the natural $L^2$-metric on the moduli space of limiting configurations coincides with the semiflat metric $g_{L^2}^\sf$ defined via the integrable system structure.
However, the literature is not as developed for higher-order poles.

We will extend the construction of limiting configurations $(E, \theta, h_\sf)$ in a natural way for our setting with irregular singularities, and take this as our \emph{definition} of the ``semiflat harmonic metric'' $h_\sf$.
This approach will allow us to define semiflat analogues of the various objects and spaces considered in the previous sections.
We will not discuss the corresponding $L^2$-metric on the moduli space, but we expect that it coincides with other constructions for the semiflat metric.
\end{remark}

Given a Higgs bundle $(E, \theta)$ over $C = \CP^1$---say, underlying an element $(E, \theta, h, g) \in \Hfr$---consider its associated spectral Higgs line bundle $(\L, \lambda)$ over $\Sigma$, where $\lambda$ is the canonical $1$-form. (Here $\L = \L_\theta$ comes from abelianizing the Higgs bundle; it should not be confused with the line bundles $\L^\ab$ from the previous sections which came from abelianizing the corresponding connections $\nabla_\zeta$.)

As before, let $\Sigma'$ denote $\Sigma$ with the branch points removed.
The semiflat harmonic metric $h_\sf$ will be obtained by pushing forward a  metric from $(\L, \lambda)|_{\Sigma'}$.

\begin{construction}[Semiflat harmonic metric]
    Equip $\L$ with parabolic weights $-\frac{1}{2}$ at the branch points and $\pm m^{(3)}$ at the punctures $\infty_\mp$.
    Then, there is a unique hermitian metric $h_\L$ on $\L|_{\Sigma'}$ such that:
    \begin{enumerate}[(i)]
    \item $F_{D_{h_\L}} = 0$ (i.e.\ the Chern connection corresponding to $h_\L$ is flat).

    \item $h_\L$ is adapted to the parabolic structure (i.e.\  near each branch point or puncture $p$ with parabolic weight $\alpha_p$, there is a local coordinate $w_p$ centred at $p$ and local holomorphic frame $e_p$ for $\L$ such that $h_\L(e_p, e_p) = |w_p|^{2 \alpha_p}$).
    \end{enumerate}
    ($h_\L$ is a solution to the abelian version of the Hitchin equation \eqref{eq:hitchin} with suitable boundary conditions.)
    Let $h_\sf$ denote the orthogonal pushforward of $h_\L$ to $E \cong \pi_* \L$, where as usual $\pi: \Sigma \to C$.
    We call $h_\sf$ the \emph{semiflat harmonic metric} for $(E, \theta)$.
\end{construction}

We emphasize that $h_\sf$ is singular at the branch points of $C$, unlike $h$.
Also note that by construction, the $\theta$-eigenspaces are orthogonal with respect to $h_\sf$, and $h_\sf$ solves the decoupled Hitchin equations
\begin{equation} \label{eq:decoupled-hitchin}
    F_{D_{h_\sf}} = [\theta, \theta^{\dagger_{h_\sf}}] = 0.
\end{equation} 

Now we can define semiflat versions of the spaces of harmonic bundles from \cref{sub:framed-harmonic-bundles}.
Instead of belabouring the point by fully repeating the definitions, we will just stress the main differences.

\begin{definition}[Semiflat harmonic bundles in $\H_\sf$]
    Define a set $\H_\sf$ of \emph{semiflat harmonic bundles} $(E, \theta, h_\sf)$ by making the appropriate replacements of $h$ with $h_\sf$ in \cref{def:harmonic-bundles} for $\H$.
    Now, these are bundles over $C' \setminus \{\infty \}$, where $C'$ denotes $C$ with the branch points removed.
\end{definition}

\begin{definition}[Framed semiflat harmonic bundles in $\sfHfr$]
    Define a set $\sfHfr$ of \emph{compatibly framed semiflat harmonic bundles} $(E, \theta, h_\sf, g)$ by similarly modifying \cref{def:framed-harmonic} for $\Hfr$.
    Now, we will say that a frame $g$ at $\infty$ is compatible if it extends to an \emph{$h_\sf$-unitary frame of eigenvectors} for $\theta$, with respect to which
    \begin{equation} \label{eq:sf-higgs-framed-form}
        \theta = \begin{pmatrix}
            \sqrt{z^2+2m} & 0 \\
            0 & - \sqrt{z^2+2m} 
        \end{pmatrix} dz =  -H \frac{dw}{w^3} - mH \frac{dw}{w} + \text{diagonal regular terms}.
    \end{equation}
    (cf.\ the compatibly framed form \eqref{eq:higgs-framed-form} in $\Hfr$, where $g$ was an $h$-unitary frame diagonalizing just the singular part of $\theta$.)

    Let $\sfXfr$ denote the set of isomorphism classes in $\sfHfr$.
\end{definition}

\begin{lemma}[Comparison map] \label{lem:comparison-map}
    There is a natural map
    \begin{align} \label{eq:comparison-map}
    \begin{split}
        \iota^\sf: \Hfr &\to \sfHfr \\
        (E, \theta, h, g) &\mapsto (E, \theta, h_\sf, g),
    \end{split}
    \end{align}
    which descends to a map of moduli spaces
    \begin{align}
    \begin{split}
        \iota^\sf: \Xfr &\to \sfXfr \\
        [(E, \theta, h, g)] &\mapsto [(E, \theta, h_\sf, g)].
    \end{split}
    \end{align}
\end{lemma}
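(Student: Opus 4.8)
The claim is that the assignment $(E,\theta,h,g)\mapsto (E,\theta,h_\sf,g)$ is well-defined on $\Hfr$ and descends to moduli. The plan is to verify the three things that ``well-defined'' requires: (1) the output genuinely lies in $\sfHfr$, i.e.\ $(E,\theta,h_\sf,g)$ satisfies all the defining conditions of a framed semiflat harmonic bundle; (2) the construction of $h_\sf$ depends only on the input data $(E,\theta)$ (together with the parabolic parameter $m^{(3)}$ recorded by the framing), not on any auxiliary choices; and (3) isomorphisms are respected, so that the map descends to isomorphism classes.

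\emph{Step 1 (the image lies in $\sfHfr$).} First I would observe that the underlying Higgs bundle $(E,\theta)$ is unchanged, so the conditions on $\theta$ (traceless, $\det\theta = -(z^2+2m)dz^2$) and on the wild/irregular type are automatic. What must be checked is that $h_\sf$ is a genuine harmonic metric in the semiflat sense: this is exactly the content of the preceding \emph{Semiflat harmonic metric} construction, which produces $h_\sf$ as the orthogonal pushforward of the flat adapted metric $h_\L$ and shows it solves the decoupled Hitchin equations \eqref{eq:decoupled-hitchin}. So I would simply invoke that construction. The more delicate point is the \emph{framing compatibility}: I must show that the same frame $g$ that was $h$-unitary and put $\theta$, $\bar\partial_E$ into the forms \eqref{eq:higgs-framed-form}--\eqref{eq:holo-str-framed-form} now serves as a compatible frame in the semiflat sense, i.e.\ it extends to an $h_\sf$-unitary frame of $\theta$-eigenvectors realizing \eqref{eq:sf-higgs-framed-form}. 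Here I would use that near $\infty$ the Higgs field $\theta$ is already diagonal in the frame $g$ up to its singular terms, that its eigenvalues are the two branches of $\sqrt{z^2+2m}$, and that $h_\sf$ is by construction the pushforward of a metric under which the two $\theta$-eigenlines are orthogonal; thus a small adjustment of $g$ within the eigenline decomposition yields an $h_\sf$-unitary eigenframe, and the parabolic weights $\pm m^{(3)}$ assigned to $\L$ at $\infty_\mp$ are chosen precisely to reproduce the same $m^{(3)}$.

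\emph{Step 2 (independence of choices).} Next I would note that $h_\L$ is characterized \emph{uniquely} by conditions (i)--(ii) in the construction (flat Chern connection, adapted to the fixed parabolic weights), so $h_\sf=\pi_*h_\L$ is canonically determined by $(E,\theta)$ and the weight $m^{(3)}$. This rigidity is what makes the assignment a genuine function rather than a choice.

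\emph{Step 3 (descent to moduli).} Finally, for descent I would take an isomorphism $\Phi:(E,\theta,h,g)\xrightarrow{\sim}(E',\theta',h',g')$ in $\Hfr$ and show it induces an isomorphism of the semiflat data. Since $\Phi$ intertwines the Higgs fields, it induces an isomorphism of spectral data $(\Sigma,\L,\lambda)\cong(\Sigma',\L',\lambda')$ respecting the parabolic weights at branch points and punctures; by the uniqueness in Step 2 it therefore pulls back $h'_\L$ to $h_\L$, hence $h'_\sf$ to $h_\sf$, and it carries the compatible frame $g$ to $g'$ by hypothesis. Thus $\Phi$ is also an isomorphism in $\sfHfr$, and the map descends. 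I expect \textbf{Step 1, the framing-compatibility verification}, to be the main obstacle: the notion of compatible frame genuinely changes between $\Hfr$ (where $g$ need only diagonalize the \emph{singular part} of $\theta$) and $\sfHfr$ (where $g$ must be an $h_\sf$-unitary eigenframe for \emph{all} of $\theta$), so the real work is checking that the extra orthogonality and full-eigenvector conditions are compatible with the fixed singular normalization \eqref{eq:sf-higgs-framed-form} and do not force a different value of $m$ or $m^{(3)}$.
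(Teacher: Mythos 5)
Your proposal is correct and takes essentially the same route as the paper: the paper's proof likewise observes that the only nontrivial point is that the frame $g$ extends to an $h_\sf$-unitary eigenframe for $\theta$, which it delegates to \cref{lem:extension-to-eigenframe}. The one place where your sketch stays vaguer than the paper is the mechanism for producing that eigenframe --- the paper does it by taking the representative of the isomorphism class obtained from a holomorphic eigenframe compatible with the parabolic structure and renormalizing it with respect to $h_\sf$ instead of $h$, using asymptotic exponential orthogonality of the eigensections and the fact that $h$ and $h_\sf$ induce the same parabolic structure to see that the result still limits to $g$ --- but your ``adjust $g$ within the eigenline decomposition'' is the same idea.
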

\begin{proof}
    Unwinding the definitions, the main nontrivial statement here is that the original frame $g$ admits the desired extension to an $h_\sf$-unitary eigenframe for $\theta$; this is proved in \cref{lem:extension-to-eigenframe}.
\end{proof}

The map $\iota^\sf$ can be used to pull back semiflat objects (e.g.\ symplectic forms) to $\Hfr$ or $\Xfr$, where they can be compared with their standard counterparts.
The analogous (unframed) map in \cite{Mazzeo:2019} gives a diffeomorphism of moduli spaces, and is used to compare the Hitchin metric $g_{L^2}$ to the semiflat metric $g_{L^2}^\sf$.

\subsubsection{Semiflat connections} 
\label{ssub:sf-connections}

Now we define analogues of the framed connections from \cref{sub:framed-connections}.

Given $(E, \theta, h_\sf, g) \in \sfHfr$ and $\zeta \in \C^*$, we can consider the corresponding connection
\begin{equation} \label{eq:sf-connection}
    \nabla^\sf_\zeta \coloneqq \zeta^{-1} \theta + D_{h_\sf} + \zeta \theta^{\dagger_{h_\sf}}.
\end{equation}
Note that $\nabla^\sf_\zeta$ is singular at the two branch points as well as the puncture at $\infty$, but it is flat over their complement in $C = \CP^1$ (since $h_\sf$ satisfies the decoupled Hitchin equations \eqref{eq:decoupled-hitchin}).

With respect to the (extension of the) frame $g$ near $w=0$, the Chern connection for $h_\sf$ is of the form
\begin{equation} \label{eq:sf-chern-framed-form}
    D_{h_\sf} = d +  \frac{m^{(3)}}{2} H \left(  \frac{dw}{w} - \frac{d \overline{w}}{\overline{w}} \right),
\end{equation}
and hence
\begin{align}
    \begin{split}
        \nabla^\sf_\zeta &= d + \left[\zeta^{-1} \left(- \frac{dw}{w^3} - m \frac{dw}{w}  \right) + \frac{m^{(3)}}{2}  \left( \frac{dw}{w} - \frac{d \overline{w}}{\overline{w}} \right) + \zeta \left(- \frac{d \overline{w}}{\overline{w}^3} - \overline{m} \frac{d \overline{w}}{\overline{w}}\right)\right]H  \\
    &\qquad + \text{diagonal regular terms}
    \end{split} \\
    \begin{split} \label{eq:sf-framed-connection-form}
        &= d + \left[- \zeta^{-1} \frac{dw}{w^3}  - \zeta \frac{d \overline{w}}{\overline{w}^3} - (\zeta^{-1} m - \frac{1}{2} m^{(3)}) \frac{dw}{w} -  (\zeta \overline{m} + \frac{1}{2} m^{(3)})\frac{d \overline{w}}{\overline{w}}  \right]H \\
        &\qquad+ \text{diagonal regular terms}.
    \end{split}
\end{align}
We emphasize that although these are the same singular terms that appeared in our original calculations, we are now working with a different metric $h_\sf$ and extension of the frame $g$.

\begin{definition}[Framed semiflat bundles in $\sfAfr_\zeta$]
    For fixed $\zeta \in \C^*$, let $\sfAfr_\zeta$ denote the set of \emph{$\zeta$-compatibly framed semiflat bundles} $(E, \nabla^\sf, g)$ with framing of the form \eqref{eq:sf-framed-connection-form} near $\infty$, analogously to \cref{def:framed-connections} for $\Afr_\zeta$.
\end{definition}

As before, we will denote the nonabelian Hodge map by
\begin{align} \label{eq:sf-NAH-map-set}
\begin{split}
    \NAH_\zeta: \sfHfr &\to \sfAfr_\zeta \\ 
    (E, \theta, h_\sf, g) &\mapsto (E, \nabla^\sf_\zeta, g).
\end{split}
\end{align}

Before discussing the abelian counterpart of $\sfAfr_\zeta$, we will fill in some gaps from our earlier discussion of the semiflat Ooguri-Vafa form.

\subsubsection{Semiflat Ooguri-Vafa form and the magnetic angle} 
\label{ssub:sf-ov-form}

As promised in \cref{sub:ov-correspondence}, we will now describe the explicit formula for the magnetic angle $\theta_m$ from \cite{Tulli:2019}.
Given a framed Higgs bundle $(E, \theta, h, g) \in \Hfr$ with $m \neq 0$, let $\phi_2 = -\det \theta = (z^2 + 2m) dz^2$ be the corresponding quadratic differential.
Choose a square root
\begin{equation} \label{eq:phi-square-root}
    \lambda_0 \coloneqq \sqrt{\phi_2} = \sqrt{z^2 + 2m} \, dz
\end{equation}    
with branch cut between $z=\pm \sqrt{-2m}$, so that near $w=0$
\begin{equation} \label{eq:phi-square-root-expansion}
    \lambda_0 = - \frac{dw}{w^3} - m \frac{dw}{w} + \text{regular terms}.
\end{equation}
Following the setup in \cite[Section 3.6.1]{Tulli:2019}:

\begin{itemize}
    \item Let $\gamma$ be a WKB curve for $\phi_2$ with phase $e^{i \arg(m)}$,\footnote{i.e.\ a $\vartheta$-trajectory of phase $\vartheta = \arg(m)$, in the language of \cref{def:theta-traj}} such that $\lambda_0(\dot{\gamma}) = e^{i \arg (m)}$.
    \begin{itemize}
        \item Such a curve crosses the branch cut for $\lambda_0$.
        As $t \to \infty$ and for $\Re(\zeta^{-1} m) > 0$, $\gamma(t)$ lies in either
        \begin{equation*}
          \eSect_1(\zeta) \cap \eSect_4(\zeta)  \quad \text{or} \quad \eSect_2(\zeta) \cap \eSect_4(\zeta), 
        \end{equation*}
        i.e.\ the sector centred around the anti-Stokes ray $r_1$ or $r_3$ (see \cref{subfig:sector-intersections}).

        We assume that $\gamma$ is oriented from $\eSect_2(\zeta) \cap \eSect_4(\zeta)$ to $\eSect_1(\zeta) \cap \eSect_4(\zeta)$ .

         \item In fact, we can (and will) choose $\gamma$ to be a straight line of the form 
        \begin{equation} \label{eq:wkb-curve-choice}
            \gamma_\WKB(t) = \rho(t) e^{i \arg(m)/2}
        \end{equation}
        for an appropriate real-valued function $\rho$ (see \cref{subfig:wkb-direction}).

        \item For later use, let $\Gamma_\WKB$ denote the lift of $\gamma$ to the spectral cover $\Sigma$ from $\infty_+$ to $\infty_-$. (We choose this ordering of sheets in order to be consistent with the choice of square root branch along $\gamma$ in \cite{Tulli:2019}.)
    \end{itemize}

    \begin{figure}[h]
    \subcaptionbox{$\gamma(t)$ lies in the indicated sectors as $t \to \pm \infty$.\label{subfig:sector-intersections}}[0.33\textwidth]
    {\begin{tikzpicture}[scale=1]
    \node at (-2,2.5) {\fbox[rb]{$w$}};

    \draw[fill=green!50] (0,0) -- (45:-1.7) arc(45:135:-1.7) -- cycle;
    \draw[fill=red!50] (0,0) -- (45:1.7) arc(45:135:1.7) -- cycle;

    \node[below right, green] at (0.5, -1.7) {$\eSect_1 \cap \eSect_4$};
    \node[above right, red] at (0.5, 1.7) {$\eSect_2 \cap \eSect_3$};

    \draw[thick] (-2, 0) -- (2, 0);
    \draw[thick] (0, -2) -- (0, 2);
    \draw[thick, dashed, gray] (-1.8, -1.8) -- (1.8, 1.8);
    \draw[thick, dashed, gray] (-1.8, 1.8) -- (1.8, -1.8);

    \draw[very thick, green] (0,0) -- (45:-1.7) arc(45:135:-1.7) -- cycle;
    \draw[very thick, red] (0,0) -- (45:1.7) arc(45:135:1.7) -- cycle;

    \node[black, above, red] at (0, 2) {$r_3$};
    \node[black, left] at (-2, 0) {$r_4$};
    \node[black, below, green] at (0, -2) {$r_1$};
    \node[black, right] at (2, 0) {$r_2$};
    \end{tikzpicture}}
    \hspace{1cm}
    \subcaptionbox{The chosen WKB curve $\gamma$ and possible directions for $r_1$ and $r_3$.\label{subfig:wkb-direction}}[0.4\textwidth]
    {\includegraphics[scale=1]{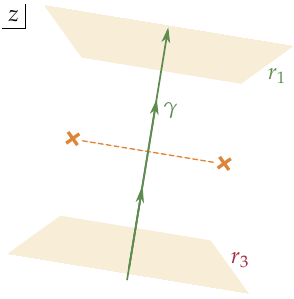}}

    \caption{The WKB curve $\gamma$ is chosen to be a straight line $\rho(t) e^{i \arg(m)/2}$, oriented so that it goes from the sector centred around $r_3$ in \subref{subfig:sector-intersections} to the sector around $r_1$ when $\Re(\zeta^{-1} m) > 0$.
    Conversely, the anti-Stokes directions $r_1$ and $r_3$ must lie somewhere within the yellow shaded region in \subref{subfig:wkb-direction}.}
    \label{fig:wkb-curve}
    \end{figure}

    \item Choose a frame $(\eta_1, \eta_2)$ of $\theta$-eigenvectors along $\gamma$ such that
    \begin{equation} \label{eq:wkb-eigenframe-normalization}
        (\eta_1, \eta_2)|_{\gamma(t)} \to \begin{cases}
            (e_1, e_2) & \text{as } t \to \infty,  \\
            (e_2, -e_1) & \text{as } t \to -\infty,
        \end{cases}
    \end{equation}
    where $g = (e_1, e_2)$ is the specified frame for $E$ at $\infty$.
    We will make a particular choice of $(\eta_1, \eta_2)$ in \cref{rem:pushforward-frame} below.

    \item Let $A_h$ denote the connection form of the Chern connection $D_h$ with respect to $(\eta_1, \eta_2)$.
\end{itemize} 
\begin{definition}[$\theta_m$ on $\Hfr$] \label{def:magnetic-angle}
    With setup as above, the magnetic angle $\theta_m$ on $\Hfr$ is given by
    \begin{equation} \label{eq:magnetic-angle}
        \theta_m(E, \theta, h, g) = m^{(3)} \arg(-m) + \pi + \Im \int_\gamma (A_h)_{11} \pmod {2\pi}.
    \end{equation}
\end{definition}
This formula is isomorphism invariant and hence descends to the moduli space $\Xfr \cong \Mov$, where it agrees with the Ooguri-Vafa magnetic coordinate under the identification in \cite[Section 4]{Tulli:2019}.

For reference we also note the following fact from \cite{Tulli:2019}.
\begin{prop}[$U(1)$-action and $\theta_m$] \label{prop:u1-action-magnetic-angle}
    The $U(1)$-action of $e^{i \vartheta}$ on $\Xfr$ shifts $\theta_m \to \theta_m + \vartheta$; that is,
    \begin{equation}
        \theta_m(e^{i \vartheta} \cdot [(E, \theta, h, g)]) = \theta_m([(E, \theta, h, g)]) + \vartheta.
    \end{equation}
\end{prop}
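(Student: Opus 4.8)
The plan is to isolate the only part of the formula \eqref{eq:magnetic-angle} that depends on the framing and to track its change under the $U(1)$-action. By definition, $e^{i\vartheta}$ acts on $\Xfr$ by replacing the frame $g = (e_1, e_2)$ with $g' = (e^{i\vartheta/2} e_1,\, e^{-i\vartheta/2} e_2)$ while leaving the underlying harmonic bundle $(E, \theta, h)$ unchanged. Since $m$, $m^{(3)}$, and the quadratic differential $\phi_2 = -\det\theta$ depend only on $(E,\theta,h)$, the summands $m^{(3)}\arg(-m) + \pi$ in \eqref{eq:magnetic-angle} are unaffected, and it suffices to compute the change in $\Im \int_\gamma (A_h)_{11}$.

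First I would observe that the WKB curve $\gamma$ and the $\theta$-eigenlines along it are intrinsic to $(E,\theta)$, so the eigenframe $(\eta_1', \eta_2')$ associated to $g'$ is necessarily a pointwise rescaling of the eigenframe $(\eta_1, \eta_2)$ associated to $g$; write $(\eta_1', \eta_2') = (\eta_1, \eta_2)\cdot C$ with $C = \diag(c_1, c_2)$ for nowhere-vanishing functions $c_i \colon \gamma \to \C^*$. The normalization \eqref{eq:wkb-eigenframe-normalization}, applied with $g'$ in place of $g$, then fixes the boundary behaviour of $c_1$: since $\eta_1' \to e^{i\vartheta/2} e_1$ as $t \to \infty$ and $\eta_1' \to e^{-i\vartheta/2} e_2$ as $t \to -\infty$ (and likewise $\eta_1 \to e_1,\, e_2$), we obtain
\begin{equation*}
    c_1(t) \to e^{i\vartheta/2} \ \text{ as } t \to \infty, \qquad c_1(t) \to e^{-i\vartheta/2} \ \text{ as } t \to -\infty.
\end{equation*}

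Next, because $C$ is diagonal, conjugation leaves the diagonal of the connection form unchanged, so the Chern connection transforms as $(A_h')_{11} = (A_h)_{11} + d\log c_1$. Integrating along $\gamma$ gives
\begin{equation*}
    \Im \int_\gamma (A_h')_{11} = \Im \int_\gamma (A_h)_{11} + \Im\bigl(\log c_1(+\infty) - \log c_1(-\infty)\bigr),
\end{equation*}
and the last term equals $\vartheta/2 - (-\vartheta/2) = \vartheta \pmod{2\pi}$, which yields the claimed shift $\theta_m \mapsto \theta_m + \vartheta$.

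The step requiring the most care is the well-definedness modulo $2\pi$: any two eigenframes sharing the same boundary normalization differ by a function with endpoint values $1$, whose logarithmic integral lies in $2\pi i \Z$, so $\Im \int_\gamma (A_h)_{11}$ depends only on the boundary data modulo $2\pi$. To pin down the representative as exactly $\vartheta$ rather than $\vartheta + 2\pi\Z$, I would make the rescaling explicit — for instance $c_1(t) = e^{i\vartheta s(t)/2}$ for a monotone $s$ with $s(\pm\infty) = \pm 1$ — so that $\int_\gamma d\log c_1 = i\vartheta$ with no spurious winding contribution. This is the only subtlety; the rest is the elementary diagonal change-of-frame computation above.
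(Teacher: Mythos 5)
Your proof is correct. The paper itself states this proposition without proof (citing [Tulli:2019]), but your argument --- tracking the diagonal rescaling $c_1$ of the normalized eigenframe forced by the new boundary asymptotics and the resulting $d\log c_1$ contribution to $\Im\int_\gamma (A_h)_{11}$ --- is exactly the computation the paper carries out for the closely analogous invariance of $\hat{\theta}_m$ under unitary Hecke modifications, so this is essentially the intended argument (and your final paragraph's worry about pinning down the representative is moot, since both sides are only defined mod $2\pi$).
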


We now have a fully explicit definition of the semiflat Ooguri-Vafa form
\begin{equation*}
    \Omega^{\ov, \sf}_\zeta = -\frac{1}{4 \pi^2} d \log \Xe(\zeta) \wedge d \log \Xm^{\sf}(\zeta).
\end{equation*}
In terms of the parameters on $\Hfr$, the semiflat magnetic twistor coordinate \eqref{eq:sf-ov-magnetic-coord} can be written
\begin{equation} \label{eq:sf-magnetic-coord-abbrv} 
    \Xm^{\sf}(\zeta) = \exp \left( \zeta^{-1} Z_B    + i \theta_m + \zeta  \overline{Z_B} \right),
\end{equation}
where
\begin{equation} \label{eq:magnetic-period}
    Z_B \coloneqq -m \log \left(\frac{m}{-2e} \right).
\end{equation}

We will interpret the twistor coordinates $\Xe$ and $\Xm^\sf$ in terms of certain integrals of abelian connections, as in \cref{sub:twistor-summary}.

\subsection{Semiflat abelianization and framing} 
\label{sub:sf-abelianization-and-framing}

Continuing with our argument from before, the next step is to abelianize the framed connections in $\sfAfr_\zeta$.

Let $(E, \theta, h_\sf, g) \in \sfHfr$ and $\zeta \in \C^*$.
By construction, the connection
\begin{equation*} 
    \nabla^\sf_\zeta = \zeta^{-1} \theta + D_{h_\sf} + \zeta \theta^{\dagger_{h_\sf}}
\end{equation*}
is the pushforward of the abelian connection 
\begin{equation} \label{eq:sf-abelian-connection}
    \nabla^{\sf, \ab}_\zeta \coloneqq \zeta^{-1} \lambda + D_{h_\L} + \zeta \overline{\lambda}
\end{equation}
on the spectral line bundle $\L$ over $\Sigma$.
In \cref{sec:sf-glued-symplectic-form} we will apply the gluing procedure to these connections.

We must first explain how to lift the frame $g$ from $E$ to $\L$, as in \cref{sub:framing-near-the-punctures}. 
But now this is automatic: the compatible frame $g$ extends (by definition) to a frame of $\theta$-eigenvectors, which define sections of the spectral line bundle $\L$.

Recall that with respect to the frame $g$ near $w=0$,
\begin{align*}
    \theta &= \begin{pmatrix}
        \lambda_0 & 0 \\ 
        0 & - \lambda_0
    \end{pmatrix} =  -H \frac{dw}{w^3} - m H \frac{dw}{w} + \text{diagonal regular terms}
\end{align*}
where $\lambda_0 = \sqrt{z^2 + 2m} \, dz$ as in \eqref{eq:phi-square-root},  
and
\begin{align*}
    D_{h_\sf} &= d +  \frac{m^{(3)}}{2} H \left(  \frac{dw}{w} - \frac{d \overline{w}}{\overline{w}} \right).
\end{align*}
Since these are already diagonal with respect to $g$, we do not need to make any additional modifications to the frame.

\begin{cor}[Induced frame for $\L$] \label{cor:induced-sf-frame}
    In a neighbourhood of each puncture $\infty_\pm$ of $\Sigma$, there exists a frame $g^\L_\pm$ for $\L$ with respect to which 
    \begin{equation} \label{eq:induced-sf-frame-form}
        \nabla^{\sf, \ab}_\zeta = d \pm \pi^*\left[ -\zeta^{-1} \lambda_0 -  \left(  \frac{m^{(3)}}{2} \left(  \frac{dw}{w} - \frac{d \overline{w}}{\overline{w}} \right)  \right) - \zeta \overline{\lambda_0} \right].
    \end{equation}
\end{cor}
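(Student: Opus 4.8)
The plan is to read the connection form of $\nabla^{\sf, \ab}_\zeta$ directly off the $\theta$-eigenframe, exploiting the fact that in the semiflat setting no modification of $g$ is required --- in sharp contrast to the nonabelian \cref{cor:induced-frame}, whose proof relied on the sectorial Stokes analysis of \cref{prop:asymptotic-existence-g}. The entire construction collapses to a substitution once the eigenframe is in place.

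First I would recall that, by the definition of $\sfHfr$ --- more precisely via \cref{lem:comparison-map} and the extension result \cref{lem:extension-to-eigenframe} --- the compatible frame $g = (e_1, e_2)$ extends to an $h_\sf$-unitary frame of $\theta$-eigenvectors in a punctured neighbourhood of $\infty$. Since $\pi \colon \Sigma \to C$ is unramified over $\infty$, a punctured neighbourhood of $\infty$ in $C$ is covered by two disjoint punctured neighbourhoods of $\infty_\pm$ in $\Sigma$, and the two eigenlines spanned by $e_1, e_2$ correspond to the two sheets. I would therefore simply \emph{define} $g^\L_\pm$ to be the section of $\L$ given by the eigenvector supported on the sheet containing $\infty_\pm$; by construction these are frames for $\L$ near each puncture.

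The second step is the computation. By \eqref{eq:sf-abelian-connection} we have $\nabla^{\sf, \ab}_\zeta = \zeta^{-1} \lambda + D_{h_\L} + \zeta \overline{\lambda}$, and $\nabla^\sf_\zeta = \pi_* \nabla^{\sf, \ab}_\zeta$ is diagonal in the eigenframe with the form \eqref{eq:sf-framed-connection-form}. The two key facts are: (i) the tautological form $\lambda$ restricts on the two sheets of $\Sigma$ to $\pm \lambda_0$, where $\lambda_0 = \sqrt{z^2 + 2m}\,dz$ is the square root \eqref{eq:phi-square-root} with expansion \eqref{eq:phi-square-root-expansion}; and (ii) because $h_\sf$ is the orthogonal pushforward of $h_\L$ and the eigenframe is $h_\sf$-unitary, the diagonal entries of the Chern connection $D_{h_\sf}$ in \eqref{eq:sf-chern-framed-form} are exactly the connection forms of $D_{h_\L}$ with respect to $g^\L_\pm$, namely $\pm \tfrac{m^{(3)}}{2}\bigl(\tfrac{dw}{w} - \tfrac{d\overline{w}}{\overline{w}}\bigr)$. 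Substituting (i) and (ii) into the defining formula and reading off the connection form with respect to $g^\L_\pm$ yields \eqref{eq:induced-sf-frame-form}; consistency with the expanded singular terms of \eqref{eq:sf-framed-connection-form} is then a one-line check using \eqref{eq:phi-square-root-expansion}.

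The only genuine content --- hence the step I expect to require the most care --- is observation (ii): that the diagonal blocks of $D_{h_\sf}$ really do coincide with $D_{h_\L}$ on each sheet. This follows from the orthogonality of the $\theta$-eigenlines with respect to $h_\sf$ (noted after the construction of $h_\sf$, cf.\ \eqref{eq:decoupled-hitchin}) together with the definition of $h_\sf$ as the orthogonal pushforward of $h_\L$, so that the Chern connection of the pushforward metric decouples into its two sheets. Everything else is direct substitution, and the essential analytic input --- existence of the eigenframe extension --- is already supplied by \cref{lem:extension-to-eigenframe}.
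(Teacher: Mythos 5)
Your proposal is correct and follows essentially the same route as the paper: the compatible frame $g$ for a semiflat bundle extends by definition to an $h_\sf$-unitary $\theta$-eigenframe, which already simultaneously diagonalizes $\theta$, $D_{h_\sf}$, and $\theta^{\dagger_{h_\sf}}$, so the two eigenvector components directly furnish the frames $g^\L_\pm$ and the connection form is read off from the diagonal entries of \eqref{eq:sf-framed-connection-form} with $\lambda$ restricting to $\pm\lambda_0$ on the two sheets. Your extra care on point (ii) (the decoupling of the Chern connection of the orthogonal pushforward metric) is a justification the paper leaves implicit in the displayed diagonal formula for $D_{h_\sf}$, but it is not a different argument.
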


\begin{definition}[Framed abelian semiflat bundles in $\sfabAfr_\zeta$]
    For fixed $\zeta \in \C^*$, let $\sfabAfr_\zeta$ denote the set of abelian semiflat bundles $(\L, \nabla^{\sf, \ab}, g^\L_\pm)$ with framing of the form \eqref{eq:induced-sf-frame-form} near $\infty_\pm$, analogously to \cref{def:ab-framed-connections} for $\abAfr_\zeta$.
\end{definition}

\begin{remark}[Framed semiflat form]
    The framed form \eqref{eq:induced-sf-frame-form} for the connections in $\sfabAfr_\zeta$ is also quite rigid, but in a slightly different way than $\abAfr_\zeta$ (cf.\ \cref{rem:abelian-form-rigidity}):
    \begin{itemize}
        \item For $\abAfr_\zeta$, the connections had prescribed singular terms with no additional regular terms. 

        \item For $\sfabAfr_\zeta$, the connections now \emph{do} have additional regular terms, but they are fully explicit (in terms of $\lambda_0$).
    \end{itemize}
\end{remark}

Next, as in \cref{sub:extending-frames}, we extend these local frames to a global frame $g^\L$ defined away from the punctures and branch points.
Note that over each sheet of $\Sigma$, the frame $g^\L$ pushes down to $E$ to give an eigensection for $\theta$ with corresponding eigenvalue $\pm \lambda_0$. 

\begin{remark}[Pushed-forward frame along $\gamma$] \label{rem:pushforward-frame}
    The pushforward of $g^\L$ along the path $\Gamma_\WKB$ gives an eigenframe $(\eta_1, \eta_2)$ along the WKB curve $\gamma$ which satisfies the normalization condition \eqref{eq:wkb-eigenframe-normalization}.
    We therefore can (and will) use this frame to compute the integral appearing in the formula \eqref{eq:magnetic-angle} for the magnetic angle $\theta_m$.
\end{remark}

\subsection{Regularized semiflat Atiyah-Bott forms} 
\label{sub:sf-regularized-atiyah-bott-forms}

We can define regularized Atiyah-Bott forms $\Omega^\reg$ and $\Omega^{\reg, \ab}$ on the spaces of semiflat connections $\sfAfr_\zeta$ and $\sfabAfr_\zeta$ exactly as we did in \cref{sec:regularized-forms}, namely by
 \begin{align}
    \Omega^\reg(\dot{\nabla}^\sf_1, \dot{\nabla}^\sf_2) &= \lim_{R \to 0} \left[\int_{C_R} \tr (\dot{\nabla}^\sf_1 \wedge \dot{\nabla}^\sf_2) - 2 \pi \log R \cdot \mathcal{R}(\dot{\nabla}^\sf_1, \dot{\nabla}^\sf_2) \right], \\
    \mathcal{R}(\dot{\nabla}^\sf_1, \dot{\nabla}^\sf_2) &= \tr \left( \mu^\sf_1 \lambda^\sf_2 - \mu^\sf_2 \lambda^\sf_1 \right), \\
    \dot{\nabla}^\sf_i &= (\mu^\sf_i + \O(r)) d \theta + (\lambda^\sf_i + \O(r)) \frac{dr}{r} \quad \text{for some } \mu^\sf_i, \lambda^\sf_i \in \mathfrak{h},
\end{align}
and
\begin{align}
    \Omega^{\reg, \ab}(\dot{\nabla}^{\sf, \ab}_1, \dot{\nabla}^{\sf, \ab}_2) &= \lim_{R \to 0} \left[ \int_{\Sigma_R} \dot{\nabla}^{\sf, \ab}_1 \wedge \dot{\nabla}^{\sf, \ab}_2 - 2 \pi \log R \cdot \mathcal{R}^\ab(\dot{\nabla}^{\sf, \ab}_1, \dot{\nabla}^{\sf, \ab}_2)  \right], \\
    \mathcal{R}^\ab (\dot{\nabla}^{\sf, \ab}_1, \dot{\nabla}^{\sf, \ab}_2) &= 2( \mu^{\sf, \ab}_1 \lambda^{\sf, \ab}_2 - \mu^{\sf, \ab}_2 \lambda^{\sf, \ab}_1), \\
    \dot{\nabla}^{\sf, \ab}_i &= (\pm \mu_i^{\sf, \ab} + \O(r))  d \theta + (\pm \lambda_i^{\sf, \ab} + \O(r)) \frac{dr}{r} \quad \text{for } \mu_i^{\sf, \ab}, \lambda_i^{\sf, \ab} \in \C.
\end{align}
Note that the forms themselves are given by the same formulas as before---only the underlying spaces of connections are different---so we will still denote them by $\Omega^\reg$ and $\Omega^{\reg, \ab}$ without an ``$\sf$'' subscript.

We define the \emph{regularized semiflat Atiyah-Bott form} $\Omega^{\reg, \sf}_\zeta$ on $\Hfr$ by pulling back $\Omega^\reg|_{\sfAfr_\zeta}$ via the composition
\begin{equation}
    \NAH^\sf_\zeta \coloneqq {\NAH_\zeta} \circ {\iota^\sf} \ : \ \Hfr \xlongrightarrow{\iota^\sf} \sfHfr \xrightarrow{\NAH_\zeta} \sfAfr_\zeta,
\end{equation}
where $\iota^\sf$ is the comparison map from \cref{lem:comparison-map}.
This allows us to compare $\Omega^\reg_\zeta$ and $\Omega^{\reg, \sf}_\zeta$ on the same underlying space (see \cref{fig:sf-regularized-forms}). 
By the usual abuse of notation we will also let $\Omega^{\reg, \sf}_\zeta$ denote the induced form on the moduli space $\Xfr$.

\begin{figure}[ht]
    \begin{equation*}
    \begin{tikzcd}
    & \color{green} \sfHfr \arrow[rr, "\NAH_\zeta"] && \color{orange} (\sfAfr_\zeta,  \Omega^\reg) \\
    \color{green} (\Hfr, \Omega^\reg_\zeta,   \Omega^{\reg, \sf}_\zeta ) \arrow[rr, "\NAH_\zeta"] \arrow[ru, "\iota^\sf"] && \color{orange} (\Afr_\zeta, \Omega^\reg ) &        
    \end{tikzcd}
    \end{equation*}
    \caption{Forms induced on $\Hfr$ by the regularized forms $\Omega^\reg|_{\Afr_\zeta}$ and $\Omega^\reg|_{\sfAfr_\zeta}$.}
    \label{fig:sf-regularized-forms}
\end{figure}

\begin{remark}[Semiflat regularization terms]
    If $\dot{\nabla}_i$ and $\dot{\nabla}_i^\sf$ are variations of $\nabla_\zeta = \NAH_\zeta(E, \theta, h, g)$ and $\nabla^\sf_\zeta = \NAH^\sf_\zeta(E, \theta, h, g)$ respectively, induced by variations of $(E, \theta, h, g) \in \Hfr$,
    then
    \begin{equation}
         \mathcal{R}(\dot{\nabla}_1, \dot{\nabla}_2) = \mathcal{R}(\dot{\nabla}^\sf_1, \dot{\nabla}^\sf_2),
    \end{equation}
    i.e.\, the regularization terms of $\Omega^\reg_\zeta$ and $\Omega^{\reg, \sf}_\zeta$ coincide.
\end{remark}

We also get the expected analogue of \cref{prop:ab-symplectomorphism}:

\begin{prop}[``Semiflat abelianization is a symplectomorphism''] \label{prop:sf-ab-symplectomorphism}
As forms on $\sfAfr_\zeta$,
\begin{equation}
    \ab^* \Omega^{\reg, \ab} = \Omega^\reg,
\end{equation}
i.e.\
\begin{equation}
    \Omega^\reg(\dot{\nabla}^\sf_1, \dot{\nabla}^\sf_2) = \Omega^{\reg, \ab}(\dot{\nabla}^{\sf, \ab}_1, \dot{\nabla}^{\sf, \ab}_2).
\end{equation}
\end{prop}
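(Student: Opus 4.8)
The plan is to follow the two-part structure of the proof of \cref{prop:ab-symplectomorphism}, but to exploit the fact that the semiflat setting is genuinely simpler: since $h_\sf$ solves the decoupled Hitchin equations \eqref{eq:decoupled-hitchin}, the $\theta$-eigenspaces are $h_\sf$-orthogonal and the connection $\nabla^\sf_\zeta$ is already diagonal with respect to the global eigenframe $g^\L$. Consequently there are no Stokes jumps to account for, and the interpolation/cutoff-function argument needed in \cref{prop:ab-symplectomorphism} is not required here.

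First I would observe that the regularization terms agree, by exactly the computation in \cref{lem:reg-terms-coincide}. Writing the polar coefficients of the semiflat variations as $\mu^\sf_i = \diag(\mu^{\sf,\ab}_i, -\mu^{\sf,\ab}_i)$ and $\lambda^\sf_i = \diag(\lambda^{\sf,\ab}_i, -\lambda^{\sf,\ab}_i)$, one has $\tr(\mu^\sf_1\lambda^\sf_2 - \mu^\sf_2\lambda^\sf_1) = 2(\mu^{\sf,\ab}_1\lambda^{\sf,\ab}_2 - \mu^{\sf,\ab}_2\lambda^{\sf,\ab}_1)$, so that $\mathcal{R}(\dot{\nabla}^\sf_1, \dot{\nabla}^\sf_2) = \mathcal{R}^\ab(\dot{\nabla}^{\sf,\ab}_1, \dot{\nabla}^{\sf,\ab}_2)$. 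It then remains only to compare the two integral terms in the limit $R \to 0$.

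Next I would compare the integrands directly. Because $\nabla^\sf_\zeta = \pi_* \nabla^{\sf,\ab}_\zeta$ globally on $C' = C \setminus \{\text{branch points}\}$ --- with no wall-crossing, since the connection is diagonal everywhere in the eigenframe --- the variations satisfy $\dot{\nabla}^\sf_i = \pi_* \dot{\nabla}^{\sf,\ab}_i$ there, and hence $\tr(\dot{\nabla}^\sf_1 \wedge \dot{\nabla}^\sf_2)$ equals, pointwise, the sum over the two sheets of $\dot{\nabla}^{\sf,\ab}_1 \wedge \dot{\nabla}^{\sf,\ab}_2$. Since $\pi : \Sigma_R \to C_R$ is a double cover, integrating yields $\int_{C_R} \tr(\dot{\nabla}^\sf_1 \wedge \dot{\nabla}^\sf_2) = \int_{\Sigma_R} \dot{\nabla}^{\sf,\ab}_1 \wedge \dot{\nabla}^{\sf,\ab}_2$ for every sufficiently small $R$. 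Combined with the matching regularization terms, taking $R \to 0$ gives $\Omega^\reg(\dot{\nabla}^\sf_1, \dot{\nabla}^\sf_2) = \Omega^{\reg,\ab}(\dot{\nabla}^{\sf,\ab}_1, \dot{\nabla}^{\sf,\ab}_2)$, as desired.

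The main point requiring care is the behaviour near the branch points, where $h_\sf$ (and hence $\nabla^\sf_\zeta$) is singular, in contrast to the smooth metric $h$ of the earlier sections. The key observation is that near each branch point the abelianized connection has the fixed form $\nabla^{\sf,\ab} = d \pm \frac{dt_\pm}{2t_\pm}$ of \eqref{eq:ab-form-branch-points}, coming from the fixed parabolic weight $-\tfrac{1}{2}$; its connection form is constant in the coordinate $t_\pm$, so its variations vanish in a neighbourhood of each branch point (cf.\ \cref{rem:var-of-ab-connection}). This ensures both that the integrand is supported away from the branch points --- so that no residue or boundary contribution arises there and the integral over $C_R$ converges --- and that the pushforward identity $\dot{\nabla}^\sf_i = \pi_* \dot{\nabla}^{\sf,\ab}_i$ used above holds on the support of the variations. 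With this in hand the computation goes through, and the argument is in fact strictly simpler than that of \cref{prop:ab-symplectomorphism}.
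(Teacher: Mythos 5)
Your proof takes essentially the same route as the paper, which disposes of this proposition in one line: since $\nabla^\sf_\zeta$ is globally the pushforward of $\nabla^{\sf,\ab}_\zeta$, there are no Stokes jumps, the integrands match sheet-by-sheet, and the regularization terms agree exactly as in \cref{lem:reg-terms-coincide}; your write-up is just a more explicit version of this. One caveat about your branch-point discussion: the normal form $\nabla^\ab = d \pm \frac{dt_\pm}{2t_\pm}$ of \eqref{eq:ab-form-branch-points} is the one imposed on $\abAfr_\zeta$ via \cref{cor:global-frame}, and it does not apply verbatim to $\sfabAfr_\zeta$ --- in the frame $g^\L$ the semiflat connection near a branch point is $\zeta^{-1}\lambda + A_{h_\L} + \zeta\overline{\lambda}$, and its variation (e.g.\ the term $\zeta^{-1}\dot{\lambda}$) need not vanish there. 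This does not affect the conclusion, since the pushforward identity holds everywhere away from the branch points and the integrand remains integrable near them ($\lambda$ vanishes at the branch points and the parabolic weight $-\tfrac{1}{2}$ is fixed), but the justification you give for that step is borrowed from the wrong setting.
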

\begin{proof}
    This time the result is essentially immediate, since the semiflat connections $\nabla^\sf_\zeta$ are the pushforwards of $\nabla^{\sf, \ab}_\zeta$.
    (Alternatively we could follow the proof of \cref{prop:ab-symplectomorphism}, except now the Stokes matrices $S$ are just the identity.)
\end{proof}

\subsection{Semiflat glued symplectic form} 
\label{sec:sf-glued-symplectic-form}

Our goal here, analogously to \cref{sec:glued-symplectic-form}, is to produce a semiflat glued form $\Omega^{\glue, \sf}$ on $\sfabAfr_\zeta$ as an intermediary between the regularized abelian Atiyah-Bott form  and the semiflat Ooguri-Vafa form.
Once again the glued form will coincide with the regularized one, but its identification with the Ooguri-Vafa form will be slightly modified as a result of replacing $h$ with $h_{\sf}$ in the definitions.

We start by introducing a semiflat counterpart of the magnetic angle $\theta_m$ from \cref{def:magnetic-angle}.

\begin{definition}[Shifted magnetic angle and form] \label{def:shifted-magnetic}
    \begin{enumerate}
        \item[] 
    \item Define the \emph{magnetic angle} $\theta_m$ on $\sfHfr$ by
    \begin{equation} \label{eq:magnetic-angle-on-sf}
         \theta_m(E, \theta, h_\sf, g) \coloneqq  m^{(3)} \arg(-m)  + \pi + \Im \int_\gamma (A_{h_\sf})_{11} \pmod {2\pi},
     \end{equation}
    where $A_{h_\sf}$ is the connection form of the semiflat Chern connection $D_{h_\sf}$ with respect to the eigenframe $(\eta_1, \eta_2)$ from \cref{ssub:sf-ov-form}. 
    (The analogous \cref{def:magnetic-angle} for $\theta_m$ on $\Hfr$ uses $A_h$ instead of $A_{h_\sf}$.)

    \item Define a \emph{shifted magnetic angle} $\theta_m^\shift$ on $\Hfr$ by the pullback
    \begin{equation}
        \theta_m^\shift \coloneqq (\iota^\sf)^* (\theta_m|_{\sfHfr}),
    \end{equation}
    i.e.\
    \begin{equation} \label{eq:shifted-magnetic-angle}
         \theta_m^\shift(E, \theta, h, g) =  m^{(3)} \arg(-m)  + \pi + \Im \int_\gamma (A_{h_\sf})_{11} \pmod {2\pi}.
     \end{equation}

    \item  Define the corresponding \emph{shifted semiflat magnetic twistor coordinate} on $\Hfr$ by
    \begin{equation} \label{eq:shifted-magnetic-coord} 
        \Xms(\zeta) \coloneqq \exp \left( \zeta^{-1} Z_B    + i \theta_m^\shift + \zeta  \overline{Z_B} \right)
    \end{equation}
    and \emph{shifted semiflat Ooguri-Vafa form}
    \begin{equation}
        \Omega^{\ov, \shift}_\zeta \coloneqq -\frac{1}{4 \pi ^2} d \log \Xe(\zeta) \wedge d \log \Xms(\zeta).
    \end{equation}
    \end{enumerate}
\end{definition}

\begin{remark}[$\shift$ vs $\sf$]
    The (perhaps subtle) choice of notation above is deliberate.
    The definition \eqref{eq:magnetic-angle-on-sf} of $\theta_m$ on $\sfHfr$ is really the natural analogue of $\theta_m$ on $\Hfr$, given by the same formula but with a different underlying harmonic metric.
    In this sense, $\Xms$ is the natural semiflat magnetic coordinate from the point of view of $\Hfr$.

    However, it is not \emph{a priori} obvious that $\Xms$ does (or should) coincide with the semiflat Ooguri-Vafa coordinate $\Xm^\sf$ (defined by \eqref{eq:sf-magnetic-coord-abbrv}, in terms of $\theta_m$).
    We use the ``$\shift$'' superscript to emphasize this distinction.
\end{remark}

In \cref{prop:sf-glue-equals-reg,prop:sf-glue-equals-ov} below we will show that

\begin{equation*}
    \Omega^{\glue, \sf} = \Omega^{\reg, \ab},
\end{equation*}
and
\begin{equation*}
    (\ab \circ \NAH^\sf_\zeta)^* \Omega^{\glue, \sf} = -4 \pi^2 \cdot \Omega^{\ov, \shift}_\zeta,
\end{equation*}
which will give the following analogue of \cref{thm:reg-equals-ov}:

\begin{theorem}[Shifted semiflat Ooguri-Vafa and regularization] \label{thm:sf-reg-equals-shifted-ov}
    Under the identification of spaces $\Mov \cong \Xfr$,  
        \begin{equation} \label{eq:sf-reg-equals-shifted-ov}
             \Omega^{\ov, \shift}_\zeta = - \frac{1}{4\pi^2} \Omega^{\reg, \sf}_\zeta,
        \end{equation}
    i.e.\ the shifted semiflat Ooguri-Vafa form coincides with the regularized semiflat Atiyah-Bott form, pulled back to $\Xfr$.
\end{theorem}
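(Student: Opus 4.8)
The plan is to deduce the theorem exactly as \cref{thm:reg-equals-ov} was obtained, by chaining three semiflat analogues of the earlier component propositions around the commutative diagram of \cref{fig:summary-of-sf-spaces}. Unwinding the definition $\Omega^{\reg,\sf}_\zeta = (\NAH^\sf_\zeta)^*\Omega^\reg$ and applying in turn \cref{prop:sf-ab-symplectomorphism} (so that $\ab^*\Omega^{\reg,\ab} = \Omega^\reg$ on $\sfAfr_\zeta$), the gluing--regularization identity $\Omega^{\glue,\sf} = \Omega^{\reg,\ab}$ (\cref{prop:sf-glue-equals-reg}), and the Ooguri-Vafa identity $(\ab\circ\NAH^\sf_\zeta)^*\Omega^{\glue,\sf} = -4\pi^2\,\Omega^{\ov,\shift}_\zeta$ (\cref{prop:sf-glue-equals-ov}), we obtain
\[
\Omega^{\reg,\sf}_\zeta = (\NAH^\sf_\zeta)^*\ab^*\Omega^{\reg,\ab} = (\ab\circ\NAH^\sf_\zeta)^*\Omega^{\glue,\sf} = -4\pi^2\,\Omega^{\ov,\shift}_\zeta,
\]
which is the assertion. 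Since \cref{prop:sf-ab-symplectomorphism} is already in hand, the work reduces to constructing $\Omega^{\glue,\sf}$ and proving the two remaining propositions.

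For \cref{prop:sf-glue-equals-reg} I would re-run the general gluing construction of \crefrange{sub:general-gluing-construction}{sub:gluing-on-sigma} on the cut-off cylinder $\Sigma_r$, now using the semiflat connections $\nabla^{\sf,\ab}_\zeta \in \sfabAfr_\zeta$. Because the singular terms of $\nabla^{\sf,\ab}_\zeta$ near $\infty_\pm$ agree with those of $\nabla^\ab_\zeta$ (only the regular terms differ, and these are now the explicit $\lambda_0$-terms), the symmetric gluing map $\chi$ and the boundary integral $\mathcal{R}^\chi_r$ are computed by the same local argument near the punctures. The alternative proof of \cref{prop:glue-equals-reg} in \cref{ssub:regularization-atiyah-bott}, which depends only on the asymptotic polar coefficients, therefore carries over essentially verbatim, giving $\Omega^{\glue,\sf} = \Omega^{\reg,\ab}$.

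The substance lies in \cref{prop:sf-glue-equals-ov}, where the reasoning is genuinely reversed relative to the non-semiflat case. The electric side is unchanged: $\Xe$ has the same formal-type formula \eqref{eq:electric-coord}, so \cref{prop:electric-integral} gives $\int_{\gamma_e}\dot\alpha = -d\log\Xe$ with no modification. For the magnetic side we no longer have a Stokes-data interpretation from \cite{Tulli:2019} to invoke; instead I would compute the regularized parallel transport $\int^{\reg,\chi}_{\gamma_m}\alpha^\sf$ of the semiflat abelian connection directly and match it against the explicit formula \eqref{eq:shifted-magnetic-coord} for $\Xms$. Concretely, decompose the connection form along $\gamma_m$ as $\alpha^\sf = \zeta^{-1}\lambda + A_{h_\L} + \zeta\overline{\lambda}$, where $A_{h_\L}$ is the (flat) connection form of $D_{h_\L}$. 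After the $\chi$-regularization $A(p_3^t) - A(p_2^b)$ cancels their divergences, the period contributions $\zeta^{-1}\int\lambda + \zeta\int\overline{\lambda}$ should reproduce the central-charge terms $\zeta^{-1}Z_B + \zeta\overline{Z_B}$ of \eqref{eq:magnetic-period}, while the Chern integral $\int A_{h_\L}$ reproduces $i\theta_m^\shift$ once $(A_{h_\sf})_{11}$ is identified with the pushforward eigenframe of \cref{rem:pushforward-frame}. Taking variations then yields $d\log\Xe\wedge d\log\Xms = -4\pi^2\,\Omega^{\ov,\shift}_\zeta$ exactly as in \cref{sub:twistor-summary}.

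I expect the last matching to be the main obstacle, and it is twofold. First, one must align the combinatorially-defined gluing path $\gamma_m = b_r(p_3^t,\vartheta)$ with the WKB lift $\Gamma_\WKB$ from $\infty_+$ to $\infty_-$ appearing in \eqref{eq:magnetic-angle}; since $\nabla^{\sf,\ab}_\zeta$ is an honest pushforward with trivial Stokes jumps, parallel transport is homotopy-invariant up to the branch-point holonomy $-1$, so the two paths can be compared by homotopy, but the resulting endpoint, winding, and sign contributions must be tracked carefully and shown to be absorbed into the $\tfrac{\vartheta}{2\pi}\log\Xe$ term together with the constants $m^{(3)}\arg(-m) + \pi$. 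Second, one must verify that the regularized period $\int_{\gamma_m}\lambda$ converges to $Z_B$ as in \eqref{eq:magnetic-period}; this is a direct asymptotic computation of $\int\lambda_0$ against the antiderivative $A_0$ near $w=0$, and is precisely where the constant $-2e$ must be produced. I anticipate this period and regularization bookkeeping to be the crux, with the remainder following the template already established in \crefrange{sec:regularized-forms}{sec:glued-symplectic-form}.
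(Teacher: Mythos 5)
Your proposal is correct and follows essentially the same route as the paper: the theorem is obtained by chaining \cref{prop:sf-ab-symplectomorphism}, \cref{prop:sf-glue-equals-reg}, and \cref{prop:sf-glue-equals-ov}, with the real work in the magnetic integral, which the paper (like you) splits into a regularized period computation $\Lambda_0(p_3^t)+\Lambda_0(p_2^b)+\int_{\gamma_m}\lambda=-Z_B$ (evaluated via the explicit antiderivative at the branch point, producing the constant $-2e$) and a comparison of $\gamma_m$ with $\Gamma_\WKB$ using the branch-point holonomy $-1$ and the vanishing of $A_{h_\L}$ along the radial WKB ray. The two obstacles you flag are precisely the content of \cref{lem:regularized-lambda-integrals} and \cref{lem:br-vs-gamma}.
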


\begin{remark}[$\theta_m$ vs $\theta_m^\shift$] 
More explicitly,
\begin{equation} \label{eq:shifted-form} 
    \Omega^{\ov, \shift} = \Omega^{\ov, \sf} + \frac{i}{4 \pi ^2} d \log \Xe \wedge d(\theta_m - \theta_m^\shift),
\end{equation}
where
\begin{equation}
    \theta_m - \theta_m^\shift =  \Im \int_{\gamma} (A_h)_{11} - \Im \int_\gamma(A_{h_\sf})_{11}.
\end{equation}
We will postpone the issue of comparing these integrals until \cref{sec:duality-on-hitchin-section}. 
\end{remark}

\subsubsection{Setup for the semiflat gluing procedure} 
\label{ssub:sf-gluing-setup}

Now we will follow the gluing procedure as in \cref{sub:gluing-on-sigma}.
We will use the same
\begin{itemize}
    \item cut off cylinder $\Sigma_r \subseteq \Sigma$,
    \item gluing diffeomorphism $\sigma(w, s) = (e^{i \vartheta} \overline{w}, -s)$, and
    \item paths $\gamma_e$ and $\gamma_m$ (as shown in \cref{fig:cutoff-paths}),
\end{itemize}
but must specify a new map $\chi^\sf$ for the gluing gauge transformation.

Write $\nabla^{\sf, \ab}_\zeta = d + \alpha^\sf$ with respect to the global frame $g^\L$. 
Also let $A_{h_\L}$ denote the connection form of the Chern connection $D_{h_\L}$ with respect to this frame, so that we can (globally) write
\begin{equation} \label{eq:sf-abelian-connection-form}
    \alpha^\sf = \zeta^{-1} \lambda + A_{h_\L} + \zeta \overline{\lambda}.
\end{equation}

With notation as in \cref{sub:gluing-on-sigma} (in particular suppressing the pullback $\pi^*$), near the punctures we have
\begin{equation}
    A_{h_\L} = \epsilon \cdot \frac{m^{(3)}}{2} \left(  \frac{dw}{w} - \frac{d \overline{w}}{\overline{w}} \right) 
\end{equation}
and
\begin{equation}
    \alpha^\sf =  \epsilon \cdot \left[ \zeta^{-1} \lambda_0 +  \frac{m^{(3)}}{2} \left(  \frac{dw}{w} - \frac{d \overline{w}}{\overline{w}} \right) + \zeta \overline{\lambda_0} \right].
\end{equation}

The above formula almost coincides with the earlier expression \eqref{eq:ab-form-sign} for $\alpha$, except now the series expansion \eqref{eq:phi-square-root-expansion} of $\lambda_0$ contains additional regular $\O(w)$ terms, and so we can write
\begin{equation}
    \alpha^\sf = \alpha + \O(|w|).
\end{equation}
It follows that there is a unique choice of gluing map $\chi^\sf$ satisfying the symmetric gluing condition \eqref{eq:sym-gluing-condition} such that
\begin{equation} \label{eq:sf-vs-regular-gluing}
    \boxed{\chi^\sf = \chi + \O(|w|)}
\end{equation}
near $\partial \Sigma_r$, where $\chi$ denotes the original choice of gluing map \eqref{eq:gluing-map}.

In fact, we can once again specify $\chi^\sf$ completely explicitly:

\begin{itemize}
    \item First choose the antiderivative
\begin{equation} \label{eq:lambda-antiderivative}
    \Lambda_0(z) = \frac{z}{2} \sqrt{z^2 + 2m} + m \log (z + \sqrt{z^2 + 2m}) - \frac{m}{2} - m \log 2
\end{equation}
of $\lambda_0$, as in \cite[Equation (3.104)]{Tulli:2019}, so that $\Lambda_0(w) = \frac{1}{2} w^{-2} - m \log w +  \O(w)$.

    \item Define an antiderivative of $\alpha^\sf$ in a neighbourhood of $\partial \Sigma_r$ by
    \begin{equation} \label{eq:sf-lifted-antiderivative}
        A^\sf \coloneqq \epsilon \cdot \pi^* \left(\zeta^{-1} \Lambda_0 + C_0 + \zeta \overline{\Lambda_0}\right),
    \end{equation}
    where
    \begin{equation} \label{eq:sf-chern-antiderivative}
        C_0(w) = \frac{m^{(3)}}{2} (\log w - \log \overline{w})
    \end{equation}
    (so $C_0$ is an antiderivative for the Chern connection term, i.e.\ $d (\epsilon \cdot \pi^* C_0) = A_{h_\L}$ near $\partial \Sigma_r$).
\end{itemize}

Analogously to \eqref{eq:sym-gluing-map-antiderivative}, we will choose the gluing map
\begin{equation}
    \boxed{\chi^\sf = \frac{A^\sf - \sigma^* A^\sf}{2} - \frac{\epsilon}{2} \cdot \frac{\vartheta}{2\pi} x_e}
\end{equation}
(which is indeed consistent with \eqref{eq:sf-vs-regular-gluing}).

\begin{construction}[Semiflat glued form $\Omega^{\glue, \sf}$]
    With this choice of $\chi^\sf$, the gluing construction produces a form $\Omega^{\glue, \sf}$ on $\sfabAfr$ (and its moduli space).
    It can be calculated by
    \begin{align}
    \begin{split}
       \Omega^{\glue, \sf}(\dot{\nabla}^{\sf, \ab}_1, \dot{\nabla}^{\sf, \ab}_2)
        &=\int_{\Sigma_r} \dot{\alpha}^\sf_1 \wedge \dot{\alpha}^\sf_2 
        + \int_{\partial \Sigma_r} (\dot{\chi}^\sf_2 \dot{\alpha}^\sf_1 -  \dot{\chi}^\sf_1 \dot{\alpha}^\sf_2 + \dot{\chi}^\sf_1 d \dot{\chi}^\sf_2) \label{eq:sf-glued-form-1} 
    \end{split} \\
    \begin{split}
        &= \int_{\gamma_e} \dot{\alpha}^\sf_1 \left( \dot{\chi}^\sf_2(p_0) - \dot{\chi}^\sf_2(\sigma(p_0)) + \int_{\gamma_m} \dot{\alpha}^\sf_2 \right) \\
        &\qquad- \int_{\gamma_e} \dot{\alpha}^\sf_2 \left( \dot{\chi}^\sf_1(p_0) - \dot{\chi}^\sf_1(\sigma(p_0)) +  \int_{\gamma_m} \dot{\alpha}^\sf_1 \right). \label{eq:sf-glued-form-2}
    \end{split}
    \end{align} 
\end{construction}

\subsubsection{Semiflat gluing and regularization} 
\label{ssub:sf-gluing-and-regularization}

Just as in \cref{ssub:regularization-for-parallel-transport}, the regularized integrals
\begin{align}
    \int^{\reg, \chi^\sf}_{\gamma_m} \alpha^\sf \coloneqq{}& \chi^\sf(p_0) - \chi^\sf(\sigma(p_0)) + \int_{\gamma_m} \alpha^\sf  \\
    ={}& \! \left( A^\sf(p_0) - A^\sf(\sigma(p_0)) + \int_{\gamma_m} \alpha^\sf  \right) - \frac{\vartheta}{2\pi} x_e \label{eq:sf-reg-integral-antiderivatives}
\end{align}
are $r$-independent, and hence so is the form $\Omega^{\glue, \sf}$.
By the same argument as in \cref{ssub:regularization-atiyah-bott}, we obtain the analogue of \cref{prop:glue-equals-reg}.

\begin{prop}[Semiflat gluing and regularization] \label{prop:sf-glue-equals-reg}
    As forms on $\sfabAfr_\zeta$,
    \begin{equation} \label{eq:sf-glue-equals-reg}
        \Omega^{\glue, \sf} = \Omega^{\reg, \ab},
    \end{equation}
    i.e.\ the semiflat glued form $\Omega^{\glue, \sf}$ coincides with the regularized abelian Atiyah-Bott form.
\end{prop}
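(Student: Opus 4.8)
The plan is to mirror the alternative proof of \cref{prop:glue-equals-reg} given in \cref{ssub:regularization-atiyah-bott}, replacing the connection form $\alpha$ and gluing map $\chi$ by their semiflat counterparts $\alpha^\sf$ and $\chi^\sf$. The entire topological and gluing framework --- the cut off cylinder $\Sigma_r$, the diffeomorphism $\sigma(w,s) = (e^{i\vartheta}\overline{w}, -s)$, the sign $\epsilon$, and the symmetric gluing condition \eqref{eq:sym-gluing-condition} --- is unchanged from the non-semiflat case. In particular the symmetric condition still forces $\sigma^*\chi^\sf = -\chi^\sf$, so the boundary term in \eqref{eq:sf-glued-form-1} collapses, exactly as in \eqref{eq:rewritten-boundary-term}, to a single integral over $\gamma_e = a$:
\[
\int_{\partial \Sigma_r}\bigl(\dot{\chi}^\sf_2 \dot{\alpha}^\sf_1 - \dot{\chi}^\sf_1 \dot{\alpha}^\sf_2 + \dot{\chi}^\sf_1 d\dot{\chi}^\sf_2\bigr) = \int_a\bigl(\dot{\chi}^\sf_2 \cdot (\dot{\alpha}^\sf_1 + \sigma^*\dot{\alpha}^\sf_1) - \dot{\chi}^\sf_1 \cdot (\dot{\alpha}^\sf_2 + \sigma^*\dot{\alpha}^\sf_2)\bigr).
\]

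Next I would record the polar asymptotics of the semiflat data near $\partial \Sigma_r$. The crucial observation is that the singular parts of $\alpha^\sf$ and $\alpha$ coincide, while by \eqref{eq:sf-vs-regular-gluing} we have $\chi^\sf = \chi + \O(|w|)$ and $\alpha^\sf = \alpha + \O(|w|)$. Consequently the variations take the same leading form as in the non-semiflat computation, up to regular corrections that are subleading on the circle $|w| = r$:
\[
\dot{\alpha}^\sf_i = \epsilon\bigl[(\mu^{\sf,\ab}_i + \O(r))\,d\theta + (\lambda^{\sf,\ab}_i + \O(r))\tfrac{dr}{r}\bigr], \qquad \dot{\chi}^\sf_i = \epsilon\,\lambda^{\sf,\ab}_i \log r + \O(r),
\]
where the leading coefficients agree with the original $\mu^\ab_i, \lambda^\ab_i$ because they are determined by the identical first-order singular terms (consistent with the coincidence of regularization terms noted earlier). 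Substituting these into the boundary integral and integrating over $\theta \in [0, 2\pi]$ then reproduces, verbatim, the non-semiflat conclusion
\[
\int_a\bigl(\dot{\chi}^\sf_2 (\dot{\alpha}^\sf_1 + \sigma^*\dot{\alpha}^\sf_1) - \dot{\chi}^\sf_1 (\dot{\alpha}^\sf_2 + \sigma^*\dot{\alpha}^\sf_2)\bigr) = -2\pi \log r \cdot \bigl[\mathcal{R}^\ab(\dot{\nabla}^{\sf,\ab}_1, \dot{\nabla}^{\sf,\ab}_2) + \O(r)\bigr],
\]
since every $\O(r)\log r$ and $\O(r)$ correction vanishes as $r \to 0$.

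Finally I would invoke the $r$-independence of $\Omega^{\glue, \sf}$, which was already established through \eqref{eq:sf-reg-integral-antiderivatives}. Writing $\Omega^{\glue, \sf}(\dot{\nabla}^{\sf,\ab}_1, \dot{\nabla}^{\sf,\ab}_2) = \int_{\Sigma_r}\dot{\alpha}^\sf_1 \wedge \dot{\alpha}^\sf_2 + \mathcal{R}^{\chi^\sf}_r$ and passing to the limit yields
\[
\Omega^{\glue, \sf}(\dot{\nabla}^{\sf,\ab}_1, \dot{\nabla}^{\sf,\ab}_2) = \lim_{r \to 0}\Bigl[\int_{\Sigma_r}\dot{\alpha}^\sf_1 \wedge \dot{\alpha}^\sf_2 - 2\pi \log r \cdot \mathcal{R}^\ab(\dot{\nabla}^{\sf,\ab}_1, \dot{\nabla}^{\sf,\ab}_2)\Bigr] = \Omega^{\reg, \ab}(\dot{\nabla}^{\sf,\ab}_1, \dot{\nabla}^{\sf,\ab}_2),
\]
which is the claim.

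The main obstacle --- indeed essentially the only nontrivial verification --- is confirming that the regular $\O(|w|)$ terms now carried by $\lambda_0$ (and hence by $\alpha^\sf$ and $\chi^\sf$) do not contaminate the log-divergent regularization term. This reduces to the elementary observation that a product of the shape $(c + \O(r))\log r$, integrated against a bounded $d\theta$-form around the boundary circle, differs from its leading value $c\log r$ only by terms of order $\O(r\log r) \to 0$; no analytic input beyond the non-semiflat argument of \cref{ssub:regularization-atiyah-bott} is required, which is why the result is essentially immediate.
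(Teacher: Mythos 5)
Your proposal is correct and follows essentially the same route as the paper, whose proof of this proposition is literally ``by the same argument as in \cref{ssub:regularization-atiyah-bott}'': you rerun the alternative proof of \cref{prop:glue-equals-reg} with $\alpha^\sf, \chi^\sf$ in place of $\alpha, \chi$ and combine the resulting boundary-term asymptotics with the $r$-independence of $\Omega^{\glue,\sf}$. Your explicit check that the extra $\O(|w|)$ regular terms coming from $\lambda_0$ only perturb the boundary integral by $\O(r\log r)$ is exactly the one point that needs verifying, and it is handled correctly.
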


\subsubsection{Interpreting the semiflat twistor coordinates} 
\label{ssub:interpreting-sf-coordinates}

Next we will interpret the integrals 
\begin{equation*}
    \int_{\gamma_e} \alpha^\sf \quad \text{and} \quad \int^{\reg, \chi^\sf}_{\gamma_m} \alpha^\sf
\end{equation*}
in terms of the semiflat twistor coordinates, as in \cref{sub:interpreting-electric-coordinate,sub:interpreting-magnetic-coordinate}.

As another sanity check, note that the integral over $\gamma_e$ has the same interpretation in terms of $\Xe$ as before:
\begin{align*}
    \int_{\gamma_e} \alpha^\sf &= \int_{\gamma_e} \left( \zeta^{-1} \lambda_0 +  \frac{m^{(3)}}{2} \left(  \frac{dw}{w} - \frac{d \overline{w}}{\overline{w}} \right) + \zeta \overline{\lambda_0} \right) \\
    &= 2 \pi i (\zeta^{-1} m - \zeta \overline{m} - m^{(3)}) \\
    &= - \log \Xe(E, \theta, h, g) \pmod{2 \pi i}.
\end{align*}
(This is consistent with the fact that $\Xe^\sf = \Xe$.)

We now turn to the more complicated regularized integral over $\gamma_m$.
We will obtain an analogue of \eqref{eq:gamma-m-integral} for the corresponding integral in $\Omega^\glue$, but involving the shifted version of the magnetic angle.

\begin{prop}[Semiflat $\gamma_m$ integral] \label{prop:sf-magnetic-integral}
    For a semiflat connection $\nabla^\sf = \NAH^\sf_\zeta(E, \theta, h, g) \in \sfAfr_\zeta$, pushed down from $\nabla^{\sf, \ab} = d + \alpha^\sf$,
    \label{prop:sf-gamma-m-integral}
    \begin{equation} \label{eq:sf-gamma-m-integral}
        \int^{\reg, \chi^\sf}_{\gamma_m} \alpha^\sf = - \log \Xms(E, \theta, h, g) - \frac{\vartheta}{2\pi} \log \Xe(E, \theta, h, g) \pmod{2\pi i}.
    \end{equation}
\end{prop}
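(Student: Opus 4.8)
The plan is to adapt the argument of \cref{sub:interpreting-magnetic-coordinate}, but to exploit the fact that $\nabla^{\sf, \ab}_\zeta$ is \emph{literally} the abelianized connection on the spectral line bundle $\L$, so that we can integrate $\alpha^\sf$ directly rather than routing through Stokes data. Using the $r$-independent expression \eqref{eq:sf-reg-integral-antiderivatives} with $p_0 = p_3^t$, $\sigma(p_0) = p_2^b$, and $\log \Xe = x_e$, the assertion reduces to the semiflat analogue of \cref{lem:reg-holonomy-xm}, namely
\[
    A^\sf(p_3^t) - A^\sf(p_2^b) + \int_{\gamma_m} \alpha^\sf = - \log \Xms \pmod{2\pi i}.
\]
I would then decompose both sides by powers of $\zeta$. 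Writing $\alpha^\sf = \zeta^{-1} \lambda + A_{h_\L} + \zeta \overline{\lambda}$ (from \eqref{eq:sf-abelian-connection-form}), $A^\sf = \epsilon \, \pi^*(\zeta^{-1} \Lambda_0 + C_0 + \zeta \overline{\Lambda_0})$ (from \eqref{eq:sf-lifted-antiderivative}), and $\log \Xms = \zeta^{-1} Z_B + i \theta_m^\shift + \zeta \overline{Z_B}$ (from \eqref{eq:shifted-magnetic-coord}), and recalling $\epsilon = +1$ at $p_3^t$ and $\epsilon = -1$ at $p_2^b$, the three $\zeta$-graded components must match separately since $\zeta \in \C^*$ is free.

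For the holomorphic $\zeta^{-1}$-component I need $\Lambda_0(w_3) + \Lambda_0(w_2) + \int_{\gamma_m} \lambda = -Z_B$, where $w_i = \pi(p_i)$. The lift $\Lambda$ of $\Lambda_0$ is a multivalued antiderivative of $\lambda$ on $\Sigma'$, so the two endpoint evaluations cancel the naive boundary part of the path integral, leaving exactly the $B$-period of $\lambda$ picked up as $\gamma_m$ winds once around the branch point in the triangle $123$. I would evaluate this with the explicit antiderivative \eqref{eq:lambda-antiderivative}, reproducing the period $Z_B = -m \log(m/(-2e))$ of \cite{Tulli:2019}; the $r$-independence (\cref{cor:glued-form-r-indep}) permits taking the cutoff $r \to 0$ during the computation. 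The antiholomorphic $\zeta$-component then comes for free by complex conjugation: since $\int_{\gamma_m} \overline{\omega} = \overline{\int_{\gamma_m} \omega}$ for the fixed path $\gamma_m$, it equals $-\zeta \overline{Z_B}$.

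The substantive $\zeta^0$-component requires $C_0(w_3) + C_0(w_2) + \int_{\gamma_m} A_{h_\L} = -i\,\theta_m^\shift \pmod{2\pi i}$. The key input is that $D_{h_\L}$ is flat, so $\int_{\gamma_m} A_{h_\L}$ depends only on the homotopy class of $\gamma_m$ rel endpoints. By \cref{rem:pushforward-frame}, pushing the global frame $g^\L$ forward along $\Gamma_\WKB$ yields precisely the eigenframe $(\eta_1, \eta_2)$ of \cref{def:shifted-magnetic}, so $\int_{\Gamma_\WKB} A_{h_\L} = \int_\gamma (A_{h_\sf})_{11}$, the integral appearing in \eqref{eq:shifted-magnetic-angle}. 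I would then deform $\gamma_m$ onto $\Gamma_\WKB$ together with short boundary arcs near $\infty_\pm$: the arcs contribute the $C_0$ terms (recall $C_0 = i m^{(3)} \arg w$ is purely imaginary, matching the parabolic weights $\pm m^{(3)}$), while the residual closed loop supplies the constant $m^{(3)} \arg(-m) + \pi$ via the parabolic monodromy at the punctures and the $-1 = e^{i\pi}$ holonomy of $\nabla^{\sf, \ab}$ around the branch point. Since $A_{h_\L}$ is purely imaginary in the $h_\L$-unitary frame, $\int_\gamma (A_{h_\sf})_{11} = i\, \Im \int_\gamma (A_{h_\sf})_{11}$, and assembling the pieces yields $-i\,\theta_m^\shift$.

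The main obstacle is exactly this $\zeta^0$ step: pinning down the precise homotopy relating the open path $\gamma_m$ (from $p_3^t$ on the top sheet to $p_2^b$ on the bottom sheet, winding around the branch point) to the WKB lift $\Gamma_\WKB$ from $\infty_+$ to $\infty_-$, and bookkeeping every branch-cut crossing and boundary arc so that the constant lands exactly on $m^{(3)} \arg(-m) + \pi$. This is the semiflat counterpart of the delicate Stokes-data matching carried out in \cite{Tulli:2019}: the orientation conventions for $\gamma$ (\cref{fig:wkb-curve}) and for $\gamma_m$ (\cref{fig:cutoff-paths}), together with the sign $\epsilon$ distinguishing the two sheets, must be tracked carefully. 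By contrast, the holomorphic period evaluation, though intricate, is a routine computation with the explicit $\Lambda_0$, and the antiholomorphic part is automatic.
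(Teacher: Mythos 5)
Your proposal follows essentially the same route as the paper: reduce via \eqref{eq:sf-reg-integral-antiderivatives} to matching $A^\sf(p_3^t)-A^\sf(p_2^b)+\int_{\gamma_m}\alpha^\sf$ with $-\log\Xms$, split by powers of $\zeta$, evaluate the $\zeta^{\pm1}$ components by deforming $\gamma_m$ through the branch point with the explicit antiderivative $\Lambda_0$ (the paper's \cref{lem:regularized-lambda-integrals}), and handle the $\zeta^0$ component by a homotopy relating $\gamma_m$ plus boundary arcs to the lifted WKB curve, with the arcs producing the $C_0$ terms and the branch-point holonomy producing the $\pi$ (the paper's \cref{lem:br-vs-gamma}). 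The bookkeeping you flag as the main obstacle is exactly what the paper carries out, including the observation that $\int_{\Gamma_\out}A_{h_\L}=0$ because $A_{h_\L}=\epsilon\, i m^{(3)}d\theta$ vanishes along the radial WKB line outside $\Sigma_r$.
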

The calculation of this regularized integral involves several steps, but it provides some geometric insight into the terms appearing in $\theta_m^\shift$.
For notational concreteness we will assume that $\Re(\zeta^{-1} m) > 0$, but the argument for the other case is the same.\footnote{Unlike the full magnetic coordinate $\Xm(\zeta)$, the semiflat coordinate does not involve any jumps in $\zeta$.}

\begin{proof}
Using the formulas \eqref{eq:sf-reg-integral-antiderivatives} for $\int^{\reg, \chi^\sf}_{\gamma_m} \alpha^\sf$ and \eqref{eq:shifted-magnetic-coord} for $\Xms$, we need to match up
\begin{equation}  \label{eq:sf-gamma-m-integral-lhs}
    I_m^\sf \coloneqq A^\sf(p_3^t) - A^\sf(p_2^b) + \int_{\gamma_m} \alpha^\sf 
\end{equation}
with
\begin{equation} \label{eq:sf-gamma-m-integral-rhs}
    -\log \Xms = 
    -\zeta^{-1} Z_B  \underbrace{- i m^{(3)} \arg(-m) - \pi i - \int_{\Gamma_\WKB} A_{h_\L}}_{-i \theta_m^\shift}   - \zeta  \overline{Z_B}  \pmod{2\pi i}.
\end{equation}
In the expansion \eqref{eq:shifted-magnetic-angle} of $\theta_m^\shift$ we have rewritten
\begin{equation}
    \int_\gamma (A_{h_\sf})_{11} = \int_{\Gamma_\WKB} A_{h_\L},
\end{equation}
where $\Gamma_\WKB$ is the lift of $\gamma$ to $\Sigma$ from \cref{ssub:sf-ov-form}.
(Note that this integral is purely imaginary.)

To start, we can use the expressions \eqref{eq:sf-abelian-connection-form} for $\alpha^\sf$ and \eqref{eq:sf-lifted-antiderivative} for $A^\sf$ to split up 
\begin{equation} 
     \begin{split}
         I_m^\sf &= \zeta^{-1} \left(  \Lambda_0(p_3^t) + \Lambda_0(p_2^b) + \int_{\gamma_m} \lambda  \right) + \left( C_0(p_3^t) + C_0(p_2^b) + \int_{\gamma_m} A_{h_\L} \right) \\
         &\qquad + \zeta \left( \overline{\Lambda_0}(p_3^t) + \overline{\Lambda_0}(p_2^b) + \int_{\gamma_m} \overline{\lambda}  \right).
     \end{split}
\end{equation}
The $\zeta^{-1}$ and $\zeta$ terms can be calculated explicitly.

\begin{lemma}[Regularized integral of $\lambda$ along $\gamma_m$] \label{lem:regularized-lambda-integrals}
    \begin{equation}
        \Lambda_0(p_3^t) + \Lambda_0(p_2^b) + \int_{\gamma_m} \lambda = -Z_B
    \end{equation}
\end{lemma}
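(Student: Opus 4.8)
The plan is to evaluate the left-hand side directly by pushing the computation down to the base and exploiting that the tautological form $\lambda = s\,dz$ is smooth and closed on \emph{all} of $\Sigma$, branch points included (from $s^2 = z^2+2m$ one gets $2s\,ds = 2z\,dz$, hence $ds \wedge dz = 0$ and $d\lambda = 0$). Consequently $\int_{\gamma_m}\lambda$ is invariant under homotopies of $\gamma_m$ rel endpoints within all of $\Sigma$ (the branch points need not be removed for $\lambda$, only for $\nabla^\ab$), so I may freely deform $\gamma_m$ to a path running from $p_3^t$ radially inward along the top sheet to the branch point $b_0$ it encircles in the triangle $123$ (with $\pi(b_0) = z_0$ a square root of $-2m$), then back out along the bottom sheet to $p_2^b$. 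Writing $z_i = \pi(p_i)$, the lemma then reduces to the claim $\Lambda_0(z_3) + \Lambda_0(z_2) + \int_{\gamma_m}\lambda = 2\Lambda_0(z_0)$, after which a short algebraic check identifies $2\Lambda_0(z_0)$ with $-Z_B$.

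For the segment computation I would track a continuous antiderivative $\tilde\Lambda$ of $\lambda$ along the deformed path, normalized so that $\tilde\Lambda(p_3^t) = \Lambda_0(z_3)$. Since $\lambda$ restricts to $\pi^*\lambda_0$ on the top sheet and $\Lambda_0$ is an antiderivative of $\lambda_0$, the top segment contributes $\Lambda_0(z_0) - \Lambda_0(z_3)$. At $b_0$ the two sheets meet and $\tilde\Lambda$ is forced to be continuous; because $\lambda$ restricts to $-\pi^*\lambda_0$ on the bottom sheet, its bottom-sheet antiderivative is $-\Lambda_0 + C$, and continuity at $b_0$ fixes $C = 2\Lambda_0(z_0)$. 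Hence $\tilde\Lambda(p_2^b) = -\Lambda_0(z_2) + 2\Lambda_0(z_0)$ and $\int_{\gamma_m}\lambda = \tilde\Lambda(p_2^b) - \Lambda_0(z_3) = -\Lambda_0(z_2) - \Lambda_0(z_3) + 2\Lambda_0(z_0)$. The two boundary terms $\Lambda_0(z_3),\ \Lambda_0(z_2)$ — which are exactly the near-puncture values recorded by $A^\sf(p_3^t)$ and $A^\sf(p_2^b)$, whose local antiderivatives $\pm\Lambda_0$ carry no branch-point shift — then cancel, leaving $2\Lambda_0(z_0)$. I would also note that the small arc near $b_0$ contributes nothing in the limit, since in the local coordinate $t$ with $s = t$ one has $\lambda = (t^2/z_0)\,dt + O(t^3)$, which integrates to $0$ as the arc shrinks.

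Finally I would evaluate the constant. At $z_0 = \sqrt{-2m}$ the square-root term of $\Lambda_0$ vanishes, so $\Lambda_0(z_0) = m\log z_0 - \tfrac m2 - m\log 2 = \tfrac m2\log(-2m) - \tfrac m2 - m\log 2$, and a direct simplification gives $2\Lambda_0(z_0) = m\log(-m/2) - m = m\log\!\big(m/(-2e)\big) = -Z_B$, using $Z_B = -m\log(m/(-2e))$. This matches the claimed value. The main obstacle will be the bookkeeping of branches: one must pin down the determinations of $\sqrt{z^2+2m}$ and of the logarithm in $\Lambda_0$ consistently along $\gamma_m$ — across the branch cut, through $b_0$, and at the two punctures — so that the sheetwise identifications $\lambda|_{\mathrm{top}} = \pi^*\lambda_0$ and $\lambda|_{\mathrm{bot}} = -\pi^*\lambda_0$ hold with the signs used above, the near-puncture antiderivatives match the $\Lambda_0(p_i)$ entering $A^\sf$, and the final determination of $\log(-2m)$ produces exactly $Z_B$ rather than a $2\pi i$-translate (which for the lemma's exact equality must be controlled using the orientation of $\gamma_m$ together with the branch conventions of \eqref{eq:phi-square-root} and \cite[Equation (3.104)]{Tulli:2019}). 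Everything else is the routine antiderivative computation above.
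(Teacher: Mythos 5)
Your proposal is correct and follows essentially the same route as the paper: deform $\gamma_m$ through the branch point over $\sqrt{-2m}$, split the integral into a top-sheet segment (where $\lambda = \pi^*\lambda_0$) and a bottom-sheet segment (where $\lambda = -\pi^*\lambda_0$), evaluate both via the antiderivative $\Lambda_0$, and observe that the boundary terms cancel leaving $2\Lambda_0(\sqrt{-2m}) = m\log(m/(-2e)) = -Z_B$. Your added remarks (closedness of $\lambda$ across the branch points justifying the homotopy, continuity of the antiderivative at $b_0$, and the vanishing of the small arc) only make explicit what the paper leaves implicit.
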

\begin{subproof}
    Essentially the same calculation appears in \cite[Section 9.4.3]{Gaiotto:2013a} and \cite[Lemma 3.13]{Tulli:2019}.
    Begin by deforming the path $\gamma_m$ so that it passes through the (preimage of the) branch point $z = \sqrt{-2m}$, and write
    \begin{equation}
        \int_{\gamma_m} \lambda = \int_{p_3^t}^{\sqrt{-2m}} \lambda + \int_{\sqrt{-2m}}^{p_2^b} \lambda.
    \end{equation}
    These two paths respectively lie on the top/bottom sheets of $\Sigma$, where $\lambda|_\top =  \lambda_0$ and $\lambda|_\bot = - \lambda_0$, so we can write
    \begin{align*}
        \int_{\gamma_m} \lambda &= \int_{p_3^t}^{\sqrt{-2m}} \lambda_0 + \int_{\sqrt{-2m}}^{p_2^b} (- \lambda_0) \\
        &= \left(\Lambda_0(\sqrt{-2m}) - \Lambda_0(p_3^t) \right) + \left(\  \Lambda_0(\sqrt{-2m}) - \Lambda_0(p_2^b) \right).
    \end{align*}
    Therefore
    \begin{align*}
        \Lambda_0(p_3^t) + \Lambda_0(p_2^b) +   \int_{\gamma_m} \lambda   &= 2 \Lambda_0(\sqrt{-2m})  = m \log \left(\frac{m}{-2e}\right) = -Z_B,
    \end{align*}
    where the second equality is a direct calculation using the definition \eqref{eq:lambda-antiderivative} of $\Lambda_0$.
\end{subproof}
It follows that
\begin{equation*}
    I_m^\sf =  -\zeta^{-1} Z_B  + \left( C_0(p_3^t) + C_0(p_2^b) + \int_{\gamma_m} A_{h_\L}\right) - \zeta \overline{Z_B}
\end{equation*}
This is starting to look like the desired expression \eqref{eq:sf-gamma-m-integral-rhs}. 
Next we need to incorporate the lifted WKB curve $\Gamma_\WKB$.

\begin{lemma}[$\gamma_m$ vs $\Gamma_\WKB$] \label{lem:br-vs-gamma}
    \begin{equation}
        C_0(p_3^t) + C_0(p_2^b) + \int_{\gamma_m} A_{h_\L}  = -i \theta_m^\shift \pmod {2\pi i} 
    \end{equation}
\end{lemma}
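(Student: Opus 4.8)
The plan is to reduce the statement to a single closed-loop integral of the (closed) $1$-form $A_{h_\L}$ and to evaluate it by residues. First I would unwind the target: since the semiflat Chern connection is flat ($F_{D_{h_\L}} = 0$ in the construction of $h_\sf$), the form $A_{h_\L}$ is closed on $\Sigma' = \Sigma \setminus \{b_\pm, \infty_\pm\}$, and $\int_{\Gamma_\WKB} A_{h_\L}$ is purely imaginary, so that $i\theta_m^\shift = i m^{(3)}\arg(-m) + \pi i + \int_{\Gamma_\WKB} A_{h_\L}$. Substituting into \eqref{eq:shifted-magnetic-angle}, the lemma becomes the claim
\begin{equation*}
    \int_{\gamma_m} A_{h_\L} + \int_{\Gamma_\WKB} A_{h_\L} + C_0(p_3^t) + C_0(p_2^b) = - i m^{(3)} \arg(-m) - \pi i \pmod{2\pi i}.
\end{equation*}

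The main geometric observation is that $\gamma_m$ (from $p_3^t$ near $\infty_-$ to $p_2^b$ near $\infty_+$) and $\Gamma_\WKB$ (from $\infty_+$ to $\infty_-$) traverse the two sheets in opposite senses, so their concatenation closes into a loop once the small gaps at the punctures are bridged along $\partial\Sigma_r$. Near each puncture the form is purely angular: writing $w = re^{i\theta}$, one has $A_{h_\L} = \epsilon\, i m^{(3)}\, d\theta = d(\epsilon\, C_0)$ with $C_0 = i m^{(3)}\arg w$, so the bridging integrals are radially path-independent and contribute exactly the boundary terms $C_0(p_3^t), C_0(p_2^b)$ together with the angular offsets between the endpoints $p_3^t, p_2^b$ and the directions along which $\Gamma_\WKB$ enters the punctures. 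I would extract these offsets from the chosen WKB line $\gamma_\WKB(t) = \rho(t) e^{i\arg(m)/2}$ and the positions of the anti-Stokes rays $r_1, r_3$ (\cref{subfig:wkb-direction}); the net angular contribution is $-i m^{(3)}\arg(-m)$, with $\arg(-m) = \arg(m) + \pi$ appearing because the two ends of the WKB curve sit in antipodal sectors.

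It then remains to evaluate the resulting closed-loop integral $\oint A_{h_\L}$. By construction the loop encircles the single branch point lying in the triangle $123$ around which $\gamma_m$ winds, and there the parabolic weight $-\tfrac{1}{2}$ gives $h_\L \sim |t_\pm|^{-1}$, hence $A_{h_\L} \sim -\tfrac{1}{2}\,\tfrac{dt_\pm}{t_\pm}$ and $\oint A_{h_\L} = -\pi i$ (equivalently, the $-1$ holonomy of the almost-flat abelian connection). Combining the angular and branch-point contributions gives the identity modulo $2\pi i$. This computation runs closely parallel to \cite[Section 9.4.3]{Gaiotto:2013a}, \cite[Lemma 3.13]{Tulli:2019}, and the companion \cref{lem:regularized-lambda-integrals}, which handles the $\lambda$-integral by the same branch-point deformation.

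I expect the main obstacle to be the angular bookkeeping modulo $2\pi$ in the puncture contributions: keeping consistent track of the arguments of the boundary points $p_3^t, p_2^b$, the entry directions of $\Gamma_\WKB$, and the branch choices of $\sqrt{z^2 + 2m}$ and of the logarithm in $C_0$, so that the term $-i m^{(3)}\arg(-m)$ emerges with the correct sign and the $C_0$ boundary terms are absorbed cleanly. By contrast the branch-point term is robust: since $\pi i \equiv -\pi i \pmod{2\pi i}$, the orientation of the enclosed loop is immaterial. A small but necessary input is that $\int_{\Gamma_\WKB} A_{h_\L}$ is purely imaginary, which follows from unitarity of $D_{h_\L}$ in the $h_\L$-eigenframe and lets us drop the $\Im$ in \eqref{eq:shifted-magnetic-angle} after multiplying by $i$.
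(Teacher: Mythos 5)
Your proposal is correct and follows essentially the same route as the paper: close up $\gamma_m$ and (the truncation of) $\Gamma_\WKB$ into a loop via arcs along $\partial\Sigma_r$, use the $-1$ holonomy around the enclosed branch point to produce the $\pi i$, evaluate the boundary arcs via the antiderivative $C_0$ of the purely angular form $A_{h_\L} = \epsilon\, i m^{(3)} d\theta$, and read off $C_0(q^t)+C_0(q^b) = -i m^{(3)}\arg(-m)$ from the angle of the straight WKB line. The only cosmetic difference is that the paper splits $\Gamma_\WKB = \Gamma_\ins + \Gamma_\out$ and kills $\int_{\Gamma_\out}A_{h_\L}$ using $d\theta(\dot\Gamma_\WKB)=0$, whereas you absorb this into the radial path-independence of the bridging integrals; the content is the same.
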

\begin{subproof}
We must show that
\begin{equation} \label{eq:br-vs-gamma-expanded}
    C_0(p_3^t) + C_0(p_2^b) + \int_{\gamma_m} A_{h_\L} = -i  m^{(3)} \arg(-m) - \int_{\Gamma_\WKB} A_{h_\L} + \pi i \pmod {2\pi i}.
\end{equation}
    We will compare the integrals in terms of the paths indicated in the triptych below (\cref{fig:br-vs-gamma-triptych}).

    \begin{figure}[ht]
        \subcaptionbox{\label{subfig:br-and-gamma}}{\includegraphics[scale=1]{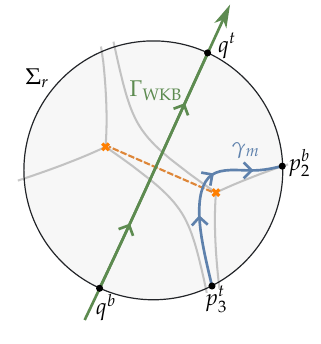}}
        \subcaptionbox{\label{subfig:gamma-decomp}}{\includegraphics[scale=1]{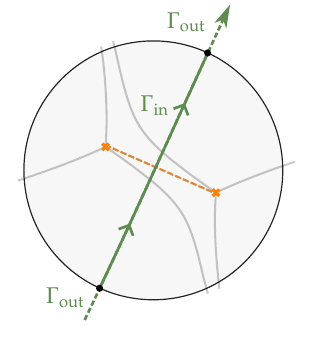}}
        \subcaptionbox{\label{subfig:br-gamma-loop}}{\includegraphics[scale=1]{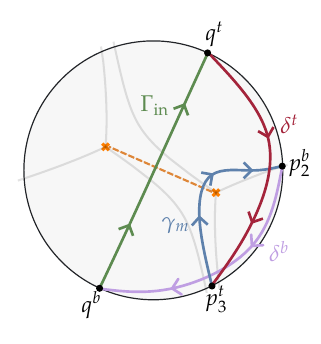}}
        \caption{Comparison of the lifted WKB curve $\Gamma_\WKB$ and $\gamma_m$, in terms of paths $\delta^t$ and $\delta^b$ near the boundary of $\Sigma_r$.}
        \label{fig:br-vs-gamma-triptych}
    \end{figure}

    Let $q^b$ and $q^t$ denote the points where $\Gamma_\WKB$ intersects $\partial \Sigma_r$, lying on the sheets indicated by their superscripts, and decompose
    \begin{equation*}
        \Gamma_\WKB = \Gamma_\ins + \Gamma_\out
    \end{equation*}
    into components lying inside and outside $\Sigma_r$.

    Recall that near the punctures---so in particular, near $\partial \Sigma_r$ and along $\Gamma_\out$---we have
    \begin{align*}
        A_{h_\L} &= \epsilon \cdot \frac{m^{(3)}}{2} \left(  \frac{dw}{w} - \frac{d \overline{w}}{\overline{w}} \right) \\
        &= \epsilon \cdot i m^{(3)} d\theta.
    \end{align*}
    Since $d \theta(\dot{\Gamma}_\WKB) = 0$ (i.e.\ $\Gamma_\WKB$ is a straight line), it follows that
    \begin{equation*}
        \int_{\Gamma_\out} A_{h_\L} = 0,
    \end{equation*}
    and so 
    \begin{equation}
        \int_{\Gamma_\WKB}  A_{h_\L} = \int_{\Gamma_\ins}  A_{h_\L}.
    \end{equation}

    Now consider the two paths $\delta^t$ and $\delta^b$ shown in \cref{subfig:br-gamma-loop}, going along $\partial \Sigma_r$ from $q^t$ to $p_3^t$ and $p_2^b$ to $q^b$ respectively.
    The concatenation
    \begin{equation*}
        \Gamma_\ins + \delta^t + \gamma_m + \delta^b
    \end{equation*}
    is homotopic to a small loop around one of the branch points, so
    \begin{equation}
        \exp\left( \int_{\Gamma_\ins + \delta^t + \gamma_m + \delta^b} A_{h_\L} \right) = -1,
    \end{equation}
    and hence
    \begin{equation}
        \int_{\gamma_m} A_{h_\L} + \int_{\delta^t} A_{h_\L} + \int_{\delta^b} A_{h_\L} = -\int_{\Gamma_\WKB} A_{h_\L} + \pi i  \pmod {2\pi i}.
    \end{equation}

    Since $d (\epsilon \cdot C_0) = A_{h_\L}$ near $\partial \Sigma_r$, we can write
    \begin{equation*}
        \int_{\delta^t} A_{h_\L} = C_0(p_3^t) - C_0(q^t) 
    \end{equation*}
    on the top sheet
    \begin{equation*}
        \int_{\delta^b} A_{h_\L} 
        = C_0(p_2^b) - C_0(q^b)
    \end{equation*}
    and on the bottom.
    Finally, we can use the angle \eqref{eq:wkb-curve-choice} of the WKB curve to compute\footnote{Note that $\theta$ is the angle in $w$-coordinates, whereas \cref{fig:br-vs-gamma-triptych} and the formula \eqref{eq:wkb-curve-choice} are in terms of $z$.}
    \begin{align*}
        C_0(q^t) + C_0(q^b) &= -i m^{(3)} \frac{\arg(m)}{2} -im^{(3)} \left( \frac{\arg(m)}{2} + \pi \right) \\
        &= -im^{(3)} \arg(-m),
    \end{align*}
    from which \eqref{eq:br-vs-gamma-expanded} follows.
\end{subproof}
Therefore
\begin{equation*}
    I_m^\sf =  -\zeta^{-1} Z_B  -i \theta_m^\shift - \zeta \overline{Z_B} = - \log \Xms \pmod {2\pi i},
\end{equation*}
which proves \cref{prop:sf-gamma-m-integral}.
\end{proof}

\subsubsection{Summary of semiflat twistor interpretations and completing the proof} 
\label{ssub:sf-twistor-summary}

Similarly to \cref{sub:twistor-summary}, we see that for variations $\dot{\mathcal{E}}_i$ of $\mathcal{E} = (E, \theta, h, g) \in \Hfr$, with induced variations of the semiflat connections $\nabla^\sf_\zeta$ and $\nabla^{\sf, \ab}_\zeta = d + \alpha^\sf$,
\begin{align*}
\begin{split}
    (\ab \circ \NAH^\sf_\zeta)^* \Omega^{\glue, \sf}(\dot{\mathcal{E}}_1, \dot{\mathcal{E}}_2) 
    &\defeq \int_{\gamma_e} \dot{\alpha}^\sf_1 \left(\dot{\chi}^\sf_2(p_0) - \dot{\chi}^\sf_2(\sigma(p_0)) + \int_{\gamma_m} \dot{\alpha}^\sf_2 \right) \\
    &\qquad - \int_{\gamma_e} \dot{\alpha}^\sf_2 \left( \dot{\chi}^\sf_1(p_0) - \dot{\chi}^\sf_1(\sigma(p_0)) + \int_{\gamma_m} \dot{\alpha}^\sf_1 \right)
\end{split} \\
\begin{split}
    &=  d \log \Xe \wedge d \log \Xms (\dot{\mathcal{E}}_1, \dot{\mathcal{E}}_2)
\end{split} \\
\begin{split}
    &\defeq -4 \pi^2 \cdot \Omega^{\ov, \shift}_\zeta (\dot{\mathcal{E}}_1, \dot{\mathcal{E}}_2),
\end{split}
\end{align*}
which gives the following analogue of \cref{prop:glue-equals-ov}:

\begin{prop}[Semiflat gluing and shifted Ooguri-Vafa] \label{prop:sf-glue-equals-ov}
    \begin{equation} \label{eq:sf-glue-equals-ov}
         (\ab \circ \NAH^\sf_\zeta)^* \Omega^{\glue, \sf} = -4 \pi^2 \cdot \Omega^{\ov, \shift}_\zeta.
    \end{equation}
\end{prop}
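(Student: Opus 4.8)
The plan is to mirror the argument of \cref{sub:twistor-summary} almost verbatim, now feeding in the semiflat twistor interpretations established just above in place of their non-semiflat counterparts. The natural starting point is the second form \eqref{eq:sf-glued-form-2} of the glued bilinear identity for $\Omega^{\glue, \sf}$, since it expresses the form entirely through the two integral quantities $\int_{\gamma_e} \dot\alpha^\sf$ and $\dot\chi^\sf(p_0) - \dot\chi^\sf(\sigma(p_0)) + \int_{\gamma_m} \dot\alpha^\sf$. Pulling back along $\ab \circ \NAH^\sf_\zeta$ means reading these integrals as functionals on variations $\dot{\mathcal E}$ of $\mathcal E = (E,\theta,h,g) \in \Hfr$, where they become variations of the induced connection $\nabla^{\sf, \ab} = d + \alpha^\sf$; so the task reduces to reinterpreting each integral as the variation of a logarithm of a twistor coordinate.

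First I would record the two integral interpretations already in hand. The electric computation in \cref{ssub:interpreting-sf-coordinates} gives $\int_{\gamma_e} \alpha^\sf = -\log \Xe \pmod{2\pi i}$, while \cref{prop:sf-magnetic-integral} gives the regularized magnetic integral $\int^{\reg, \chi^\sf}_{\gamma_m} \alpha^\sf = -\log \Xms - \frac{\vartheta}{2\pi} \log \Xe \pmod{2\pi i}$. Taking variations (the mod-$2\pi i$ ambiguity being locally constant, hence annihilated by $d$) yields $\int_{\gamma_e} \dot\alpha^\sf = -d\log\Xe$ and $\dot\chi^\sf(p_0) - \dot\chi^\sf(\sigma(p_0)) + \int_{\gamma_m} \dot\alpha^\sf = -d\log\Xms - \frac{\vartheta}{2\pi} d\log\Xe$ as functionals on $\dot{\mathcal E}$.

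Substituting into \eqref{eq:sf-glued-form-2} is then a short algebraic step. Writing $e_i \coloneqq d\log\Xe(\dot{\mathcal E}_i)$ and $m_i \coloneqq d\log\Xms(\dot{\mathcal E}_i)$ for the two input variations, the right-hand side becomes $e_1\bigl(m_2 + \frac{\vartheta}{2\pi} e_2\bigr) - e_2\bigl(m_1 + \frac{\vartheta}{2\pi} e_1\bigr)$; since the $e_i, m_i$ are scalars the two $\frac{\vartheta}{2\pi}$-terms cancel, leaving $e_1 m_2 - e_2 m_1 = (d\log\Xe \wedge d\log\Xms)(\dot{\mathcal E}_1, \dot{\mathcal E}_2)$. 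Comparing with the definition $\Omega^{\ov,\shift}_\zeta = -\frac{1}{4\pi^2} d\log\Xe \wedge d\log\Xms$ from \cref{def:shifted-magnetic} produces exactly $-4\pi^2 \cdot \Omega^{\ov,\shift}_\zeta$, which is the claim.

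The formal assembly above is thus routine; the genuine content has already been isolated in \cref{prop:sf-magnetic-integral}, whose proof is the delicate part (matching the regularized $\gamma_m$-integral with the shifted magnetic angle $\theta_m^\shift$, via \cref{lem:regularized-lambda-integrals} and \cref{lem:br-vs-gamma}). Within the present step the one point needing a moment's care is the cancellation of the auxiliary $\frac{\vartheta}{2\pi}\log\Xe$ contribution: it disappears because the antisymmetrization in \eqref{eq:sf-glued-form-2}, together with the commutativity of the scalar factors $e_i$, annihilates the $d\log\Xe \wedge d\log\Xe$ term it would otherwise generate. Since this is precisely the semiflat analogue of the computation completing \cref{prop:glue-equals-ov}, with $\Xm$ replaced throughout by $\Xms$, I expect no new phenomena to arise beyond bookkeeping.
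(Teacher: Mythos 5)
Your proposal is correct and follows essentially the same route as the paper: the paper likewise starts from the bilinear-identity expression \eqref{eq:sf-glued-form-2}, substitutes the variations of the electric integral and of the regularized $\gamma_m$-integral from \cref{prop:sf-magnetic-integral}, and notes that the $\frac{\vartheta}{2\pi}\,d\log\Xe$ cross-terms cancel under antisymmetrization, leaving $d\log\Xe \wedge d\log\Xms = -4\pi^2\,\Omega^{\ov,\shift}_\zeta$. You correctly identify that all the substantive work lives in \cref{prop:sf-magnetic-integral}, exactly as in the paper.
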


Combining our results, we get the equality of forms
\begin{align*}
    \Omega^{\ov, \shift} &= -\frac{1}{4 \pi^2} (\ab \circ \NAH^\sf_\zeta)^* \Omega^{\glue, \sf} \\
    &= -\frac{1}{4 \pi^2}(\NAH^\sf_\zeta)^* \ab^* (\Omega^{\reg, \ab}|_{\sfabAfr_\zeta}) \\
    &= -\frac{1}{4 \pi^2} (\NAH^\sf_\zeta)^*  \Omega^\reg|_{\sfAfr} \\
    &= -\frac{1}{4 \pi^2} \Omega^{\reg, \sf},
\end{align*}
completing the proof of \cref{thm:sf-reg-equals-shifted-ov}.

\section{Duality and the magnetic angles on the Hitchin section} 
\label{sec:duality-on-hitchin-section}

It remains to compare the usual semiflat Ooguri-Vafa form $\Omega^{\ov, \sf}$ to the shifted version $\Omega^{\ov, \shift}$ appearing in \cref{thm:sf-reg-equals-shifted-ov}.
One might hope that the \emph{a priori} different magnetic angles $\theta_m$ and $\theta_m^\shift$ actually coincide, which would mean that $\Xm^\sf = \Xms$ and hence $\Omega^{\ov, \sf} = \Omega^{\ov, \shift}$.
We will prove that this holds on a ``framed Hitchin section'' $\Bfr \subset \Xfr$ (\cref{def:framed-hitchin-section}).

\begin{theorem}[Vanishing angles on the Hitchin section] \label{thm:hitchin-section-shift}
    Restricted to the Hitchin section $\Bfr \subset \Xfr$,
    \begin{equation}
         \theta_m|_{\Bfr} \equiv 0 \equiv  \theta_m^\shift|_{\Bfr}.
     \end{equation}
    Consequently
    \begin{equation}
        \Omega^{\ov, \sf}_\zeta|_{\Bfr}  = -\frac{1}{4 \pi^2} \Omega^{\reg, \sf}_\zeta|_{\Bfr},
    \end{equation}
    i.e.\ the (usual) semiflat Ooguri-Vafa form coincides with the regularized semiflat Atiyah-Bott form.
\end{theorem}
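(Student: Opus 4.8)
The plan is to exploit a self-duality of the Hitchin section to show that both magnetic angles vanish identically on $\Bfr$, and then feed this into \cref{thm:sf-reg-equals-shifted-ov}. The key structural input is that a framed bundle in $\Bfr$ is fixed (up to isomorphism) by a natural involution built from the hyperelliptic involution $\tau\colon(z,s)\mapsto(z,-s)$ of the spectral cover $\Sigma$, which swaps the two sheets and hence the two punctures $\infty_\pm$. First I would make this self-duality precise: the companion form of $\theta$ together with the symmetric framing and parabolic weights defining $\Bfr$ (\cref{def:framed-hitchin-section}) force the spectral line bundle to be anti-invariant, $\tau^*\L\cong\L^{-1}$ up to the fixed parabolic/ramification correction. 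Because the semiflat metric $h_\L$ is the \emph{unique} flat metric adapted to this (self-dual) parabolic structure, it inherits the symmetry, which translates into the transformation law $\tau^*A_{h_\L}=-A_{h_\L}$ modulo an explicit exact term coming from the parabolic weights. The same reasoning applies to the genuine harmonic metric $h$, since $h$ is likewise canonically determined by Hitchin's equation and the self-dual data, so $A_h$ obeys an analogous anti-invariance along $\gamma$.

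Next I would apply these transformation laws to the defining integrals. Recall from \cref{def:magnetic-angle} and \eqref{eq:magnetic-angle-on-sf} that $\theta_m$ and $\theta_m^\shift$ are built from $\Im\int_\gamma(A_h)_{11}$ and $\Im\int_{\Gamma_\WKB}A_{h_\L}$ respectively, together with the explicit term $m^{(3)}\arg(-m)+\pi$. Since $\tau$ reverses $\Gamma_\WKB$ (which runs from $\infty_+$ to $\infty_-$) while negating $A_{h_\L}$, the integral is forced to be reflection-symmetric; combined with the reality of the Hitchin-section data this pins the imaginary part to a definite value. I would then compute the residual constant contributions --- the parabolic term $m^{(3)}\arg(-m)$, the $+\pi$, and the endpoint ($C_0$-type) boundary terms arising from the chosen antiderivatives --- and check that they cancel exactly, yielding $\theta_m\equiv0$ and $\theta_m^\shift\equiv0$ on $\Bfr$ rather than merely $\theta_m\in\{0,\pi\}$.

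The main obstacle will be precisely this sign-and-phase bookkeeping: the self-duality alone only gives $\theta_m\equiv-\theta_m\pmod{2\pi}$, so distinguishing $0$ from $\pi$ requires carefully tracking the branch-cut conventions for $\lambda_0$, the orientation of $\gamma$ relative to the fixed locus of the involution, and the exact parabolic weight $m^{(3)}$ prescribed on $\Bfr$. A clean way to settle this is to evaluate everything at one convenient point (for instance $m\in\R_{<0}$, where the base reflection $z\mapsto-z$ and complex conjugation act transparently on $\gamma$ and on the branch points $\pm\sqrt{-2m}$), and then propagate the vanishing across all of $\Bfr$ using continuity of $\theta_m$ in $m$ together with \cref{prop:u1-action-magnetic-angle}.

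Finally, once $\theta_m\equiv0\equiv\theta_m^\shift$ on $\Bfr$, the formulas \eqref{eq:sf-magnetic-coord-abbrv} and \eqref{eq:shifted-magnetic-coord} give $\Xm^\sf=\Xms$ there, hence $\Omega^{\ov,\sf}_\zeta|_{\Bfr}=\Omega^{\ov,\shift}_\zeta|_{\Bfr}$. Combining this with \cref{thm:sf-reg-equals-shifted-ov} yields $\Omega^{\ov,\sf}_\zeta|_{\Bfr}=-\tfrac{1}{4\pi^2}\Omega^{\reg,\sf}_\zeta|_{\Bfr}$, completing the proof.
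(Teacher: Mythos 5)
Your overall strategy is the paper's: show $\Bfr$ is fixed by a duality that negates both magnetic angles, deduce $\theta_m=-\theta_m$ and hence vanishing, and combine with \cref{thm:sf-reg-equals-shifted-ov}. The reduction at the end is correct. The gap is in the mechanism you propose for the anti-invariance of the integrals $\Im\int_\gamma(A_h)_{11}$ and $\Im\int_{\Gamma_\WKB}A_{h_\L}$. First, the deck transformation $\tau(z,s)=(z,-s)$ does \emph{not} reverse $\Gamma_\WKB$: it sends the lift of $\gamma$ running from $\infty_+$ to $\infty_-$ to the \emph{other} lift, which still covers $\gamma$ with the same orientation. The identity you would then extract from $\tau^*A_{h_\L}=-A_{h_\L}+(\text{exact})$ is $\int_{\text{other lift}}A_{h_\L}=-\int_{\Gamma_\WKB}A_{h_\L}+(\text{boundary})$, which is just the tautological tracelessness of the pushed-down connection in the eigenframe; it places no constraint on $\int_{\Gamma_\WKB}A_{h_\L}$ itself. (Even granting your premise, $\int_\Gamma\tau^*A=\int_{\bar\Gamma}A=-\int_\Gamma A$ combined with $\tau^*A=-A$ reduces to $-\int_\Gamma A=-\int_\Gamma A$.) Second, and decisively: the symmetry you invoke is insensitive to the phase of the framing, whereas by \cref{prop:u1-action-magnetic-angle} rotating $g$ by $e^{i\vartheta}$ shifts $\theta_m$ by $\vartheta$ — so the theorem holds only for the particular framing $g_0$ of \cref{def:hitchin-section-frame}, and any correct argument must consume that choice. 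Your plan never does.

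Where your proposal defers to ``sign-and-phase bookkeeping'' is exactly where the paper's proof lives. The negation $(A_h)_{11}\mapsto-(A_h)_{11}$ is obtained not by pulling back along $\tau$ but by passing to the \emph{dual frame} $(\eta_1^*,\eta_2^*)$, with respect to which $A_h\mapsto-A_h^t$ (\cref{prop:unrestricted-magnetic-angles-under-duality}); the self-duality of $\Bfr$ is realized by the explicit isomorphism $S=\bigl(\begin{smallmatrix}0&i\\i&0\end{smallmatrix}\bigr)$ together with a unitary Hecke modification of type $1$ (\cref{prop:self-duality}); one must verify both that $Sg_0^*$ returns $g_0$ up to that modification (\cref{lem:dual-frame-calculation} — this is where the specific phase of $g_0$ enters) and that $\theta_m,\theta_m^\shift$ are invariant under unitary Hecke modifications (the integral shifts by $-\arg(-m)$, cancelling the increment of $m^{(3)}\arg(-m)$). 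Your spectral-side formulation $\tau^*\L\cong\L^{-1}$ is indeed the shadow of $E\cong E^*$, so the structural input is right, but without the dual-frame/Hecke analysis it does not produce the negation of the angles. Finally, your observation that self-duality alone only yields $\theta_m\in\{0,\pi\}$ is legitimate — the paper's proof of \cref{thm:hitchin-section-shift} passes over this point — and your proposed resolution (evaluate at one convenient $m$ and propagate by continuity over the connected base $\C^*$) is a sensible way to close it.
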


Comparing the angles $\theta_m$ and $\theta_m^\shift$ amounts to comparing the integrals 
\begin{equation*}
    \int_\gamma (A_h)_{11} \quad \text{and} \quad \int_\gamma (A_{h_\sf})_{11}.
\end{equation*}
These integrals are generally hard to describe explicitly---they respectively involve solutions to Hitchin's equations \eqref{eq:hitchin} and \eqref{eq:decoupled-hitchin}---but by restricting to the Hitchin section we will be able take advantage of an additional symmetry of the bundles.

Similarly to the standard Hitchin section, we will consider a family of bundles of the form $E = K_C^{-1/2} \oplus K_C^{1/2}$ with Higgs field
\begin{equation*}
    \theta =\begin{pmatrix}
            0 & 1 \\ 
            (z^2 + 2m)  dz^2 & 0
        \end{pmatrix}, \quad m \in \C^*,
\end{equation*}
but we will need to take some extra care to specify the parabolic structure and framing.

As in the unframed case \cite{Hitchin:1992a}, these bundles should exhibit a kind of self-duality related to their structure as real Higgs bundles (cf.\ \cite{Neitzke:2024} for a similar wild setting, but in rank 3).
We will discuss this duality more generally before returning to the Hitchin section, where we will use it to show that the magnetic angles vanish.

\subsection{Framed duality} 
\label{sub:framed-duality}

There is a natural duality $\D^\ov$ on the Ooguri-Vafa space $\Mov$ defined by negating the angles on each torus fibre,
 i.e.\ acting on the coordinates by
    \begin{align} \label{eq:ov-duality}
        \D^\ov: \begin{cases}
            z &\mapsto z \\
            \theta_e &\mapsto -\theta_e \\ 
            \theta_m &\mapsto -\theta_m. \\
        \end{cases}
    \end{align}
Our goal in this subsection is to describe the corresponding duality $\D^\fr$ on the space $\Xfr$.
Note that in terms of the correspondence $\Mov \cong \Xfr$ (see \cref{tab:dictionary}), $\D^\ov$ sends $m \mapsto m$ and $m^{(3)} \mapsto -m^{(3)}$.

\subsubsection{Duality for unrestricted framed harmonic bundles} 
\label{ssub:duality-for-unrestricted-framed-harmonic-bundles}

A natural first guess for $\D^\fr$ is to send each component of $(E, \theta, h, g)$ to its dual object.
This is almost correct, but we will also need to shift the dual parabolic weights when $m^{(3)} = \frac{1}{2}$ to ensure that they remain in the half-open interval $(-\frac{1}{2}, \frac{1}{2}]$.
We will proceed towards the full definition in steps, starting with the duality
\begin{equation} \label{eq:unframed-duality}
    (E, \theta, h)^* \coloneqq (E^*, \theta^t, h^*)
\end{equation}
on the space $\H$ of (unframed) harmonic bundles.
\begin{lemma}[Dual harmonic bundle]
    $(E, \theta, h)^*$ is in fact a harmonic bundle, i.e.\ the dual metric $h^*$ satisfies the ``dual Hitchin equation''
    \begin{equation} \label{eq:dual-hitchin}
        F_{D_{h^*}} + [\theta^t, (\theta^t)^{\dagger_{h^*}}] = 0.
    \end{equation}
\end{lemma}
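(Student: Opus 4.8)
The plan is to deduce the dual Hitchin equation \eqref{eq:dual-hitchin} from the transpose of the original equation \eqref{eq:hitchin}. The strategy rests on three compatibility facts relating the dual objects on $E^*$ to those on $E$, each a consequence of the defining (uniqueness) properties of the Chern connection and of the hermitian adjoint. Concretely I want to show $F_{D_{h^*}} = -F_{D_h}^t$ and $[\theta^t, (\theta^t)^{\dagger_{h^*}}] = -[\theta, \theta^{\dagger_h}]^t$, so that transposing \eqref{eq:hitchin} and negating gives \eqref{eq:dual-hitchin} directly.

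First I would identify the Chern connection of the dual bundle. The dual connection $(D_h)^*$ on $E^*$, characterized by $d\langle \phi, s \rangle = \langle (D_h)^* \phi, s\rangle + \langle \phi, D_h s\rangle$, is automatically compatible with the dual holomorphic structure $\bar{\partial}_{E^*}$ and with the dual metric $h^*$; by uniqueness of the Chern connection it therefore coincides with $D_{h^*}$. In a local holomorphic frame where $D_h = d + A$ one has $(D_h)^* = d - A^t$, and a short matrix computation using the identity $(A \wedge A)^t = -A^t \wedge A^t$ for endomorphism-valued $1$-forms yields $F_{D_{h^*}} = -F_{D_h}^t$.

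Next I would establish the key identity for the Higgs adjoint, namely $(\theta^t)^{\dagger_{h^*}} = (\theta^{\dagger_h})^t$. I expect this to be the main obstacle, since it requires carefully tracking the convention for the dual metric. Working in a holomorphic frame with Gram matrix $H$ — so that $\theta^{\dagger_h} = H^{-1} \theta^* H$, with $\theta^*$ the conjugate transpose and the $(1,0)$-form conjugated to a $(0,1)$-form — the dual metric has Gram matrix $(H^t)^{-1}$ in the dual frame, and I would verify by direct substitution that the adjoint of $\theta^t$ with respect to this metric is exactly $(\theta^{\dagger_h})^t$. Equivalently, one can argue invariantly via the conjugate-linear isomorphism $E \to E^*$ induced by $h$, under which $\theta^t$ corresponds to $\theta$ and $h^*$ corresponds to $h$, so that the adjoint operation is intertwined with transposition.

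Finally I would assemble the pieces. Using that $(A \wedge B)^t = -B^t \wedge A^t$ for endomorphism-valued $1$-forms, the bracket transposes as $[\theta, \theta^{\dagger_h}]^t = -[\theta^t, (\theta^{\dagger_h})^t]$; combined with the identity of the previous step this gives $[\theta^t, (\theta^t)^{\dagger_{h^*}}] = -[\theta, \theta^{\dagger_h}]^t$. Transposing \eqref{eq:hitchin} and negating yields $-F_{D_h}^t - [\theta, \theta^{\dagger_h}]^t = 0$, and substituting $F_{D_{h^*}} = -F_{D_h}^t$ together with the bracket identity gives precisely \eqref{eq:dual-hitchin}. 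The only genuinely delicate point is the convention bookkeeping in the adjoint identity; everything else is a routine transpose-and-sign computation.
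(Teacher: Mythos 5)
Your proof is correct and follows essentially the same route as the paper: transpose the Hitchin equation for $h$ and use the compatibilities $(\theta^t)^{\dagger_{h^*}} = (\theta^{\dagger_h})^t$, $[\theta,\theta^{\dagger_h}]^t = -[\theta^t,(\theta^{\dagger_h})^t]$, and $F_{D_{h^*}} = -F_{D_h}^t$. The paper compresses these into the phrase ``general properties of duality''; you have simply supplied the verifications, and your sign bookkeeping checks out.
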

\begin{proof}
    By general properties of duality, $(\theta^t)^{\dagger_{h^*}} = (\theta^{\dagger_h})^t$, and so 
    \begin{align*}
        [\theta^t, (\theta^t)^{\dagger_{h^*}}] &= [\theta^t, (\theta^{\dagger_h})^t] = -[\theta, \theta^{\dagger_h}]^t \\
        &=(F_{D_h})^t \tag*{since $h$ satisfies the Hitchin equation \eqref{eq:hitchin}} \\
        &= - F_{D_{h^*}}. \tag*{\qedhere}
    \end{align*}
\end{proof}
Also note that the parameter $m$ is preserved, since $\det \theta^t = \det \theta = -(z^2+2m) dz^2$.

\begin{remark}
    One could also reasonably define $(E, \theta, h)^* = (E^*, -\theta^t, h^*)$ as in e.g.\ \cite{Simpson:1992}, but the version without the minus sign will work better with our framing conventions.
\end{remark}

In order to extend this duality to the space $\Hfr$ of framed bundles we will also need to take into account the parameter $m^{(3)} \in (-\frac{1}{2}, \frac{1}{2}]$.
To simplify matters we will temporarily introduce a larger unrestricted space of bundles.

\begin{definition}[Unrestricted framed harmonic bundles]
    Let $\uHfr$ be the space of framed harmonic bundles satisfying the same conditions as $\Hfr$ in \cref{def:framed-harmonic}, but without any restriction on $m^{(3)}$, i.e.\ replacing \eqref{eq:holo-str-framed-form} with
    \begin{equation} \label{eq:unrestricted-holo-str-framed-form}
        \bar{\partial}_E = \bar{\partial} - \frac{m^{(3)}}{2} H \frac{d \overline{w}}{\overline{w}} + \text{regular terms} \quad \text{for some $ m^{(3)} \in \R$.} 
    \end{equation}
\end{definition}
This extended space is the natural setting for the ``naive duality''
\begin{equation} \label{eq:dual-framed-bundle}
    (E, \theta, h, g)^* \coloneqq (E^*, \theta^t, h^*, g^*).
\end{equation} 

\begin{lemma}[Duality and $m,m^{(3)}$] \label{lem:dual-framed-params}
    If $(E, \theta, h, g) \in \uHfr$ has associated parameters $m$ and $m^{(3)}$, then its dual $(E, \theta, h, g)^*$ also belongs to $\uHfr$ and has associated parameters $m$ and $-m^{(3)}$.
\end{lemma}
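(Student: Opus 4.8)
The plan is to track how the duality \eqref{eq:dual-framed-bundle} acts on the matrix data of the framing near $w=0$, the essential point being that $H$ is symmetric ($H^t = H$) while the dual operation treats the endomorphism $\theta$ and the $\bar\partial$-operator $\bar\partial_E$ by two \emph{different} rules. First I would fix the dual frame $g^* = (e_1^*, e_2^*)$, characterized by $\langle e_i^*, e_j\rangle = \delta_{ij}$, and record the standard transformation laws for the induced structures on $E^*$ with respect to $g^*$: \textbf{(a)} the transpose endomorphism $\theta^t \in H^0(\End(E^*) \otimes K)$ is represented by the transpose of the matrix representing $\theta$ with respect to $g$, with \emph{no} sign change (this follows directly from $\langle \theta^t e_i^*, e_j\rangle = \langle e_i^*, \theta e_j\rangle$); whereas \textbf{(b)} the dual holomorphic structure $\bar\partial_{E^*}$, characterized by requiring $\langle \xi, s\rangle$ to be holomorphic whenever $\xi$ and $s$ are, is represented by the \emph{negative} transpose of the connection form of $\bar\partial_E$. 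For (b) I would include the short verification: writing $\bar\partial_E = \bar\partial + B$ and $\bar\partial_{E^*} = \bar\partial + B'$, expanding the Leibniz identity $\bar\partial\langle\xi,s\rangle = \langle \bar\partial_{E^*}\xi, s\rangle + \langle \xi, \bar\partial_E s\rangle$ in the frames $g$, $g^*$ forces $B'_{ji} = -B_{ij}$, i.e.\ $B' = -B^t$.

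With these two rules in hand the parameter computation is immediate. Writing $\theta = \left(-H w^{-3} - m H w^{-1}\right)dw + \text{(regular)}$ with respect to $g$, rule (a) together with $H^t = H$ gives that $\theta^t$ has \emph{exactly} the same singular terms with respect to $g^*$, so its associated parameter is again $m$ (consistent with $\det\theta^t = \det\theta$). Writing $\bar\partial_E = \bar\partial - \tfrac{m^{(3)}}{2} H \tfrac{d\overline{w}}{\overline{w}} + \text{(regular)}$, rule (b) gives
\begin{equation*}
    \bar\partial_{E^*} = \bar\partial + \tfrac{m^{(3)}}{2} H^t \tfrac{d\overline{w}}{\overline{w}} + \text{(regular)} = \bar\partial - \tfrac{(-m^{(3)})}{2} H \tfrac{d\overline{w}}{\overline{w}} + \text{(regular)},
\end{equation*}
so the associated parameter is $-m^{(3)}$. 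This is the crux of the lemma: the asymmetry between ``transpose'' (for $\theta$) and ``negative transpose'' (for $\bar\partial_E$) is precisely what preserves $m$ while flipping the sign of $m^{(3)}$, matching the predicted action $m^{(3)} \mapsto -m^{(3)}$ of the Ooguri-Vafa duality $\D^\ov$.

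Finally I would confirm membership $(E^*, \theta^t, h^*, g^*) \in \uHfr$. The preceding lemma already establishes that $(E^*, \theta^t, h^*)$ is a harmonic bundle and that $\det\theta^t = \det\theta = -(z^2+2m)\,dz^2$, so the conditions on the underlying harmonic bundle and on the determinant hold with the same $m$. What remains is the framing condition: that $g^*$ is an $\SU(2)$-frame of $(E^*, h^*)$. This is routine — the dual of an $h$-orthonormal frame is $h^*$-orthonormal, and the dual volume form trivializing $\det E^*$ makes $e_1^* \wedge e_2^*$ the unit, so $g^*$ is indeed $\SU(2)$ — and the singular forms above show it extends to a neighbourhood of $\infty$ in the required shape. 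I expect the only genuine care needed anywhere in the argument is pinning down the transpose/sign conventions in (a) and (b) so that they are consistent with the paper's definitions of $\theta^t$, $h^*$, and $g^*$; there is no analytic obstacle, since everything reduces to linear algebra on the singular coefficients.
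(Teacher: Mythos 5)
Your proposal is correct and takes essentially the same route as the paper: the paper's proof simply records that with respect to $g^*$ the singular part of $\theta^t$ is unchanged (being diagonal, hence symmetric) while the connection form of $\bar\partial_{E^*}$ is the negative transpose of that of $\bar\partial_E$, flipping $m^{(3)}$ to $-m^{(3)}$. You merely spell out the two transformation rules and the $\SU(2)$/membership check that the paper leaves implicit.
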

\begin{proof}
    With respect to the dual frame $g^*$ near $w=0$,
    \begin{equation}
        \theta^t = -H \frac{dw}{w^3} - m H \frac{dw}{w} + \text{regular terms}
    \end{equation}
    and
    \begin{equation}
        \bar{\partial}_{E^*} = \bar{\partial} + \frac{m^{(3)}}{2} H \frac{d \overline{w}}{\overline{w}} + \text{regular terms}
    \end{equation}
    (cf.\ the expressions \eqref{eq:higgs-framed-form} for $\theta$ and \eqref{eq:holo-str-framed-form} for $\bar{\partial}_E$ with respect to $g$).
    Therefore $g^*$ is a compatible frame for $(E^*, \theta^t, h^*)$ exhibiting the desired properties.
\end{proof}

This definition of duality is natural with respect to many of the other constructions on $\uHfr$.
In particular:
\begin{itemize}
    \item The parabolic structure of $(E, \theta, h, g)^* \in \uHfr$ is dual to that of $(E, \theta, h, g)$ (see \cref{sub:filtered-duality}).

    \item The semiflat constructions from \cref{sec:sf-analysis} are compatible with duality, in the sense that:
    \begin{itemize}
        \item The spectral Higgs line bundle of $(E, \theta)^*$ is the dual of that of $(E, \theta)$, and their corresponding induced frames (as in \cref{sub:sf-abelianization-and-framing}) are also dual.
        \item The semiflat harmonic metric for $(E, \theta)^*$ is the dual of that for $(E, \theta)$.
    \end{itemize}
\end{itemize}

We can define magnetic angles on the larger space $\uHfr$ using the exact same formulas as for $\Hfr$:
\begin{equation} \label{eq:magnetic-angle-hat}
    \hat{\theta}_m \coloneqq m^{(3)} \arg(-m) + \pi + \Im \int_\gamma (A_h)_{11}  \pmod {2\pi}
\end{equation}
and
\begin{equation} \label{eq:shifted-magnetic-angle-hat}
    \hat{\theta}_m^\shift \coloneqq m^{(3)} \arg(-m)  + \pi + \Im \int_\gamma (A_{h_\sf})_{11}  \pmod {2\pi}.
\end{equation}

\begin{prop}[Magnetic angles under duality on $\uHfr$] \label{prop:unrestricted-magnetic-angles-under-duality}
    \begin{equation}
        \hat{\theta}_m((E, \theta, h, g)^*) = -\hat{\theta}_m(E, \theta, h, g)
    \end{equation}
    and
    \begin{equation}
        \hat{\theta}_m^\shift((E, \theta, h, g)^*) = -\hat{\theta}_m^\shift(E, \theta, h, g).
    \end{equation}
\end{prop}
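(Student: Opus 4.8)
The plan is to reduce both identities to the single claim that the $(1,1)$-entry of the relevant Chern connection form changes sign under duality, the remaining terms in the formula transforming in an elementary way. Both $\hat{\theta}_m$ and $\hat{\theta}_m^\shift$ have the shape $m^{(3)} \arg(-m) + \pi + \Im \int_\gamma (A_\bullet)_{11}$, differing only in whether $A_\bullet$ is $A_h$ or $A_{h_\sf}$. By \cref{lem:dual-framed-params} the duality $(E,\theta,h,g) \mapsto (E,\theta,h,g)^*$ fixes $m$ and sends $m^{(3)} \mapsto -m^{(3)}$, so the first term negates while $\arg(-m)$ is unchanged; and since $\pi \equiv -\pi \pmod{2\pi}$, the constant $\pi$ is also compatible with an overall sign flip. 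Thus in both cases it suffices to prove
\[
    \Im \int_\gamma (A_{h^*})_{11} = -\Im \int_\gamma (A_h)_{11}, \qquad \Im \int_\gamma (A_{(h_\sf)^*})_{11} = -\Im \int_\gamma (A_{h_\sf})_{11},
\]
where on the right we integrate against the WKB eigenframe $(\eta_1, \eta_2)$ of $\theta$ and on the left against the corresponding eigenframe of $\theta^t$.

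The first step is to identify the eigenframe to use for the dual; I claim it is simply the dual basis $(\eta_1^*, \eta_2^*)$ of $(\eta_1, \eta_2)$. Since $\theta^t$ has the same eigenvalues $\pm \lambda_0$ as $\theta$, and the dual basis vectors $\eta_i^*$ are exactly the eigenvectors of the transpose $\theta^t$ (with the same eigenvalue as $\eta_i$), the frame $(\eta_1^*, \eta_2^*)$ is a $\theta^t$-eigenframe. It remains to check the normalization \eqref{eq:wkb-eigenframe-normalization} for the dual, now with respect to the dual frame $g^* = (e_1^*, e_2^*)$ of $(E^*, \theta^t, h^*)$ as in \eqref{eq:dual-framed-bundle}. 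Passing to dual bases commutes with the $t \to \pm \infty$ limits (the limits being bases), and a direct computation shows the dual basis of $(e_1, e_2)$ is $(e_1^*, e_2^*)$ while the dual basis of $(e_2, -e_1)$ is $(e_2^*, -e_1^*)$. Hence $(\eta_1^*, \eta_2^*) \to (e_1^*, e_2^*)$ as $t \to \infty$ and $\to (e_2^*, -e_1^*)$ as $t \to -\infty$, matching the required normalization exactly.

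Next I would invoke the behaviour of the Chern connection under duality. The Chern connection $D_{h^*}$ of the dual harmonic bundle $(E^*, \bar{\partial}_{E^*}, h^*)$ coincides with the dual connection $(D_h)^*$, since the holomorphic structure and hermitian metric both dualize and the Chern connection is uniquely determined by them. A standard computation gives that the connection form of a dual connection, expressed in the dual frame, is $-A^t$; in particular its $(1,1)$-entry is $-(A_h)_{11}$. This yields the first integral identity, and hence
\[
    \hat{\theta}_m((E,\theta,h,g)^*) = -m^{(3)} \arg(-m) + \pi - \Im \int_\gamma (A_h)_{11} \equiv -\hat{\theta}_m(E,\theta,h,g) \pmod{2\pi},
\]
using $\pi \equiv -\pi$. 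For $\hat{\theta}_m^\shift$ the identical argument applies once the semiflat data is known to dualize: by the compatibility properties recorded just before the proposition (the spectral Higgs line bundle, its induced frame, and the semiflat harmonic metric of $(E,\theta)^*$ are the duals of those of $(E,\theta)$), the eigenframe for $\theta^t$ used in $\hat{\theta}_m^\shift$ is again $(\eta_1^*, \eta_2^*)$ and $D_{(h_\sf)^*} = (D_{h_\sf})^*$, so $(A_{(h_\sf)^*})_{11} = -(A_{h_\sf})_{11}$.

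I expect the main obstacle to be the careful bookkeeping in the second step: verifying that the dual basis of the eigenframe satisfies precisely the normalization \eqref{eq:wkb-eigenframe-normalization} for the dual bundle, including that the limits commute with dualization and that the sign in $(e_2, -e_1) \mapsto (e_2^*, -e_1^*)$ comes out correctly (this asymmetric normalization is exactly what makes the argument work). Once the correct eigenframe is pinned down, the sign flip of $(A_\bullet)_{11}$ and the mod-$2\pi$ reconciliation of the $\pi$ term are routine.
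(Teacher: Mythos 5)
Your proof is correct and follows essentially the same route as the paper's: reduce to negation of the $(1,1)$-entry integrals, use the dual eigenframe $(\eta_1^*,\eta_2^*)$ (under which the connection form becomes $-A^t$), and appeal to the compatibility of the semiflat data with duality. You additionally verify two details the paper leaves implicit — that the dual basis satisfies the normalization \eqref{eq:wkb-eigenframe-normalization} for the dual bundle, and that the constant $\pi$ is absorbed mod $2\pi$ — both of which check out.
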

\begin{proof}
    By \cref{lem:dual-framed-params} the first term $m^{(3)} \arg(-m)$ is negated, so we must show that the same is true of
    \begin{equation*}
        \Im \int_\gamma (A_h)_{11} \quad \text{and} \quad \Im \int_{\gamma} (A_{h_\sf})_{11}.
    \end{equation*}

    For the first integral, recall that $A_h$ denotes the connection form of the Chern connection $D_h$ with respect to a $\theta$-eigenframe $(\eta_1, \eta_2)$ satisfying the normalization condition \eqref{eq:wkb-eigenframe-normalization}.
    In order to compute the corresponding integral for $(E, \theta, g)^*$, we can use the dual frame $(\eta_1^*, \eta_2^*)$, with respect to which $A_h$ is replaced by $-A_h^t$ and hence $(A_h)_{11}$ by $-(A_h)_{11}$.
    Therefore the first integral is negated.
    
    The same argument works for the second integral, by the compatibility of the semiflat construction with duality.
\end{proof}

\subsubsection{Hecke modifications and magnetic angles} 
\label{ssub:hecke-modifications-and-magnetic-angles}

Shifting the weight $m^{(3)}$ of an element of $\uHfr$ can be described in terms of certain Hecke modifications.

In \cref{sub:hecke-modifications} we recall a standard definition of Hecke modifications, which involves making a choice of a holomorphic trivialization.
If the chosen holomorphic frame is compatible with the parabolic structure, then the Hecke modification naturally shifts the associated parabolic weights.
However, the bundles in $\uHfr$ come equipped with \emph{unitary} rather than holomorphic frames, so it will be convenient to give an alternative definition adapted to this setting.

\begin{definition}[Unitary Hecke modification] \label{def:unitary-hecke-mod}
    Let $(E, \theta, h, g) \in \uHfr$, and write $g = (e_1, e_2)$ for the frame in a neighbourhood of $\infty$.
    A \emph{unitary Hecke modification of $(E, \theta, h, g)$ of type $n \in \Z$} is the new bundle $(\tilde{E}, \theta, \tilde{h}, \tilde{g}) \in \uHfr$ such that:
    \begin{enumerate}
        \item The underlying harmonic bundle satisfies
        \begin{equation}
            (\tilde{E}, \theta, \tilde{h})|_{\CP^1 \setminus \{\infty\}} = (E, \theta, h)|_{\CP^1 \setminus \{\infty\}}.
        \end{equation}
        \item The modified frame is
        \begin{equation} \label{eq:hecke-modified-frame} 
            \tilde{g}|_{\CP^1 \setminus \{\infty\}} = \left( \frac{w^{n}}{|w|^{n}} \cdot e_1, \frac{w^{-n}}{|w|^{-n}} \cdot e_2 \right).
        \end{equation}
        (This defines the required unitary extension of $(\tilde{E}, \theta, \tilde{h})|_{\CP^1 \setminus \{\infty\}}$ over $\infty$.)
    \end{enumerate}
\end{definition}

\begin{remark}
    If $g$ was obtained by orthonormalizing a holomorphic frame as in \cref{constr:frame-from-eigenframe}, then \cref{def:unitary-hecke-mod} is compatible with our definition of a parabolic Hecke modification in \cref{sub:hecke-modifications}. 
\end{remark}

It is clear from the definition that unitary Hecke modifications define a $\Z$-action on $\uHfr$.
\begin{lemma}[Hecke modifications and $m^{(3)}$]
    A unitary Hecke modification of type $n$ shifts the weight $m^{(3)}$ of an element of $\uHfr$ to $m^{(3)} + n$.
\end{lemma}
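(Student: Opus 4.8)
The plan is to unwind the definitions of the unitary Hecke modification and track precisely how the modified frame $\tilde{g}$ affects the coefficient of the logarithmic term in the holomorphic structure $\bar{\partial}_{\tilde{E}}$. Since the underlying harmonic bundle is unchanged away from $\infty$, the only thing that can shift is the parameter $m^{(3)}$ governing the parabolic weights, which by \cref{rem:parabolic-weight-cases} is determined by the $\frac{d\overline{w}}{\overline{w}}$-coefficient of $\bar{\partial}_E$ with respect to the framing near $\infty$.

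First I would write the gauge transformation relating $g$ and $\tilde{g}$ explicitly. From \eqref{eq:hecke-modified-frame} we have $\tilde{g} = g \cdot k$ where $k = \diag\!\left(\frac{w^n}{|w|^n}, \frac{w^{-n}}{|w|^{-n}}\right)$, which can be rewritten using $|w| = (w\overline{w})^{1/2}$ as $k = \diag\!\left((w/\overline{w})^{n/2}, (w/\overline{w})^{-n/2}\right) = \exp\!\left(\tfrac{n}{2}(\log w - \log \overline{w}) H\right)$. This $k$ is unitary (since $|w/\overline{w}| = 1$), confirming that $\tilde{g}$ is a genuine $\SU(2)$-frame as required for membership in $\uHfr$. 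Next I would compute how $\bar{\partial}_E$ transforms: a change of frame by $k$ conjugates the connection/holomorphic structure and adds the logarithmic-derivative term $k^{-1}\bar{\partial} k$. Since $k$ is diagonal, conjugation fixes the diagonal leading terms, and the relevant new contribution is $k^{-1}(\bar{\partial} k) = \tfrac{n}{2}(-\tfrac{d\overline{w}}{\overline{w}})H = -\tfrac{n}{2} H\,\frac{d\overline{w}}{\overline{w}}$, using that $\bar{\partial}\log w = 0$ and $\bar{\partial}\log\overline{w} = \frac{d\overline{w}}{\overline{w}}$.

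Combining this with the original form \eqref{eq:holo-str-framed-form}, namely $\bar{\partial}_E = \bar{\partial} - \frac{m^{(3)}}{2}H\frac{d\overline{w}}{\overline{w}} + (\text{regular})$, I would conclude that with respect to $\tilde{g}$,
\begin{equation*}
    \bar{\partial}_{\tilde{E}} = \bar{\partial} - \frac{m^{(3)} + n}{2} H \frac{d\overline{w}}{\overline{w}} + \text{regular terms},
\end{equation*}
so the new weight parameter is $m^{(3)} + n$, as claimed. I would also check that $\theta$ retains its framed form \eqref{eq:higgs-framed-form}: since $\theta$ is built from $H\,\frac{dw}{w^3}$ and $H\,\frac{dw}{w}$ and $k$ is diagonal, conjugation by $k$ leaves these holomorphic singular terms invariant (the factors $(w/\overline{w})^{\pm n/2}$ commute with $H$ and $k^{-1}\theta k = \theta$ termwise on the diagonal), so the parameter $m$ is preserved and $(\tilde{E},\theta,\tilde{h},\tilde{g})$ indeed lies in $\uHfr$.

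The only genuinely delicate point—and the step I would flag as the main obstacle—is verifying that the modification $k$ does not disturb the \emph{regular terms} in a way that would move the bundle outside $\uHfr$, and confirming that $k$ extends to a genuine holomorphic/smooth gauge transformation on the punctured neighbourhood giving a well-defined unitary extension over $\infty$; this is essentially the content already asserted in \cref{def:unitary-hecke-mod}, so the argument reduces to the bookkeeping above. I would therefore keep the proof short, presenting the computation of $k^{-1}\bar{\partial}k$ as the crux and citing \cref{rem:parabolic-weight-cases} to translate the shifted coefficient into the shifted parabolic weight.
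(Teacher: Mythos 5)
Your proposal is correct and is precisely the computation the paper's proof consists of: the paper's entire argument is the one-line instruction to calculate \eqref{eq:unrestricted-holo-str-framed-form} under the change of frame $g \to \tilde{g}$, and you carry out exactly that calculation, correctly identifying $k^{-1}(\bar{\partial}k) = -\tfrac{n}{2}H\,\frac{d\overline{w}}{\overline{w}}$ as the source of the shift. The additional checks you flag (unitarity of $k$, preservation of the framed form of $\theta$ and of the regular terms) are sensible bookkeeping that the paper leaves implicit.
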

\begin{proof}
    Calculate \eqref{eq:unrestricted-holo-str-framed-form} under the change of frame $g \to \tilde{g}$.
\end{proof}

By making an appropriate Hecke modification, the weight $m^{(3)}$ associated to $(E, \theta, h, g) \in \uHfr$ can always therefore be (uniquely) shifted to lie in $(-\frac{1}{2}, \frac{1}{2}]$.

\begin{cor}
    The quotient of the space $\uHfr$ by the action of unitary Hecke modifications is $\Hfr$.
\end{cor}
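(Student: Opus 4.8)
The plan is to exhibit $\Hfr$ as a complete and irredundant set of representatives for the $\Z$-action, i.e.\ to show that the inclusion $\Hfr \hookrightarrow \uHfr$ followed by the quotient map $\uHfr \to \uHfr/\Z$ is a bijection of sets. Since the preceding lemma shows that a type-$n$ unitary Hecke modification shifts the weight $m^{(3)}$ by $n$, and $\Hfr \subset \uHfr$ is by definition the locus where $m^{(3)} \in (-\tfrac12, \tfrac12]$, the entire statement reduces to the elementary fact that every real number admits a unique integer translate lying in the half-open interval $(-\tfrac12, \tfrac12]$.

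Concretely, I would first record that the $\Z$-action is free: a type-$n$ modification fixes an element only if it leaves $m^{(3)}$ unchanged, which forces $n = 0$, and the type-$0$ modification is the identity (its frame \eqref{eq:hecke-modified-frame} reduces to $\tilde{g} = g$, and it leaves the underlying harmonic bundle untouched). For surjectivity of the composite, given any $(E, \theta, h, g) \in \uHfr$ with weight $m^{(3)}$, I choose the unique $n \in \Z$ with $m^{(3)} + n \in (-\tfrac12, \tfrac12]$; the resulting type-$n$ modification then lands in $\Hfr$ and lies in the same orbit, which is precisely the unique-shift observation recorded just above the statement. For injectivity, if two elements of $\Hfr$ lie in a common orbit, then they are related by a type-$n$ modification, so their weights differ by $n$; but both weights lie in $(-\tfrac12, \tfrac12]$, an interval of length one, so $n = 0$, and by freeness the two elements coincide.

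The argument is essentially a repackaging of the preceding lemma, so there is no serious obstacle; the only points requiring care are the conventional ones. Namely, the half-openness of $(-\tfrac12, \tfrac12]$ is exactly what removes the endpoint ambiguity and makes the representative unique (a symmetric closed interval would fail injectivity at $m^{(3)} = \pm\tfrac12$, where both translates survive), and one must verify that the type-$0$ modification genuinely is the identity, so that each orbit meets $\Hfr$ in a single point rather than merely nonemptily. Since these objects are defined purely set-theoretically, a bijection of sets is all that the statement requires.
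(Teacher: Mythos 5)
Your proof is correct and matches the paper's (implicit) argument: the corollary is stated as an immediate consequence of the preceding lemma together with the observation that every real $m^{(3)}$ has a unique integer translate in the half-open interval $(-\tfrac12,\tfrac12]$, which is exactly the content you spell out. Your extra care about freeness and the type-$0$ modification being the identity is a harmless elaboration of the same reasoning.
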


This allows us to complete the definition of duality on $\Hfr$.

\begin{definition}[Duality on $\Hfr$]
    Given a framed bundle $(E, \theta, h, g) \in \Hfr$ with weight $m^{(3)}$, define its dual $\D^\fr (E, \theta, h, g) \in \Hfr$ by 
    \begin{enumerate}
        \item taking the dual $(E, \theta, h, g)^*$ in $\uHfr$, and 

        \item performing a Hecke modification of type $1$ if $m^{(3)} = \frac{1}{2}$.
    \end{enumerate}
    The resulting bundle has weight
    \begin{equation*}
        \begin{cases}
            -m^{(3)} & \text{if } m^{(3)} \in (-\frac{1}{2}, \frac{1}{2}), \\
            -m^{(3)} + 1 = \frac{1}{2} & \text{if } m^{(3)} = \frac{1}{2},
        \end{cases}
    \end{equation*}
    and hence still lies in $\Hfr$.
\end{definition}

\begin{prop}
    The magnetic angle $\hat{\theta}_m$ and shifted magnetic angle $\hat{\theta}_m^\shift$ on $\uHfr$ are invariant under unitary Hecke modifications.
\end{prop}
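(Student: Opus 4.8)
The plan is to track how each of the two terms in the defining formulas \eqref{eq:magnetic-angle-hat} and \eqref{eq:shifted-magnetic-angle-hat} changes under a unitary Hecke modification of type $n$, and to verify that the changes cancel. The crucial observation is that, by part (1) of \cref{def:unitary-hecke-mod}, such a modification leaves the underlying harmonic bundle $(E, \theta, h)$ unchanged over $\CP^1 \setminus \{\infty\}$, and hence leaves $D_h$, the semiflat data $h_\sf$ and $D_{h_\sf}$, and the WKB curve $\gamma$ all unchanged. The only effect is on the frame near $\infty$, namely $g = (e_1, e_2) \mapsto \tilde{g}$ as in \eqref{eq:hecke-modified-frame}, which shifts $m^{(3)} \mapsto m^{(3)} + n$ (as recorded in the preceding lemma). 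Thus the first term $m^{(3)}\arg(-m)$ increases by $n\arg(-m)$, and it remains to show that the integral term decreases by exactly the same amount.

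Since $\theta$ is unchanged, the normalized eigenframe $(\eta_1, \eta_2)$ along $\gamma$ is replaced by a new eigenframe $(\tilde{\eta}_1, \tilde{\eta}_2)$ differing only by scalar rescalings $\tilde{\eta}_i = c_i \eta_i$, where the functions $c_i$ are pinned down by imposing the normalization \eqref{eq:wkb-eigenframe-normalization} with respect to $\tilde{g}$ rather than $g$. Comparing \eqref{eq:wkb-eigenframe-normalization} for the two frames via \eqref{eq:hecke-modified-frame}, the first rescaling factor satisfies $c_1 \to \tfrac{w^n}{|w|^n} = e^{in\arg w}$ at the end of $\gamma$ where $\eta_1 \to e_1$, and $c_1 \to \tfrac{w^{-n}}{|w|^{-n}} = e^{-in\arg w}$ at the end where $\eta_1 \to e_2$. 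Under the rescaling $\eta_1 \mapsto c_1\eta_1$ the diagonal connection entry transforms by $(A_h)_{11} \mapsto (A_h)_{11} + d\log c_1$, and identically for $A_{h_\sf}$ (the Chern connection form being unaffected by the change of frame except through this rescaling), so the integral changes by
\[
    \Im \int_\gamma d\log c_1 = n(\theta_+ + \theta_-),
\]
where $\theta_\pm$ denote the limiting values of $\arg w$ at the two ends of $\gamma$.

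Finally, these limiting angles satisfy $\theta_+ + \theta_- = -\arg(-m)$: this is precisely the evaluation of $C_0(q^t) + C_0(q^b) = -im^{(3)}\arg(-m)$ carried out in the proof of \cref{lem:br-vs-gamma}, where $C_0 = i m^{(3)}\arg w$ is computed at the two ends of (the lift of) $\gamma$. Hence the integral term changes by $-n\arg(-m)$, exactly cancelling the change in $m^{(3)}\arg(-m)$, so $\hat{\theta}_m$ is invariant; the argument for $\hat{\theta}_m^\shift$ is verbatim the same, since only $A_h$ is replaced by $A_{h_\sf}$ while the rescaling factor $c_1$ and its endpoint asymptotics are identical. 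The one point requiring care is the identification of the endpoint asymptotics of $c_1$ together with the angle relation $\theta_+ + \theta_- = -\arg(-m)$; once these are in hand the cancellation is immediate, and the $2\pi$-ambiguity in $\arg$ is harmless since $\hat{\theta}_m$ is defined only modulo $2\pi$.
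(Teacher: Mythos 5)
Your proof is correct and takes essentially the same route as the paper's: both reduce the question to how the normalization condition \eqref{eq:wkb-eigenframe-normalization} forces a rescaling $\tilde{\eta}_1 = c_1\eta_1$ of the eigenframe, compute the resulting shift $\Im\int_\gamma d\log c_1$ of the Chern-connection integral from the endpoint phases of $c_1$ along the straight line $\gamma$, and observe that this exactly cancels the shift $n\arg(-m)$ of the first term. The only (immaterial) differences are that the paper reduces to type $n=1$ and plugs in the explicit phases $e^{\mp i\arg(m)/2}$ (with the extra sign at $t\to-\infty$ contributing the $\pi$), whereas you treat general $n$ and package the same endpoint computation as the identity $\theta_+ + \theta_- = -\arg(-m)$.
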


\begin{proof}
    It suffices to consider a modification $(E, \theta, g) \to (\tilde{E}, \theta, \tilde{g})$ of type 1.
    We must show that the integrals in \eqref{eq:magnetic-angle-hat} and \eqref{eq:shifted-magnetic-angle-hat} shift by $-\arg(-m)$ to compensate for the increment $m^{(3)} \to m^{(3)} + 1$.

    Starting with \eqref{eq:magnetic-angle-hat}, let 
    \begin{equation*}
        I_m(g) \coloneqq \Im \int_\gamma (A_h)_{11} \quad \text{and} \quad I_m(\tilde{g}) \coloneqq \Im \int_\gamma (\tilde{A}_h)_{11} 
    \end{equation*}
    denote the integrals for $\hat{\theta}_m(E, \theta, g)$ and $\hat{\theta}_m(\tilde{E}, \theta, \tilde{g})$.
    The first integral $I_m(g)$ is computed with respect to an eigenframe $(\eta_1, \eta_2)$ along $\gamma$ that satisfies the normalization condition \eqref{eq:wkb-eigenframe-normalization} involving $g = (e_1, e_2)$:
    \begin{equation*}
        (\eta_1, \eta_2)|_{\gamma(t)} \to \begin{cases}
            (e_1, e_2) & \text{as } t \to \infty,  \\
            (e_2, -e_1) & \text{as } t \to -\infty.
        \end{cases}
    \end{equation*}

    The Hecke modified bundle is defined by the frame
    \begin{equation*}
        \tilde{g} = \left( \frac{w}{|w|} e_1, \frac{w^{-1}}{|w|^{-1}} e_2  \right) = \left( \frac{z^{-1}}{|z|^{-1}} e_1, \frac{z}{|z|} e_2 \right)
    \end{equation*}
    near $\infty$.
    By the choice \eqref{eq:wkb-curve-choice} of $\gamma$ as a straight line $\gamma(t) = \rho(t) e^{i \arg(m)/2}$,
    \begin{equation}
        \tilde{g}|_{\gamma(t)} = \begin{cases}
            (e^{-i \arg(m)/2} \cdot  e_1, \ e^{i \arg(m)/2} \cdot e_2) & \text{for } t \gg 0,   \\
            (-e^{-i \arg(m)/2} \cdot  e_1, \ -e^{i \arg(m)/2} \cdot e_2) & \text{for } t \ll 0.
        \end{cases}
    \end{equation}
    The corresponding normalization condition for the eigenframe $(\tilde{\eta}_1, \tilde{\eta}_2)$ used to calculate $I_m(\tilde{g})$ is
    \begin{equation}
        (\tilde{\eta}_1, \tilde{\eta}_2)|_{\gamma(t)} \to \begin{cases}
            (e^{-i \arg(m)/2} \cdot  e_1, \ e^{i \arg(m)/2} \cdot e_2) & \text{as } t \to \infty,  \\
            (-e^{i \arg(m)/2} \cdot e_2, \ e^{-i \arg(m)/2} \cdot e_1) & \text{as } t \to -\infty.
        \end{cases}
    \end{equation}
    We can therefore take $\tilde{\eta}_i = c_i \eta_i$, where the coefficients $c_i$ are chosen to approach the phases above as $t \to \pm \infty$, e.g.
    \begin{equation*}
        c_1|_{\gamma(t)} \to \begin{cases}
            e^{-i \arg(m)/2} & \text{as } t \to  \infty,  \\
            -e^{i \arg(m)/2} & \text{as } t \to -\infty.
        \end{cases}
    \end{equation*}

    It follows that
    \begin{align*}
        I_m(\tilde{g}) &= \Im \int_\gamma (\tilde{A}_h)_{11} = \Im \int_\gamma ((A_h)_{11} +  d \log c_1) \\
        &= I_m(g) + \underbrace{\arg(c_1|_{\gamma(\infty)})}_{-\arg(m)/2} - \underbrace{\arg(c_1|_{\gamma(-\infty)})}_{\arg(m)/2+\pi} \\
        &= I_m(g) -\arg(-m),
    \end{align*}
    as required.
    The argument for the shifted magnetic angle \eqref{eq:shifted-magnetic-angle-hat} is identical.
\end{proof}

\begin{cor}[Magnetic angles under duality on $\Hfr$] \label{cor:magnetic-angles-under-duality}
    The duality $\D^\fr$ on $\Hfr$ negates the magnetic angles $\theta_m$ and $\theta_m^\shift$.
\end{cor}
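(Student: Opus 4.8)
The plan is to deduce the corollary by simply assembling the two results established immediately above it: the behaviour of the magnetic angles under the naive duality on the unrestricted space $\uHfr$ (\cref{prop:unrestricted-magnetic-angles-under-duality}), and their invariance under unitary Hecke modifications (the proposition proved just above). Since $\D^\fr$ is by definition built out of exactly these two operations, the corollary should follow with essentially no additional computation.

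First I would observe that the angles $\theta_m$ and $\theta_m^\shift$ on $\Hfr$ (from \cref{def:magnetic-angle} and \cref{def:shifted-magnetic}) are literally the restrictions to $\Hfr$ of the angles $\hat{\theta}_m$ and $\hat{\theta}_m^\shift$ on $\uHfr$, because the defining formulas \eqref{eq:magnetic-angle-hat} and \eqref{eq:shifted-magnetic-angle-hat} agree verbatim with \eqref{eq:magnetic-angle} and \eqref{eq:shifted-magnetic-angle}. This lets me track both angle functions on the larger space $\uHfr$, where each of the two constituent operations of $\D^\fr$---naive duality and Hecke modification---is individually defined, and then read off the result back on $\Hfr$.

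Next I would split into the two cases appearing in the definition of $\D^\fr$. When $m^{(3)} \in (-\tfrac{1}{2}, \tfrac{1}{2})$, the dual weight $-m^{(3)}$ already lies in $(-\tfrac{1}{2}, \tfrac{1}{2})$, so $\D^\fr$ coincides with the naive duality $(E, \theta, h, g) \mapsto (E, \theta, h, g)^*$, and \cref{prop:unrestricted-magnetic-angles-under-duality} immediately gives $\theta_m \mapsto -\theta_m$ and $\theta_m^\shift \mapsto -\theta_m^\shift$. When $m^{(3)} = \tfrac{1}{2}$, the map $\D^\fr$ is by definition the naive dual followed by a unitary Hecke modification of type $1$; the naive dual negates both angles by \cref{prop:unrestricted-magnetic-angles-under-duality}, while the subsequent Hecke modification leaves them unchanged by the invariance proposition above. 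Composing the two steps (in this order, on $\uHfr$) shows that $\D^\fr$ again negates $\theta_m$ and $\theta_m^\shift$, completing the argument.

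I do not expect a genuine obstacle here, since all the substantive work lives in the two preceding propositions; the only points requiring care are bookkeeping ones. Specifically, I would want to confirm that the composite $\D^\fr$ genuinely lands back in $\Hfr$ in the boundary case (the weight becomes $-\tfrac{1}{2} + 1 = \tfrac{1}{2}$), and that the weight-adjusting Hecke modification, which is the only step interposed between the naive duality and the final bundle, does not disturb the sign change produced by duality---this is exactly what the invariance proposition guarantees.
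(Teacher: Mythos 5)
Your proposal is correct and follows exactly the paper's intended route: the corollary is stated without a separate proof precisely because it is the immediate composition of \cref{prop:unrestricted-magnetic-angles-under-duality} (naive duality on $\uHfr$ negates $\hat{\theta}_m$ and $\hat{\theta}_m^\shift$) with the preceding proposition (unitary Hecke modifications preserve them). Your case split on $m^{(3)}$ and the bookkeeping remarks about landing back in $\Hfr$ are exactly the right sanity checks.
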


We will also write $\D^\fr$ to denote the induced duality $\D^\fr([E, \theta, h, g]) = [\D^\fr(E, \theta, h, g)]$ on the space $\Xfr$ of isomorphism classes.

\begin{cor}[$\D^\fr \leftrightarrow \D^\ov$]
    Under the identification of $\Xfr$ with $\Mov$, the duality $\D^\fr$ coincides with the natural duality $\D^\ov$ defined by \eqref{eq:ov-duality}.
\end{cor}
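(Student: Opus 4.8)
The plan is to reduce the statement to a comparison of coordinate actions. The duality $\D^\ov$ is defined purely by \eqref{eq:ov-duality}, namely it fixes the base coordinate $z$ and negates both angles $\theta_e, \theta_m$ on each torus fibre; since a fibre-preserving involution of the torus fibration $\Mov \to \B$ is determined by its effect on $(z, \theta_e, \theta_m)$, it suffices to check that $\D^\fr$ induces exactly the same transformation of these coordinates under the dictionary of \cref{tab:dictionary}, i.e.\ via $z = -2im$, $\theta_e = 2\pi m^{(3)}$, and $\theta_m$ the magnetic angle.

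The three checks then run as follows. First I would handle $z = -2im$: by \cref{lem:dual-framed-params} the naive dual $(E,\theta,h,g)^*$ preserves $m$, and the possible Hecke modification in the definition of $\D^\fr$ changes only the framing (hence $m^{(3)}$), not $m$, so $\D^\fr$ fixes $z$. Next, for $\theta_e = 2\pi m^{(3)}$, \cref{lem:dual-framed-params} shows the naive dual negates $m^{(3)}$. When $m^{(3)} \in (-\tfrac12, \tfrac12)$ the value $-m^{(3)}$ already lies in the allowed range, no Hecke modification occurs, and $\theta_e \mapsto -2\pi m^{(3)} = -\theta_e$; in the boundary case $m^{(3)} = \tfrac12$ the type-$1$ Hecke modification sends $-\tfrac12$ to $-m^{(3)} + 1 = \tfrac12$, so $\theta_e = \pi$ is fixed, consistent with $\D^\ov$ because $-\pi \equiv \pi \pmod{2\pi}$. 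Finally, \cref{cor:magnetic-angles-under-duality} states precisely that $\D^\fr$ negates $\theta_m$. Combining these, $\D^\fr$ and $\D^\ov$ agree on $(z, \theta_e, \theta_m)$ and hence coincide.

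The main subtlety I expect is the boundary case $m^{(3)} = \tfrac12$: this is exactly where the ``naive'' duality of \eqref{eq:dual-framed-bundle} and the corrected duality $\D^\fr$ diverge, and it is the reason the Hecke modification was incorporated into $\D^\fr$ in the first place, so I would take care to spell out the matching there. The only other point to keep in mind is that $\theta_e$ and $\theta_m$ are angles (with $\theta_m$ moreover carrying the monodromy $\theta_m \to \theta_m + \theta_e - \pi$ around $z=0$), so each identity above should be read modulo $2\pi$; verifying that negation of the angles is compatible with this monodromy is a routine mod-$2\pi$ calculation rather than a genuine obstacle.
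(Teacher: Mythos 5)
Your proposal is correct and follows essentially the same route the paper intends: the corollary is stated as an immediate consequence of \cref{cor:magnetic-angles-under-duality} together with the earlier remark that, under \cref{tab:dictionary}, $\D^\ov$ acts by $m \mapsto m$ and $m^{(3)} \mapsto -m^{(3)}$, and your three coordinate checks (including the careful treatment of the boundary case $m^{(3)} = \tfrac12$, where $\theta_e = \pi \equiv -\pi \pmod{2\pi}$) just make that implicit argument explicit. Your observations that the Hecke modification leaves $m$ untouched and that negating both angles is compatible with the monodromy $\theta_m \to \theta_m + \theta_e - \pi$ are exactly the right details to verify.
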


\subsection{Framed Hitchin section and self-duality} 
\label{sub:framed-hitchin-section}

Now we will construct a framed version of the Hitchin section and show that it is self-dual under $\D^\fr$.
Certain technical details are relegated to \cref{sub:self-dual-frames}, but we summarize the main points below.

\begin{definition}[Hitchin section for $\Xfr$] \label{def:framed-hitchin-section}
    Consider the set of quadratic differentials 
    \begin{equation}
        \mathcal{Q} \coloneqq \{(z^2 + 2m)  dz^2 : m \in \C^* \}.
    \end{equation}
    Define a map
    \begin{equation}
        \iota^\fr: \mathcal{Q} \to \Xfr
    \end{equation}
    sending $(z^2 + 2m ) dz^2$ to the isomorphism class $[(E, \theta, g_0)]$ of the framed bundle with
    \begin{itemize}
        \item underlying vector bundle
        \begin{equation}
             E|_{\CP^1 \setminus \{\infty\}} = K_C^{-1/2} \oplus K_C^{1/2}|_{\CP^1 \setminus \{\infty\}},
         \end{equation} 
         where $K_C^{1/2}$ is a spin structure\footnote{i.e.\ a choice of line bundle such that $K_C^{1/2} \otimes K_C^{1/2} \cong K_C$, the canonical bundle on $C$. For $C = \CP^1$ there is a unique choice, namely $K_C^{1/2} = \O(-1)$.},

        \item Higgs field
        \begin{equation}
            \theta =\begin{pmatrix}
            0 & 1 \\ 
            (z^2 + 2m)  dz^2 & 0
        \end{pmatrix},
        \end{equation}

        \item parabolic weight $m^{(3)} = \frac{1}{2}$ (with multiplicity $2$) at $\infty$, as in \cref{constr:bundle-from-parameters}, and
         
        \item framing $g_0 = (e_1, e_2)$ at $\infty$ as specified in \cref{def:hitchin-section-frame}, that is, obtained by orthonormalizing the eigenframe $(\eta_1, w\eta_2)$ where
        \begin{equation*}
        \eta_1 = \sqrt{\frac{i z}{2 \sqrt{P}}} \cdot
        \begin{pmatrix}
        1 \\
        \sqrt{P}
        \end{pmatrix}
        \quad \text{and} \quad
        \eta_2 = \sqrt{\frac{-i z}{2 \sqrt{P}}} \cdot
        \begin{pmatrix}
        -1 \\
        \sqrt{P}
        \end{pmatrix}.
    \end{equation*}

    \end{itemize}
    We will refer to the image $\Bfr \coloneqq \iota^\fr(\B) \subseteq \Xfr$ as the \emph{(framed) Hitchin section} for $\Xfr$.
\end{definition}

\begin{prop}[Self-duality on $\Bfr$] \label{prop:self-duality}
    Isomorphism classes of bundles in the Hitchin section $\Bfr \subseteq \Xfr$ are fixed under the duality $\D^\fr$.
\end{prop}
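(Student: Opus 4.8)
The plan is to exhibit, for each point of the Hitchin section, an explicit isomorphism of framed harmonic bundles between $(E, \theta, h, g_0)$ and its dual $\D^\fr(E, \theta, h, g_0)$. Recall that on $\Bfr$ we have $m^{(3)} = \tfrac12$, so by definition $\D^\fr$ first forms $(E^*, \theta^t, h^*, g_0^*) \in \uHfr$ and then applies a unitary Hecke modification of type $1$ to return the weight to $\tfrac12$. Since $\det \theta^t = \det \theta = -(z^2+2m)\,dz^2$, the parameter $m$ is unchanged (cf.\ \cref{lem:dual-framed-params}), so the original and dual bundles lie over the same point $(z^2+2m)\,dz^2 \in \mathcal{Q}$; the task is therefore to produce a single bundle isomorphism carrying all the decorations into one another.

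First I would construct the underlying Higgs-bundle isomorphism. Writing $q = (z^2+2m)\,dz^2$ and $E = K_C^{-1/2} \oplus K_C^{1/2}$, the antidiagonal map $\psi = \begin{pmatrix} 0 & 1 \\ 1 & 0 \end{pmatrix} : E \to E^*$, which interchanges the two line-bundle summands and identifies $E^* \cong K_C^{1/2} \oplus K_C^{-1/2}$, satisfies $\theta^t \psi = \psi \theta$ (a one-line matrix check), hence is an isomorphism $(E, \theta) \xrightarrow{\sim} (E^*, \theta^t)$; a sign may be inserted in one entry to preserve the trivialization of $\det E$. Because $\psi$ is holomorphic and intertwines the Higgs fields, $\psi^* h^*$ is again a harmonic metric for $(E, \theta)$ with the same parabolic weights, so by uniqueness of the harmonic metric it equals $h$ up to a positive constant, which can be absorbed by rescaling $\psi$. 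Thus the metric compatibility is automatic, and the only genuine content is the framing.

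The heart of the matter is to check that (after rescaling) $\psi$ carries the self-dual frame $g_0$ to the Hecke-modified dual frame. Here I would work with the explicit eigenframe $(\eta_1, w\eta_2)$ from \cref{def:framed-hitchin-section}: the $\eta_j$ are the $\pm\sqrt{P}\,dz$-eigenvectors of $\theta$ (with $P = z^2+2m$), and since the $\lambda$-eigencovector of $\theta^t$ annihilates the opposite eigenvector of $\theta$, the intertwiner $\psi$ sends each $\eta_j$ to a scalar multiple of the corresponding dual eigencovector. Orthonormalizing with respect to $h$ (resp.\ $h^*$) then relates $g_0$ to $g_0^*$ up to unit-modulus scalars, and one verifies that the degree asymmetry between the summands $K_C^{-1/2}$ and $K_C^{1/2}$, reflected in the twist $w$ multiplying $\eta_2$, is corrected \emph{exactly} by the type-$1$ Hecke modification $(e_1^*, e_2^*) \mapsto (\tfrac{w}{|w|}\,e_1^*,\ \tfrac{|w|}{w}\,e_2^*)$ of \cref{def:unitary-hecke-mod}. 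I expect this frame-matching, carried out in \cref{sub:self-dual-frames}, to be the main obstacle: it requires tracking the square-root normalizations in $\eta_1, \eta_2$, the phase $w/|w|$ introduced by the Hecke twist, and the branch of $\sqrt{P}$ across the cut between $\pm\sqrt{-2m}$, all of which must cancel to give an honest equality of $\SU(2)$ frames rather than merely of frames up to scale.

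Once the frame identity holds, $\psi$ (after rescaling and, if necessary, composition with a residual $U(1)$ framing automorphism) is an isomorphism $(E, \theta, h, g_0) \xrightarrow{\sim} \D^\fr(E, \theta, h, g_0)$ in $\Hfr$, so the two define the same class in $\Xfr$; hence every point of $\Bfr$ is fixed by $\D^\fr$. This is exactly what is needed to feed into \cref{cor:magnetic-angles-under-duality} and conclude, in \cref{thm:hitchin-section-shift}, that $\theta_m$ and $\theta_m^\shift$ vanish on $\Bfr$.
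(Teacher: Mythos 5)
Your proposal follows essentially the same route as the paper: the paper's isomorphism is $S = \begin{pmatrix} 0 & i \\ i & 0\end{pmatrix}$, which is your antidiagonal swap rescaled by $i$ precisely to fix the determinant (as you anticipated), and the three checks you list---intertwining the Higgs fields, matching the metrics (the paper uses diagonality of $h$ rather than uniqueness, but both work), and matching the frames up to a type-$1$ unitary Hecke modification---are exactly the paper's, with the frame computation likewise deferred to the appendix (\cref{lem:dual-frame-calculation}).

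One remark in your last paragraph is not innocuous, though: you allow yourself to compose ``if necessary, with a residual $U(1)$ framing automorphism'' to absorb a leftover phase. No such automorphism exists---the $U(1)$-action on framings is free on each fibre $\Xfr(m,\tfrac12)$ with $m \neq 0$, and it shifts $\theta_m$ by $\vartheta$ (\cref{prop:u1-action-magnetic-angle}), so a nonzero residual phase would mean the isomorphism class is \emph{not} fixed by $\D^\fr$ and the proposition would fail. Indeed, duality acts on each $U(1)$-torsor of framings as an involution with only isolated fixed points, which is why \cref{def:hitchin-section-frame} singles out the specific phase $g_0 = e^{i\pi/4}\cdot g$: in \cref{lem:dual-frame-calculation} one finds $S(e^{i\vartheta}\cdot g)^* = \bigl(ie^{-2i\vartheta}\tfrac{w}{|w|}e_1,\ -ie^{2i\vartheta}\tfrac{|w|}{w}e_2\bigr)$, and only for the distinguished value of $\vartheta$ do the prefactors become $1$ so that the result is an honest equality of $\SU(2)$-frames after the Hecke twist. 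So the phase cancellation you flag as ``the main obstacle'' is not merely bookkeeping to be discharged later---it is the entire content of the statement, and cannot be swept into a gauge freedom that is not there.
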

\begin{proof}
    Unpacking the definitions, we must show that
    $(E, \theta, h, g) \cong (E^*, \theta^t, h^*, g^*)$ up to a unitary Hecke modification of type 1.

    We claim that the desired isomorphism is given by
    \begin{equation}
        S = \begin{pmatrix}
            0 & i \\ 
            i & 0
        \end{pmatrix}: E \xrightarrow{\sim} E^*.
    \end{equation}

    It is straightforward to compute that:
    \begin{enumerate}
        \item $S$ sends $\theta$ to $\theta^t$, i.e.\ $S \theta S^{-1} = \theta^t$.

        \item $S$ sends $h$ to $h^*$, since on the Hitchin section the harmonic metric is diagonal with respect to the direct sum decomposition of $E$.
    \end{enumerate}
    It remains to show that $S g^* = g$ up to a Hecke modification. 
    This boils down to a linear algebra computation which we give in \cref{lem:dual-frame-calculation}.
\end{proof}

Finally, we conclude that the magnetic angles vanish on the Hitchin section (\cref{thm:hitchin-section-shift}).

\begin{proof}[Proof of \cref{thm:hitchin-section-shift}]
    Since $\D^\fr [(E, \theta, h, g)] = [(E, \theta, h, g)]$ on $\Bfr$,
    \begin{equation*}
        \theta_m [(E, \theta, h, g)] = \theta_m (\D^\fr[(E, \theta, h, g)]) = -\theta_m [(E, \theta, h, g)],
    \end{equation*}
    and likewise for $\theta_m^\shift$.
    Therefore
    \begin{equation*}
        \theta_m|_{\Bfr} \equiv 0 \equiv \theta_m^{\shift}|_{\Bfr}. \qedhere
    \end{equation*}
\end{proof}

\section{Metric on the Hitchin section} 
\label{sec:metric-on-the-hitchin-section}

The natural remaining question is how to translate the equality of the symplectic forms $\Omega^\reg_\zeta$ and $\Omega^\ov_\zeta$ to the corresponding metrics.

At least formally, we would like to say something like
\begin{align*}
    g_{L^2}^\reg(\cdot, \cdot) &= -\Re  \, (\NAH_{\zeta = 1})^* \Omega^{\reg}(\cdot, I \cdot) \tag{cf.\  \eqref{eq:metric-and-form}} \\
    & \defeq -\Re  \,  \Omega^{\reg}_{\zeta = 1}(\cdot, I \cdot)
\end{align*}
for a suitably regularized version of $g_{L^2}$.

The main difficulty here comes from understanding the complex structure $I$ on the space $\Xfr$ of framed Higgs bundles.
It is not \emph{a priori} clear how the complex structure should act on variations of the parabolic weights or framing.\footnote{Of course, the identification $\Mov \cong \Xfr$ from \cite{Tulli:2019} induces a complex structure $I$ on $\Xfr$, but it is not obvious how/if it is related to the ``natural one'' on $\Xfr$ (whatever that may be).}

This difficulty is also related to the analytic question of describing the tangent space. 
As mentioned before, we do not have a gauge-theoretic construction of these moduli spaces.
In the standard (compact, unframed) setting, the tangent space to a point in a Hitchin moduli space $\M^\Hit$ can be identified with the space of variations $(\dot{A}, \dot{\theta})$ which
satisfy the linearizations of Hitchin's equations
\begin{equation*}
    \begin{cases}
        F_{D_h} + [ \theta, \theta^{\dagger_h}] = 0, \\
        \bar{\partial}_E \phi= 0,
    \end{cases}
\end{equation*}
and lie in the orthocomplement of the linearized gauge orbit.
In our setting, a variation of the parabolic weight corresponds to changing the boundary conditions of the relevant PDEs. 
We leave these analytic considerations for future work.

However, these technical difficulties can be circumvented by restricting to the Hitchin section $\Bfr \subset \Xfr$, along which the weights and framing do not change.

\subsection{Polynomial Hitchin sections} 
\label{sub:background-polynomial-hitchin-section}

An explicit integral for the $L^2$-metric on certain ``polynomial Hitchin sections'' is given in \cite{Dumas:2020} (using the computations from \cite{Dumas:2019} for the compact surface case).
Our situation will be described by a regularized version of this integral.
We will briefly summarize the relevant definitions and results from \cite{Dumas:2020} below, and then apply them to our setting in the next section.

Let $\B_d$ denote the space of quadratic differentials $\phi$ on $\C$ of the form $\phi = P(z) \, dz^2$, where $P$ is a polynomial of degree $d$.
(By abuse of notation we will sometimes just write $P \in \mathcal{B}_d$.)
The differentials $\phi \in \B_d$ parametrize a \emph{polynomial Hitchin section of degree $d$}, consisting of Higgs bundles $(E, \theta_\phi)$ where $E$ is the trivial\footnote{The canonical bundle of $C=\C$ is trivial, so we fix a trivialization $E = K_C^{-1/2} \oplus K_C^{1/2} \cong \O^2$ to simplify the notation.} rank $2$ bundle over $\C$ and
\begin{equation}
     \theta_\phi = \begin{pmatrix}
        0 & 1 \\
        P & 0
    \end{pmatrix} \, dz.
\end{equation}

The harmonic metric $h = h_\phi$ for $(E, \theta_\phi)$ is diagonal and can be written as
\begin{equation}
    h = \begin{pmatrix}
        e^u & 0 \\
        0 & e^{-u}
    \end{pmatrix}
\end{equation}
for a scalar function $u$ that satisfies:
\begin{enumerate}
    \item the self-duality equation
    \begin{equation}
        \Delta u = 4(e^{2u} - e^{-2u} |P|^2).
    \end{equation}
    (This is equivalent to Hitchin's equation \eqref{eq:hitchin}.) 

    \item the growth rate condition
    \begin{equation}
        u \sim \frac{1}{2} \log |P| \quad \text{as } |z| \to \infty.
    \end{equation}
    (This is equivalent to the choice of parabolic weights $m^{(3)} = \frac{1}{2}$ in our setting.)
\end{enumerate}

Tangent vectors to $\mathcal{B}_d$ are just represented by polynomials $\dot{P}$, but one must restrict their degrees as follows in order to obtain a convergent $L^2$-integral.

Given a polynomial $h(z)$ of degree $d$, let $\B_{d, h} \subseteq \B_d$ denote the space of polynomials 
\begin{equation}
    P(z) = h(z) + \ell(z)
\end{equation}
with $\deg \ell < \frac{d}{2} - 1$.
Then tangent vectors to $P \in \B_{d, h}$ are polynomials $\dot{P} = \dot{\ell}$ with 
\begin{equation}  \label{eq:degree-condition}
    \deg \dot{P} < \frac{d}{2} - 1. 
\end{equation}

As computed in \cite[Section 4]{Dumas:2019}, the norm of $\dot{P}$ with respect to Hitchin's metric is given by
\begin{equation} \label{eq:hitchin-section-integral}
    g_{L^2}(\dot{P}, \dot{P}) 
    = \int_\C 4e^{-2u} (|\dot{P}| - \Re(F P \dot{\overline{P}})) \, dx dy,
\end{equation}
where $F$ is the solution on $\C$ to the ``complex variation equation''
\begin{equation}
    (\Delta - 8(e^{2u} + e^{-2u} |P|^2))F + 8e^{-2u} \overline{P} \dot{P} = 0.
\end{equation}
(This integral is the norm $\int_\C \|\dot{A} \|^2 + \|\dot{\Theta} \|^2$ of the Coulomb gauge representative $(\dot{A}, \dot{\Theta})$ corresponding to the variation $\dot{P}$, as described in \cref{sub:l2-metric}.)

For large $|z|$, 
\begin{equation}
    u \sim u_\sf \coloneqq \frac{1}{2} \log |P| \quad \text{and} \quad F \sim F_\sf \coloneqq \frac{1}{2} \frac{\dot{P}}{P},
\end{equation}
so the integrand in \eqref{eq:hitchin-section-integral} grows like $|\dot{P}|^2/|P| \sim |z|^{2 \deg \dot{P} - d}$.
Therefore the integral converges if and only if the degree condition \eqref{eq:degree-condition} is satisfied.

In addition, replacing $u$ and $F$ with their semiflat approximations $u_\sf$ and $F_\sf$ gives the formula for the norm with respect to the semiflat metric:
\begin{equation}  \label{eq:hitchin-section-sf-integral}
    g_{L^2}^\sf(\dot{P}, \dot{P})
    = \int_\C 2 \frac{|\dot{P}|^2}{|P|} \, dx dy.
\end{equation}

\subsection{Regularization} 
\label{sub:metric-regularization}

Our framed Hitchin section $\Bfr \subset \Xfr$ (\cref{def:framed-hitchin-section}) is a polynomial Hitchin section of degree $d = 2$.
We are interested in polynomials of the form
\begin{equation*}
    P(z) = z^2 + 2m,
\end{equation*}
but we wish to allow variations $\dot{P} = 2\dot{m} \in \C$.
This violates the degree condition \eqref{eq:degree-condition}, so we will need to regularize the integral \eqref{eq:hitchin-section-integral}.

For large $|z|$, the integrand is asymptotic to
\begin{equation}
    2 \frac{|\dot{P}|^2}{|P|} \sim \frac{8 |\dot{m}|^2}{|z|^2},
\end{equation}
so the integral diverges logarithmically like $8 |\dot{m}|^2 \cdot 2 \pi \log R$ as $|z| = R \to \infty$.

\begin{definition}[Regularized $L^2$-metric on $\Bfr$]
Define a regularized version of the metric \eqref{eq:hitchin-section-integral} on $\Bfr$ by
\begin{equation} \label{eq:reg-hitchin-section-integral}
    g_{L^2}^\reg(\dot{P}, \dot{P})
    \coloneqq  \lim_{R \to \infty} \left[  \left(\int_{C_R} 4e^{-2u} (|\dot{P}| - \Re(F P \dot{\overline{P}})) \, dx dy\right) - 16 \pi \log R |\dot{m}|^2  \right],
\end{equation}
where $C_R \coloneqq \{z \in \C: |z| \leq R \}$.
\end{definition}

The natural complex structure $I$ on $\Bfr$ acts by $I(\dot{P}) = i \dot{P}$, i.e.\
\begin{equation}
    I(\dot{m}) = i \dot{m}.
\end{equation}
(This is the restriction of the natural Higgs complex structure \eqref{eq:I-on-higgs-variation} which sends $I(\dot{\theta}) = i \dot{\theta}.)$

\begin{prop}[Metric and form compatibility] \label{prop:reg-compatible-form}
    On the framed Hitchin section $\Bfr \subset \Xfr$,
    \begin{equation} \label{eq:reg-compatible-form}
        g_{L^2}^\reg(\cdot, \cdot) = -\Re \Omega_{\zeta=1}^\reg (\cdot, I \cdot),
    \end{equation}
    i.e.\ the regularized $L^2$-metric is compatible with the regularized Atiyah-Bott form (from \cref{def:reg-form}) with respect to the complex structure $I$.
\end{prop}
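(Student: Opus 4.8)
The plan is to verify the compatibility relation \eqref{eq:reg-compatible-form} by directly comparing the two regularized quantities, exploiting the fact that on $\Bfr$ both the metric and the form are given by explicit integrals over $C_R$ plus matching regularization terms. First I would compute the right-hand side $-\Re \Omega^\reg_{\zeta=1}(\dot{P}, I\dot{P})$ using \cref{def:reg-form}. Since $I(\dot{m}) = i\dot{m}$, the variations $\dot{\nabla}_1 = \dot{\nabla}$ and $\dot{\nabla}_2 = \dot{\nabla}'$ correspond to $\dot{m}$ and $i\dot{m}$ respectively, and I would substitute $\zeta = 1$ into the framed form \eqref{eq:framed-connection-form} and its variation \eqref{eq:polar-variation-form}. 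The key point is that on the Hitchin section the weight $m^{(3)}$ is fixed at $\tfrac{1}{2}$ and does not vary, so $\dot{m}^{(3)} = 0$; this simplifies the explicit coefficients $\mu, \lambda$ in \eqref{eq:explicit-mu-lambda-coeffs} considerably and lets me evaluate the regularization term $\mathcal{R}$ from \eqref{eq:regularization-term} in closed form.

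Next I would match the two regularization terms. On the metric side, the subtracted divergence is $16\pi \log R\, |\dot{m}|^2$ as $|z| = R \to \infty$; I must confirm that $-\Re$ of the form-side regularization term $-2\pi \log R \cdot \mathcal{R}(\dot{\nabla}, I\dot{\nabla})$ produces exactly this same coefficient. Care is needed with the two conventions: the metric integral \eqref{eq:reg-hitchin-section-integral} is cut off at large $|z| = R$ (i.e.\ near $z = \infty$, $w = 0$), which is consistent with the $w$-disc $\{|w| < R'\}$ being removed in $C_R$ of \cref{def:reg-form} once one accounts for the coordinate change $w = 1/z$ and the resulting sign flip $\log R \mapsto -\log R$. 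I expect this bookkeeping — tracking the orientation and the $\log R$ versus $-\log R$ matching across the two coordinate systems — to require a short but attentive calculation, and it is here that the factor $16\pi = 4\pi^2 \cdot$ (something) will need to be reconciled with the constants in \eqref{eq:explicit-mu-lambda-coeffs}.

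The heart of the proof is the comparison of the finite (integral) parts. On the metric side, the integrand over $C_R$ is $4e^{-2u}(|\dot{P}| - \Re(FP\dot{\overline{P}}))\,dx\,dy$, which by \cref{sub:background-polynomial-hitchin-section} equals the $L^2$-norm density $\|\dot{A}\|^2 + \|\dot{\Theta}\|^2$ of the Coulomb-gauge representative. The standard (unregularized) Kähler identity $g_{L^2}(\cdot,\cdot) = -\Re\, \Omega^\Hit_{\zeta=1}(\cdot, I\cdot)$ holds pointwise for the integrands coming from genuine $L^2$-harmonic representatives, as in the compact theory. The plan is therefore to invoke this pointwise/local compatibility on each truncated region $C_R$ — which does not require any global convergence — and conclude that the two truncated integrals agree exactly before regularization, reducing the whole statement to the matching of regularization terms already carried out. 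Concretely, I would argue that the difference between the Coulomb-gauge variation used for the metric and the ``framed-form'' variation used for $\Omega^\reg$ is an exact/gauge term whose contribution to the Atiyah-Bott integrand is a total derivative, hence integrates to a boundary term on $\partial C_R$ that is absorbed into the regularization.

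The main obstacle will be this last gauge-matching step: the metric integral \eqref{eq:hitchin-section-integral} is computed using the \emph{Coulomb gauge} representative $(\dot{A}, \dot{\Theta})$ (orthogonal to the gauge orbit), whereas $\Omega^\reg$ is evaluated on variations written in the fixed \emph{framed form} near the puncture. These two representatives of the same tangent vector differ by an infinitesimal gauge transformation $\dot{g}$ that need not vanish at infinity, so I cannot simply invoke gauge-invariance of $\Omega^\reg$ (\cref{lem:reg-form-gauge-invariance}) — that lemma only covers gauge transformations approaching the identity. The careful resolution is to show that the discrepancy contributes precisely a boundary integral over $\partial C_R$ whose divergent part is exactly the difference between the two regularization conventions, so that after subtracting the respective $\log R$ terms the finite parts coincide. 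I would handle this by expressing $\dot{g}$ explicitly near $w=0$ (it is diagonal and determined by the framed forms), computing $\int_{\partial C_R} \tr(\dot{g}\, \dot{\nabla})$-type boundary terms via Stokes' theorem, and verifying cancellation — this is the one place where a genuine (though routine) computation, rather than a formal appeal to compatibility, is unavoidable.
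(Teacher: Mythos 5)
Your proposal follows essentially the same route as the paper's proof: the finite integral parts are matched by appealing to the standard unregularized compatibility $g_{L^2} = -\Re\,\Omega_{\zeta=1}(\cdot, I\cdot)$, and the substance of the argument is the check that the regularization terms coincide, which you correctly reduce (using $\dot{m}^{(3)} = 0$ on $\Bfr$, $I(\dot m) = i\dot m$, and the $w = 1/z$ sign flip in $\log R$) to the identity $-2\pi\log R\cdot\mathcal{R}(\dot\nabla_{\dot P}, \dot\nabla_{i\dot P}) = -16\pi\log R\,|\dot m|^2$. The Coulomb-gauge versus framed-form discrepancy that you flag as the main obstacle is simply absorbed by the paper into the phrase ``the integral terms match up just as in the unregularized case,'' so your extra boundary-term analysis is added care within the same method rather than a different approach.
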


To spell out the notation, write
\begin{align*}
    \nabla \coloneqq&{} \NAH_{\zeta=1} (E, \theta_\phi, h_\phi) \\
    ={}& \theta_\phi + D_{h_\phi} + \theta_\phi^{\dagger_{h_\phi}}
\end{align*}
for the connection associated to $\phi = (z^2 + 2m) \, dz^2 \in \Bfr$.
Then a variation $\dot{P}$ of the polynomial $z^2 + 2m$ induces a variation $\dot{\nabla}_{\dot{P}}$ of $\nabla$, and \eqref{eq:reg-compatible-form} says that
\begin{equation*}
     g_{L^2}^\reg(\dot{P}, \dot{P}) \defeq  \lim_{R \to \infty} \left[  \left(\int_{C_R} 4e^{-2u} (|\dot{P}| - \Re(F P \dot{\overline{P}})) \, dx dy\right) - 16 \pi \log R |\dot{m}|^2  \right] \\
\end{equation*}
is equal to
\begin{equation*}
    - \Re \Omega^\reg (\dot{\nabla}_{\dot{P}}, \dot{\nabla}_{i\dot{P}}) \defeq \lim_{R \to \infty} \Re \left[ \left(- \int_{C_R} \tr (\dot{\nabla}_{\dot{P}} \wedge \dot{\nabla}_{i\dot{P}})\right) -  2 \pi \log R \cdot \mathcal{R}(\dot{\nabla}_{\dot{P}}, \dot{\nabla}_{i\dot{P}}) \right],
\end{equation*}
where as above $C_R = \{|z| \leq R \}$.

\begin{proof}
    The integral terms match up just as they do in the usual (unregularized) case, so we only need to check that the regularization terms coincide.
    Since $\dot{m}^{(3)} \equiv 0$ on the Hitchin section, the regularization term \eqref{eq:regularization-term} for $\Omega^\reg(\dot{\nabla}_1, \dot{\nabla}_2)$ takes the simpler form
\begin{equation*}
    - 2 \pi \log R \cdot \mathcal{R}(\dot{\nabla}_1, \dot{\nabla}_2) = 16 \pi \log R \Im(\dot{m}_1 \dot{\overline{m}}_2).
\end{equation*}
For $\Omega^\reg (\dot{\nabla}_{\dot{P}}, \dot{\nabla}_{i\dot{P}})$, 
this becomes
\begin{equation*}
    16 \pi \log R \Im(\dot{m} \, {\overline{i \dot{m}}}) = -16\pi \log R |\dot{m}|^2,
\end{equation*}
which is the same as the regularization term for the $L^2$-norm.
\end{proof}

\begin{theorem}[Ooguri-Vafa metric and regularization] \label{thm:reg-equals-ov-metric}
    On the polynomial Hitchin section parametrized by $P(z) = z^2 + 2m$ for $m \in \C$,
    \begin{equation}
        g^\reg_{L^2} = 4 \pi^2 \cdot g^\ov,
    \end{equation}
    i.e.\ the regularized version of Hitchin's metric coincides with the Ooguri-Vafa metric.
\end{theorem}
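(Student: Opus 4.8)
The plan is to assemble the metric identity from the two structural inputs that are already in place: the equality of symplectic forms from \cref{thm:reg-equals-ov} (restricted to the Hitchin section) and the metric-form compatibility from \cref{prop:reg-compatible-form}. The strategy is to show that \emph{both} $g_{L^2}^\reg$ and $g^\ov$ are recovered from the same symplectic form $\Omega_{\zeta=1}^\reg$ (equivalently $\Omega_{\zeta=1}^\ov$) by contracting with the same complex structure $I$, so the desired identity $g_{L^2}^\reg = 4\pi^2 g^\ov$ will follow directly from the factor $-\tfrac{1}{4\pi^2}$ relating the two forms.

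First I would invoke \cref{prop:reg-compatible-form} to write
\begin{equation*}
    g_{L^2}^\reg(\cdot, \cdot) = -\Re\, \Omega_{\zeta=1}^\reg(\cdot, I\cdot)
\end{equation*}
on $\Bfr$, with $I(\dot m) = i\dot m$. Next I would record the parallel fact on the Ooguri-Vafa side: the hyperkähler metric $g^\ov$ is compatible with its holomorphic symplectic form via the same twistorial formula $g^\ov(\cdot,\cdot) = -\Re\,\Omega_{\zeta=1}^\ov(\cdot, I\cdot)$ (this is the standard relation \eqref{eq:metric-and-form} between a hyperkähler metric and the $\zeta=1$ member of its twistor family of holomorphic symplectic forms, referenced in the introduction). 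The crucial consistency check is that the complex structure $I$ used on the two sides agrees: on $\Bfr$ the identification $\Mov\cong\Xfr$ sends $z = -2im$, so a variation $\dot m$ corresponds to $\dot z = -2i\dot m$, and $I(\dot m)=i\dot m$ is exactly $I(\dot z)=i\dot z$, i.e.\ the standard complex structure $I=I_{\zeta=1}$ on the Ooguri-Vafa space. Since \cref{thm:reg-equals-ov} gives $\Omega^\ov_{\zeta=1} = -\tfrac{1}{4\pi^2}\Omega^\reg_{\zeta=1}$ on $\Xfr$ (and in particular on $\Bfr$, where \cref{thm:hitchin-section-shift} guarantees no shift discrepancy enters), contracting both sides with $(\cdot, I\cdot)$ and taking $-\Re$ yields
\begin{equation*}
    g^\ov = -\Re\,\Omega^\ov_{\zeta=1}(\cdot, I\cdot) = -\tfrac{1}{4\pi^2}\left(-\Re\,\Omega^\reg_{\zeta=1}(\cdot, I\cdot)\right) = \tfrac{1}{4\pi^2}\, g_{L^2}^\reg,
\end{equation*}
which is the claim after multiplying through by $4\pi^2$.

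The main obstacle I anticipate is \emph{not} a computation but the justification that the complex structures genuinely match, so that the twistorial metric-form relation may be applied simultaneously on both sides. One must verify that the natural Higgs-theoretic $I$ on $\Bfr$ (acting by $I(\dot\theta)=i\dot\theta$, hence $I(\dot m)=i\dot m$) is carried by the identification $\Mov\cong\Xfr$ to the hyperkähler complex structure $I$ on $\Mov$ for which $\Omega^\ov_\zeta$ is the twistor family; this is where the restriction to $\Bfr$ is essential, since off the Hitchin section it is unclear how $I$ acts on variations of the parabolic weights and framing (as noted in the discussion opening \cref{sec:metric-on-the-hitchin-section}). Along $\Bfr$ the weights and framing are constant, only $m$ varies, and the holomorphic coordinate $z=-2im$ on the base of the fibration pins down $I$ unambiguously, so the matching is forced. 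A secondary point to confirm is that the regularization terms on the two sides are compatible under $-\Re\,(\cdot, I\cdot)$; but this was already handled in the proof of \cref{prop:reg-compatible-form}, where the regularization term of $\Omega^\reg$ was shown to reproduce exactly the $-16\pi\log R\,|\dot m|^2$ counterterm of $g_{L^2}^\reg$, so no new analysis is required. Finally, I would remark that the identical argument applies verbatim to the semiflat metrics, using \cref{thm:hitchin-section-shift} in place of \cref{thm:reg-equals-ov}, giving the companion statement claimed immediately after the theorem.
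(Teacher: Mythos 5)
Your proposal is, in structure, exactly the paper's own proof: apply \cref{prop:reg-compatible-form} to write $g^\reg_{L^2} = -\Re\,\Omega^\reg_{\zeta=1}(\cdot, I\cdot)$, convert $\Omega^\reg_{\zeta=1}$ to $\Omega^\ov_{\zeta=1}$ via \cref{thm:reg-equals-ov}, and close the loop with the hyperkähler compatibility of $g^\ov$, checking along the way that $z=-2im$ identifies the Higgs complex structure $I(\dot m)=i\dot m$ with $I^\ov$ on the base of the fibration. One caveat on signs: the relation you quote, $g^\ov = -\Re\,\Omega^\ov_{\zeta=1}(\cdot, I\cdot)$, has the wrong sign in the paper's conventions. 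The minus sign in \eqref{eq:metric-and-form} is an artifact of the Atiyah--Bott normalization $\Omega_\zeta = -(\NAH_\zeta)^*\Omega^\AB$; the intrinsic twistorial relation \eqref{eq:metric-from-holo-form} is $g = +\Re\,\Omega_{\zeta=1}(\cdot, I\cdot)$, and it is the latter that applies to $\Omega^\ov_{\zeta=1}$ (the paper verifies $g^{\ov,\sf} = \Re\,\Omega^{\ov,\sf}_{\zeta=1}(\cdot, I^\ov\cdot)$ explicitly in \cref{sub:explicit-metric-formulas}). Your displayed chain then contains a second, compensating slip in its last step: $-\tfrac{1}{4\pi^2}\bigl(-\Re\,\Omega^\reg_{\zeta=1}(\cdot, I\cdot)\bigr) = -\tfrac{1}{4\pi^2}\,g^\reg_{L^2}$, not $+\tfrac{1}{4\pi^2}\,g^\reg_{L^2}$. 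The two sign errors cancel, so the conclusion is correct, but as written two of the intermediate identities are individually false; starting from $g^\ov = +\Re\,\Omega^\ov_{\zeta=1}(\cdot, I^\ov\cdot)$ the chain goes through cleanly and reproduces the paper's argument verbatim.
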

\begin{proof}

    \begin{align*}
        g_{L^2}^\reg (\cdot, \cdot) &= -\Re \Omega^\reg_{\zeta = 1}(\cdot, I \cdot) 
        \tag*{by \cref{prop:reg-compatible-form}} \\
        &= 4\pi^2 \cdot \Re  \,  \Omega^{\ov}_{\zeta = 1}(\cdot, I^\ov \cdot) 
        \tag*{by \cref{thm:reg-equals-ov}} \\
        &= 4\pi^2 \cdot g^\ov(\cdot, \cdot) \tag*{since $g^\ov$ is hyperkähler.}
    \end{align*}
    The second equality uses the fact that the identification $\Xfr \cong \Mov$ preserves the complex structures on the Hitchin section. 
    Indeed, $I^\ov$ acts by multiplication by $+i$ on variations $\dot{z}$ of the Ooguri-Vafa coordinate (see \cref{sec:ooguri-vafa-construction}), and $I$ acts by $+i$ on the corresponding variations $-2\pi \dot{m}$.
\end{proof}

\subsection{Semiflat metric} 
\label{sub:semiflat-metric}

The same argument works for the semiflat metric on the Hitchin section.
Here we can define
\begin{equation} \label{eq:reg-hitchin-section-sf-integral}
    g_{L^2}^{\reg, \sf}(\dot{P}, \dot{P}) 
    \coloneqq  \lim_{R \to \infty} \left[  \left(\int_{C_R} 2 \frac{|\dot{P}|^2}{|P|} \, dx dy \right) - 16 \pi \log R |\dot{m}|^2  \right]
\end{equation}
and make use of the identification
\begin{equation*}
    \Omega^{\ov, \sf}_\zeta|_{\Bfr}  = -\frac{1}{4 \pi^2} \Omega^{\reg, \sf}_\zeta|_{\Bfr}
\end{equation*}
from \cref{thm:hitchin-section-shift}.

\begin{cor}[Semiflat Ooguri-Vafa metric and regularization] \label{thm:sf-reg-equals-sf-ov-metric}
    On the polynomial Hitchin section parametrized by $P(z) = z^2 + 2m$ for $m \in \C$,
    \begin{equation}
        g^{\reg, \sf}_{L^2} = 4 \pi^2 \cdot g^{\ov, \sf},
    \end{equation}
    i.e.\ the regularized semiflat $L^2$-metric coincides with the semiflat Ooguri-Vafa metric.
\end{cor}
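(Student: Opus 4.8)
The plan is to run exactly the same argument as in the proof of \cref{thm:reg-equals-ov-metric}, now using the semiflat compatibility statement in place of the full one. First I would establish the semiflat analogue of \cref{prop:reg-compatible-form}, namely that on the framed Hitchin section $\Bfr$,
\begin{equation*}
    g_{L^2}^{\reg, \sf}(\cdot, \cdot) = -\Re \, \Omega^{\reg, \sf}_{\zeta=1}(\cdot, I \cdot).
\end{equation*}
The integral terms match as in the (unregularized) semiflat case: the norm formula \eqref{eq:hitchin-section-sf-integral} for $g_{L^2}^\sf$ is precisely the leading-order piece of the general integral \eqref{eq:hitchin-section-integral} with $u, F$ replaced by $u_\sf, F_\sf$, and the Atiyah-Bott pairing $-\Re \int_{C_R} \tr(\dot{\nabla}^\sf_{\dot{P}} \wedge \dot{\nabla}^\sf_{i \dot{P}})$ produces this same integrand via the standard metric-form compatibility. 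Since the singular terms of $\nabla^\sf_\zeta$ agree with those of $\nabla_\zeta$ (as noted after \eqref{eq:sf-framed-connection-form}), the coefficients $\mu, \lambda$ are unchanged, so the regularization term of $\Omega^{\reg, \sf}$ is identical to that of $\Omega^\reg$: with $\dot{m}^{(3)} \equiv 0$ on $\Bfr$, it reduces to $16\pi \log R \, \Im(\dot{m}_1 \dot{\overline{m}}_2)$, which evaluates to $-16\pi \log R \, |\dot{m}|^2$ on $(\dot{P}, i\dot{P})$—exactly the subtraction in \eqref{eq:reg-hitchin-section-sf-integral}.

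With this compatibility in hand, the corollary follows by the same three-line chain as \cref{thm:reg-equals-ov-metric}:
\begin{align*}
    g_{L^2}^{\reg, \sf}(\cdot, \cdot) &= -\Re \, \Omega^{\reg, \sf}_{\zeta=1}(\cdot, I \cdot) \\
    &= 4\pi^2 \cdot \Re \, \Omega^{\ov, \sf}_{\zeta=1}(\cdot, I^\ov \cdot) \\
    &= 4\pi^2 \cdot g^{\ov, \sf}(\cdot, \cdot),
\end{align*}
where the middle equality uses \cref{thm:hitchin-section-shift} (which identifies $\Omega^{\ov, \sf}_\zeta|_{\Bfr}$ with $-\tfrac{1}{4\pi^2}\Omega^{\reg, \sf}_\zeta|_{\Bfr}$) together with the fact that the identification $\Xfr \cong \Mov$ preserves the complex structures along $\Bfr$, and the final equality is the hyperkähler compatibility of $g^{\ov, \sf}$.

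The only genuinely new ingredient is the semiflat metric-form compatibility on $\Bfr$, so I expect the main obstacle to be verifying that $-\Re \int_{C_R} \tr(\dot{\nabla}^\sf_{\dot{P}} \wedge \dot{\nabla}^\sf_{i \dot{P}})$ does indeed reproduce the semiflat integrand $2|\dot{P}|^2/|P|$. This requires knowing that the Coulomb-gauge representative of the variation with respect to the decoupled Hitchin equations \eqref{eq:decoupled-hitchin} has the norm $\int_\C 2|\dot{P}|^2/|P|$; this is essentially the content of replacing $(u, F)$ by $(u_\sf, F_\sf)$ in the derivation of \cite{Dumas:2019}, and one should confirm that the singular behaviour of $h_\sf$ at the branch points does not introduce additional boundary contributions to the Atiyah-Bott pairing. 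Since $\nabla^\sf_\zeta$ is genuinely flat away from the branch points and its variations vanish to the required order there (cf.\ the treatment of branch-point contributions in \cref{rem:var-of-ab-connection}), I anticipate this causes no trouble, and the remaining steps are routine given the results already established.
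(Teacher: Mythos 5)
Your proposal is correct and takes essentially the same route as the paper: the paper likewise asserts that ``the same argument works'' for the semiflat case, i.e.\ the semiflat analogue of \cref{prop:reg-compatible-form} on $\Bfr$ combined with the identification $\Omega^{\ov, \sf}_\zeta|_{\Bfr} = -\tfrac{1}{4\pi^2}\Omega^{\reg,\sf}_\zeta|_{\Bfr}$ from \cref{thm:hitchin-section-shift} and the hyperk\"ahler compatibility of $g^{\ov,\sf}$. The paper additionally verifies the identity by a fully explicit computation of $\int 2|\dot{P}|^2/|P|\,dx\,dy$ in elliptic coordinates (matching it against $4V^\sf\|dm\|^2$ with $\Lambda = 4i$), which sidesteps the Coulomb-gauge and branch-point subtleties you flag at the end; since the semiflat integrand is completely explicit here, that direct check is the cleanest way to close the one gap you identify.
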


In fact, everything here is so explicit that the calculation can be done by hand.
In elliptic coordinates $(\mu, \nu)$ with  $x = \sqrt{-2m} \cosh \mu \cos \nu$ and $y = \sqrt{-2m} \sinh \mu \sin \nu$, there is a neat expression for the integral
\begin{equation}
    \int_{C_{\mu_0}} 2 \frac{|\dot{P}|^2}{|P|} \, dx dy = 16 \pi |\dot{m}|^2 \mu_0
\end{equation}
over $C_{\mu_0} \coloneqq \{ (\mu, \nu) : \mu \leq \mu_0 \}$.

For large $\mu_0$,
\begin{equation*}
    R = |x+iy| \approx \left|\sqrt{-2m} \cdot  \frac{e^{\mu_0}}{2} \right| = \sqrt{\frac{|m|}{2}} \, e^{\mu_0},
\end{equation*}
and so 
\begin{equation*}
    \log R \approx  \frac{1}{2} \log \frac{|m|}{2}  + \mu_0.
\end{equation*}
We can therefore rewrite the regularized semiflat metric \eqref{eq:reg-hitchin-section-sf-integral} as
\begin{align*}
    g_{L^2}^{\reg, \sf}(\dot{P}, \dot{P})
    &=  \lim_{\mu_0 \to \infty} \left[  \left(\int_{C_{\mu_0}} 2 \frac{|\dot{P}|^2}{|P|} \, dx dy \right) - 16 \pi \left(\frac{1}{2} \log \frac{|m|}{2}  + \mu_0 \right) |\dot{m}|^2  \right] \\
    &= -8\pi  \log \left( \frac{|m|}{2} \right) |\dot{m}|^2 \\
    &= 4\pi^2 \cdot g^{\ov, \sf} (\dot{P}, \dot{P}),
\end{align*}
where the last equality uses the explicit formula for the semiflat Ooguri-Vafa metric
\begin{equation*}
    g^{\ov, \sf} = 4 V^\sf \|dm \|^2
\end{equation*}
on the Hitchin section from \cref{sub:explicit-metric-formulas}.
(Recall that $V^\sf$ depends on the Ooguri-Vafa cutoff $\Lambda$:
\begin{equation*}
    V^\sf(m) = -\frac{1}{2\pi} \log \left(\frac{2|m|}{|\Lambda|} \right).
\end{equation*}
The above calculation perhaps gives some geometric insight into the mysterious-looking choice $\Lambda = 4 i$.)

\begin{remark}
    Although the (non-regularized) integrals defining $g_{L^2}$ and $g_{L^2}^\sf$ are both divergent, their difference
    \begin{equation*}
        g_{L^2} - g_{L^2}^\sf = g_{L^2}^\reg - g_{L^2}^{\reg, \sf}
    \end{equation*}
    is convergent (the regularization terms cancel).

\begin{cor}[Instanton part]
On the polynomial Hitchin section parametrized by $P(z) = z^2 + 2m$ for $m \in \C$,
    \begin{equation}
        g_{L^2} - g_{L^2}^\sf  = 4\pi^2 \cdot \underbrace{(g^\ov  - g^{\ov, \sf})}_{\ds g^{\ov, \inst}},
    \end{equation}
    i.e.\ the difference of the (divergent) $L^2$-metrics is given by the instanton part of the Ooguri-Vafa metric
    \begin{equation}
        g^{\ov, \inst} = V^\inst(m) \| dm \|^2,
    \end{equation}
    where $V^\inst$ is the sum of Bessel functions in \eqref{eq:v-sf-inst}.
\end{cor}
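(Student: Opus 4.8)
The plan is to deduce the corollary directly from the two metric identifications \cref{thm:reg-equals-ov-metric} and \cref{thm:sf-reg-equals-sf-ov-metric}, combined with the key observation (recorded in the preceding remark) that the full and semiflat regularized $L^2$-metrics on $\Bfr$ carry the \emph{same} counterterm. Everything is explicit on the polynomial Hitchin section $P(z) = z^2 + 2m$, so no new analysis is required beyond matching the divergent tails of the two integrals.

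First I would establish the identity $g_{L^2} - g_{L^2}^\sf = g_{L^2}^\reg - g_{L^2}^{\reg,\sf}$. Comparing the regularized integrals \eqref{eq:reg-hitchin-section-integral} and \eqref{eq:reg-hitchin-section-sf-integral}, both are regularized by subtracting the identical term $16\pi\log R\,|\dot m|^2$, so that
\[
    g_{L^2}^\reg(\dot P, \dot P) - g_{L^2}^{\reg,\sf}(\dot P, \dot P) = \lim_{R\to\infty}\int_{C_R}\big[\,\mathcal{I} - \mathcal{I}^\sf\,\big]\,dx\,dy,
\]
where $\mathcal{I}$ and $\mathcal{I}^\sf$ denote the integrands of \eqref{eq:reg-hitchin-section-integral} and \eqref{eq:reg-hitchin-section-sf-integral}. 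The point is that this limit already converges without any subtraction: using the asymptotics $u\sim u_\sf$ and $F\sim F_\sf$, both integrands have the common leading behaviour $2|\dot P|^2/|P|\sim 8|\dot m|^2/|z|^2$ as $|z|\to\infty$, so their difference decays strictly faster than $|z|^{-2}$ and is integrable at infinity. Hence the $\log R$ divergences genuinely cancel \emph{before} the limit, and the difference of the regularized metrics equals the (convergent) difference of the original, unregularized $L^2$-integrals, which is precisely $g_{L^2} - g_{L^2}^\sf$.

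Next I would substitute the two identifications on the section $P(z)=z^2+2m$: by \cref{thm:reg-equals-ov-metric} we have $g_{L^2}^\reg = 4\pi^2\,g^\ov$, and by \cref{thm:sf-reg-equals-sf-ov-metric} we have $g_{L^2}^{\reg,\sf} = 4\pi^2\,g^{\ov,\sf}$. Subtracting and using the previous paragraph gives
\[
    g_{L^2} - g_{L^2}^\sf = 4\pi^2\big(g^\ov - g^{\ov,\sf}\big) = 4\pi^2\,g^{\ov,\inst},
\]
the last equality being the definition of the instanton part. To extract the explicit form $g^{\ov,\inst} = V^\inst(m)\,\|dm\|^2$, I would read off the potential from the Gibbons--Hawking description of $\Mov$ recalled in the appendix, where the harmonic function splits into a semiflat piece plus the instanton correction $V^\inst$, the sum of Bessel functions in \eqref{eq:v-sf-inst}; restricting the metric to the base exactly as in the semiflat computation above then yields the stated formula (with the same normalization conventions as \cref{sub:explicit-metric-formulas}).

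I expect essentially no obstacle here: the only substantive point is the convergence of the difference integral, which is exactly the statement that the divergent tails of $g_{L^2}$ and $g_{L^2}^\sf$ cancel, and this is what forces the two regularization counterterms to coincide in the first place. All remaining steps are formal consequences of the metric theorems already proved.
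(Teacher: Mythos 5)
Your proposal is correct and follows essentially the same route as the paper: the corollary is obtained exactly as you describe, by observing that the two regularized metrics \eqref{eq:reg-hitchin-section-integral} and \eqref{eq:reg-hitchin-section-sf-integral} share the identical counterterm $16\pi\log R\,|\dot m|^2$, so that $g_{L^2}-g_{L^2}^\sf = g_{L^2}^\reg - g_{L^2}^{\reg,\sf}$, and then substituting \cref{thm:reg-equals-ov-metric} and \cref{thm:sf-reg-equals-sf-ov-metric} and reading off $V^\inst$ from the Gibbons--Hawking splitting \eqref{eq:v-sf-inst}. Your justification of the convergence of the difference integral via the common asymptotics $u\sim u_\sf$, $F\sim F_\sf$ is in fact slightly more explicit than the paper's one-line parenthetical remark.
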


This was originally anticipated in a numerical calculation by Andy Neitzke (relayed in a personal communication), which this project set out to explain.
\end{remark}

\appendix

\section{Background on Hitchin's hyperkähler metric} 
\label{sec:hyperkahler-metric}

In order to establish notation and conventions, we will briefly recall some salient facts about the hyperkähler metric on a moduli space of Higgs bundles.
For simplicity we will specialize to $\SL(2)$-Higgs bundles of degree $0$ on a compact Riemann surface $C$, as first studied in \cite{Hitchin:1987}.\footnote{In the main text we will consider surfaces with punctures (i.e.\ allow the Higgs bundles to have singularities); see \cref{sec:setup} for the relevant definitions.}

In this case the Hitchin moduli space $\M^\Higgs$ parametrizes \emph{harmonic bundles} $(E, \theta, h)$ where:
\begin{itemize}
    \item $E$ is a holomorphic rank $2$ vector bundle over $C$.
    \item $\theta$, the \emph{Higgs field}, is a traceless holomorphic bundle map
    \begin{equation}
        \theta: E \to E \otimes K_C,
    \end{equation}
    where $K_C$ is the canonical bundle of holomorphic $1$-forms on $C$.

     \item $h$, the \emph{harmonic metric}, is a hermitian metric on $E$ satisfying Hitchin's equation
      \begin{equation} \label{eq:INTRO-hitchin}
          F_{D_h} + [ \theta, \theta^{\dagger_h}] = 0.
      \end{equation}
    Here $D_h$ is the canonical Chern connection associated to $(E, h)$ and $F_{D_h}$ is its curvature.
\end{itemize}

\subsection{\texorpdfstring{$L^2$-metric}{L²-metric}} 
\label{sub:l2-metric}

There is a naturally defined (but highly transcendental) Riemannian metric $g_{L^2}$ on $\M^\Hit$, involving solutions to Hitchin's equation \eqref{eq:INTRO-hitchin}.
The metric can be written down as follows (see the presentation in \cite{Dumas:2019}).

A tangent vector $v$ to $\M^\Hit$ is represented by a $1$-parameter family of harmonic bundles $(E_t, \theta_t, h_t)$, up to gauge transformations.
By identifying the underlying hermitian bundles $(E_t, h_t)$ we obtain a family of unitary connections $D_t = d + A_t$ and $1$-forms $\Theta_t \coloneqq \theta_t - \theta_t^{\dagger_h} \in \Omega^1(\mathfrak{su}(E))$, with variations
\begin{equation}
    (\dot{A}, \dot{\Theta}) = \left.\frac{d}{dt}\right|_{t=0} (D_t, \Theta_t) \in \Omega^1(\mathfrak{su}(E))^2.
\end{equation}
The $L^2$-norm of $v$ is given by
\begin{equation} \label{eq:l2-norm}
    g_{L^2}(v, v) = \int_C \|\dot{A} \|^2 + \|\dot{\Theta} \|^2
\end{equation}
for the choice of representative $(\dot{A}, \dot{\Theta})$ which minimizes the norm (or equivalently, is $L^2$-orthogonal to the gauge orbit through $(D_0, \Theta_0)$).
Such a representative is said to be \emph{in Coulomb gauge}.

\subsection{Hyperkähler structure}

The metric $g_{L^2}$ is in fact \emph{hyperkähler}, meaning it is compatible with three complex structures $I, J, K$ satisfying the quaternion relations. 

The first complex structure $I$ is the natural complex structure on $\M^\Hit$ viewed as a space of Higgs bundles.
In particular, it acts on a variation $\dot{\theta}$ of a Higgs field by
\begin{equation} \label{eq:I-on-higgs-variation}
    I(\dot{\theta}) = i \dot{\theta}.
\end{equation}
The other complex structures naturally come from viewing $\M^\Hit$ as a space of flat connections, as we will describe later.

If $w_I, w_J, w_K$ denote the three Kähler forms, then
\begin{equation}
    \Omega_I \coloneqq \omega_J + i \omega_K
\end{equation}
is a holomorphic symplectic form with respect to $I$, and likewise with the indices cyclically permuted.
The metric can be recovered by
\begin{equation} \label{eq:metric-from-holo-form}
    g_{L^2}(\cdot, \cdot) = \omega_I(\cdot, I\cdot) = \Re \Omega_K(\cdot, I \cdot)
\end{equation}
(and cyclic permutations thereof).
    
The above data can be packaged into a \emph{twistor family} of complex structures $I_\zeta$ for $\zeta \in \CP^1$ with corresponding holomorphic symplectic forms $\Omega_\zeta$ (see \cite{Hitchin:1987b,Hitchin:1992b}, or \cite[Section 3]{Gaiotto:2010} for a summary).
This comes from the fact that $a I + bJ + cK$ is also a complex structure for any
\begin{equation*}
    (a,b,c) \in S^2 \leftrightarrow \zeta \in \CP^1,
\end{equation*}
identified stereographically so that
\begin{equation} \label{eq:zeta-complex-structures}
     I \leftrightarrow I_{\zeta = 0}, \quad J \leftrightarrow I_{\zeta = -i}, \quad K \leftrightarrow I_{\zeta = 1}.
\end{equation}

\subsection{Flat connections and nonabelian Hodge} 
As mentioned in \cref{sec:introduction}, for each $\zeta \in \C^*$ the nonabelian Hodge map
\begin{equation*}
    \NAH_\zeta: (E, \theta, h) \mapsto (E, \nabla_\zeta =  \zeta^{-1} \theta + D_h + \zeta \theta^{\dagger_h})
\end{equation*}
identifies $(\M^\Hit, I_\zeta)$ with the moduli space $\M^\dR$ of flat $\SL(2, \C)$-connections on $C$ (in its natural complex structure).
Furthermore, the holomorphic symplectic form $\Omega_\zeta$ for $(\M^\Hit, I_\zeta)$ satisfies
\begin{equation} \label{eq:higgs-sympl-form}
    \Omega_\zeta = -(\NAH_\zeta)^* \Omega^\AB,
\end{equation}
where 
\begin{equation*} 
    \Omega^\AB (\dot{\nabla}_1, \dot{\nabla}_2) = \int_C \tr (\dot{\nabla}_1 \wedge \dot{\nabla}_2)
\end{equation*}
is the holomorphic symplectic \emph{Atiyah-Bott form} on $\M^\dR$ \cite{Atiyah:1983}.\footnote{The negative sign in \eqref{eq:higgs-sympl-form} is just due to our choice of conventions; we could instead have absorbed it into the definition of $\Omega^{\AB}$.}

In particular, with notation as above,
\begin{equation}
    \Omega_K \equiv \Omega_{\zeta = 1} = -(\NAH_{\zeta = 1})^* \Omega^\AB,
\end{equation}
and so the hyperkähler metric satisfies
\begin{equation} \label{eq:metric-and-form}
    g_{L^2}(\cdot, \cdot) = -\Re  \, (\NAH_{\zeta = 1})^* \Omega^\AB(\cdot, I \cdot).
\end{equation}

\section{Construction of the Ooguri-Vafa space} 
\label{sec:ooguri-vafa-construction}

In this appendix we will briefly explain how the Ooguri-Vafa space $\Mov$ can be constructed using the Gibbons-Hawking ansatz.
The construction itself is not crucial for understanding our results, but we include an overview for completeness. 
(The essential features of $\Mov$ that we will use are summarized in \cref{ssub:summary-ov-space}.)
Many more technical details and proofs can be found in \cite{Gross:2000}; 
we will mostly follow the presentation of the summaries in \cite{Gaiotto:2010,Tulli:2019}.

\subsection{Gibbons-Hawking ansatz} 
\label{sub:gibbons-hawking-ansatz}

The \emph{Gibbons-Hawking ansatz} is a recipe for producing a hyperkähler metric on a $U(1)$-bundle $X$ over an open subset of $\R^3$, using a positive harmonic function $V$ (which satisfies a certain integrality condition).
The Ooguri-Vafa space \cite{Ooguri:1996} can be constructed as a quotient of such a space $X$, for a particular choice of harmonic function $V$ with an additional $\Z$-symmetry.

\begin{construction}[Gibbons-Hawking]
    \begin{enumerate}
        \item[]
        \item Fix an open subset $U \subseteq \R^3$ with coordinates $x_1, x_2, x_3$.
        Let $V: U \to \R_{>0}$ be a positive harmonic function such  that the homology class $[\frac{i}{2\pi} F] \in H^2(U, \R)$ is integral, where $F \coloneqq 2 \pi i {\star} V$ and $\star$ denotes the usual Hodge star on $\R^3$.
        
        \item By the above integrality assumption, $[\frac{i}{2\pi} F]$ is the first Chern class of a $U(1)$-bundle $X \to U$, i.e.\ there exists a $U(1)$-connection $\Theta \in \Omega^1(X, \mathfrak{u}(1) = i \R)$ on $X$ with curvature $d \Theta = \pi^* F$.
        To simplify the notation, we will consider the real form $\tilde{\Theta} \coloneqq \frac{\Theta}{2 \pi i}$ and omit pullbacks $\pi^*$.

        \item Define three (real) symplectic forms by
        \begin{gather}
            \omega_1 \coloneqq - \tilde{\Theta} \wedge d x_1 + V dx_2 \wedge dx_3, \\
            \omega_2 \coloneqq - \tilde{\Theta} \wedge d x_2 + V dx_3 \wedge dx_1, \\
            \omega_3 \coloneqq - \tilde{\Theta} \wedge d x_3 + V dx_1 \wedge dx_2.
        \end{gather}
    
    \item The complex form
    \begin{equation}
        \Omega_1 \coloneqq  \omega_2 + i \omega_3
    \end{equation}
    can be written as $\Omega_1 = V \alpha_1 \wedge d z_1$, where
    \begin{gather}
        \alpha_1 \coloneqq - \tilde{\Theta} V^{-1} + i d x_1, \\ 
        z_1 \coloneqq x_2 + i x_3.
    \end{gather}
    There exists a unique integrable complex structure $I_1$ on $X$ for which $\Omega_1$ is a holomorphic symplectic form.
    This structure $I_1$ acts by $+i$ on $ dz_1$ and $\alpha_1$ (so in particular $z_1: X \to \C$ is an $I_1$-holomorphic map).

    \item The other two complex structures $I_2, I_3$ are obtained by cyclically permuting all of the formulas above.
    They satisfy $I_1 I_2 = I_3$ and define a hyperkähler structure with corresponding metric
    \begin{equation} \label{eq:gibbons-hawking-metric}
        g = V^{-1} \tilde{\Theta}^2 + V || d \vec{x} ||^2.
    \end{equation}
\end{enumerate}
\end{construction}

\begin{notation}[Complex structures]
To harmonize the convention
\begin{equation}
    I_\zeta = \frac{i (-\zeta + \overline{\zeta}) I_1 - (\zeta + \overline{\zeta})I_2 + (1 - |\zeta|^2) I_3 }{1+|\zeta|^2}
\end{equation}
from \cite{Gaiotto:2010} with our stereographic conventions \eqref{eq:zeta-complex-structures} for $I, J, K$, we will call
\begin{equation}
    I \coloneqq I_3, \quad J \coloneqq -I_1, \quad K \coloneqq -I_2.
\end{equation}
\end{notation}

\subsection{Ooguri-Vafa from Gibbons-Hawking} 
\label{sub:ooguri-vafa-from-gibbons-hawking}

The Ooguri-Vafa space is obtained by applying a slightly modified version of the Gibbons-Hawking construction with the harmonic function
\begin{equation}
    V(x_1, x_2, x_3) \coloneqq \frac{1}{4 \pi} \left( \sum_{n=-\infty}^\infty \frac{1}{\sqrt{x_1^2 + x_2^2 + (x_3 + n)^2}} - c_n \right),
\end{equation}
where $c_n \in \R_{>0}$ are constants chosen so that the sum converges\footnote{e.g.\ one could take $c_n = \tfrac{1}{n+\tfrac{1}{2}}$}.
By Poisson resummation one can rewrite
\begin{align} \label{eq:v-sf-inst}
    V(x_1, x_2, x_3) &= \underbrace{-\frac{1}{2\pi} \log \left(\frac{|z|}{|\Lambda|} \right) \vphantom{\sum_{n \in \Z \setminus \{0\}}} }_{\ds \eqqcolon V^\sf} + \underbrace{\frac{1}{2\pi} \sum_{n \in \Z \setminus \{0\}} e^{2 \pi i n x_3} K_0 (2 \pi |nz|)}_{\ds \eqqcolon V^\inst} 
\end{align}
where $z \coloneqq z_3 = x_1 + i x_2$, the constant $|\Lambda| \in \R_{>0}$ depends on the choice of $c_n$, and $K_0$ is the zeroth modified Bessel function of the second kind.

This choice of $V$ is positive on an open set
\begin{equation*}
    U = \B \times \R \setminus \{(0,0) \} \times \Z
\end{equation*}
for some sufficiently small neighbourhood $\B \subseteq \R^2 \cong \C$ of the origin, and satisfies the required integrality condition.
The Gibbons-Hawking ansatz therefore produces a hyperkähler $U(1)$-bundle $X \to U$ as above.

Furthermore, $V$ is invariant under the $\Z$-action $x_3 \to x_3 + n$. 
This action can be lifted to $X$,\footnote{There is really a $U(1)$-worth of choice for the lift, which can be combined with the constant $|\Lambda| \in \R_{>0}$ into the complex cutoff parameter $\Lambda \in \C^*$.} and taking the quotient gives a $U(1)$-bundle $\tilde{X}$ over 
    \begin{equation*}
        \tilde{U} = \B \times S^1 \setminus \{(0,0,1) \}.
    \end{equation*}
The hyperkähler structure descends to $\tilde{X}$, and smoothly extends over $\B \times S^1$ by adding a point---this extension is called the Ooguri-Vafa space $\M^\ov(\Lambda)$.
It can be viewed as a singular torus fibration over $\B$, as shown in \cref{fig:ov-space}.
The torus fibres are parametrized by the electric angle $\theta_e = 2\pi x_3$  and a (locally defined) magnetic angle $\theta_m$ on the $U(1)$-bundle $\tilde{X}$.

\subsection{Explicit metric formulas} 
\label{sub:explicit-metric-formulas}

For reference in \cref{sec:metric-on-the-hitchin-section}, we will give some more explicit formulas for the Ooguri-Vafa metric $g^\ov$ on the Hitchin section.
As we explain in \cref{sec:duality-on-hitchin-section}, under the identification $\Xfr \cong \Mov$, the Hitchin section $\Bfr$ is given by the locus in $\Mov$ with 
\begin{itemize}
    \item $x_3 \equiv \frac{1}{2}$ (corresponding to the choice of parabolic weights $m^{(3)} = \frac{1}{2}$), and
    \item $\theta_m \equiv 0$ (corresponding to the choice of framing).
\end{itemize}
This will allow us to simplify many of the formulas from above.

The $U(1)$-connection $\Theta$ can be written \cite{Gaiotto:2010} as $\Theta = i d \theta_m + 2\pi i A$
for
\begin{equation}
    \begin{split}
    A = &\frac{i}{4\pi} \left(\log \left(\frac{z}{\Lambda} \right) - \log \left( \frac{\overline{z}}{\overline{\Lambda}}  \right)  \right) dx_3 \\
    &- \frac{1}{4\pi} \left( \sum_{n \neq 0} \text{sgn}(n) e^{2 \pi i n x_3} |z| K_1(2 \pi |nz|)  \right) \left(\frac{dz}{z} - \frac{d \overline{z}}{\overline{z}} \right),
    \end{split}
\end{equation}
where $K_1$ is the first modified Bessel function of the second kind.
Note that $A$ vanishes when $x_3 \equiv \frac{1}{2}$.

It follows that on the Hitchin section, the metric \eqref{eq:gibbons-hawking-metric} produced by the Gibbons-Hawking ansatz takes the simple form
\begin{equation}
    g^\ov|_{\Bfr} = V \|dz\|^2.
\end{equation}
Similarly, the semiflat metric is given by
\begin{equation}
    g^{\ov, \sf}|_{\Bfr} = V^\sf \|dz\|^2,
\end{equation}
where
\begin{equation*}
    V^\sf = -\frac{1}{2\pi} \log \left(\frac{|z|}{|\Lambda|} \right).
\end{equation*}

Note that with the conventions established above, the coordinate $z = x_2 + i x_3$ is holomorphic with respect to the complex structure $I^\ov = I_3$.
One can directly see from the formulas that the metric satisfies
\begin{equation}
    g^{\ov, \sf}(\cdot, \cdot) = \Re \Omega^{\ov, \sf}_{\zeta = 1}(\cdot, I^\ov \cdot)
\end{equation}
on $\Bfr$ (cf.\ \eqref{eq:metric-from-holo-form}).

Under the correspondence $\Mov(\Lambda = 4 i) \cong \Xfr$ (which associates $z \leftrightarrow -2i m$), we can regard $\Bfr$ as a polynomial Hitchin section as in \cref{sub:metric-regularization}, and write
\begin{equation} \label{eq:ov-sf-polynomial-norm}
    g^{\ov, \sf}(\dot{P}, \dot{P}) = -\frac{2}{\pi} \log \left( \frac{|m|}{2} \right) |\dot{m}|^2
\end{equation}
for a variation $\dot{P}$ of $z^2 + 2m$.

\section{Parabolic bundles and framing} 
\label{sec:parabolic-bundles-and-framing}

In this appendix we discuss the relation between compatible frames, eigenframes, and parabolic structures for bundles $(E, \theta, h, g) \in \Hfr$.

\subsection{Parabolic and filtered bundles} 
\label{sub:parabolic-and-filtered-bundles}

We start by reviewing some general definitions involving parabolic and filtered bundles (see e.g.\ \cite{Simpson:1990,Mochizuki:2011}).
We will use similar notation and conventions as in \cite{Tulli:2019}.

Fix a Riemann surface $C$ and a finite subset $D \subseteq C$.

\begin{definition}[Parabolic vector bundles]
    Let $\mathbf{c} = (c_p)_{p \in D} \in \R^D$.
    A $\mathbf{c}$-\emph{parabolic vector bundle} over $(C, D)$ consists of a holomorphic vector bundle $E \to C$, together with an increasing flag of vector spaces $E_{p,i}$ and increasing sequence of weights $\alpha_{p,i} \in (c_p -1, c_p]$ at each $p \in D$:
    \begin{equation}
    \begin{alignedat}{6}
    0 = E_{p,0} &  &&\subset E_{p,1} &&\subset E_{p,2} &&\subset \cdots &&\subset E_{p,n_p} &&= E|_p \\
    c_p - 1 & &&< \alpha_{p,1} &&< \alpha_{p,2} &&< \cdots &&< \alpha_{p,n_p} &&\leq c_p.
    \end{alignedat}
    \end{equation}
    Define the \emph{multiplicity} of the weight $\alpha_{p,i}$ to be $m_{p,i} \coloneqq \dim E_{p,i} - \dim E_{p, i-1}$, and the \emph{parabolic degree} of $E$ by
    \begin{equation}
        \pdeg E \coloneqq \deg E - \sum_{p \in D} \sum_{i=1}^{n_p}  m_{p,i} \alpha_{p,i}.
    \end{equation}
\end{definition}
Most of the constructions in \cite{Tulli:2019} involve $\frac{1}{2}$-parabolic bundles (i.e.\ restrict the weights to $(-\frac{1}{2}, \frac{1}{2}]$), but we will occasionally use the more general notion.

\begin{definition}[Parabolic frame]
    A frame $(\eta_1, \dots, \eta_r)$ for a rank $r$ parabolic vector bundle $E$ near $p$ is \emph{compatible} with the parabolic structure 
    \begin{equation*}
        0  \subset E_{p,1} \subset E_{p,2} \subset \cdots \subset E_{p,n_p} = E|_p
    \end{equation*}
    if there is a subsequence $1 \leq k_1 <  k_2 < \dots < k_{n_p} = r$ such that
    $(\eta_j: j \leq k_i )$ is a frame of $E_{p, i}$ for each $i$.
\end{definition}

The flag data of a parabolic bundle can be expressed in terms of increasing filtrations $\mathcal{P}_* (E|_p)$ indexed by $(c_p - 1, c_p]$.
This can equivalently be formulated without any restrictions on the weights using the notion of a filtered bundle.

\begin{definition}[Filtered bundles] \label{def:filtered-bundles}
    A \emph{filtered bundle} over $(C,D)$ consists of a meromorphic vector bundle $E \to C$ with poles at $D$ (i.e.\ a locally free finite-rank $\O_C(*D)$-module) together with a family $\mathcal{P}_*E = (\mathcal{P}_\alpha E: \alpha \in \R^D)$ of holomorphic subbundles of $E$ (i.e.\ locally free $\O_C$-submodules) such that: 
    \begin{enumerate}[(i)]
        \item  \label{item:filtered-def-1} $\mathcal{P}_\alpha E|_{C \setminus D} = E|_{C \setminus D}$. 

        \item For $p \in D$, the stalk $\mathcal{P}_\alpha E|_p$ depends only on the weight $\alpha_p \coloneqq \alpha(p) \in \R$. 
        We will write $\mathcal{P}_{p, \alpha_p} E  \coloneqq \mathcal{P}_\alpha E|_p$. 

        \item \label{item:filtered-def-3} $\mathcal{P}_{p, \alpha_p} E \subseteq \mathcal{P}_{p, \beta_p} E$ if $\alpha_p \leq \beta_p$, and $\mathcal{P}_{p, \alpha_p+\epsilon} E = \mathcal{P}_{p, \alpha_p} E$ for small $\epsilon > 0$. 
        
        \item \label{item:filtered-def-4} If $w$ is a local coordinate centred at $p \in D$, then $w \mathcal{P}_{p, \alpha_p} E = \mathcal{P}_{p, \alpha_p+1} E$.
    \end{enumerate}
\end{definition}

By property \ref{item:filtered-def-3}, for each $\mathbf{c} \in \R^D$ and $p \in D$ there are finitely many parabolic weights
\begin{equation}
    \{\alpha_p \in (c_p-1, c_p]: \mathcal{P}_{p, \alpha_p} E \neq \mathcal{P}_{p, \alpha_p-\epsilon} E \text{ for small $\epsilon > 0$}\}.
\end{equation}
This defines a corresponding $\mathbf{c}$-parabolic bundle $_\mathbf{c}E$, called the  $\mathbf{c}$-truncation of $\mathcal{P}_*E$. 
Conversely, any $\mathbf{c}$-parabolic bundle determines a filtered bundle by property \ref{item:filtered-def-4}.
The \emph{parabolic degree} of $\mathcal{P}_*E$ is defined to be the parabolic degree of any of its $\mathbf{c}$-truncations $_\mathbf{c}E$.\footnote{Note that replacing $c_p \to c_{p}+1$ increases the degree of $_\mathbf{c}E$ by $\rank E$ (by property \ref{item:filtered-def-4} in \cref{def:filtered-bundles})
and increases each weight $\alpha_{p, i}$ by 1, so $\pdeg \mathcal{P}_*E \coloneqq \pdeg (_\mathbf{c}E)$ is independent of the choice of $\mathbf{c}$.}

\begin{definition}[Parabolic weight of a section]
    Any section $\eta$ of the meromorphic bundle $E$ in a neighbourhood of $p$ has an associated parabolic weight 
    \begin{equation}
        \nu_p(\eta) \coloneqq \min \{\alpha_p : \eta \in \mathcal{P}_{p, \alpha_p}  \}.
    \end{equation}
\end{definition}

\begin{remark}[Growth rate filtration]
    The harmonic bundles in $\H$ naturally carry a filtered structure.
    More generally, given any wild harmonic bundle $(E, \theta, h)$ over $C \setminus D$, there is a corresponding filtered bundle $\mathcal{P}_*^h E$
    defined using the growth rates of sections near $D$ with respect to the harmonic metric $h$. 
    Each stalk $\mathcal{P}_{p, \alpha_p}^h E$ 
    consists of holomorphic sections $s$ in a punctured neighbourhood of $p$ such that
    \begin{equation} \label{eq:growth-filtration-condition}
      |s|_h = \O(|w|^{-\alpha_p - \epsilon}) \quad \text{for every } \epsilon > 0.
    \end{equation}
    We will refer to $\mathcal{P}_*^h E$ as the \emph{growth rate filtration}.
    
    The $\mathbf{c}$-truncations of $\mathcal{P}_*^h E$ are compatible with the Higgs field $\theta$, in the sense that for each $\mathbf{c} \in \R^D$ and $p \in D$ there exists a holomorphic frame of $\theta$-eigenvectors near $p$ compatible with the parabolic structure (i.e.\ with appropriate growth rates) \cite{Mochizuki:2011}.
\end{remark}

\subsection{Constructing and extending compatible frames} 
\label{sub:constructing-and-extending-compatible-frames}

Certain eigenframes can be used to produce a parabolic structure and compatible frame for a bundle in $\Hfr$.
This is explained in two constructions from \cite{Tulli:2019}, which we summarize below.

\begin{construction}[Elements of $\H$, {\cite[Lemma 3.1]{Tulli:2019}}] \label{constr:bundle-from-parameters}
    For each $m \in \C^*$ and $m^{(3)} \in (-\frac{1}{2}, \frac{1}{2}]$, the following construction yields a wild harmonic bundle $(E, \theta, h) \in \H$
    with the specified parameters:
    \begin{enumerate}
        \item Start with the trivial rank 2 bundle $E$ over $\CP^1 \setminus \{\infty\}$, equipped with its standard global frame $(e_1, e_2)$ and Higgs field
        \begin{equation*}
            \theta = \begin{pmatrix}
                0 & 1 \\ 
                z^2 + 2m & 0 
            \end{pmatrix} dz.
        \end{equation*}

        \item Choose a holomorphic $\theta$-eigenframe $(\eta_1, \eta_2)$ in a punctured neighbourhood of $\infty$ such that 
        \begin{equation} \label{eq:eigenframe-wedge}
            \eta_1 \wedge \eta_2  = 
            \begin{cases}
                 e_1 \wedge e_2 & \text{if } m^{(3)} \in (-\frac{1}{2}, \frac{1}{2}),  \\
                 z e_1 \wedge e_2  & \text{if } m^{(3)} = \frac{1}{2},
            \end{cases}
        \end{equation}
        and use it to extend $E$ over $\infty$.

        \item If $m^{(3)} \in (-\frac{1}{2}, \frac{1}{2})$, assign parabolic weights $+m^{(3)}$ to $\eta_1$ and $-m^{(3)}$ to $\eta_2$.
        Otherwise assign $m^{(3)} = \frac{1}{2}$ to both $\eta_1$ and $\eta_2$. 
        This defines a $\frac{1}{2}$-parabolic bundle with the desired parameters $m$ and $m^{(3)}$ and parabolic degree $0$, so the harmonic metric $h$ exists by the general theory of \cite{Biquard:2004}.
        Thus we have an element of $\H$.
    \end{enumerate}
\end{construction}

\begin{construction}[Compatible frame from $\theta$-eigenframe, {\cite[Proposition 3.2]{Tulli:2019}}] \label{constr:frame-from-eigenframe}
    Given a wild harmonic bundle $(E, \theta, h) \in \H$ 
    (e.g.\ produced as above), the following construction yields a compatible frame $g$:

    \begin{enumerate}
        \item Choose a holomorphic eigenframe $(\eta_1, \eta_2)$ compatible with the parabolic structure, ordered so that $\theta$ is of the framed form
        \begin{equation*}
            \theta = -H \frac{dw}{w^3} - mH \frac{dw}{w} + \text{diagonal holomorphic terms}.
        \end{equation*}
        (If $(E, \theta, h)$ was produced using \cref{constr:bundle-from-parameters}, we can use the same frame $(\eta_1, \eta_2)$ as before.)
        Note that $\eta_1$ and $\eta_2$ are asymptotically exponentially orthogonal near $w=0$ \cite{Mochizuki:2011}.

        \item Let
        \begin{equation}
        (v_1, v_2) \coloneqq \begin{cases}
            (\eta_1, \eta _2) & \text{if }  m^{(3)} \in (-\frac{1}{2}, \frac{1}{2}),  \\
            (\eta_1, w\eta_2) & \text{if } m^{(3)} = \frac{1}{2},
        \end{cases}
        \end{equation}
        so that $v_1$ and $v_2$ have respective parabolic weights $+m^{(3)}$ and $-m^{(3)}$.

        \item Let $(e_1, e_2)$ be the Gram-Schmidt orthonormalization of the frame $(v_1, v_2)$ with respect to the harmonic metric $h$, and use it construct a unitary extension of the bundle over $\infty$.
        Then $g = (e_1, e_2)|_\infty$ is a unitary frame with respect to which $\theta$ and $\overline{\partial}_E$ have the desired forms \eqref{eq:higgs-framed-form} and \eqref{eq:holo-str-framed-form}.
    \end{enumerate}
\end{construction}

Together these two constructions produce a framed bundle $(E, \theta, g)$ for any choice of parameters $m$ and $m^{(3)}$.
For each such $(E, \theta, g)$, the isomorphism classes $e^{i \vartheta} \cdot [(E, \theta, g)] = [(E, \theta, e^{i \frac{\vartheta}{2}} \cdot g)]$ exhaust $\Xfr(m, m^{(3)})$.
This gives us a fairly concrete representative for each element in $\Xfr$.

By specifying the initial choice of eigenframe $(\eta_1, \eta_2)$, we can describe the resulting frame $g$ even more explicitly.
We will do this in \cref{sub:self-dual-frames} to construct a framed version of the Hitchin section.

We can also go in the other direction, from a compatible frame to an eigenframe.

\begin{lemma}[Extension to $\theta$-eigenframe] \label{lem:extension-to-eigenframe}
    \begin{enumerate}
        \item[]
        \item Given $(E, \theta, g) \in \Hfr$, the frame $g$ admits an extension to an eigenframe $(\tilde{\eta}_1, \tilde{\eta}_2)$ for $\theta$ near $\infty$.

        \item  Furthermore, the extension can be chosen to be an $SU(2)$-frame with respect to the semiflat metric $h_\sf$ (see \cref{ssub:sf-metrics}), and so that
    \begin{align*}
        D_{h_\sf} &= d +  \frac{m^{(3)}}{2} H \left(  \frac{dw}{w} - \frac{d \overline{w}}{\overline{w}} \right)
    \end{align*}
    with respect to the frame near $w=0$.
    \end{enumerate}
\end{lemma}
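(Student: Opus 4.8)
The plan is to prove \cref{lem:extension-to-eigenframe} in two stages, corresponding to its two parts. The key conceptual point is that the compatible frame $g = (e_1, e_2)$ at $\infty$ was (by \cref{constr:frame-from-eigenframe}) obtained by Gram-Schmidt orthonormalization of an eigenframe $(\eta_1, \eta_2)$ with respect to the \emph{harmonic} metric $h$, so $g$ already \emph{remembers} an underlying eigenframe. The subtlety is that the orthonormalization uses $h$, whereas part (2) asks for an $h_\sf$-unitary eigenframe, and the two metrics differ by lower-order corrections near the puncture.

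For part (1), I would argue as follows. By \cref{def:framed-harmonic}, the frame $g$ extends to an $\SU(2)$-frame in a neighbourhood of $\infty$ with respect to which $\theta = -H\frac{dw}{w^3} - mH\frac{dw}{w} + (\text{regular})$. Since the leading singular term $-H/w^3$ has distinct eigenvalues $\pm 1/w^3$, standard Stokes theory (the existence of a formal/sectorial diagonalization, as invoked for \cref{prop:asymptotic-existence-g} and developed in \cref{sub:sectorial-stokes-data}) produces a holomorphic gauge transformation $\tilde{\Sigma} \to \bbid$ as $w \to 0$ that diagonalizes $\theta$; applying it to $g$ gives an eigenframe $(\tilde{\eta}_1, \tilde{\eta}_2) = g \cdot \tilde{\Sigma}$ that extends $g$ (in the sense that $\tilde{\Sigma} \to \bbid$ at the puncture). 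Concretely, one may also start from the eigenframe $(\eta_1, \eta_2)$ underlying the construction of $g$ and rescale it to match the asymptotics of $g$.

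For part (2), I would first record the general fact that near the puncture the harmonic metric $h$ and the semiflat metric $h_\sf$ are asymptotic: both make the $\theta$-eigenlines orthogonal to leading order and have the same leading growth rates $|w|^{\mp 2m^{(3)}}$ dictated by the parabolic weights $\pm m^{(3)}$ (see \cref{rem:parabolic-weight-cases} and the construction of $h_\sf$ in \cref{ssub:sf-metrics}, where $h_\sf$ is the orthogonal pushforward of the parabolic-adapted flat metric $h_\L$). Given the eigenframe from part (1), I would apply Gram-Schmidt with respect to $h_\sf$ instead of $h$; because the eigenlines are already $h_\sf$-orthogonal (this is built into the definition of $h_\sf$ as a pushforward along the two sheets), this amounts to rescaling each $\tilde{\eta}_i$ by a positive function so that it becomes $h_\sf$-unit-length, producing an $h_\sf$-unitary eigenframe. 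The final claim, that in this frame $D_{h_\sf} = d + \frac{m^{(3)}}{2}H(\frac{dw}{w} - \frac{d\overline{w}}{\overline{w}})$, is then a direct computation: with respect to an $h_\sf$-unitary eigenframe the Chern connection is diagonal and anti-Hermitian, and its diagonal entries are determined by $\partial \log h_\sf(\tilde{\eta}_i, \tilde{\eta}_i)$, which by the adapted form $h_\L(e_p, e_p) = |w|^{2\alpha_p}$ with $\alpha_p = \pm m^{(3)}$ gives exactly the stated coefficients (cf.\ \eqref{eq:sf-chern-framed-form}).

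The main obstacle I anticipate is \textbf{controlling the compatibility between $g$ and the $h_\sf$-unitary eigenframe}: one must check that the rescaling of part (2) genuinely extends the given frame $g$ (rather than merely agreeing with it to leading order) and produces the exact connection form with \emph{no} additional regular terms, as claimed. This requires knowing that $h_\sf$ is adapted to the parabolic structure precisely enough that $h_\sf(\tilde{\eta}_i, \tilde{\eta}_i) = |w|^{\mp 2m^{(3)}}$ on the nose in a suitable holomorphic frame, which is exactly the defining property (ii) of $h_\L$ in the construction of the semiflat metric. The rest is bookkeeping with Gram-Schmidt and the formula for the Chern connection of a diagonal metric.
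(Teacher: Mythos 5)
Your proposal is correct and follows essentially the same route as the paper: for (1) you recover the eigenframe underlying \cref{constr:frame-from-eigenframe} and rescale it to match the asymptotics of $g$ (using asymptotic orthogonality of the eigenlines), and for (2) you renormalize with respect to $h_\sf$ and compute $D_{h_\sf}$ from the adapted model form $h_\L(e_p,e_p)=|w_p|^{2\alpha_p}$, which is exactly how the paper argues. The only cosmetic difference is your first suggested route for (1) via sectorial Stokes theory, which is unnecessary here (diagonalizing $\theta$ near $\infty$ only needs distinctness of the eigenvalues and Mochizuki's compatible eigenframes, not the Stokes apparatus used for $\nabla_\zeta$), but your concrete alternative is the paper's proof.
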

\begin{proof}
    \emph{(1)} First suppose that $(E, \theta, g)$ was produced by orthonormalizing a eigenframe $(v_1, v_2)$ as in \cref{constr:frame-from-eigenframe}. 
    Then we can choose the extension $(\tilde{\eta}_1, \tilde{\eta}_2) \coloneqq (\frac{v_1}{|v_1|_h}, \frac{v_2}{|v_2|_h})$; note that it also approaches $g$ as $w \to 0$ since $v_1$ and $v_2$ are asymptotically exponentially orthogonal.
    The result also holds for $(E, \theta, e^{i \vartheta} \cdot g)$ for any $e^{i \vartheta} \in U(1)$, using the extension $e^{i \vartheta} \cdot (v_1, v_2)$.
    But then the general result follows since every isomorphism class in $\Hfr$ has a representative of this form.

    \emph{(2)} Follow a similar argument as above, but start with an initial holomorphic eigenframe $(\eta_1, \eta_2)$ such that 
    \begin{equation*}
        h_\sf = \begin{pmatrix}
            |w|^{2m^{(3)}} & 0 \\
            0 & |w|^{ -2m^{(3)}}
        \end{pmatrix}
    \end{equation*}
    near $w=0$. 
    (Such a frame is compatible with the parabolic structure---in fact, this is the model scenario, see e.g.\ \cite{Fredrickson:2022}.
    It can be obtained by pushing down the corresponding ``model frames'' for $h_\L$ over $\Sigma$.)
    Choose the extension of $g$ this time by normalizing with respect to $h_\sf$ instead of $h$, i.e.\  by taking $(\tilde{\eta}_1, \tilde{\eta}_2) \coloneqq (\frac{\eta_1}{|\eta_1|_{h_\sf}}, \frac{\eta_2}{|\eta_2|_{h_\sf}}) = (|w|^{-m^{(3)}}\eta_1, |w|^{m^{(3)}} \eta_2)$.
    Note that $(\tilde{\eta}_1, \tilde{\eta}_2)$ still approaches $g$ as $w \to 0$, since $h$ and $h_\sf$ are both compatible with the same parabolic structure.
    By a standard calculation the Chern connection $D_{h_\sf}$ is of the specified form.
\end{proof}

\section{Classical Stokes theory} 
\label{sec:classical-stokes-theory}

Stokes theory for irregular connections is typically formulated in terms of meromorphic connections, but we will prefer to work directly with the complex (but not meromorphic) framed connections $(E, \nabla_\zeta, g)$.
In this appendix we summarize some of the standard notions and results from the classical theory, and explain how they can be translated to our $C^\infty$ setting.

\subsection{Holomorphic frames} 
\label{sub:holomorphic-frames}

For each $(E, \theta, g) \in \Hfr$ and $\zeta \in \C^*$, there is a corresponding \emph{framed filtered flat bundle} $(\mathcal{P}_*^h \mathcal{E}_\zeta, \nabla_\zeta, \tau_{*,\zeta})$ as described in \cite[Section 3.4.2]{Tulli:2019}, where:

\begin{itemize}
  \item $\mathcal{E}_\zeta \coloneqq (E|_{\CP^1 \setminus \{\infty\}}, \bar{\partial}_E + \zeta \theta^\dagger)$ is a holomorphic vector bundle over $\CP^1 \setminus \{\infty\}$.

  \item $\mathcal{P}_*^h \mathcal{E}_\zeta$ is a filtered bundle over $\CP^1$, defined using the growth rate filtration induced by $h$ (see \cref{sub:parabolic-and-filtered-bundles} for these definitions).
  
  \item $\tau_{a,\zeta}$ is a holomorphic frame of $\mathcal{E}_\zeta$ in a neighbourhood of $z=\infty$, with respect to which
\begin{equation}
    \nabla_\zeta = d - (\zeta^{-1} + \overline{\zeta}) H \frac{dw}{w^3} + \Lambda(\zeta) \frac{dw}{w} + \text{holo $(1,0)$ terms},
\end{equation}
where
\begin{equation}
    \Lambda(\zeta) = \begin{pmatrix} \label{eq:exponent-formal-monodromy}
        - \zeta^{-1} m + m^{(3)} + \zeta \overline{m} + n_-(a) & 0 \\
        0 &  \zeta^{-1} m - m^{(3)} - \zeta \overline{m} + n_+(a)
    \end{pmatrix}.
\end{equation}
Here the $n_\pm(a)$ are integers chosen to ensure that the sections of the frame $\tau_{a,\zeta}$ have parabolic weights lying in $(a-1, a]$.\footnote{\cite{Tulli:2019} calls $n_- = n_1$ and $n_+ = n_2$.} 
The frame $\tau_{a,\zeta}$ thereby defines a holomorphic extension of $\mathcal{E}_\zeta$ which coincides with $\mathcal{P}_a^h \mathcal{E}_\zeta$.
\end{itemize}
More explicitly, the compatible holomorphic frame $\tau_{a,\zeta}$ for $\nabla_\zeta$ is given by 
\begin{equation} \label{eq:holo-frame}
    \tau_{a,\zeta} (w) = (e_1, e_2) \cdot g_\zeta(w) |w|^{(m^{(3)} + 2 \zeta \overline{m}) H} w^{N(a)} \exp \left(\frac{\overline{\zeta} H}{2w^2} - \frac{\zeta H}{2 \overline{w}^2}  \right),
\end{equation}
where:
\begin{itemize}
  \item $(e_1, e_2)$ is an extension of the original frame $g$ to a neighbourhood of $w=0$ (as in \cref{def:framed-harmonic}).
  \item $g_\zeta$ is a gauge transformation in a neighbourhood of $w=0$ that kills the regular $(0,1)$-part of $\bar{\partial}_E + \zeta \theta^\dagger$ and satisfies $g_\zeta(0) = \bbid$, obtained from \cite[Section 8]{Biquard:2004}.
  \item $N(a) = \diag(n_-(a), n_+(a))$, where $n_\pm(a)$ is the unique integer such that
  \begin{equation*}
      n_\pm(a) \mp (m^{(3)} + 2 \Re(\zeta \overline{m})) \in (a-1, a].
  \end{equation*}
\end{itemize}
We will write $(\mathcal{P_*E}, \nabla_\zeta, \tau_*) \coloneqq (\mathcal{P}^h_* \mathcal{E}_\zeta, \nabla_\zeta, \tau_{*,\zeta})$ to simplify the notation.

\subsection{Sectorial Stokes data} 
\label{sub:sectorial-stokes-data}

As above, fix $(E, \theta, g) \in \Hfr$ and $\zeta \in \C^*$, and consider the associated framed filtered flat bundle $(\mathcal{P_*E}, \nabla_\zeta, \tau_*)$ over $\CP^1$.
We will recall some key facts involving the classical Stokes data of the connection $\nabla_\zeta$. 
A more general summary of the theory can be found in \cite[Section 3.4.1]{Tulli:2019}, following \cite{Boalch:2001,Boalch:2002,Witten:2008}, but we will just state what is needed for our application.

Choose $a \in \R$ and consider the fixed holomorphic extension $(\mathcal{P}_a \mathcal{E}, \nabla_\zeta, \tau_a)$.
There is a unique formal gauge transformation $\hat{F}_a$ such that $\hat{F}_a(0) = \bbid$ and such that the connection has the diagonal form
\begin{equation} \label{eq:holo-diag-form}
    \nabla_\zeta = d - (\zeta^{-1} + \overline{\zeta}) H \frac{dw}{w^3} + \Lambda(\zeta) \frac{dw}{w}
\end{equation}
in the formal frame $\tau_a \cdot \hat{F}_a$ (see e.g.~\cite[Lemma~1]{Boalch:2002}).

The connection $\nabla_\zeta$ has four anti-Stokes rays $r_1, \dots, r_4$ and four Stokes rays, corresponding to directions in the $w$-plane where $-(\zeta^{-1} + \overline{\zeta}) \frac{1}{w^2}$ is real resp.\ imaginary.
(These coincide with the rays defined in \cref{ssub:stokes-conventions}.)
Recall that $\Sect_i$ denotes the sector bounded by the anti-Stokes rays $r_i$ and $r_{i+1}$, and $\eSect_i$ denotes the extended sector bounded by the adjacent Stokes rays.
The following sectorial asymptotic existence theorem (quoted from \cite[Theorem 3.6]{Tulli:2019}) is a fundamental classical result. 

\begin{prop}[Sectorial asymptotic existence, holomorphic version]
\label{prop:asymptotic-existence-holo}
  In a neighbourhood of $w=0$ in each extended sector $\eSect_i$, there is a unique invertible matrix $\Sigma_i$ of holomorphic functions such that the connection $\nabla_\zeta$ has the diagonal form \eqref{eq:holo-diag-form} in the sectorial frame $\tau_a \cdot \Sigma_i $.

  Furthermore, $\Sigma_i = \bbid + \O(|w|)$ as $w \to 0$ in $\eSect_i$.\footnote{In fact, more is true: each $\Sigma_i$ is asymptotic to the formal series $\hat{F}_a$ as $w \to 0$ in $\eSect_i$.}  
\end{prop}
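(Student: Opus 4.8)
The plan is to prove the final statement, \cref{prop:asymptotic-existence-holo}, as a consequence of the classical theory of asymptotic solutions to meromorphic connections with an irregular singularity. The setup is ideal for this: we have fixed a holomorphic extension $(\mathcal{P}_a \mathcal{E}, \nabla_\zeta, \tau_a)$ in which $\nabla_\zeta$ is a genuine \emph{meromorphic} connection with a pole of order $3$ (i.e.\ rank-$2$ irregular type) at $w=0$, with diagonalizable leading term $-(\zeta^{-1} + \overline{\zeta}) H$. Since the two eigenvalues of this leading coefficient are distinct (assuming $\zeta^{-1} + \overline{\zeta} \neq 0$, which holds on the relevant locus), the connection is \emph{unramified} and the classical results of Birkhoff, Sibuya, and Malgrange apply directly.

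The key steps are as follows. First I would recall that the formal gauge transformation $\hat{F}_a$ with $\hat{F}_a(0) = \bbid$ bringing $\nabla_\zeta$ to its diagonal normal form \eqref{eq:holo-diag-form} exists and is unique; this is standard (and is cited from \cite{Boalch:2002} in the paragraph preceding the statement). Second, I would invoke the sectorial asymptotic existence theorem (the Hukuhara--Turrittin--Sibuya theorem, or equivalently Malgrange's isomorphism): on each sufficiently narrow sector — in particular on each extended sector $\eSect_i$ bounded by adjacent Stokes rays — there is a genuine holomorphic invertible matrix $\Sigma_i$, asymptotic to the formal series $\hat{F}_a$ as $w \to 0$ within that sector, such that $\tau_a \cdot \Sigma_i$ diagonalizes $\nabla_\zeta$. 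The asymptotic relation $\Sigma_i = \bbid + \O(|w|)$ follows immediately from $\hat F_a(0) = \bbid$ together with the fact that $\Sigma_i \sim \hat{F}_a$. Third, for uniqueness I would argue that any two holomorphic diagonalizing frames on $\eSect_i$ differ by a diagonal holomorphic gauge transformation commuting with the normal form; such a transformation must be constant (since it commutes with the diagonal connection form, which has a genuine pole with distinct diagonal entries, forcing the gauge to be diagonal and flat), and the normalization $\Sigma_i \to \bbid$ pins it down to the identity.

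The main obstacle — really the only subtle point — is verifying that the \emph{extended} sectors $\eSect_i$ are admissible for the asymptotic existence theorem, i.e.\ that each $\eSect_i$ is contained in a sector of opening less than the critical angle $\pi/k$ (here $k=2$, the "slope" of the irregular part) so that a \emph{unique} holomorphic sum of $\hat{F}_a$ exists on it. This is exactly where the definition of the extended sectors as bounded by \emph{adjacent Stokes rays} (rather than anti-Stokes rays) is used: by construction the Stokes rays are spaced by $\pi/k = \pi/2$, so each $\eSect_i$ has opening exactly $\pi/2$, and the asymptotic solution is unique on a closed sector of this opening by the Sibuya--Malgrange uniqueness result. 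I would therefore make this Stokes-ray bookkeeping explicit, matching the rays here (where $-(\zeta^{-1}+\overline{\zeta})w^{-2}$ is real/imaginary) with those of \cref{ssub:stokes-conventions}, as already noted parenthetically in the statement. The remaining content is a direct citation of the classical theorem, so beyond this angular bookkeeping no genuinely new argument is required; the proposition is essentially a translation of a known result into the paper's notation and fixed holomorphic gauge.
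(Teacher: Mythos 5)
For context: the paper does not actually prove this proposition --- it is quoted verbatim from \cite[Theorem 3.6]{Tulli:2019} as a classical result --- so your plan of deriving it from the Hukuhara--Turrittin--Sibuya/Balser--Jurkat--Lutz theory is more than the paper attempts, and the overall strategy (formal normal form, sectorial lifting, uniqueness from the opening of the sector) is the right one. However, there is a concrete geometric error that undermines both halves of your argument. The extended sector $\eSect_i$ does \emph{not} have opening $\pi/2$: it is the sector $\Sect_i$ (of opening $\pi/k = \pi/2$, bounded by two consecutive anti-Stokes rays) enlarged on each side out to the nearest Stokes rays, so it has opening $2\pi/k = \pi$ --- it is the classical ``supersector,'' and \cref{fig:stokes-sectors} shows $\eSect_1$ as a half-plane containing the quarter-plane $\Sect_1$. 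The two Stokes rays bounding $\eSect_i$ are not adjacent to one another (a third Stokes ray lies in the interior of $\Sect_i$), which is where your count of $\pi/2$ went wrong. Consequently the statement you need for existence is not ``a sum exists on a sector of opening $\le \pi/k$'' but the stronger classical fact that the sectorial lifts glue to a single $\Sigma_i$ asymptotic to $\hat{F}_a$ on the entire supersector of opening $2\pi/k$; this is still standard, but it is not the theorem you invoked.

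Your uniqueness argument fails independently of the angle count. A holomorphic gauge transformation $g$ on a sector preserving the diagonal form \eqref{eq:holo-diag-form} is \emph{not} forced to be diagonal or constant: solving $dg = -[A,g]$ with $A$ the diagonal connection form shows the diagonal entries of $g$ are constant but the off-diagonal entries are flat sections of the ``difference'' line, i.e.\ multiples of $\exp\bigl(\pm(\zeta^{-1}+\overline{\zeta})w^{-2} + \cdots\bigr)$. On a sector on which one of these exponentials decays throughout, that entry is asymptotic to zero, so the normalization $g \to \bbid$ does not kill it --- this is precisely the Stokes phenomenon, and if your rigidity claim were correct there would be no Stokes matrices at all. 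Uniqueness on $\eSect_i$ holds because its opening exceeds $\pi/k$: the sign of $\Re\bigl((\zeta^{-1}+\overline{\zeta})w^{-2}\bigr)$ changes inside $\eSect_i$, so each exponential blows up along some ray of the sector, forcing both off-diagonal coefficients to vanish; the constant diagonal entries are then pinned to $1$ by the normalization. The repair is therefore to correct the opening of $\eSect_i$ to $\pi$, cite the supersector form of the asymptotic existence theorem for existence, and run uniqueness through the growth of the exponentials rather than through a claimed rigidity of the commutant.
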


Thus in each sector $\eSect_i$, the frame of flat sections $\Phi_i$ is explicitly given by
\begin{equation}
    \Phi_i = \tau_a \cdot \Sigma_i (\hat{F}_a) w^{- \Lambda(\zeta)} \exp \left(- (\zeta^{-1} + \overline{\zeta}) \frac{H}{2w^2}  \right).
\end{equation}

We will want to work directly with the original frame $g$ instead of $\tau_a$.
Recall that the Stokes matrix $S_i$ is the transition matrix from $\Phi_i$ to $\Phi_{i+1}$ on $\eSect_i \cap \eSect_{i+1}$.
The sections $\Phi_i$ are independent of the choice of $a \in \R$ \cite[Section 3.4.3]{Tulli:2019}, and hence so are the Stokes matrices $S_i$ and the formal monodromy $M_0 = e^{- 2 \pi i \Lambda(\zeta)}$.\footnote{The $a$-independence of $M_0 = e^{- 2 \pi i \Lambda(\zeta)}$ can be seen directly from \eqref{eq:exponent-formal-monodromy}.} 
Therefore we can unambiguously speak of the Stokes data of the complex connection $(E, \nabla_\zeta, g)$.

We will also need a translation of \cref{prop:asymptotic-existence-holo}

\begin{cor}[Sectorial asymptotic existence in terms of $g$]
\label{cor:appendix-asymptotic-existence-g}
    In a neighbourhood of $w=0$ in each extended sector $\eSect_i$, there is an invertible matrix $\tilde{\Sigma}_i$ of smooth functions such that the connection $\nabla_\zeta$ has the diagonal form \eqref{eq:diag-form} in the sectorial frame $g \cdot \tilde{\Sigma}_i$.

  Furthermore, each $\tilde{\Sigma}_i \to \bbid$ as $w \to 0$ in $\eSect_i$.
\end{cor}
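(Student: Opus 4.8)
The plan is to deduce \cref{cor:appendix-asymptotic-existence-g} from the holomorphic version \cref{prop:asymptotic-existence-holo} by an explicit change of frame. The two statements differ only in which reference frame is used to diagonalize $\nabla_\zeta$: the holomorphic statement uses the frame $\tau_a$, while the corollary uses the original unitary frame $g$ (extended to a neighbourhood of $w=0$), and correspondingly the diagonal form changes from \eqref{eq:holo-diag-form} to \eqref{eq:diag-form}. So the essential content is to produce the smooth gauge transformation relating $\tau_a$ to $g$ and verify it has the right asymptotics.

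First I would recall the explicit formula \eqref{eq:holo-frame} for the holomorphic frame,
\begin{equation*}
    \tau_{a}(w) = g \cdot g_\zeta(w)\, |w|^{(m^{(3)} + 2 \zeta \overline{m}) H}\, w^{N(a)} \exp\left(\frac{\overline{\zeta} H}{2w^2} - \frac{\zeta H}{2\overline{w}^2}\right),
\end{equation*}
where $g_\zeta(0) = \bbid$. This expresses $\tau_a = g \cdot P$ for an explicit (generally non-holomorphic but smooth) matrix $P = P(w)$ away from $w=0$. Defining $\tilde{\Sigma}_i \coloneqq P \cdot \Sigma_i \cdot P_0^{-1}$, where $\Sigma_i$ is the holomorphic sectorial matrix from \cref{prop:asymptotic-existence-holo} and $P_0$ is the purely diagonal/singular part of $P$ chosen to absorb the difference between the two diagonal forms \eqref{eq:holo-diag-form} and \eqref{eq:diag-form}, I would check that $g \cdot \tilde{\Sigma}_i$ equals $\tau_a \cdot \Sigma_i$ up to the diagonal singular gauge that converts one normal form to the other. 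The key computation is that conjugating the singular diagonal part of $\nabla_\zeta$ by the factor $|w|^{(m^{(3)} + 2\zeta\overline{m})H} \exp(\tfrac{\overline{\zeta}H}{2w^2} - \tfrac{\zeta H}{2\overline{w}^2})$ and comparing exponents recovers precisely the coefficients $-\zeta^{-1}\tfrac{dw}{w^3} - \zeta \tfrac{d\overline{w}}{\overline{w}^3} - (\zeta^{-1}m - \tfrac12 m^{(3)})\tfrac{dw}{w} - (\zeta\overline{m} + \tfrac12 m^{(3)})\tfrac{d\overline{w}}{\overline{w}}$ appearing in \eqref{eq:diag-form}; this is where the shift from the holomorphic exponent $\Lambda(\zeta)$ in \eqref{eq:exponent-formal-monodromy} to our $C^\infty$ normal form is accounted for by the non-holomorphic factor $|w|^{(\cdots)H}$, and where the integers $N(a)$ cancel because they only affect the holomorphic extension, not the smooth diagonal form.

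Next I would establish the asymptotic claim $\tilde{\Sigma}_i \to \bbid$ as $w \to 0$ in $\eSect_i$. This follows because $\Sigma_i = \bbid + \O(|w|)$ by \cref{prop:asymptotic-existence-holo}, because $g_\zeta(0) = \bbid$, and because the diagonal singular factors in $P$ and $P_0$ are designed to cancel in the combination $P \Sigma_i P_0^{-1}$ once the normal forms are matched — so no exponentially growing or $\log$-divergent terms survive and the limit is the identity. The main obstacle I anticipate is bookkeeping: carefully tracking the several diagonal conjugating factors (the $|w|$-powers, the $w^2$-exponentials, and the integer shifts $N(a)$) and confirming that the leftover smooth matrix genuinely tends to $\bbid$ rather than to some nonzero diagonal constant. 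In particular I must verify that the difference between the $\tfrac{dw}{w}$-coefficient $\Lambda(\zeta)$ (which includes $m^{(3)}$ and the $n_\pm(a)$) and the $C^\infty$ coefficient $-(\zeta^{-1}m - \tfrac12 m^{(3)})$ is exactly absorbed by $d\log$ of the non-holomorphic prefactor, with the $\tfrac{d\overline w}{\overline w}$-terms appearing symmetrically; this is a direct but delicate check, and is essentially the same computation already carried out to pass from \eqref{eq:NAH} and \eqref{eq:chern-framed-form} to the framed form \eqref{eq:framed-connection-form}. Since the statement is identical to \cref{prop:asymptotic-existence-g} quoted in the main text, I would conclude by noting that this corollary is exactly that proposition, now justified from the classical holomorphic theory.
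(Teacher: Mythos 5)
Your proposal follows essentially the same route as the paper: write $\tau_a = g\cdot g_\zeta M$ using \eqref{eq:holo-frame}, set $\tilde{\Sigma}_i = g_\zeta M \Sigma_i M^{-1}$, check that this gauge change converts the holomorphic normal form \eqref{eq:holo-diag-form} into the $C^\infty$ form \eqref{eq:diag-form}, and then deduce the asymptotics from $\Sigma_i = \bbid + \O(|w|)$ and $g_\zeta(0)=\bbid$.

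One step is stated too loosely, though. You say the diagonal singular factors of $P$ and $P_0$ are ``designed to cancel'' in $P\Sigma_i P_0^{-1}$, so that no divergent terms survive. Conjugation by the diagonal matrix $M$ cancels only on the \emph{diagonal} entries of $\Sigma_i$; the off-diagonal entries get multiplied by the ratio of the two diagonal entries of $M$, which (after noting that the $\exp(\tfrac{\overline\zeta H}{2w^2}-\tfrac{\zeta H}{2\overline w^2})$ factor has unit modulus) is $|w|$ raised to the exponent $\pm\bigl[n_-(a)-n_+(a)+2(m^{(3)}+2\Re(\zeta\overline m))\bigr]$. This exponent can be negative, so the claim that nothing divergent survives is not automatic: the actual argument is that this exponent is the difference of two parabolic weights lying in the half-open unit interval $(a-1,a]$, hence lies in $(-1,1)$, and only then does the $\O(|w|)$ bound on the off-diagonal entries of $\Sigma_i$ force the product to vanish as $w\to 0$. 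You flag the bookkeeping as the anticipated obstacle, so you have the right ingredients, but this quantitative estimate is the one point your write-up needs to make explicit for the proof to close.
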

\begin{proof}
    This follows from the holomorphic result by making an appropriate gauge transformation.

    Fix $a \in \R$ and rewrite \eqref{eq:holo-frame} as
    \begin{equation*}
        \tau_{a,\zeta}(w)  = (e_1, e_2) \cdot g_\zeta(w) M(w),
    \end{equation*}
    where
    \begin{equation*}
        M(w) \coloneqq |w|^{(m^{(3)} + 2 \zeta \overline{m})H} w^{N(a)}\exp \left(\frac{\overline{\zeta} H}{2w^2} - \frac{\zeta H}{2 \overline{w}^2}  \right).
    \end{equation*}
    It is straightforward to check that $\nabla_\zeta$ has the desired diagonal form \eqref{eq:diag-form} with respect to the frame
    \begin{align*}
        (\tau \cdot \Sigma_i) M^{-1} = (e_1, e_2) \cdot  \underbrace{g_\zeta M \Sigma_i M^{-1}}_{\ds \eqqcolon \tilde{\Sigma}_i}.
    \end{align*}
    We know that $g_\zeta \to \bbid$ as $w \to 0$, so it only remains to check that $M \Sigma_i M^{-1} \to \bbid$ as $w\to 0$ in $\eSect_i$. 
    An almost identical calculation appears in the proof of \cite[Lemma 3.5]{Tulli:2019}.

    In short, since
    \begin{equation*}
        \left|M(w) \right| = |w|^{(m^{(3)} + 2\Re( \zeta \overline{m})) H + N(a)},
    \end{equation*}
    the diagonal terms of $M \Sigma_i M^{-1}$ are the same as those of $\Sigma_i$ (which approach $1$), while the off-diagonal terms differ from those of $\Sigma_i$ by a factor of magnitude
    \begin{equation*}
        |w|^{\pm \left[n_-(a) - n_+(a) + 2(m^{(3)} + 2 \Re(\zeta \overline{m})) \right]}.
    \end{equation*}
    The exponent above lies in $(-1, 1)$ since it is the difference of two parabolic weights in $(a-1, a]$, but the off-diagonal terms of $\Sigma_i$ are $\O(|w|)$, and so the result follows.
\end{proof}

\section{Parabolic duality and related constructions} 
\label{sec:parabolic-duality-constructions}

In this appendix we describe some technical constructions involving duality for parabolic and filtered bundles.
We will apply this to construct a framed version of the Hitchin section, which will be self-dual in the appropriate sense.

\subsection{Filtered duality} 
\label{sub:filtered-duality}

There is a natural notion of duality for filtered bundles (see e.g.\ \cite{Mochizuki:0}).

\begin{definition}[Dual filtered bundle] \label{def:dual-filtered}
    If $\mathcal{P}_*E$ is a filtered bundle over $(C, D)$, then the dual meromorphic bundle $E^\vee = \shom(E, \O_C(*D))$
    has an induced filtered structure 
    \begin{equation}
        \mathcal{P}_\alpha (E^\vee) \coloneqq \{ \phi \in E^\vee: \phi(\mathcal{P}_\beta E) \subseteq \mathcal{P}_{\beta + \alpha} \O_C(*D) \quad \forall \beta \in \R^D \},
    \end{equation}
    where
\begin{equation}
    \mathcal{P}_\alpha \O_C(*D) = \O(\sum_{p \in D} \lfloor \alpha_p \rfloor p  ).
\end{equation}
\end{definition}

\begin{remark}[Dual weights] \label{rem:dual-weights}
There is a natural isomorphism of (holomorphic) bundles
\begin{equation} \label{eq:dual-filtration-isom}
    \mathcal{P}_\alpha (E^\vee) \cong (\mathcal{P}_{< - \alpha + \mathbf{1}} E)^*.
    \end{equation}
    If $\mathcal{P}_{p, *} E$ has parabolic weights $\{\alpha_p \}$ at $p \in D$ (i.e.\ indices at which the filtration $\mathcal{P}_{p, *} E$ jumps, not restricted to any subinterval), then $\mathcal{P}_{p,*} (E^\vee)$ has weights $\{-\alpha_p \}$.
\end{remark}

On the other hand, given a wild harmonic bundle $(E, \theta, h)$, we can consider the dual harmonic bundle (cf.\ \cref{sub:framed-duality})
\begin{equation*}
    (E, \theta, h)^* \coloneqq (E^*, \theta^t, h^*).
\end{equation*}
This is compatible with \cref{def:dual-filtered} in the sense that the growth rate filtration induced by $(E, \theta, h)^*$ is dual to that induced by $(E, \theta, h)$. 

Describing duality on the level of parabolic bundles is slightly more subtle.
Motivated by \cref{rem:dual-weights}, we make the following (somewhat indirect) definition:
\begin{definition}[Dual parabolic bundle]
    If $E$ is a $\mathbf{c}$-parabolic bundle with corresponding filtered bundle $\mathcal{P}_* E$, define the dual parabolic bundle $E^*$ to be the ($\mathbf{-c+1}$)-truncation of the dual filtered bundle $\mathcal{P}_* (E^\vee)$.
\end{definition}  
Note that under this definition, the dual of a $\frac{1}{2}$-parabolic bundle is again $\frac{1}{2}$-parabolic.

We would like describe the dual flag data more explicitly.
There is some asymmetry since the dual weights are truncated to half-open intervals $(-c_p, -c_p+1]$. 
If $E$ has parabolic weights $\alpha_1 < \alpha_2 < \dots < \alpha_{n_p}$ at $p \in D$, then $E^*$ must have weights
\begin{align}
    \begin{cases}
        -\alpha_{n_p} < -\alpha_{n_p -1} < \dots < -\alpha_2 < -\alpha_1 & \text{if } \alpha_{n_p} \neq c_p,  \\
         \phantom{-\alpha_{n_p} < } -\alpha_{n_p-1} < \dots < -\alpha_2 < -\alpha_1 < -c_p + 1 & \text{if } \alpha_{n_p} = c_p.
    \end{cases}
\end{align}

It is most natural to think of these weights in terms of corresponding (dual) sections.
Suppose $E|_p$ has flag data
\begin{alignat*}{6}
    0 = E_{p,0} &  &&\subset E_{p,1} &&\subset E_{p,2} &&\subset \cdots &&\subset E_{p,n_p} &&= E|_p,
\end{alignat*}
and let $(\eta_1, \dots, \eta_{\rank E})$ be a compatible frame.
Then $(\eta_{\rank E}^*, \dots, \eta_1^*)$ is a compatible frame for the dual flag of annihilators
\begin{alignat*}{6}
    0 = (E_{p,n_p})^0 &  &&\subset (E_{p,n_p-1})^0 &&\subset (E_{p,n_p - 2})^0 &&\subset \cdots &&\subset (E_{p,0})^0 &&= (E|_p)^*,
\end{alignat*}
with corresponding weights $\nu_p(\eta_i^*) = - \nu_p(\eta_i)$.
In other words, each subspace $(E_{p, i})^0$ is naturally associated with the weight $-\alpha_{p, i +1}$.
This coincides with the desired dual parabolic structure when $\alpha_{n_p} \neq c_p$, but if $\alpha_{n_p} = c_p$ we will need to modify the bundle and flag so that the dual weight $-c_p$ lies in $(-c_p, -c_p+1]$:
\begin{equation*}
    \alpha_{n_p} = c_p \xrightarrow{\text{dualize}} -c_p \xrightarrow{\text{shift weight}} -c_p + 1
\end{equation*}
Such a shift can be conveniently phrased in the language of Hecke modifications.

\subsection{Hecke modifications and parabolic duality} 
\label{sub:hecke-modifications}

We begin with a very brief overview of Hecke modifications in general; see \cite{Kapustin:2007} for more detail.
Our main focus will be on their interplay with the parabolic structures described above.

\begin{construction}[Hecke modification] \label{constr:hecke-mod}
    Let $E$ be a rank $r$ holomorphic vector bundle over $C$, and fix a point $p \in C$.
    A \emph{Hecke modification} of $E$ at $p$ of type $(n_1, \dots, n_r) \in \Z^r$ is a new bundle $\tilde{E}$, obtained by the following procedure:
    \begin{enumerate}
        \item Fix a local coordinate $w$ centred at $p$ and choose a trivialization of $E$ over a sufficiently small disc $\Delta_p$ around $p$.

        \item Let $\tilde{E}$ be the new bundle obtained by regluing $E|_{\Delta_p}$ and $E|_{C \setminus \{p\}}$ using the transition map
        \begin{equation}
            \diag(w^{-n_1}, \dots, w^{-n_r})
        \end{equation}
        over the punctured disc $\Delta_p^\times$.
    \end{enumerate}
\end{construction}

Such a modification of $E$ changes the degree of the bundle to 
\begin{equation}
    \deg \tilde{E} = \deg E + \sum_{i=1}^r n_i.
\end{equation}

Note that Hecke modifications of a given type $(n_1, \dots, n_r)$ are not unique: the above construction depends on the choice of trivialization. 
However, our situation is more rigid, since we will only be interested in modifications that are compatible with the parabolic structure.

\begin{definition}[Parabolic Hecke modification]
    If $E$ is a parabolic bundle and $p \in D$, we will say that a Hecke modification at $p$ is \emph{parabolic} if the sections used to trivialize $E$ over $\Delta_p$ are compatible with the parabolic structure at $p$.
\end{definition}

Such a modification naturally shifts the parabolic weights associated with the trivializing sections, and in this way defines a new flag at $p$. 
In particular, if $E$ is the $\mathbf{c}$-truncation of a filtered bundle $\mathcal{P}_* E$, then the bundle $\tilde{E}$ obtained by a parabolic Hecke modification of type $\pm(1, \dots, 1)$ is the $\mathbf{c}'$-truncation of $\mathcal{P}_* E$, where $c_p' = c_p \pm 1$.\footnote{In this case the Hecke modification construction is essentially a reformulation of properties \ref{item:filtered-def-1} and \ref{item:filtered-def-4} in  \cref{def:filtered-bundles}.}
More generally one can shift some subset of the parabolic weights in order to produce a new parabolic bundle, as long as all of the new weights lie in suitable half-open intervals.
This leads to a procedure for directly obtaining the dual of a parabolic bundle.

\begin{construction}[Dual parabolic bundle, redux] \label{constr:dual-parabolic-bundle}
    If $E$ is a $\mathbf{c}$-parabolic bundle with flags
    \begin{alignat*}{6}
        0 = E_{p,0} &  &&\subset E_{p,1} &&\subset E_{p,2} &&\subset \cdots &&\subset E_{p,n_p} &&= E|_p \\
        c_p - 1 & &&< \alpha_{p,1} &&< \alpha_{p,2} &&< \cdots &&< \alpha_{p,n_p} &&\leq c_p,
    \end{alignat*}
    then the dual parabolic bundle $E^*$ is obtained by taking the dual flags
        \begin{alignat*}{6}
            0 = (E_{p,n_p})^0 &  &&\subset (E_{p,n_p-1})^0 &&\subset (E_{p,n_p - 2})^0 &&\subset \cdots &&\subset (E_{p,0})^0 &&= (E|_p)^* \\
            -c_{p} & &&\leq  \hspace{8pt} -\alpha_{p,n_p} &&< -\alpha_{p,n_p - 1} &&< \cdots &&< -\alpha_{p,1} &&< -c_p + 1
        \end{alignat*}
        and performing a parabolic Hecke modification of type $\smash{(\underbrace{1, \dots, 1}_{m_{p, n_p}}, 0, \dots, 0)}$ at each point $p\in D$ such that $\alpha_{p, n+p} = c_p$.
\end{construction}

\subsection{Self-dual frames for the Hitchin section} 
\label{sub:self-dual-frames}

We can construct a frame for the Hitchin section $\Bfr \subseteq \Xfr$ (\cref{def:framed-hitchin-section}) by following the $m^{(3)} = \frac{1}{2}$ cases of \cref{constr:bundle-from-parameters,constr:frame-from-eigenframe}.\footnote{The notation in \cref{def:framed-hitchin-section} is slightly different, with $E = K_C^{-1/2} \oplus K_C^{1/2}$ and $\theta = \begin{pmatrix}
            0 & 1 \\ 
            (z^2 + 2m) dz^2 & 0
        \end{pmatrix}$.
To be consistent with the notation below, we will fix a trivialization of $E$ using the sections $dz^{\pm 1/2}$.}
This procedure involves a choice of eigenframe $(\eta_1, \eta_2)$, which we will explicitly specify below.

The eigensections of
\begin{equation*}
    \theta = \begin{pmatrix}
            0 & 1 \\ 
            z^2 + 2m & 0
            \end{pmatrix}  dz
\end{equation*}
are of the form
\begin{equation*}
    \eta_1 = \lambda_1 \begin{pmatrix}
        1 \\
        \sqrt{P}
    \end{pmatrix} \quad \text{and} \quad 
    \eta_2 = \lambda_2 \begin{pmatrix}
        -1 \\
        \sqrt{P}
    \end{pmatrix},
\end{equation*}
where 
\begin{equation*}
    P \coloneqq z^2 + 2m.
\end{equation*}
The requirement $\eta_1 \wedge \eta_2 = z e_1 \wedge e_2$ in \eqref{eq:eigenframe-wedge} is equivalent to 
\begin{equation*}
    \lambda_1 \lambda_2 = \frac{z}{2 \sqrt{P}},
\end{equation*}
which we can satisfy by taking
\begin{align*}
    \lambda_1 = \lambda_2 &= \sqrt{\frac{z}{2 \sqrt{P}}} \\
    &= \frac{1}{\sqrt{2}} + \O(w^2).
\end{align*}
Using these choices of normalization, \cref{constr:bundle-from-parameters} defines a parabolic bundle in which both eigensections $\eta_i$ have weight $\frac{1}{2}$, as well as a harmonic metric $h$. 
Then, \cref{constr:frame-from-eigenframe} produces a compatible frame $g$ by orthonormalizing $(\eta_1, w \eta_2)$ with respect to $h$.

More generally we can consider the $U(1)$-family of frames $e^{i \vartheta} \cdot g$, which equivalently could have been obtained by starting with the eigenframe $e^{i \vartheta} \cdot (\eta_1, \eta_2)$, i.e.\ choosing the normalizations
\begin{equation} \label{eq:eigenframe-coefficients}
    \lambda_1 = e^{i \vartheta} \sqrt{\frac{z}{2 \sqrt{P}}} \quad \text{and} \quad \lambda_2 = e^{-i \vartheta} \sqrt{\frac{z}{2 \sqrt{P}}}.
\end{equation}

\begin{definition}[Hitchin section frame]\label{def:hitchin-section-frame}
    We will choose the frame $g_0 \coloneqq e^{i \pi/4} \cdot g$ for our construction of the framed Hitchin section.
\end{definition}

We will prove that the magnetic angles $\theta_m$ and $\theta_m^\shift$ vanish for this choice of frame by showing that it is ``self-dual'' in the sense discussed in \cref{sub:framed-hitchin-section}.
More specifically, we will need to know how the frame $e^{i \vartheta} \cdot g$ transforms under duality and the isomorphism
\begin{equation*}
    S = \begin{pmatrix}
        0 & i \\ 
        i & 0
    \end{pmatrix}: E^* \xrightarrow{\sim} E.
\end{equation*}

\begin{lemma}[Dual frame calculation] \label{lem:dual-frame-calculation}
    The frame $e^{i \vartheta} \cdot g = (e_1, e_2)$ satisfies
    \begin{equation}
        S (e^{i \vartheta} \cdot g)^* = \left( ie^{-2i \vartheta} \frac{w}{|w|}  e_1 , -ie^{2 i \vartheta} \frac{|w|}{w} e_2 \right).
    \end{equation}
    In particular, for $\vartheta = \frac{\pi}{2}$, the Hitchin section frame $g_0$ satisfies
    \begin{equation} \label{eq:swapped-dual-frame}
        S (g_0)^* = \left(\frac{w}{|w|}  e_1 , \frac{|w|}{w} e_2 \right),
    \end{equation}
    i.e.\ it is self-dual under the isomorphism $S$ up to a unitary Hecke modification of type $1$ (see \cref{def:unitary-hecke-mod}).
\end{lemma}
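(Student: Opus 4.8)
The statement to be proved is \cref{lem:dual-frame-calculation}, a concrete linear-algebra computation describing how the frame $e^{i\vartheta}\cdot g$ transforms under the isomorphism $S=\bigl(\begin{smallmatrix}0 & i\\ i & 0\end{smallmatrix}\bigr)$ together with filtered/parabolic duality. The approach is entirely explicit: I would start from the eigenframe $(\eta_1,\eta_2)$ with the normalization coefficients \eqref{eq:eigenframe-coefficients}, namely $\lambda_1 = e^{i\vartheta}\sqrt{z/(2\sqrt{P})}$ and $\lambda_2 = e^{-i\vartheta}\sqrt{z/(2\sqrt{P})}$, and trace the frame $e^{i\vartheta}\cdot g = (e_1,e_2)$ through the orthonormalization of \cref{constr:frame-from-eigenframe} (the $m^{(3)}=\tfrac12$ case, using $(\eta_1,w\eta_2)$). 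The key observation is that the harmonic metric $h$ on the Hitchin section is diagonal with respect to the direct-sum decomposition $E = K_C^{-1/2}\oplus K_C^{1/2}$, so the Gram–Schmidt step does not mix the two eigendirections and the unitary frame vectors $e_i$ are just the $\eta_i$ (resp.\ $w\eta_2$) rescaled by real positive functions $|v_i|_h^{-1}$.

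\textbf{Key steps.} First I would write down the dual frame $(e^{i\vartheta}\cdot g)^* = (e_1^*, e_2^*)$ and compute $S$ applied to each dual basis vector, using $S(e_j^*) = \sum_k S_{kj}^{\phantom{t}} e_k^*$ composed with the identification $E^*\xrightarrow{\sim}E$. Since $S$ is off-diagonal, it interchanges the two factors, so $S e_1^*$ lands in the direction of $e_2$ and $S e_2^*$ in the direction of $e_1$; the content is in tracking the scalar coefficients. Second, I would insert the explicit normalization \eqref{eq:eigenframe-coefficients}: the ratio $\lambda_1/\lambda_2 = e^{2i\vartheta}$ is what produces the phase factors $e^{\pm 2i\vartheta}$, while the orthonormalization together with the discrepancy between using $\eta_2$ and $w\eta_2$ (i.e.\ the factor of $w$ built into the $m^{(3)}=\tfrac12$ extension) accounts for the $w/|w|$ and $|w|/w$ factors. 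Third, I would collect these to obtain
\begin{equation*}
    S(e^{i\vartheta}\cdot g)^* = \Bigl( i e^{-2i\vartheta}\frac{w}{|w|}\, e_1,\ -i e^{2i\vartheta}\frac{|w|}{w}\, e_2 \Bigr),
\end{equation*}
and finally specialize to $\vartheta = \pi/4$ so that $g_0 = e^{i\pi/4}\cdot g$: here $i e^{-i\pi/2} = 1$ and $-i e^{i\pi/2} = 1$, yielding \eqref{eq:swapped-dual-frame}. To finish, I would note that multiplying the two frame vectors by $w/|w|$ and $|w|/w = (w/|w|)^{-1}$ respectively is precisely a unitary Hecke modification of type $1$ in the sense of \cref{def:unitary-hecke-mod}, so $g_0$ is self-dual up to such a modification.

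\textbf{Main obstacle.} The computation itself is routine once the setup is correct; the delicate point is bookkeeping the parabolic/Hecke shift. The duality $\D^\fr$ on $\Hfr$ in the $m^{(3)}=\tfrac12$ case involves both the naive dual $(\,\cdot\,)^*$ \emph{and} a Hecke modification of type $1$ (because the dual weight $-\tfrac12$ must be shifted back into $(-\tfrac12,\tfrac12]$), so I must be careful that the factor of $w$ appearing in the extension $(\eta_1,w\eta_2)$ is correctly matched against the Hecke shift in the definition of $S g_0^*$, rather than double-counted or dropped. Concretely, the hard part is verifying that the asymmetry between the two eigendirections (one carrying a $w$, the other not) combines with the off-diagonal $S$ to give a modification that is genuinely of type $1$ as in \cref{def:unitary-hecke-mod}, i.e.\ with the frame transforming by $(w/|w|, w^{-1}/|w|^{-1})$ and not some other type. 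Once the phases and the $w$-powers are reconciled, the identity \eqref{eq:swapped-dual-frame} and the self-duality conclusion follow immediately, completing the input needed for \cref{prop:self-duality}.
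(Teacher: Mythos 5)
Your overall strategy---explicit linear algebra with the inverse-transpose (dual) frame, the normalization \eqref{eq:eigenframe-coefficients}, and the extra factor of $w$ coming from the $m^{(3)}=\tfrac12$ extension $(\eta_1,w\eta_2)$---is essentially the paper's, and your accounting of where the phases $e^{\pm 2i\vartheta}$ and the factors $w/|w|$, $|w|/w$ arise, together with the final identification with a unitary Hecke modification of type $1$, is right. (You also correctly specialize to $\vartheta=\pi/4$, consistent with \cref{def:hitchin-section-frame}; the ``$\vartheta=\tfrac{\pi}{2}$'' in the lemma statement appears to be a typo.)

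However, there is a genuine error in your key step. You assert that, since $S$ is off-diagonal, it ``interchanges the two factors,'' so that $Se_1^*$ lands in the direction of $e_2$ and $Se_2^*$ in the direction of $e_1$. This is false, and it contradicts the formula you then write down, which has $Se_1^*\propto e_1$ and $Se_2^*\propto e_2$. The matrix $S$ is off-diagonal only with respect to the \emph{standard} trivialization (equivalently, the direct-sum decomposition $K_C^{-1/2}\oplus K_C^{1/2}$); with respect to the $\theta$-eigenframe it is \emph{diagonal}. Concretely, $\eta_1^*$ has coefficient vector proportional to $(\sqrt P,1)^t$ and $S(\sqrt P,1)^t=i(1,\sqrt P)^t\propto\eta_1$, while $S(-\sqrt P,1)^t=-i(-1,\sqrt P)^t\propto\eta_2$; equivalently, the bilinear form $i(u_1v_2+u_2v_1)$ pairs $\eta_1$ with $\eta_2$ to zero. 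The paper's proof computes exactly this, and your argument as written would produce a frame of the form $(\ast\, e_2,\ast\, e_1)$ rather than $(\ast\, e_1,\ast\, e_2)$. A secondary imprecision: the Gram--Schmidt step is negligible not because $h$ is diagonal in the standard basis (that does not make $\eta_1,\eta_2$ $h$-orthogonal), but because the eigenvectors are asymptotically exponentially orthogonal as $w\to 0$, so the identity holds modulo exponentially decaying terms---which is how the paper justifies working with the ``approximately orthogonal'' frame.
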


\begin{proof}
    It suffices to work with the ``approximately orthogonal'' frame 
\begin{equation} \label{eq:approx-orthogonal-frame}
    \tilde{g} \coloneqq 
    \left(\frac{\eta_1}{|w|^{-1/2}}, \frac{w}{|w|} \frac{\eta_2}{|w|^{-1/2}} \right),
\end{equation}
instead of $g$, since the other Gram-Schmidt terms vanish exponentially as $w \to 0$.
This reduces the claim to a straightforward linear algebra computation.

In general, the coefficients of
\begin{equation*}
    \left( \tilde{\lambda}_1 \begin{pmatrix}
        1 \\ \sqrt{P}
    \end{pmatrix}, \ 
    \tilde{\lambda}_2 \begin{pmatrix}
        -1 \\ \sqrt{P}
    \end{pmatrix}  \right)^*
\end{equation*}
are given by the columns of the inverse transpose matrix, namely
\begin{equation*}
    \left( \frac{1}{\tilde{\lambda}_1 \cdot 2 \sqrt{P}} \begin{pmatrix}
        \sqrt{P} \\ 1
    \end{pmatrix}, \
    \frac{1}{\tilde{\lambda}_2 \cdot 2 \sqrt{P}} \begin{pmatrix}
        -\sqrt{P} \\ 1
    \end{pmatrix} \right).
\end{equation*}
It follows that
\begin{equation*}
    S \left( \tilde{\lambda}_1 \begin{pmatrix}
        1 \\ \sqrt{P}
    \end{pmatrix}, \ 
    \tilde{\lambda}_2 \begin{pmatrix}
        -1 \\ \sqrt{P}
    \end{pmatrix}  \right)^* 
    = 
    \left( \frac{i}{\tilde{\lambda}_1 \cdot 2 \sqrt{P}} \begin{pmatrix}
        1  \\ \sqrt{P}
    \end{pmatrix}, \
    \frac{-i}{\tilde{\lambda}_2 \cdot 2 \sqrt{P}} \begin{pmatrix}
        -1 \\ \sqrt{P}
    \end{pmatrix} \right).
\end{equation*}

Choosing
\begin{equation*}
    \tilde{\lambda}_1 = \frac{\lambda_1}{|w|^{-1/2}} \quad \text{and} \quad \tilde{\lambda}_2 = \frac{w \lambda_2}{|w|^{1/2}}
\end{equation*}
to match with $\tilde{g}$ in \eqref{eq:approx-orthogonal-frame} and using the normalization \eqref{eq:eigenframe-coefficients} for $\lambda_i$, we see that the original components of $\tilde{g}$ are respectively multiplied by
\begin{equation*}
     \frac{i}{\tilde{\lambda}_1^2 \cdot 2 \sqrt{P}} = i e^{-2 i \vartheta} \frac{w}{|w|}
     \quad \text{and} \quad
     \frac{-i}{\tilde{\lambda}_2^2 \cdot 2 \sqrt{P}} = -i e^{2 i \vartheta} \frac{|w|}{w},
\end{equation*}
as required.
\end{proof}

\end{document}